\documentclass[oneside,english,a4paper]{amsart}

\usepackage[T1]{fontenc}
\usepackage[utf8]{inputenc}
\usepackage[hidelinks]{hyperref}
\usepackage{amstext,amsthm,amssymb,babel,stackrel,graphicx,cancel,mathtools,microtype}
\numberwithin{equation}{section}
\numberwithin{figure}{section}

\theoremstyle{plain}

\newtheorem{thm}{Theorem}[section]
\newtheorem{prop}[thm]{Proposition}
\newtheorem{lem}[thm]{Lemma}
\newtheorem{cor}[thm]{Corollary}

\theoremstyle{remark}

\newtheorem{rem}[thm]{Remark}
\newtheorem{defn}[thm]{Definition}

\def\al{\alpha}
\def\be{\beta}
\def\ga{\gamma}
\def\la{\lambda}

\def\ep{\epsilon}

\def\s{\sigma}

\def\jp#1{\langle#1\rangle}
\def\norm#1{\|#1\|}
\def\ol#1{\overline{#1}}
\def\ul#1{\underline{#1}}
\def\oul#1{\ul{\ol{#1}}}

\def\tilde#1{\widetilde{#1}}

\def\Bb#1{\Big(#1\Big)}
\def\bb#1{\big(#1\big)}
\def\mc#1{\mathcal{#1}}
 
\def\les{\lesssim}
\def\hook{\hookrightarrow}
\def\weak{\rightharpoonup}

\def\d{\partial}
\def\na{\nabla}
\def\De{\Delta}

\def\N{\mathbb{N}}
\def\Z{\mathbb{Z}}

\def\R{\mathbb{R}}
\def\C{\mathbb{C}}

\def\S{\mathbb{S}}
\def\P{\mathbb{P}}

\def\E{\mathcal{E}}
\def\F{\mathcal{F}}

\def\I{\mathcal{I}}

\def\g{\mathbf{g}}
\def\h{\mathbf{h}}
\def\G{\mathbf{G}}
\def\H{\mathbf{H}}

\def\Im{\mathrm{Im}}

\def\supp{\mathrm{supp}}
\def\one{\mathbf{1}}
\def\loc{\mathrm{loc}}

\def\aff{\mathrm{aff}}
\def\pc{\mathrm{pc}}

\def\sc{\mathrm{sc}}

\DeclareMathOperator{\sech}{sech}

\begin{document}
\title[Universal bound on blow-up rate for mass-critical NLS]{A universal bound on the blow-up rate for the focusing mass-critical nonlinear Schr\"odinger equation}

\author{Beomjong Kwak}
\email{beomjong@kaist.ac.kr}
\address{Department of Mathematical Sciences, Korea Advanced Institute of Science
and Technology, 291 Daehak-ro, Yuseong-gu, Daejeon 34141, Korea}

\author{Soonsik Kwon}
\email{soonsikk@kaist.edu}
\address{Department of Mathematical Sciences, Korea Advanced Institute of Science
and Technology, 291 Daehak-ro, Yuseong-gu, Daejeon 34141, Korea}

\keywords{nonlinear Schrödinger equation, mass-critical, blow-up rate, self-similar, soliton}
\subjclass[2020]{35B44, 35Q55, 37K40}

\begin{abstract}
In this paper, we investigate a universal blow-up bound for the focusing mass-critical nonlinear Schr\"odinger equation for general initial data in $L^2(\R^d)$, extending previous knowledge for mass near the ground-state threshold due to Merle and Rapha\"el. The main results are twofold.
First, we show the nonexistence of self-similar rate blow-up solutions.
Second, under radial symmetry, we establish the sharp log--log correction to the self-similar bound on the blow-up rate.
The proofs are based on a new analysis of general blow-up solutions, which does not rely on any ansatz or variational structure.
\end{abstract}
\maketitle
\section{Introduction}
We consider the focusing nonlinear Schr\"odinger equation
(NLS) on $\R^{d}$:
\begin{equation}\label{eq:NLS}\tag{NLS}
\begin{cases}
iu_{t}+\De u+\left|u\right|^{\frac{4}{d}}u=0,\\
u(0)=u_0\in L^{2}(\R^{d}).
\end{cases}
\end{equation}
\eqref{eq:NLS} is mass-critical in the sense that the mass $M(u)=\int_{\R^d} |u|^2 dx$
is invariant under the scaling symmetry $u(t,x) \mapsto \lambda^{\frac{d}{2}}u(\la^2 t,\la x) $. It is well known that the Cauchy problem is locally well-posed in the critical space $C^0L^2\cap L^{\frac{2d+4}{d}}_{t,x}([0,T_0]\times \R^d)$. As a consequence of the local theory, if a solution $u$ ceases to exist past time $T<\infty $, then $\|u\|_{L^{\frac{2d+4}{d}}_{t,x}([0,T)\times \R^d)}=\infty$. In this case, we say that the solution $u(t)$ \textit{blows up} at time $T$.

\eqref{eq:NLS} has the ground-state soliton solution $u(t,x)=e^{it}Q(x)$. Here, the ground state $Q$ is the unique
positive, radial, and exponentially decaying solution of $-Q+\De Q+|Q|^{\frac4d}Q=0$.

A fundamental fact is that $e^{it}Q$ serves as a \textit{threshold} for long-time dynamics. In the sub-threshold regime $M(u)<M(Q)$, all solutions are global and scatter \cite{KillipTaoVisan09radial-2D,KillipVisanZhang08radial-3D+,Dodson15focusing}. At the threshold $M(u)=M(Q)$, the dynamics are rigid: in most settings, the only non-scattering solutions are rescaled solitons and their pseudo-conformal transforms \cite{Merle93minimalmass,Killip-Li-Visan-Zhang,Dodson21,Dodson21-1}.
Above the threshold, $M(u)>M(Q)$, various dynamics are possible. All negative-energy initial data of sufficient decay lead to finite-time blow-up \cite{Glassey} and various blow-up rates have been constructed \cite{BourgainWang,Perelman,MerleRaphael05CMP,MartelRaphael}.

Accordingly, understanding the general blow-up dynamics beyond the threshold mass $M(Q)$ has been of interest.
An elementary fact is that the blow-up rate is at most \emph{self-similar}; by a scaling argument, it is straightforward that, for a solution $u$ in $H^1$ that blows up at time $T$,
\begin{equation}\label{eq:scaling bound}
\|\na u(t)\|_{L^2(\R^d)}\gtrsim\frac{1}{\sqrt{T-t}}.
\end{equation}
Note that \eqref{eq:scaling bound} makes sense only for $H^1$-data. An analogous statement for general $L^2$-solutions is the concentration of mass, first shown by Bourgain \cite{Bourgain-refinements} on $\R^2$:
\begin{equation}\label{eq:Bourgain}
\limsup_{t\to T}\sup_{x_0\in\R^2}\int_{x_0+[-\sqrt{T-t},\sqrt{T-t}]^2}|u(t,x)|^2 dx\ge c
\end{equation}
for some constant $c=c(M)>0$.
See \cite{BegoutVargas07concentration,Keraani06blowup,KillipTaoVisan09radial-2D,KillipVisanZhang08radial-3D+} for further developments.

However, both numerical simulations and formal derivations have long suggested a gap between the actual dynamics and the standard scaling bound \eqref{eq:scaling bound} \cite{Fraiman1985asymptotic, LPSS}. A classical fact is that a stationary self-similar ansatz has infinite mass, which heuristically rules out a self-similar blow-up approaching that particular ansatz by the mass conservation. Nevertheless, for general data,
whether the self-similar bounds \eqref{eq:scaling bound} and \eqref{eq:Bourgain} could be improved remained an open question, as emphasized in survey articles \cite{Bourgain00problems, Raphael2008CMI, MartelMerleRaphaelSzeftel2014_arXiv}.

A major advance toward understanding blow-up dynamics was achieved by Merle and Raphaël \cite{MerleRaphael05Annals,MerleRaphael03GAFA,MerleRaphael04Invent,Raphael05Annalen,MerleRaphael06AMS,MerleRaphael05CMP}. Through a modulation analysis, they
developed a rigid description of the dynamics in the \emph{near-soliton regime} $M(Q)<M(u)\le M(Q)+\alpha_*$. They showed that, for $H^1$-solutions in the near-soliton regime, the blow-up profile is $Q$ and the blow-up rate is 
either of the following:
\[
    \norm{\nabla u(t)}_{L^2} \sim 
    \sqrt{\frac{\log|\log(T-t)|}{T-t}}
    \quad\text{or}\quad
    \norm{\nabla u(t)}_{L^2} \gtrsim \frac{1}{T-t}.
\]
In particular, the former rate, called the \emph{log--log rate}, provides a sharp lower bound improving \eqref{eq:scaling bound} and occurs for all negative-energy initial data. This log--log rate was first formally derived by Fraiman \cite{Fraiman1985asymptotic} and further supported by numerical evidence due to Landman--Papanicolaou--Sulem--Sulem \cite{LPSS}. Its first rigorous construction was then given by Perelman \cite{Perelman}.

Apart from the near-soliton regime, the nature of general blow-up dynamics remains largely unknown. Historically, the only available information regarding the blow-up rate was the elementary self-similar scaling bound \eqref{eq:scaling bound}. A fundamental open question in the field was whether genuine self-similar blow-up solutions could exist. Even within the near-soliton regime, the potential for such solutions posed a significant challenge to establishing the universality of the blow-up profile $Q$ \cite{MerleRaphael04Invent}.

In this paper, we provide universal information on blow-up dynamics without any size constraints on the initial data. By improving the bound in \eqref{eq:Bourgain} for general $L^2$-solutions, we resolve the aforementioned question and establish a new universal bound on the blow-up rate.

Our main contribution is twofold. First, we rule out the existence of self-similar blow-up for arbitrary $L^2$-solutions.
\begin{thm}[Nonexistence of self-similar blow-up]\label{thm:no self similar}
Let $d\ge 1$. Let $u(t)$ be a solution to \eqref{eq:NLS} in $L^2(\R^d)$ that blows up at time $T<\infty$. For every $\ep>0$, we have
\begin{equation}\label{eq:no self similar}
\limsup_{t\to T}\sup_{x_0\in\R^d}\int_{x_0+[-\ep\sqrt{T-t},\ep\sqrt{T-t}]^d}|u(t,x)|^2 dx\ge M(Q).
\end{equation}
\end{thm}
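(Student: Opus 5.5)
We argue by contradiction. Suppose that for some $\ep>0$ there are $\eta>0$ and $t_0<T$ with
\[
\sup_{x_0}\int_{x_0+[-\ep\sqrt{T-t},\ep\sqrt{T-t}]^d}|u(t)|^2\le M(Q)-\eta\qquad\text{for all } t\in[t_0,T).
\]
Rescale around the blow-up time by setting $\la(t)=\sqrt{T-t}$ and $v(s,y)=\la^{d/2}u(t,\la y)$ with $s=-\log(T-t)$. Then $v$ lives on the half-line $s\in[s_0,\infty)$ and solves the self-similar NLS
\[
iv_s+\De v+|v|^{4/d}v-\tfrac{i}{2}\bigl(\tfrac d2 v+y\cdot\na v\bigr)=0 .
\]
Every cube of side $2\ep$ carries $v$-mass at most $M(Q)-\eta$, so the assumption becomes a uniform small-scale mass bound below the threshold.

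The first step is a profile extraction via concentration compactness. Take a sequence $s_n\to\infty$, or equivalently $t_n\to T$ with Strichartz norm blowing up on $[t_n,T)$, and apply the $L^2$ linear profile decomposition of Merle--Vega / Bégout--Vargas to $u(t_n)$ at the scale $\la(t_n)$. The blow-up at $T$ forces at least one profile to be nonscattering on a time interval of length comparable to $T-t_n$, in rescaled time. By the subthreshold scattering theory of Dodson/Killip--Tao--Visan--Zhang, any profile with mass $<M(Q)$ scatters. Hence some profile $\phi$ has $M(\phi)\ge M(Q)$, with scale $\mu_n\lesssim\la(t_n)$.

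If $\mu_n\ll\la(t_n)$, the profile concentrates at scale $o(\sqrt{T-t_n})$. It then places mass close to $M(\phi)\ge M(Q)$ in a cube of side $\ep\sqrt{T-t_n}$, which contradicts the assumption. This uses that profiles with comparable spatial centres and large frequency or translation parameters still concentrate in $L^2$ at a point after a short linear evolution; a little care with Galilean frequency parameters is needed, and I would handle it via the Galilean/pseudo-conformal invariance.

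The main obstacle is the remaining \emph{genuinely self-similar} case $\mu_n\sim\la(t_n)$. In this case the mass never concentrates at scale $o(\la)$ at any time, so the rescaled orbit $v(s)$ is "non-concentrating". This regime has to be excluded directly. My plan is to extract a limiting object: using the assumed cube-mass bound, show that $v(s_n+\cdot)$ converges, locally in space and on compact $s$-intervals, to a nonzero global solution $V$ of the self-similar equation on $s\in\R$. The limit $V$ should have finite mass, and it should have mass at least $M(Q)$ locally, by the argument above. On $V$ I would run a monotonicity or virial argument in self-similar variables. The dilation term $-\frac i2(y\cdot\na)$ acts as a strict outward drift. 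In original variables this is the pseudo-conformal identity $\frac{d}{dt}\|(x+2i(T-t)\na)u\|^2_{L^2}=\dots$, which is available only in a localized form since the data are merely $L^2$. I would localize this identity to the ball $|y|\le R$, where the mass is bounded, and use the assumed sub-threshold mass on $\ep$-cubes to control the nonlinear term by a localized Gagliardo--Nirenberg inequality with the sharp constant $M(Q)$. This should give a coercive quantity, roughly $\|\na V\|^2-\frac{d}{d+2}\|V\|^{2+4/d}_{L^{2+4/d}}\gtrsim \eta\|\na V\|^2$ on small cubes. Integrated in $s$ over $\R$, this quantity forces $V\equiv0$, which is a contradiction.

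I expect the delicate points to be, first, justifying the compactness of $v(s_n)$ modulo nothing, i.e.\ ruling out mass escaping to $|y|\to\infty$ or to high frequency; this needs a Bourgain-type refinement in the spirit of \eqref{eq:Bourgain}. The second is handling the $L^2$-only regularity by working with Strichartz-level smoothing of the limit $V$.
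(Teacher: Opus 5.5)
There is a genuine gap, and it is in the step you call the main obstacle: the rigidity argument for the self-similar regime. The coercivity you want is not available at the scale where it is needed. Localizing the sharp Gagliardo--Nirenberg inequality to cubes of side $\ep$ (IMS-type partition of unity) gives only
\[
\|\na V\|_{L^2}^2-\tfrac{d}{d+2}\|V\|_{L^{2+4/d}}^{2+4/d}\ \ge\ \eta'\|\na V\|_{L^2}^2-C\ep^{-2}M(V).
\]
The error $C\ep^{-2}M(V)$ is not small. The self-similar profile lives at unit scale in $y$, while $\ep$ may be arbitrarily small, and small $\ep$ is exactly the hard case of the contradiction hypothesis.

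Put this into a localized pseudo-conformal/virial identity and integrate over $s\in\R$. You only get a balance between bounded time averages, roughly $\tfrac12\|yV\|_{L^2}^2+8\eta'\|\na V\|_{L^2}^2$ against $C\ep^{-2}M$, and nothing forces $V\equiv0$. The cube bound rules out concentration at scales $\ll\ep\sqrt{T-t}$. It says nothing against a compact orbit at the parabolic scale, which is precisely the object you must exclude. In addition, $\|\na V\|_{L^2}$ and energy-type quantities are not defined for $L^2$ data without a regularity theorem, and "Strichartz-level smoothing" does not supply one.

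A second gap is compactness. A local limit of $v(s_n+\cdot)$ has no uniform control as $|y|\to\infty$ or at high frequency. The solution may carry several parabolic-scale profiles plus radiation, and the radiation escapes to $|y|\to\infty$ in self-similar variables. A Bourgain-type refinement does not repair this.

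The paper gets compactness from a minimal-mass argument. Among counterexamples normalized to blow up at time $1$, it takes one of minimal mass. The nonlinear profile decomposition, together with the concentration \eqref{eq:KillipVisan conseq nonradial}, then leaves a single profile at the parabolic scale. Your treatment of the case $\mu_n\ll\la(t_n)$ is in this spirit, but concentration has to be tested near the profile's own blow-up time, not at $t_n$, because of time shifts. An $\omega$-limit in $L^2/\H$ then produces a solution that is almost periodic modulo self-similar scaling (Proposition~\ref{prop:main for AP}). This is the only place $M(Q)$ enters.

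The rigidity step excludes such solutions whatever their mass, and the contradiction comes from spatial infinity, not from small scales. Dodson's regularity gain (Proposition~\ref{prop:Dodson}) and the pseudo-conformal transform give $\|\ul u(s)\|_{H^1}+\|y\ul u(s)\|_{L^2}\lesssim1$. The Lens transform $v=e^{-i|y|^2/8}\ul u$ turns the dilation into a repulsive potential $\tfrac1{16}|y|^2$. The virial identity with weights $\max\{|y|-a,0\}$, averaged against $e^{-|s|/4}$, then gives a convexity inequality for $\gamma(a)=-\log\I W(a)$. In $d=1,2$ this forces $\gamma\to-\infty$, contradicting finite mass. In $d\ge3$ it yields super-exponential decay, which the Ionescu--Kenig Carleman estimate, transported by $\g_1$, rules out. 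The nonlinearity is treated perturbatively throughout, and no sharp Gagliardo--Nirenberg coercivity is used.
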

Since $\ep>0$ is arbitrary, $M(Q)$ mass concentrates on scales $o(\sqrt{T-t})$. In particular, self-similar blow-up cannot occur.

Second, for radial solutions, we obtain the optimal quantitative improvement of Theorem~\ref{thm:no self similar}. As a main result of the paper, we establish the sharp log--log bound on the blow-up rate.

\begin{thm}[Log--log blow-up bound]\label{thm:loglog}
Let $d\ge2$. Let $u(t)$ be a radial solution to \eqref{eq:NLS} in $L^2(\R^d)$ that blows up at time $T>0$. Then the following hold:
\begin{enumerate}
    \item\label{enu:parabolic convergence}
    There exists an asymptotic profile $z^*\in L^2(\R^d)$ such that for every $r>0$,
\begin{equation}\label{eq:z* def}
\lim_{t\to T}\norm{u(t)-z^*}_{L^2(|x|\ge r\sqrt{T-t})}=0.
\end{equation}
    \item\label{enu:loglog}
    Let $0<\delta<1$. We define the effective radius of the blow-up core: for $0\le t<T$,
    \begin{equation}\label{eq:la def}
\la_\delta(t)\coloneqq \inf\{ r>0:\norm{u(t)-z^{*}}_{L^2(|x|\ge r)}\le\delta\}.
\end{equation}
Then there exist $t_0<T$ and $C>0$ such that for every $t_*\in[t_0,T)$,
\begin{equation}\label{eq:loglog}
\int_{0}^{t_*}\frac{\la_\delta^{2}(t)}{T-t}\frac{dt}{T-t}\le C^2\int_{0}^{t_*}\frac1{\log|\log (T-t)|}\frac{dt}{T-t}.
\end{equation}
Moreover, we can set $C=\sqrt{2\pi+o_{\delta\to 0}(1)}\cdot|\log\delta|$.
\end{enumerate}
\end{thm}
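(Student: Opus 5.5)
The plan is to work in self-similar variables adapted to the blow-up time. For radial solutions, a localized virial/Morawetz-type identity will control the flux of mass through parabolic spheres $|x|=r\sqrt{T-t}$.

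\emph{Step 1: the parabolic limit (part (1)).} Set $s=-\log(T-t)$ and $y=x/\sqrt{T-t}$. Using the radial Strauss-type decay away from the origin (valid for $d\ge2$), I will show that outside $|x|\ge r\sqrt{T-t}$ the solution is asymptotically subcritical: the nonlinear term has small Strichartz norm on the exterior parabolic region. Combined with Theorem~\ref{thm:no self similar}, which places $M(Q)$ of mass at scale $o(\sqrt{T-t})$, a Duhamel argument will then show that $u(t)\one_{|x|\ge r\sqrt{T-t}}$ is Cauchy in $L^2$ as $t\to T$. The limit $z^*$ is independent of $r$, since the annuli $r_1\sqrt{T-t}\le|x|\le r_2\sqrt{T-t}$ carry vanishing mass. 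That last fact comes from a flux estimate: I will integrate the local mass identity $\d_t\int\phi|u|^2=2\Im\int\na\phi\cdot\na u\,\bar u$ against a weight $\phi$ depending on $|x|/\sqrt{T-t}$. This will give that $\int\frac{1}{T-t}\big(\text{mass in annuli}\big)\,dt$ is finite modulo terms controlled by the exterior smallness.

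\emph{Step 2: the log--log bound (part (2)).} Let $v=u-z^*$ near the core, and let $\la=\la_\delta$. The key quantity is the virial-type functional
\[
J(t)=\Im\int \psi\bigl(|x|/R(t)\bigr)\,x\cdot\na u\,\bar u,
\]
taken with a cutoff at a carefully chosen radius $R(t)$ between $\la$ and $\sqrt{T-t}$. Its time derivative produces the positive term coming from the concentrated core; by Theorem~\ref{thm:no self similar} the core contains at least $M(Q)$ of mass, so $\la^{-2}$ is bounded below. The error terms live in the region $\la\lesssim|x|\lesssim\sqrt{T-t}$, where the tail has mass $\le\delta$. The crucial mechanism is radiation: mass $\delta^2$ escaping from the core at each scale must be carried outward. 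A logarithmic pigeonhole over dyadic scales between $\la$ and $\sqrt{T-t}$, of which there are about $\log(\sqrt{T-t}/\la)$, will give an inequality of the form
\[
\frac{\la^2}{T-t}\lesssim \exp\bigl(-c\,\tfrac{1}{\delta^2}\cdot(\text{radiated flux})\bigr).
\]
This is the analogue of the Merle--Rapha\"el relation $b\sim e^{-\pi/b}$, and it is the source of the constants $2\pi$ and $|\log\delta|$. I will integrate this in $s$ against $ds=dt/(T-t)$ and use that the total radiated mass is finite (part (1)). Jensen's inequality then converts the resulting double-exponential relation $s\lesssim e^{\exp(\cdot)}$ into the averaged bound \eqref{eq:loglog}. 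It is important that \eqref{eq:loglog} is only an averaged statement, so pointwise control of $\la$ is never needed.

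\emph{Main obstacle.} The hard step is deriving the exponential flux relation without any ansatz or variational structure. I would first try to obtain it from a monotonicity formula: the Morawetz identity on the intermediate annulus, with weight $\log|x|$ (the borderline weight, whose boundary terms yield $2\pi$ in the radial reduction). To absorb the nonlinear terms on the annulus, I would use radial Sobolev bounds together with the smallness $\delta$. Making the constant sharp, namely $\sqrt{2\pi}|\log\delta|$, will require tracking these boundary contributions precisely as $\delta\to0$.
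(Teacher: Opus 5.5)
There is a genuine gap in Step~1. Nothing in it forces mass out of the self-similar region $\{|x|\sim\sqrt{T-t}\}$, and that is the real content of (1): the annuli $r\sqrt{T-t}\le|x|\le R\sqrt{T-t}$ must lose their mass for \emph{every} $r>0$. Exterior smallness for large $R$ is comparatively soft; in the paper it is Lemma~\ref{lem:negative Sobolev}. That lemma needs the in/out decomposition $u=\mathring u+\tilde u$ of Lemma~\ref{lem:regularity}, because Strauss-type decay is unavailable for $L^2$ data. Your flux estimate cannot deliver the annulus statement. With $s=\log t$, $y=x/\sqrt t$ (blow-up at $t=0$), the local mass identity reads
\[
\d_s\int\phi|\ul u|^2\,dy=-\tfrac12\int(y\cdot\na\phi)|\ul u|^2\,dy+2\Im\int\ol{\ul u}\,\na\ul u\cdot\na\phi\,dy .
\]
The current term lives in the same annulus, is of the same size as the mass you want to control, has no sign, and has no a priori bound for an $L^2$ solution. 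It is not ``controlled by the exterior smallness.'' The integrability you aim for is not even expected. Heuristically, for Merle--Rapha\"el log--log solutions the $Q$-tail alone puts mass $\approx\exp(-c\,r\sqrt{\log|\log(T-t)|})$ in $\{|x|\sim r\sqrt{T-t}\}$, which is not integrable against $\frac{dt}{T-t}$. Theorem~\ref{thm:no self similar} does not help, since it is only a $\limsup$ statement. The paper reaches every $r>0$ through Proposition~\ref{prop:main}: if $\tilde u$ is $\le e^{-C^2/\kappa}$ on an $s$-window of length $\kappa^{-10}$, then $\ul u$ carries little mass in $\{\sqrt\kappa\le|y|\le R_0\}$. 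The missing idea is in its proof. The lens transform $v=e^{-\frac i8|y|^2}\ul u$ solves NLS with the repulsive potential $\frac1{16}|y|^2$. Its virial identity with weights $(|y|-a)_+$, averaged against $e^{-\frac14|s-s_*|}$ and combined with Cauchy--Schwarz on spheres, gives the convexity inequality \eqref{eq:ODE ineq} for $\ga=-\log\I W$. Coercivity comes from the potential term, with no energy, ansatz or variational input, and the nonlinearity is only an error term.

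Step~2 does not close either. $J$ is not defined for $L^2$ data, and above $M(Q)$ its derivative has no sign without spectral or variational coercivity. Core mass $M(Q)$, which Theorem~\ref{thm:no self similar} provides only along some times, does not bound $\la^{-2}$ below. More seriously, your key relation has the wrong form. If $\frac{\la^2}{T-t}\les\exp(-c\delta^{-2}F)$ with $\int F\,ds<\infty$, then $F$ is small for most $s$ and the bound degenerates to $O(1)$, which is just the self-similar rate. Jensen also goes the wrong way for an upper bound. The correct relation is inverse and logarithmic, the analogue of $\Gamma_b\approx e^{-\pi/b}$: radiation of size $\eta$ around $s_*$ forces $\frac{\la^2}{T-t}\les|\log\delta|^2/|\log\eta|$ on average over $[s_*-1,s_*+1]$. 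The log--log bound then follows from a Chebyshev count. Finiteness of the scale-invariant norm $\norm{\ul{\tilde u}}_{L^q_sL^r_y}$ leaves at most $\kappa^{-O(1)}\eta^{-q}$ bad unit intervals; this needs an $L^q_s$ bound with $q<\infty$, not a finite radiated mass. Choosing $\kappa=B|\log\delta|^2(\log|s_0|)^{-1}$ makes this count $|s_0|^{qC_1^2/B}=o(|s_0|/\log|s_0|)$; see \eqref{eq:E size}. Likewise, $2\pi$ does not come from a $\log|x|$ weight. Integrating $\ga'(a)^2\le(\be|\log\delta|\kappa^{-1/2})^2-(\frac14-o(1))a^2$ gives $\ga\le(\frac\pi2\be^2+o(1))|\log\delta|^2\kappa^{-1}$ with $\be\downarrow2$. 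Comparing this with $\eta^2$ gives $C_1>\sqrt\pi$ in Proposition~\ref{prop:main sec6}, which requires the cascade $\varphi_mu=\mathring u_m+\tilde u_m$. The Chebyshev exponent $q\downarrow2$ then supplies the remaining factor $2$.
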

\begin{rem}
Theorem~\ref{thm:loglog} measures the blow-up rate in a stronger sense than \eqref{eq:Bourgain}.
Interpreting $\la_\delta(t)$ as the largest bubble radius of $u(t)$
 within a bubble-tower structure in the blow-up core, Theorem~\ref{thm:loglog} captures the dynamics at the largest bubbling scale.
In contrast, \eqref{eq:Bourgain} detects only the smallest scale in the bubble tower. 
\end{rem}
Theorem~\ref{thm:loglog} is optimal in two senses. First, it captures the sharp log--log correction to the concentration scale.
Second, the asymptotic size $C=\sqrt{2\pi+o_\delta(1)}\cdot|\log\delta|$ is sharp.
The factor $|\log\delta|$ reflects the exponential tail of the blow-up profile, which matches that of $Q$. The asymptotic coefficient $\sqrt{2\pi}$ coincides with the sharp scaling constant attained by log--log blow-up solutions in the near-soliton regime \cite{MerleRaphael05CMP}.\footnote{Note that this is shown under a spectral assumption, which has been verified for $d\le 12$~\cite{FibichMerleRaphael06spectral,Yang18Blowup}.}

Theorem~\ref{thm:loglog} gives the sharp improvement of \eqref{eq:Bourgain}.
\begin{cor}[Sharp mass concentration]\label{cor:loglog la Bourgain}
Let $d\ge 2$. Assume that a radial solution $u$ to \eqref{eq:NLS} blows up at time $T<\infty$. Then there exists $R>0$ such that for every $r\ge R$,
\begin{equation}\label{eq:loglog la Bourgain}
\limsup_{t\to T}\int_{|x|\le r\sqrt{\frac{T-t}{\log|\log(T-t)|}}}|u(t,x)|^2 dx\ge\int_{|x|\le\frac r{\sqrt{2\pi}} }|Q(x)|^2dx.
\end{equation}
\end{cor}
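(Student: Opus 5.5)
The plan is to derive Corollary~\ref{cor:loglog la Bourgain} from Theorem~\ref{thm:loglog} in two stages. First, the averaged log--log bound \eqref{eq:loglog} gives, along a sequence of times, a pointwise bound on $\la_\delta$. Second, at those times, the region $|x|\le\la_\delta(t)$ must carry essentially the mass of $Q$, with radius measured through the exponential tail. Throughout, write $s=-\log(T-t)$, so that $\frac{dt}{T-t}=ds$, and set $w(s)=\log s$. The right-hand side of \eqref{eq:loglog} is then $C^2\int^{s_*}\frac{ds}{\log s}$, which diverges as $s_*\to\infty$.

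For the first stage, define $f(s)=\frac{\la_\delta^2(t)}{T-t}\log|\log(T-t)|$. I claim that $\liminf_{s\to\infty} f(s)\le C^2$. If this failed, we would have $f\ge C^2(1+\eta)$ for all large $s$. Then the left-hand side of \eqref{eq:loglog} would exceed $(1+\eta)C^2\int^{s_*}\frac{ds}{\log s}-O(1)$. Since $\int^{s_*}\frac{ds}{\log s}\to\infty$, this contradicts \eqref{eq:loglog} for large $t_*$. Hence there are times $t_n\to T$ with
\[
\la_\delta(t_n)\le (C+o(1))\sqrt{\frac{T-t_n}{\log|\log(T-t_n)|}}, \qquad C=\sqrt{2\pi+o_\delta(1)}\,|\log\delta|.
\]

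For the second stage, I use the definition \eqref{eq:la def} of $\la_\delta$ together with part~\eqref{enu:parabolic convergence} of Theorem~\ref{thm:loglog}. For any fixed $\rho>0$, these give $\|u(t)-z^*\|_{L^2(|x|\ge\rho\sqrt{T-t})}\to0$. Since $\la_\delta(t)=o(\sqrt{T-t})$ and $z^*$ has no mass concentrating at the origin, mass conservation yields
\[
\int_{|x|\le\la_\delta(t)}|u|^2\ \ge\ M(u)-\|z^*\|_{L^2}^2-O(\delta)-o(1).
\]
The key fact needed here is the mass gap $M(u)-\|z^*\|_{L^2}^2\ge M(Q)$. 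This quantization of the concentrated mass should be a byproduct of the paper's analysis (compare Theorem~\ref{thm:no self similar}), and I will invoke it as established there.

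The main obstacle is matching the radius to $\frac{r}{\sqrt{2\pi}}$ rather than to $|\log\delta|$. The idea is to apply the above with $\delta$ chosen according to $r$, namely $\delta\approx\|Q\|_{L^2(|x|\ge r/\sqrt{2\pi}\cdot c)}$. Because $Q$ decays like $e^{-|x|}$, the radius capturing all but $\delta$ of the bubble mass is $\approx|\log\delta|$ in units of the bubble scale. This suggests interpreting the identity $C=\sqrt{2\pi}\,|\log\delta|$ as saying that the bubble scale is $\lesssim\sqrt{2\pi(T-t)/\log|\log(T-t)|}$. I would then need to show that the profile at scale $\la_\delta$ carries the mass of $Q$ outside the radius corresponding to $\delta$. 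This requires the profile-decomposition and tail information the paper develops, namely that the blow-up profile's tail matches that of $Q$. Choosing $R$ large makes $\delta$ small enough for the $o_\delta(1)$ errors to be absorbed. The result is
\[
\limsup_{t\to T}\int_{|x|\le r\sqrt{(T-t)/\log|\log(T-t)|}}|u|^2\ \ge\ \int_{|x|\le r/\sqrt{2\pi}}|Q|^2 .
\]
I expect this step, converting the $|\log\delta|$ factor into the precise $Q$-mass inside radius $r/\sqrt{2\pi}$, to be the delicate one. I would attack it first by exploiting the sharpness discussion following Theorem~\ref{thm:loglog}.
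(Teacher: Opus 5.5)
\paragraph{Stage~1 and the mass gap are fine.} Your first stage is correct and is how the paper starts: the divergence of the right-hand side of \eqref{eq:loglog} gives times $t_n\to T$ with $\la_\delta(t_n)\le a\sqrt{(T-t_n)/\log|\log(T-t_n)|}$ for any $a>C$. The mass gap $M(u)-M(z^*)\ge M(Q)$ is also true. You do not need to invoke it from Theorem~1.1: it follows from \eqref{eq:KillipVisan conseq}, part~(1) of Theorem~\ref{thm:loglog}, and Fatou's lemma.

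\paragraph{The gap: producing the radius $r/\sqrt{2\pi}$.} The ``tail information'' you want to invoke is not proved in the paper; the sharpness remark after Theorem~\ref{thm:loglog} is heuristic. In general the concentrating part need not resemble $Q$, since its mass may exceed $M(Q)$.

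Using only the mass gap and $z_n=u(t_n)-z^*$, you get asymptotically mass at least $M(Q)-\delta^2$ inside $\{|x|\le\la_\delta(t_n)\}$. The error is $\delta^2$, not your $O(\delta)$; this matters for sharpness. To turn this into $\int_{|x|\le r/\sqrt{2\pi}}|Q|^2$ with $\delta$ tied to $r$, you need both $|\log\delta|\ge r/\sqrt{2\pi}+O(1)$ and $\sqrt{2\pi+o_\delta(1)}\,|\log\delta|<r$. These are incompatible unless the unquantified $o_\delta(1)$ and $O(1)$ happen to have favourable signs. Taking $R$ large does not absorb them, because they become errors of size $o(r)$ in the radius. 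So your route gives at best the statement with $\sqrt{2\pi}$ replaced by $\sqrt{2\pi}+\epsilon$.

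\paragraph{The missing idea: a dichotomy plus threshold rigidity.} This is what lets the paper send $\delta\to0$ for \emph{fixed} $r$. Take $C_1>\sqrt{2\pi}$ and $\delta=e^{-a/C_1}$ with $a$ large, so that $C<C_1|\log\delta|=a$.

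\emph{Case $M(z^*)<M(u)-M(Q)$.} For $\delta$ small, the ball $\{|x|<\la_\delta(t_n)\}$ asymptotically carries mass at least $M(u)-M(z^*)-\delta^2>M(Q)$. The claim then holds for all $r\ge a$ with no profile information; this is where $R$ comes from.

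\emph{Case $M(z^*)=M(u)-M(Q)$.} Then $M(z_n)\to M(Q)$. A profile decomposition of $\{z_n\}$ (Proposition~\ref{prop:chenjie linear}) has a profile that does not scatter forward, so it carries all the mass $M(Q)$. It cannot scatter backward either: perturbation theory would then bound $\|u\|_{L^pL^p([\tau,t_n]\times\R^d)}$ uniformly in $n$, contradicting $t_n\to T$. By the radial threshold classification (Li--Zhang, Dodson) the profile is a rescaled $e^{it}Q$, so $z_n=\rho_n^{-d/2}e^{i\omega_n}Q(\cdot/\rho_n)+o_{L^2}(1)$.

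Only now does the exponential tail enter. The bound $Q(x)\gtrsim e^{-(1+o(1))|x|}$ together with $M(\one_{|x|\ge\la_\delta(t_n)}z_n)\le\delta^2=e^{-2a/C_1}$ forces $\la_\delta(t_n)/\rho_n\ge(1-o_a(1))\,a/C_1$. The ball of radius $r\sqrt{(T-t_n)/\log|\log(T-t_n)|}$ contains $\{|x|\le ra^{-1}\la_\delta(t_n)\}$. It therefore captures the $Q$-mass inside radius $(1-o_a(1))\,r/C_1$. Letting $a\to\infty$ and $C_1\downarrow\sqrt{2\pi}$ gives the sharp bound.
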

For the proofs of Theorems \ref{thm:no self similar} and \ref{thm:loglog}, we develop an analysis that does not rely on any ansatz or variational structure. 
This analysis captures the log--log gap from self-similarity at the linear level, handling the nonlinearity perturbatively at the quantitative level.

This approach also relates the emergence of the log--log blow-up rate to the spatial decay of the blow-up core and the regularity of the asymptotic profile $z^*$, a connection previously understood only in the near-soliton regime \cite{MerleRaphael05CMP}. As a result, we show that if the log--log blow-up rate bound is saturated, then the time average of the renormalized solution exhibits precise exponential spatial decay and $z^*\notin H^s$ for any $s>0$.

\begin{thm}[Saturated regime]\label{thm:saturate loglog}
Let $d\ge 2$. Let $u$ be a radial solution to \eqref{eq:NLS} with initial data $u_0\in L^2(\R^d)$ and the asymptotic profile $z^*$ as in \eqref{eq:z* def}. Assume that there exist $\delta>0$, $\ep>0$, and $t_0<T$ such that for $t_*\in[t_0,T)$,
\begin{equation}\label{eq:saturate loglog}
\int_0^{t_*}\frac{\la_\delta^2(t)}{T-t}\frac{dt}{T-t}\ge \ep\int_0^{t_*}\frac{1}{\log|\log(T-t)|}\frac{dt}{T-t}.
\end{equation}
Then the following hold:
\begin{enumerate}
    \item\label{enu:Thm profile} 
Set
\[
U(t,r)\coloneqq r^{\frac d2}u(t,r).
\]
    There exist $0<c\le C$ such that, for $r\gg1$ and $0<T-t_*\ll_r1$,
    \begin{equation}\label{eq:precise exponential}
    e^{-Cr}\le\frac1{|\log(T-t_*)|}\int_0^{t_*}\Big|U\Big(t,r\sqrt{\tfrac{T-t}{\log|\log(T-t_*)|}}\Big)\Big|^2\frac{dt}{T-t}\le e^{-cr}.
    \end{equation}
    \item\label{enu:Thm z*} For any $s>0$, either $u_0\notin H^s(\R^d)$ or $z^*\notin H^s(\R^d)$.
\end{enumerate}
\end{thm}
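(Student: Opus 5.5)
The plan is to re-run the quantitative argument behind Theorem~\ref{thm:loglog}, now as a two-sided estimate that becomes nearly an equality once \eqref{eq:saturate loglog} holds. I would work in self-similar variables: $s=-\log(T-t)$, $y=x/\sqrt{T-t}$, and $v(s,y)=(T-t)^{d/2}u(t,x)$ with a pseudo-conformal phase removed. In the radial setting, with $U=r^{d/2}u$, the linear part of the renormalized flow is a conjugated Hermite/harmonic-oscillator type operator. Its generalized eigenfunctions outside the core decay or grow like $e^{\pm c r}$ in the rescaled variable $r/\sqrt{\log s}$. This should be the same mechanism that produces the factor $|\log\delta|$ and the constant $\sqrt{2\pi}$ in Theorem~\ref{thm:loglog}.

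\textbf{Step 1: a quantitative identity behind \eqref{eq:loglog}.} From the proof of \eqref{eq:loglog} I would extract an inequality of the form
\[
\int_0^{t_*}\frac{\la_\delta^2}{T-t}\frac{dt}{T-t}\lesssim \frac{1}{\log|\log(T-t_*)|}\int_0^{t_*}\frac{dt}{T-t}\cdot \mathcal{D}(t_*) + \text{(error)},
\]
where $\mathcal{D}$ measures the time-averaged mass flux of $U$ through the region $r\sim \sqrt{(T-t)/\log|\log(T-t_*)|}$. Under \eqref{eq:saturate loglog}, this flux is therefore bounded below. A local virial/Morawetz-type monotonicity, which gives the upper bound in Theorem~\ref{thm:loglog}, bounds it above.

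\textbf{Step 2: two-sided exponential decay \eqref{eq:precise exponential}.} For the upper bound $e^{-cr}$, I would apply the linear analysis in the exterior region $r\gg1$ of the rescaled variable. There the nonlinearity is perturbative, since the local mass is small by \eqref{eq:z* def}. An Agmon-type weighted estimate with weight $e^{cr}$ on the time-averaged quantity then gives the decay; this works for any solution. For the lower bound $e^{-Cr}$ I would argue by contradiction. If the time-averaged profile decayed faster than $e^{-Cr}$ for large $C$, then rerunning Step 1 with the improved decay makes the effective $|\log\delta|$ factor scale like $1/C$. This gives $\int \la_\delta^2 (T-t)^{-2}\,dt\le (C'/C)^2\int (\log|\log(T-t)|)^{-1}dt/(T-t)$, which contradicts \eqref{eq:saturate loglog} once $C\gg \ep^{-1/2}$. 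A Harnack/unique-continuation type propagation along $r$ for the averaged ODE in $r$ would upgrade this averaged lower bound to a pointwise one at every large $r$.

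\textbf{Step 3: part (2).} Suppose $u_0\in H^s$ and $z^*\in H^s$. By persistence of regularity away from the singularity and the convergence \eqref{eq:z* def}, the outgoing radiation flux at scale $\sqrt{T-t}$ is controlled by the $H^s$ norm of $z^*$ near $r\sim\sqrt{T-t}$. This gives a gain of $(T-t)^{s/2}$ in the flux quantity $\mathcal{D}$, so $\mathcal{D}(t)\lesssim (T-t)^{s'}$. Feeding this into Step 1 gives $\int\la_\delta^2(T-t)^{-2}dt=O(1)$, which contradicts \eqref{eq:saturate loglog} because the right-hand side there grows like $\log(T-t_*)/\log|\log(T-t_*)|\to\infty$. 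The hypothesis on $u_0$ is needed so that the interior-to-exterior transfer at early times does not inject low regularity.

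\textbf{Main obstacle.} I expect the lower bound in \eqref{eq:precise exponential} to be the hardest step, because it must turn an averaged saturation into exponential behavior at every scale. I would first try a convexity (three-lines/log-convexity) argument in $r$ for the function $r\mapsto \log\int |U|^2\,dt/(T-t)$, obtained from the linearized exterior equation.
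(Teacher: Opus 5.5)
You have a genuine gap: the proposal never identifies where the $\log|\log(T-t)|$ scale comes from. You attribute it to the renormalized linear operator, whose generalized eigenfunctions you say behave like $e^{\pm cr}$ on the scale $r/\sqrt{\log s}$, and you plan an Agmon estimate on that basis. This does not hold.

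\begin{itemize}
\item In self-similar variables the linear part is, after removing the quadratic phase (cf.\ \eqref{eq:NLS harmonic}), $\Delta+\frac1{16}|y|^2$. This operator is autonomous in $s$ and contains no log--log scale. Its generalized eigenfunctions are oscillatory with $|y|^{-d/2}$ decay, not exponential.
\item Exponential tails $e^{\pm r}$ appear only in a modulated soliton frame with a phase $e^{i\int\la^{-2}}$. That is exactly the ansatz that is unavailable for general $L^2$ data.
\item \eqref{eq:z* def} gives smallness only on $|y|\ge r$ with $r$ fixed. It says nothing at $|y|\sim r/\sqrt{\log|s_0|}$, where the estimate lives, so it does not make the nonlinearity perturbative there.
\end{itemize}

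In the paper the scale comes from a counting balance that has no counterpart in your plan.
\begin{itemize}
\item Lemma~\ref{lem:regularity} splits $u=\mathring u+\tilde u$ with $\tilde u$ of finite $S$-norm.
\item Hence the renormalized radiation is $e^{-C^2\kappa^{-1}}$-small, as \eqref{eq:eta def} requires, on all but $\kappa^{-O(1)}e^{qC^2\kappa^{-1}}$ unit windows.
\item On every remaining window, the virial identity with weights $\varphi^a=\max\{|y|-a,0\}$ gives the ODE inequality for $\ga=-\log\I W$, and hence Proposition~\ref{prop:main}.
\item Choosing $\kappa\sim|\log\delta|^2/\log|s_0|$ makes the exceptional windows $o(\kappa|s_0|)$.
\end{itemize}
The upper bound in \eqref{eq:precise exponential} is then just Proposition~\ref{prop:main} applied with $\delta_*=e^{-r/\sqrt B}$ and $\kappa_*=r^2/\log|s_0|$. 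One sums in $\ell^2$ over windows and interpolates against a $W^{\tilde\ep_d,\infty}$ bound to pass from an annulus to a single radius. You are right that this part needs no saturation. Your Step~1 ``flux'' $\mathcal D$, and the inequality it enters, are never defined or derived, so Steps~2 and~3 have nothing concrete to rest on.

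For the lower bound, negating the statement gives smallness at a single radius only. Your contradiction argument, with the effective $|\log\delta|$ scaling like $1/C$, needs decay at all radii, and you do not supply that upgrade. Your closing remark does point at the right object: the paper's $\ga$ satisfies the one-sided bound $\ul\ga''\le 3c/a^2$ on $[2\kappa,R_0]$. However, this bound has limited scope:
\begin{itemize}
\item it holds only for the average $\I$ over a single good window, with weight $e^{-|s-s_*|/4}$;
\item it comes from the full nonlinear virial identity, not from a linearized equation;
\item it is used directly, not by contradiction.
\end{itemize}

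The paper's argument for the lower bound runs as follows.
\begin{itemize}
\item Saturation, compared with the per-window bounds from the proof of Theorem~\ref{thm:loglog}, forces the set $\mc B$ of windows where \eqref{eq:eta def} holds but \eqref{eq:main} fails to satisfy $\#\mc B\gtrsim|s_0|$.
\item On each such window, failure of \eqref{eq:main} forces $\ga'(R_0)\gtrsim|\log\delta|\kappa^{-1/2}$.
\item Quasi-concavity, the mass lower bound at scale $\sqrt\kappa$, and mass conservation then give Lemma~\ref{lem:superexp} at every radius $\theta\sqrt\kappa$.
\item Summing over $\mc B$ yields $e^{-Cr}$.
\end{itemize}

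For part (2), your instinct that regularity kills the radiation is right, but the mechanism is different.
\begin{itemize}
\item Replacing $u(\tau_-)$ by $z^*$ in the truncated Duhamel formula of Lemma~\ref{lem:regularity} gives $\dot H^\al$-critical bounds on $\tilde u$.
\item So every norm in \eqref{eq:eta def} decays like $e^{-\nu|s_*|}$.
\item Proposition~\ref{prop:main} must be re-proved without $\tilde u^{\pc}$. The decay of $\mathring u$ then comes from $z^*\in H^\al$ via Hardy and \eqref{eq:negative Sobolev}.
\item It then holds on every late window, so $\#\mc B=O(\kappa^{-10})$, contradicting $\#\mc B\gtrsim|s_0|$.
\end{itemize}
Your claimed conclusion $\int\la_\delta^2(T-t)^{-2}\,dt=O(1)$ does not follow from anything you have established.
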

\subsection*{Acknowledgements} 
The authors were partially supported by the National Research Foundation of Korea, RS-2019-NR040050 and RS-2022-NR069873.
\section{Preliminaries}\label{sec:preliminaries}
\subsection{Notations}
We summarize frequently used notations here. We denote $A\les B$ if $A\le C_dB$ holds for an implicit constant $C_d>0$ depending only on the dimension $d$. We denote the Strichartz exponent $p=p_d=\frac{2d+4}d$ and call $(q,r)\in[2,\infty]^2$ a Strichartz pair if $\frac d2=\frac 2q+\frac dr$. We use the Japanese bracket notation $\jp x\coloneqq \sqrt{|x|^{2}+1}$ for $x\in\R^{d}$. We denote the radial derivative by $\d_r=\frac{x}{|x|}\cdot\na$.

\subsubsection*{Fourier transform} We denote by $\mc S(\R^{d})$ the Schwartz space on $\R^{d}$. For a function of space or spacetime variables, $f=f(t,x)$ or $f=f(x)$, we denote by $\widehat f$ or $\F f$ the Fourier transform of $f$
with respect to the spatial variable $x$. We use both frequency and spatial smooth cutoffs. Let $\psi^{\le1}:\R^{d}\to[0,1]$ be a radial smooth function such that $\psi^{\le1}(x)=1$ for $|x|\le1$ and $\psi^{\le1}(x)=0$ for $|x|\ge2$. For $R>0$, we denote $\psi^{\le R}= \psi^{\le1}(R^{-1}\cdot)$, $\psi^{R}= \psi^{\le R}-\psi^{\le R/2}$, and $\psi^{\sim R}= \psi^{\le 4R}-\psi^{\le R/8}$. 
We define the Littlewood-Paley projections as follows. For $N>0$, we denote by $P_{\le N}$ the Littlewood-Paley projection with multiplier $\psi^{\le N}$. We also use the shorthand notation $f_{\le N}\coloneqq P_{\le N}f$, and similarly for other frequency localizations.

\subsubsection*{Function spaces}
On $\R\times\R^d$, we use mixed Lebesgue norms $L^q_tL^r_x(\R\times \R^d)$. On $\R^d$, we work with (in)homogeneous Sobolev and Besov spaces, denoted by $\dot W^{\theta,r}({W}^{\theta,r})$ and $\dot B^\theta_{r,\eta}({B}^\theta_{r,\eta})$, respectively. For instance, the homogeneous Besov norm admits $\norm{f}_{\dot B^\theta_{r,\eta}}\sim\norm{N^\theta\norm{P_Nf}_{L^r}}_{\ell^\eta_N}$. In addition, for $-\frac d2\le \theta\le\frac d2$, we denote
\[
2_\theta=\frac{2d}{d-2\theta}.
\]
Note that $(2,2_1)$ is a Strichartz pair and $(2,2_{-1})$ is its dual.
\subsubsection*{Weighted Besov estimates}
We use the following Besov-type estimates containing various spatial weights and frequency cutoffs.
\begin{lem}
Let $s,\theta\in\R$, $R>0$, and $r,\eta\in[1,\infty]$. For $u\in L^2(\R^d)$, we have
\begin{align}\label{eq:weighted LP}
\norm{\jp x^{-1}u_{\ge1}}_{H^{\theta}(\R^d)}&\les_\theta\norm{N^{\theta}\norm{\jp x^{-1}u_N}_{L^2}}_{\ell^2_N},&\theta>0,
\\
\label{eq:PN l2 decouple}
\norm{N^\theta\norm{P_N(\psi^{\ge R/N}u)}_{L^2(\R^d)}}_{\ell^2_N}&\les_{R}\norm u_{\dot H^\theta(\R^d)},&|\theta|\ll1,
\\
\label{eq:homogeneous embedding Lp, >N}
\norm{N^{s-\theta}\norm{|Nx|^\theta u_{\ge N}}_{L^r(\R^d)}}_{\ell^\eta_N}&\les_{s,\theta}\norm{N^{s-\theta}\norm{|Nx|^\theta u_N}_{L^r(\R^d)}}_{\ell^\eta_N},&s>0,
\\
\label{eq:homogeneous embedding Lp, d>N}
\norm{N^{s-\theta}\norm{|Nx|^\theta u_{< N}}_{L^r(\R^d)}}_{\ell^\eta_N}&\les_{s,\theta}\norm{N^{s-\theta}\norm{|Nx|^\theta u_N}_{L^r(\R^d)}}_{\ell^\eta_N},&s<0.
\end{align}
\end{lem}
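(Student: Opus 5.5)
We prove the four estimates separately; all are Littlewood--Paley bookkeeping combined with the observation that weights $|x|^\theta$ or $\jp x^{-1}$ almost commute with frequency localization $P_N$. The mechanism: $P_N$ is convolution with $N^d\check\psi(N\cdot)$, a Schwartz kernel at scale $N^{-1}$. Hence multiplying by a weight that varies slowly on scale $N^{-1}$ (such as $\jp x^{-1}$ for $N\ge1$, or $|Nx|^\theta$ measured in units of $N^{-1}$) costs only a factor controlled by the polynomial tails of the kernel.

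For \eqref{eq:homogeneous embedding Lp, >N}, write $u_{\ge N}=\sum_{M\ge N}u_M$ (with fattened projections at the endpoint). The basic pointwise weight transfer is $|Nx|^\theta\le C_\theta\,\max\bigl((N/M)^{\theta},1\bigr)\bigl(|Mx|^\theta+\ldots\bigr)$. For $\theta\ge0$ and $M\ge N$ this gives $|Nx|^\theta\le |Mx|^\theta$ directly, so
\[
N^{s-\theta}\norm{|Nx|^\theta u_M}_{L^r}\le (N/M)^{s}\, M^{s-\theta}\norm{|Mx|^\theta u_M}_{L^r}.
\]
For $\theta<0$ we cannot compare the weights pointwise. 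Instead, since $u_M=\tilde P_Mu_M$, we use the kernel bound for $\tilde P_M$ in the weighted space: $\norm{|Mx|^\theta \tilde P_M f}_{L^r}\les\norm{|Mx|^\theta f}_{L^r}$ whenever $|\theta|$ lies in the range where $|x|^{\theta r}$ is locally integrable (or one uses $\jp{Mx}$-type weights). The factor $(N/M)^{-\theta}$ is then absorbed by the gain $(N/M)^s$, and $s>0$ makes the kernel $\sum_{M\ge N}(N/M)^{s}$ summable. Young's inequality on the dyadic group then gives the $\ell^\eta_N$ bound. The estimate \eqref{eq:homogeneous embedding Lp, d>N} is identical with $M<N$, using $s<0$ so that $(N/M)^{s}$ decays as $M\to0$.

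For \eqref{eq:weighted LP}, write $\jp x^{-1}u_{\ge1}=\sum_{N\ge1}\jp x^{-1}u_N$ and decompose $\jp x^{-1}u_N=\tilde P_N(\jp x^{-1}u_N)+[\jp x^{-1},\tilde P_N]u_N$. The commutator gains $N^{-1}$, since $\nabla\jp x^{-1}=O(\jp x^{-2})$, and it is again bounded by $\norm{\jp x^{-1}u_N}$ through the kernel tails. Almost-orthogonality then yields the $H^\theta$ norm as $\ell^2$ of $N^\theta\norm{\jp x^{-1}u_N}_{L^2}$; here $\theta>0$ handles the inhomogeneous low-frequency part.

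For \eqref{eq:PN l2 decouple}, use $\psi^{\ge R/N}=1-\psi^{\le R/N}$, so it suffices to control $N^\theta\norm{P_N(\psi^{\le R/N}u)}_{L^2}$ in $\ell^2_N$. Decompose $u=\sum_M u_M$. For $M\sim N$, use the trivial bound. For $M\ll N$ or $M\gg N$, the product $\psi^{\le R/N}u_M$ has frequency-support mismatch, and $\widehat{\psi^{\le R/N}}$ is concentrated at scale $N/R$. This gives decay of the form $\min(M/N,N/M)^{K}$ up to $R$-dependent constants. In addition, $\norm{\psi^{\le R/N}u_M}_{L^2}\les (R M/N)^{d/2}\norm{u_M}$ when $M\ll N$ (Bernstein on a ball of radius $R/N$). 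Since $|\theta|\ll1$, these geometric gains beat $N^\theta/M^\theta$, and Schur's test closes the estimate.

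The main obstacle is the negative-$\theta$ case in the first two estimates. There the singular weight $|x|^\theta$ near the origin must be transported through the projections, which needs a Muckenhoupt-type weighted bound for $\tilde P_M$ (valid for $-d/r<\theta r<d(r-1)$, say). I would first try to prove the projection bound directly from the Schwartz kernel, splitting into $|x|\lesssim M^{-1}$ and $|x|\gg M^{-1}$.
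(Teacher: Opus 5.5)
Your treatment of \eqref{eq:weighted LP} is the paper's: your commutator $[\jp x^{-1},\tilde P_N]u_N$ is exactly the remainder $(1-\tilde P_N)(\jp x^{-1}u_N)$ that the paper bounds. Your Schur-test argument for \eqref{eq:PN l2 decouple} is a valid alternative to the paper's route. The paper proves only the single-scale bound $\norm{P_1(\psi^{\ge R}u)}_{L^2}\les_R\norm{u}_{\dot H^{\pm 1/10}}$ and then upgrades $\ell^\infty_N$ to $\ell^2_N$ by real interpolation. One correction there: for $M\ll N$ there is no frequency-mismatch decay, since $\widehat{\psi^{\le R/N}}$ lives at scale $N/R$. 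What you need is the Bernstein factor $(RM/N)^{d/2}$, and it suffices because $\theta<d/2$.

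The genuine problem is in \eqref{eq:homogeneous embedding Lp, >N}--\eqref{eq:homogeneous embedding Lp, d>N}. The obstacle you single out does not exist, and the tools you propose would not prove the statement.

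First, for $\theta\ge0$ and $M\ge N$, the pointwise comparison $|Nx|^\theta\le|Mx|^\theta$ only gives
\[
N^{s-\theta}\norm{|Nx|^\theta u_M}_{L^r}\le (N/M)^{s-\theta}M^{s-\theta}\norm{|Mx|^\theta u_M}_{L^r}.
\]
This is summable over $M\ge N$ only when $s>\theta$. It fails for $s=\theta=\ep_d$, which is exactly the case used in \eqref{eq:weighted estimate on w}.

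Second, for $\theta<0$, weighted bounds $\norm{|Mx|^\theta\tilde P_Mf}_{L^r}\les\norm{|Mx|^\theta f}_{L^r}$ hold only for $\theta$ in a bounded, $r$-dependent range. The lemma asserts the estimate for all $\theta\in\R$ and $r\in[1,\infty]$.

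The missing observation is that the weight does not depend on the scale: $N^{s-\theta}|Nx|^\theta=N^s|x|^\theta$. Equivalently, $|Nx|^\theta=(N/M)^\theta|Mx|^\theta$ identically, for either sign of $\theta$, so no weight ever has to pass through a projection. Writing $u_{\ge N}=\sum_{K\ge1}u_{KN}$, Minkowski's inequality gives
\[
N^{s-\theta}\norm{|Nx|^\theta u_{\ge N}}_{L^r}\le\sum_{K\ge1}K^{-s}\,(KN)^{s-\theta}\norm{|KNx|^\theta u_{KN}}_{L^r}.
\]
Taking $\ell^\eta_N$ norms (Minkowski again, using shift invariance in $N$) yields the claim with constant $\sum_{K\ge1}K^{-s}<\infty$ for $s>0$. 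For \eqref{eq:homogeneous embedding Lp, d>N}, sum over dyadic $K<1$ instead, which converges since $s<0$. This is the paper's proof, and it is valid for every $\theta\in\R$ and $r\in[1,\infty]$.

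Your displayed inequality with the factor $(N/M)^s$ is in fact this identity; it holds with equality. However, the justification you give for it produces only $(N/M)^{s-\theta}$.
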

\begin{proof}
Since $\norm{(1-P_{\sim N})(\jp x^{-1}u_N)}_{H^\theta}\les\norm{\jp x^{-1}u_N}_{L^2}$ for $N\ge1$, \eqref{eq:weighted LP} holds.

For \eqref{eq:PN l2 decouple}, we observe that, by scaling $\norm{P_1(\psi^{\ge R}u)}_{L^2}\les_{R}\norm u_{\dot H^\theta}$ for $\theta=\pm\frac1{10}$,
\[
\norm{N^\theta \norm{P_N(\psi^{\ge R/N}u)}_{L^2}}_{\ell^\infty_N}\les_{R}\norm u_{\dot H^\theta}.
\]
A real interpolation then yields \eqref{eq:PN l2 decouple}.

We turn to \eqref{eq:homogeneous embedding Lp, >N}. By the triangle inequality, we have
\begin{align*}
\norm{N^{s-\theta}\norm{|Nx|^\theta u_{\ge N}}_{L^r}}_{\ell^\eta_N}&\le\norm{\norm{N^{s-\theta}\norm{|Nx|^\theta u_{KN}}_{L^r}}_{\ell^\eta_N}}_{\ell^1_{K\ge1}}
\\&\le\norm{K^{-s}\norm{M^{s-\theta}\norm{|Mx|^\theta u_M}_{L^r}}_{\ell^\eta_M}}_{\ell^1_{K\ge1}},
\end{align*}
where we substituted $KN=M$. Since $\sum_{K\ge1}K^{-s}=O_s(1)$, \eqref{eq:homogeneous embedding Lp, >N} follows. In the case $s<0$, the same estimate over $K\le1$ concludes \eqref{eq:homogeneous embedding Lp, d>N}.
\end{proof}
The next lemma shows a bilinear estimate on radial functions.
\begin{lem}\label{lem:bilinear}
Let $d\ge2$ and $a>0$.
We define a weight function and a bilinear form for radial functions $f,g\in H^1(\R^d)$
\[
\varphi^a(x)\coloneqq \max\{|x|-a,\,0\},\quad [f,g]_a
\coloneqq
\int_{\R^d}  \overline{f}\,\nabla g\cdot \nabla \varphi^a \, dx.
\]
Then, we have the estimate
\begin{equation}\label{eq:bilinear}
\left|[f,g]_a\right|
\lesssim
(a^{-\frac12}+1)
\|f\|_{B^{\frac12}_{2,1}(\R^d)}
\|g\|_{B^{\frac12}_{2,1}(\R^d)}.
\end{equation}
\end{lem}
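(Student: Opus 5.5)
We need to bound $[f,g]_a=\int \bar f\,\d_r g\,\one_{|x|>a}\,dx$, since $\na\varphi^a=\frac{x}{|x|}\one_{|x|>a}$. The plan is to decompose both functions dyadically in frequency, $f=\sum_M f_M$ and $g=\sum_N g_N$, and to prove the bilinear estimate for each pair of blocks,
\[
\left|\int_{|x|>a}\overline{f_M}\,\d_r g_N\,dx\right|\les (a^{-1/2}+1)\,M^{1/2}N^{1/2}\norm{f_M}_{L^2}\norm{g_N}_{L^2}.
\]
Here we use inhomogeneous blocks: $M,N\ge1$, with $P_{\le1}$ treated as the block $M=1$. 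Summing over $M,N$ then gives \eqref{eq:bilinear} directly, because the right-hand side factorizes into $\norm f_{B^{1/2}_{2,1}}\norm g_{B^{1/2}_{2,1}}$. So the whole proof reduces to this per-block bound, and no orthogonality is required.

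For the per-block bound, first integrate by parts in $r$ on the region $|x|>a$, moving half of the derivative so the two frequencies are treated symmetrically. The real part is easier: for $\mathrm{Re}\int_{|x|>a}\bar f\d_r g$ with $f=g$ we get $\frac12\int_{|x|>a}\d_r|g|^2=-\frac12\int_{|x|=a}|g|^2\,dS-\frac{d-1}2\int_{|x|>a}\frac{|g|^2}{|x|}dx$. This suggests two tools: a trace-type bound on spheres and a Hardy-type bound for $|x|^{-1}$. For the general bilinear term, estimate directly by the case $M\le N$ and the case $N\le M$.

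When $N\lesssim M$, Cauchy–Schwarz gives $\norm{f_M}_{L^2}\norm{\na g_N}_{L^2}\les N\norm{f_M}\norm{g_N}\le M^{1/2}N^{1/2}\norm{f_M}\norm{g_N}$. When $M\ll N$, integrate by parts to move $\d_r$ onto $\bar f_M$. This produces three terms. The first is $\int_{|x|>a}\d_r\bar f_M\, g_N$, which is bounded by $M\le M^{1/2}N^{1/2}$ times the $L^2$ norms. The second is the divergence term $(d-1)\int_{|x|>a}|x|^{-1}\bar f_M g_N$. The third is a boundary term $\int_{|x|=a}\bar f_M g_N\,dS$. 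For the divergence term, use $|x|^{-1}\le a^{-1}$ on the region, or a Hardy inequality: since $d\ge2$, $\norm{|x|^{-1/2}h}_{L^2}\les\norm h_{\dot H^{1/2}}$. This gives the bound $M^{1/2}N^{1/2}$ and in fact needs no dependence on $a$.

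The main obstacle is the boundary term. I would bound it by the radial trace inequality $\int_{|x|=a}|h|^2dS\les \norm h_{L^2}\norm{\na h}_{L^2}+a^{-1}\norm h_{L^2}^2$. For radial $h$ this follows from $\frac{d}{dr}(r^{d-1}|h|^2)$ integrated from $a$ to $\infty$; the $a^{-1}$ term arises from the derivative of $r^{d-1}$, or one can use the fundamental theorem of calculus with a cutoff on $[a,2a]$. Applied to the blocks, this gives
\[
\left|\int_{|x|=a}\bar f_M g_N\right|\les (M^{1/2}+a^{-1/2})(N^{1/2}+a^{-1/2})\norm{f_M}\norm{g_N}.
\]
Since $M,N\ge1$, this is at most $(1+a^{-1/2})^2M^{1/2}N^{1/2}$, which is off by a factor of $a^{-1/2}$ when $a\ll1$. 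To get only $a^{-1/2}+1$, I would avoid the second trace. Instead, write the boundary term as $\int_{|x|>a}\d_r(\bar f_M g_N)$ plus divergence terms, and split the derivative asymmetrically. Bound the factor $|f_M|$ on the sphere by the radial Strauss-type estimate $|x|^{(d-1)/2}|f_M|\les\norm{f_M}^{1/2}\norm{\na f_M}^{1/2}$. Then pair it with $\norm{g_N}_{L^2(|x|\sim a)}$ together with the Hardy bound, or, if $a\ge1$, simply use $a^{-1}\le1$. Checking that the powers of $a$ and of $M,N$ balance to $(a^{-1/2}+1)M^{1/2}N^{1/2}$ is the delicate bookkeeping step.
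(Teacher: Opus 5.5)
Your overall route is sound and genuinely different from the paper's, but as written it does not close. The gap is in the boundary term, which is the one place where you do not reach the stated constant. The trace inequality you use, $\int_{|x|=a}|h|^2\,dS\lesssim\|h\|_{L^2}\|\nabla h\|_{L^2}+a^{-1}\|h\|_{L^2}^2$, only gives $(1+a^{-1/2})^2\sim1+a^{-1}$. The ``asymmetric splitting'' you propose instead is never carried out.

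The missing observation is that the term coming from the derivative of $r^{d-1}$ has a favorable sign when you integrate outward, so no $a^{-1}$ loss occurs at all:
\[
a^{d-1}|h(a)|^2=-\int_a^\infty\Bigl((d-1)r^{d-2}|h|^2+2r^{d-1}\mathrm{Re}\bigl(\bar h\,\partial_r h\bigr)\Bigr)\,dr\le2\int_a^\infty r^{d-1}|h|\,|\partial_r h|\,dr\lesssim\|h\|_{L^2}\|\nabla h\|_{L^2}.
\]
Your own identity for $\frac12\int_{|x|>a}\partial_r|g|^2$ already shows this, since both terms on its right-hand side are nonpositive. This is exactly the Strauss estimate you quote at the end. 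Apply it to both radial blocks $f_M$, $g_N$, then use Cauchy--Schwarz on the sphere and Bernstein:
\[
\Bigl|\int_{|x|=a}\overline{f_M}\,g_N\,dS\Bigr|\lesssim\|f_M\|_{L^2}^{\frac12}\|\nabla f_M\|_{L^2}^{\frac12}\|g_N\|_{L^2}^{\frac12}\|\nabla g_N\|_{L^2}^{\frac12}\lesssim M^{\frac12}N^{\frac12}\|f_M\|_{L^2}\|g_N\|_{L^2}.
\]
This has no dependence on $a$, so the bookkeeping step disappears.

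With this fix, your per-block bound holds uniformly in $a$, and even for homogeneous blocks $M,N\in2^{\mathbb Z}$. You therefore obtain the scale-invariant estimate $|[f,g]_a|\lesssim\|f\|_{\dot B^{1/2}_{2,1}}\|g\|_{\dot B^{1/2}_{2,1}}$. This is stronger than the lemma and consistent with the scaling $[f_\lambda,g_\lambda]_a=\lambda[f,g]_{\lambda a}$ for $f_\lambda=\lambda^{d/2}f(\lambda\cdot)$.

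The paper argues differently:
\begin{itemize}
\item it symmetrizes, writing $[f,g]_a+\overline{[g,f]_a}=-\int\bar f g\,\Delta\varphi^a$;
\item it proves endpoint bounds on $L^\infty\times W^{1,1}$ and $W^{1,1}\times L^\infty$, the latter via $|f(a)|\lesssim a^{1-d}\|f\|_{W^{1,1}}$;
\item it interpolates bilinearly.
\end{itemize}
Its factor $a^{-1/2}$ comes only from bounding $(d-1)/|x|\le(d-1)/a$ on $\{|x|>a\}$, which your Hardy step avoids. The paper's proof is shorter, but it only sketches the identification of the bilinear interpolation between $L^\infty$ and $W^{1,1}$ with $B^{1/2}_{2,1}$. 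Your argument is fully explicit at the level of dyadic blocks and uses only Bernstein, Hardy, and the radial Strauss estimate.
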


\begin{proof}

Note that $\Delta\varphi^{a}$ consists of a function on $\{|x|>a\}$ of size $O(a^{-1})$
and a unit surface measure supported on the sphere $\{|x|=a\}$. Integrating by parts, we obtain
\begin{equation}\label{eq:int by parts}
\left|[f,g]_a+\ol{[g,f]_a}\right|=\left|\int_{\R^d}\na(\ol fg)\cdot\na\varphi^a dx\right|\les a^{-1}\norm{\ol fg}_{L^1}+a^{d-1}|f(a)|\cdot|g(a)|.
\end{equation}
Since $\norm{\nabla\varphi^{a}}_{L^\infty(\R^d)}\les1$, it follows that $[\cdot,\cdot]_a : L^{\infty}(\R^{d})\times W^{1,1}(\R^{d}) \to \C$ is $O(1)$-bounded. Moreover, by the radial Sobolev embedding $|f(a)|\lesssim a^{1-d}\|f\|_{W^{1,1}(\R^{d})}$ and \eqref{eq:int by parts}, $[\cdot,\cdot]_a : W^{1,1}(\R^{d})\times L^{\infty}(\R^{d}) \to \C$ is $O(a^{-1}+1)$-bounded. 
Interpolating between these two estimates yields \eqref{eq:bilinear}, finishing the proof.
\end{proof}
\subsection{The Schr\"odinger operator and the virial identity}
We denote the linear Schr\"odinger propagator by $e^{it\De}\phi$ for $\phi\in L^{2}(\R^{d})$, i.e.,
\[
e^{it\De}\phi=\mathcal{F}^{-1}(e^{-it\left|\xi\right|^{2}}\F\phi).
\]
For a function $f:\R\times\R^{d}\to\C$, we denote the retarded Duhamel operators by
\[
\mc D^+ f(t)\coloneqq-i\int_{-\infty}^{t}e^{i(t-s)\De}f(s)ds,\quad \mc D^- f(t)\coloneqq i\int_{t}^{\infty}e^{i(t-s)\De}f(s)ds.
\]
We also denote by $K_N(t,x,y)$ the integral kernel of the operator $P_N e^{it\Delta}$. It is standard that (see, e.g., \cite[Lemma~2.4]{KillipTaoVisan09radial-2D}), for $m\ge0$,
\begin{equation}\label{eq:kernel}
|K_N(t,x,y)|\les_m\begin{cases}
t^{-\frac d2},&|x-y|\sim|Nt|,
\\
N^d\langle N^2t\rangle^{-m}\langle N(x-y)\rangle^{-m},&\text{otherwise.}
\end{cases}
\end{equation}
\subsubsection*{Symmetry group} \eqref{eq:NLS} enjoys a rich family of symmetries, which play a fundamental role in the analysis of its dynamics. To streamline the presentation, we collect these symmetries and introduce a convenient group-action notation.

Let $t_0\in\R$, $x_0,\xi_0\in\R^d$, and $\la>0$ be parameters for symmetries.
\begin{align*}
u(t,x)&\mapsto u(t,x-x_0)&\quad\text{(space translation)}\\
u(t,x)&\mapsto u(t-t_0,x)&\quad\text{(time translation)}\\
u(t,x)&\mapsto \la^{\frac d2}u(\la^2 t,\la x)&\quad\text{($L^2$-critical scaling)}\\
u(t,x)&\mapsto e^{ix\cdot\xi_0-it|\xi_0|^2}u(t,x-2\xi_0t)&\quad\text{(Galilean boost)}\\
u(t,x)&\mapsto |t|^{-\frac d2}e^{\frac i{4t}|x|^2}u(-t^{-1},-t^{-1}x)&\quad\text{(pseudo-conformal symmetry)}
\end{align*}
Note that all above symmetries preserve the time-orientation, as well as the mass.

We denote by $\G$ the group generated by the symmetry transformations above. The group $\G$ acts both on the time axis $\R$ and on the function space $L^{\infty}L^{2}(\R\times\R^{d})$. For $\g\in\G$, we denote by $\g t$ the action of $\g$ on the time variable $t\in\R$, which is a linear fractional transformation. We also write $\g u$ for the action of $\g$ on $u$.
For instance, if $\g\in\G$ corresponds to the scaling and translation symmetry
\[
u(t,x)\mapsto \lambda^{\frac d2}u\bigl(\lambda^{2}(t-t_{0}),\,\lambda(x-x_{0})\bigr),
\]
then the associated actions can be represented as
\[
\g t=\lambda^{2}(t-t_{0}),
\quad
(\g u)(t,x)
=\lambda^{\frac d2}
u\bigl(\lambda^{2}(t-t_{0}),\,\lambda(x-x_{0})\bigr).
\]
The following transform $\g_1\in\G$ is of particular importance in our analysis:
\begin{equation}\label{eq:g1 def}
\g_1 t=\frac{t+1}{3-t},\quad
\g_1 u(\g_1^{-1}t,x)
=
\Bb{\frac{1+t}2}^{\frac d2}
e^{-i\frac{(1+t)}{16}|x|^2}u\Bb{t,\frac{1+t}2x}.
\end{equation}
\begin{rem}\label{rem:g1 def}
For $R>0$ and a function $u$ supported in a parabolic region
\[
\Gamma=\{(t,x):|x|\le R\sqrt t\}\subset[0,\infty)\times\R^d,
\]
the transformed function $\g_1 u$ is supported in the elliptic region
\[
\Gamma_1=\{(\g_1^{-1} t,x):\tfrac{1+t}2|x|\le R\sqrt{t}\}=\{(\tau,x):|x|^2\le R^2(1-\tfrac14(\tau-1)^2)\}.
\]
In particular, $\Gamma_1$ attains the largest diameter at time $1=\g_11$.
\end{rem}
For standard profile decomposition arguments, we denote by $\G_{\mathrm{aff}}$ the subgroup of $\G$ generated by spacetime translations, scalings, and Galilean transformations.

\subsection{Local smoothing estimates}
We use local smoothing effects of the Schr\"odinger
operator \cite{constantin-saut,sjolin,vega}. It is standard (see, e.g., \cite[Theorem 2.1]{kenig-ponce-vega}) that for $R>0$,
\begin{equation}\label{eq:local smoothing}
R^{-\frac12}\norm{\psi^{\le R}e^{it\De}\phi}_{L^2\dot H^{\frac12}(\R\times\R^d)}\les \norm{\phi}_{L^2(\R^d)}.
\end{equation}
The following is a local smoothing effect adapted to the self-similar scaling.
\begin{lem}
For $\phi\in L^2(\R^d)$ and $N\in2^\Z$, we have
\begin{equation}
\label{eq:weighted local smoothing}
N^{\frac12}\norm{t^{-\frac14}\jp{x/\sqrt t}^{-1}
e^{it\De}P_N\phi}_{L^2L^2((0,\infty)\times\R^d)}\les\norm\phi_{L^2(\R^d)}.
\end{equation}
\end{lem}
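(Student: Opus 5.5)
The plan is to reduce \eqref{eq:weighted local smoothing} to the scale-invariant local smoothing estimate \eqref{eq:local smoothing} by a dyadic decomposition in time. First, both sides are invariant under the $L^2$-critical scaling $\phi\mapsto\la^{\frac d2}\phi(\la\cdot)$ with $N\mapsto N/\la$. Indeed, the weight $t^{-\frac14}\jp{x/\sqrt t}^{-1}$ is parabolic, and the $L^2_{t,x}$ norm picks up exactly the factor compensating $N^{\frac12}$. So I will set $N=1$. Write $v=e^{it\De}P_1\phi$ and split $(0,\infty)=\bigcup_{T\in2^\Z}[T,2T)$. On each $[T,2T)$ the weight is comparable to $T^{-\frac14}\jp{x/\sqrt T}^{-1}$, and I must show
\[
\sum_{T\in2^\Z}T^{-\frac12}\norm{\jp{x/\sqrt T}^{-1}v}_{L^2L^2([T,2T)\times\R^d)}^2\les\norm\phi_{L^2}^2 .
\]

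Next, decompose space dyadically. For each $T$ I write $\jp{x/\sqrt T}^{-2}\les\sum_{R\ge\sqrt T}(\sqrt T/R)^2\one_{|x|\le R}$, where $R$ ranges over dyadic numbers. Since $v$ is frequency localized at $1$, the $\dot H^{\frac12}$ norm in \eqref{eq:local smoothing} controls the $L^2$ norm up to commutator errors from the cutoff $\psi^{\le R}$; these are harmless because $R\ge\sqrt T$ and the cutoff varies on scale $R$. Thus
\[
\norm{\psi^{\le R}v}_{L^2L^2(I\times\R^d)}^2\les R\,\norm{\phi}^2_{L^2}
\]
for any time interval $I$. Plugging this in naively gives $\sum_T T^{-\frac12}\sum_{R\ge\sqrt T}(T/R^2)R$, which does not sum over $T$. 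So the whole-time estimate must be replaced by its localized form: I need the contributions of different time intervals to be almost orthogonal in $T$.

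To get this orthogonality, I will use the kernel bound \eqref{eq:kernel} with $N=1$. At time $t\sim T$, the part of $v$ supported in $|x|\le R$ with $R\ll T$ comes mainly from data located at $|y|\sim T$, since frequency-$1$ waves travel at unit speed. The same data can only be near $|x|\lesssim R$ at times of order $|y|$. Concretely, for $T\ge1$ I split $\phi=\sum_{S}\psi^{S}\phi$ (plus the region $|y|\lesssim1$). Only the pieces with $S\sim T$, or $S\lesssim T$ for radii $R\gtrsim T$, contribute nontrivially; the others give rapidly decaying errors $\jp{T}^{-m}$ by the nonstationary kernel bound. The resulting sum is then bounded by $\sum_S\norm{\psi^S\phi}_{L^2}^2$, since for fixed $S$ the weights $T^{-\frac12}\cdot T R^{-1}$, with $T\sim S$ and $R\ge\sqrt T$, sum to $O(1)$. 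For $T\le1$ the frequency-$1$ localization makes $v$ essentially constant in time on $[0,1]$. In that case I use $\norm{v(t)}_{L^2}\le\norm\phi_{L^2}$ together with $\jp{x/\sqrt T}^{-1}\le1$, which gives $\sum_{T\le1}T^{-\frac12}\cdot T\les1$.

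The main obstacle is making this orthogonality rigorous, especially the region $R\gtrsim T$ where waves are not yet separated. I would first handle it more cleanly by an $L^2$ $TT^*$ argument. Setting $w(t,x)=t^{-\frac14}\jp{x/\sqrt t}^{-1}$, the goal becomes showing that the operator $F\mapsto\int P_1^2 e^{-is\De}(wF)(s)\,ds$ is bounded from $L^2_{t,x}$ to $L^2$. I would then estimate its $TT^*$ kernel $w(t)w(s)P_1^2e^{i(t-s)\De}$ by Schur's test in the time variables. For $|t-s|\gg\max(\sqrt t,\sqrt s)$ I would use dispersion on the weighted region. Otherwise I would use the trivial bound, and check that the operator norm of $w(t)P_1^2 e^{i(t-s)\De}w(s)$ is integrable, with $\int\|\cdot\|\,ds\les1$ uniformly in $t$.
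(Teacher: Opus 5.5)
Your first sketch is the paper's argument: dyadic pieces in $t$ and in the radius, data localized to $|y|\sim T$, \eqref{eq:local smoothing} applied there, and $\norm{\psi^{\sim T}\phi}_{L^2}^2$ summed over $T$ by finite overlap. (The paper's radius parameter is your $R/\sqrt T$.)

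What you call the main obstacle, the range $R\gtrsim T$, is the trivial part. It is the paper's regime $t\lesssim R^2$, and mass conservation alone handles it. Indeed $\norm{\psi^{\le R}v}_{L^2L^2([T,2T)\times\R^d)}^2\le T\norm\phi_{L^2}^2$, so this range contributes at most $\sum_{T\ge1}\sum_{R\gtrsim T}T^{-\frac12}\cdot\frac{T}{R^2}\cdot T\norm\phi_{L^2}^2\les\sum_{T\ge1}T^{-\frac12}\norm\phi_{L^2}^2$. This is the same estimate you already used for $T\le1$, and no orthogonality is needed. For $\sqrt T\le R\ll T$, your data localization plus the nonstationary bound from \eqref{eq:kernel} suffices. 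With this observation your first argument is complete.

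The $TT^*$ route you propose instead does not work. The bound $\sup_t\int\norm{w(t)P_1^2e^{i(t-s)\De}w(s)}_{L^2\to L^2}ds\les1$ is false. Take $t,s\in[T,2T]$ with $\sqrt T\ll|t-s|\ll T$. Let $g$ be a frequency-$1$ wave packet of width $\sqrt T$ at the origin, and set $f=e^{-i(t-s)\De}g$, a packet at distance $\sim|t-s|$ from the origin. There $w(s)\sim T^{\frac14}/|t-s|$, while $w(t)\sim T^{-\frac14}$ on the support of $g$. Hence the operator norm is $\gtrsim|t-s|^{-1}$, and $\int\norm{\cdot}ds\gtrsim\log T$.

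No weighted Schur test repairs this, because the positive kernel $|t-s|^{-1}$ on this range is unbounded on $L^2_t$ (test it on $\one_{[T,2T]}$). ``Dispersion on the weighted region'' is also unavailable: $\jp{x/\sqrt t}^{-1}\notin L^2_x$ for $d\ge2$, and in $d=1$ it only gives $|t-s|^{-\frac12}$.

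Taking operator norms pointwise in $(t,s)$ throws away exactly the orthogonality behind local smoothing. Packets that refocus after different time lags come from disjoint shells, and this is what the localization $\psi^{\sim T}\phi$ in your first sketch captures.
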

\begin{proof}
By scaling, we may assume $N=1$. We claim that, for $R\ge1$,
\begin{equation}
\label{eq:weighted local smoothing claim}
R^{-\frac12}\norm{t^{-\frac14}\psi^{\le R\sqrt t}
e^{it\De}P_1\phi}_{L^2L^2((0,\infty)\times\R^d)}\les\norm\phi_{L^2(\R^d)}.
\end{equation}
Once \eqref{eq:weighted local smoothing claim} is shown, summing over $R\in2^\N$ yields \eqref{eq:weighted local smoothing}.

To show \eqref{eq:weighted local smoothing claim}, we split the time interval into the regions $t\lesssim R^{2}$ and $t\gg R^{2}$. 
For $t\lesssim R^{2}$, the bound follows from the $L^{2}$-conservation. Thus, it suffices to consider the contribution from the region $t\gg R^{2}$, where genuine local smoothing is required. 

For each dyadic number $T\gg R^2$, applying \eqref{eq:local smoothing} on $[T,2T]$ yields
\begin{equation}\label{eq:l2 local smoothing, effective, prep}
(R\sqrt T)^{-\frac12}\norm{\psi^{\le R\sqrt T}e^{it\Delta}P_1(\psi^{\sim T}\phi)}_{L^2L^2([T,2T]\times\R^d)}
\lesssim
\|\psi^{\sim T}\phi\|_{L^2}.
\end{equation}
For the remainder, since $T\gg R\sqrt T$, the kernel bound \eqref{eq:kernel}
implies
\begin{equation}\label{eq:l2 local smoothing, away, prep}
\norm{\psi^{\le R\sqrt T}e^{it\Delta}P_1(\phi-\psi^{\sim T}\phi)}_{L^2L^2([T,2T]\times\R^d)}
\lesssim T^{-10}\|\phi\|_{L^2}.
\end{equation}
By \eqref{eq:l2 local smoothing, effective, prep} and \eqref{eq:l2 local smoothing, away, prep}, we obtain
\[
R^{-\frac12}T^{-\frac14}\norm{\psi^{\le R\sqrt T}e^{it\De}P_1\phi}_{L^2L^2([T,2T]\times\R^d)}\les \norm{\psi^{\sim T}\phi}_{L^2}+T^{-1}\norm\phi_{L^2}.
\]
Taking an $\ell^2$-summation over dyadic numbers $T\gg R^2$ concludes
\eqref{eq:weighted local smoothing claim}.
\end{proof}

\subsection{In/out decomposition and radial Strichartz estimates}

When working in the radial setting, it is advantageous to use the in/out decomposition for the Schr\"odinger operator. This decomposition, originally introduced in \cite{Tao04} and further refined in \cite{KillipTaoVisan09radial-2D,KillipVisanZhang08radial-3D+}, separates a radial wave into components propagating toward and away from the origin. 
A key benefit is that the associated integral kernels enjoy an additional decay. We briefly recall the in/out decomposition and its basic properties below; see \cite{KillipTaoVisan09radial-2D,KillipVisanZhang08radial-3D+} for precise definitions and \cite[Lemmas~4.1 and~4.2]{KillipVisanZhang08radial-3D+} for the proof.
\begin{prop}[In/out decomposition, \cite{KillipTaoVisan09radial-2D,KillipVisanZhang08radial-3D+}]
Let $d\ge2$. There exist linear operators $P^\pm$ that map a radial function $f\in L^2(\R^d)$ to radial functions $P^\pm f:\R^d\to\C$ satisfying the following:
\begin{itemize}
\item $P^{+}f+P^{-}f=f$ holds.
\item $P^\pm$ have scaling invariance: for $\la>0$, $P^\pm f(\la x)=(P^\pm f(\la\cdot))(x)$.
\item Denote by $K^\pm_1(t,x,y)$ the integral kernels for $P^\pm P_1e^{\mp it\De}$ and let $m\ge 1$. We have a pointwise kernel bound: for $t\ge0$ and $x,y\in\R^{d}$ such that $|x|\ge1$,
\begin{equation}\label{eq:inout bound}
{
|K^\pm_1(t,x,y)|\les_m\begin{cases}
|x|^{-\frac{d-1}{2}}\left|y\right|^{-\frac{d-1}{2}}t^{-\frac{1}{2}}, &(|y|-|x|)\sim t\gtrsim 1\\
|x|^{-\frac{d-1}{2}}\jp{y}^{-\frac{d-1}{2}}\jp{t+|x|-|y|}^{-m}, &\mathrm{otherwise}
\end{cases}.}
\end{equation}
\end{itemize}
\end{prop}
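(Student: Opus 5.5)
The proposition is quoted from \cite{KillipTaoVisan09radial-2D,KillipVisanZhang08radial-3D+}, so the plan is to recall their construction and check the stated properties rather than derive anything new.

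\emph{Construction.} For a radial $f$, write the Fourier transform as a Hankel transform,
\[
f(x)=\int_0^\infty J(k|x|)\,\widehat f(k)\,k^{d-1}dk,
\]
with $J(\rho)=\rho^{-\frac{d-2}{2}}J_{\frac{d-2}{2}}(\rho)$ (up to normalization). Split $J=\frac12(H^{(1)}+H^{(2)})$ using the Hankel functions $H^{(1)},H^{(2)}$. Define $P^{+}f$ (outgoing) with kernel built from $H^{(1)}$ and $P^{-}f$ (incoming) from $H^{(2)}$. To avoid the singularity of $H^{(j)}$ at the origin, cut off with $1-\chi(|x|)$ and put the low region entirely into one piece, so that $P^++P^-=\mathrm{Id}$ exactly.

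\emph{Identity and scaling.} The identity $P^++P^-=\mathrm{Id}$ is then immediate from $J=\frac12(H^{(1)}+H^{(2)})$. The scaling property holds because the kernels depend only on the product $k|x|$. This in turn requires the cutoff $\chi$ to be taken in the scale-invariant variable, i.e. the high-frequency region $k|x|\gtrsim1$, which is how the cited works set it up.

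\emph{Kernel bound.} After localizing with $P_1$, we have $k\sim1$, and for $|x|\ge1$ we are in the region $k|x|\gtrsim1$. There the asymptotics $H^{(1)}(\rho)=\rho^{-\frac{d-1}{2}}e^{i\rho}a(\rho)$, with $a$ a symbol of order $0$, give
\[
K^+_1(t,x,y)=\int e^{ik|x|}a(k|x|)\,|x|^{-\frac{d-1}{2}}\,e^{-itk^2}\,J(k|y|)\,\psi(k)\,k^{\frac{d-1}{2}}\,dk .
\]
Expand $J(k|y|)$ into $e^{\pm ik|y|}$ pieces, each with the factor $\jp{y}^{-\frac{d-1}{2}}$; for $|y|\lesssim1$, $J$ is smooth and bounded instead. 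The phase is $k(|x|\mp|y|)-tk^2$ with $k\sim1$.
\begin{itemize}
\item A stationary point exists only when $|y|-|x|\sim t$ (choosing the $e^{-ik|y|}$ term; for the kernel of $P^+e^{-it\De}$ the sign conventions match). Stationary phase then gives the factor $t^{-1/2}$, together with $|x|^{-\frac{d-1}2}|y|^{-\frac{d-1}2}$.
\item Otherwise the phase derivative is $\gtrsim\jp{t+|x|-|y|}$ (or $\gtrsim\jp{t+|x|+|y|}$ for the other term). Repeated integration by parts in $k$, using the symbol bounds on $a$ and $\psi$, yields the decay $\jp{t+|x|-|y|}^{-m}$.
\end{itemize}
The bound for $P^-$ follows by complex conjugation.

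The main obstacle is the region $|y|\lesssim1$ or $k|y|\lesssim1$, where the Hankel asymptotics fail. I would handle it by Taylor/Bessel bounds on $J$ and integration by parts using only the $e^{ik|x|-itk^2}$ phase, whose derivative $|x|-2tk\ne0$ unless $|x|\sim t$. When $|x|\sim t$ we have $|y|-|x|\sim t$ failing while $\jp{t+|x|-|y|}\sim t$; this is the delicate case, and there I would defer to \cite[Lemmas~4.1 and~4.2]{KillipVisanZhang08radial-3D+}.
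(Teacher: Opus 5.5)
Your plan matches what the paper relies on: the paper gives no argument and cites \cite[Lemmas~4.1 and~4.2]{KillipVisanZhang08radial-3D+}, where $P^\pm$ are half-Hankel transforms and the kernel bound comes from stationary phase in $k$. However, your phase has a sign error that breaks the case analysis. The symbol of $e^{-it\De}$ is $e^{+it|\xi|^2}$ for any Fourier normalization, so the kernel of $P^+P_1e^{-it\De}$ carries $e^{+itk^2}$. The formula you wrote, with $e^{-itk^2}$, is the kernel of $P^+P_1e^{+it\De}$.

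With your sign, the phases $k(|x|\mp|y|)-tk^2$ are stationary only when $|x|-|y|=2tk$ or $|x|+|y|=2tk$, never when $|y|-|x|\sim t$. So your first bullet does not follow from your own formula. Worse, the regime you call delicate ($|y|\lesssim1$, $|x|\sim t\gg1$) is then a genuine nondegenerate stationary point ($\phi''=-2t$, with $J(k|y|)\approx J(0)\neq0$). There the kernel has size about $t^{-1/2}|x|^{-\frac{d-1}2}$, which is the outgoing wave leaving the origin under forward evolution. That violates the asserted bound $|x|^{-\frac{d-1}2}\jp{t+|x|-|y|}^{-m}\sim|x|^{-\frac{d-1}2}t^{-m}$. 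Deferring this case to the literature cannot rescue it, because for the operator you wrote down the estimate is false.

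With the correct sign the argument closes and there is no exceptional case. The phases are $k(|x|\mp|y|)+tk^2$, with $k$-derivatives $|x|\mp|y|+2tk$.
\begin{itemize}
\item The $e^{+ik|y|}$ term has derivative at least $|x|+|y|+2tk$, so it is never stationary.
\item The $e^{-ik|y|}$ term is stationary only when $|y|-|x|=2tk\sim t$. There van der Corput (second derivative $2t$) gives $t^{-1/2}|x|^{-\frac{d-1}2}|y|^{-\frac{d-1}2}$.
\item Away from $|y|-|x|\sim t$ you have $\bigl||x|-|y|+2tk\bigr|\gtrsim t+\bigl||x|-|y|\bigr|$. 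Integrating by parts, using $|\phi''/\phi'|\lesssim1$ and the symbol bounds on the Hankel amplitudes, gives $\jp{t+|x|-|y|}^{-m}$.
\item In the region $|y|\lesssim1$, $J(k|y|)$ is smooth in $k$ with bounded derivatives. The phase $k|x|+tk^2$ has derivative $|x|+2tk\gtrsim|x|+t\ge1$ on the support of the $P_1$ multiplier, where $k\ge\frac14$. This gives $\jp{t+|x|}^{-m}\sim\jp{t+|x|-|y|}^{-m}$ at once, so this is the easiest case, not the hardest.
\end{itemize}

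A smaller correction on the construction: the cited works define $P^\pm$ as the $\frac12H^{(1)}$ and $\frac12H^{(2)}$ transforms with no cutoff, so $P^++P^-=\mathrm{Id}$ and scaling invariance are immediate. No cutoff is needed, since the bound is only claimed for $|x|\ge1$, where $k|x|\ge\frac14$ on the support of the $P_1$ multiplier. The paper handles the origin separately through the $\psi^{<1/N}$ term in $\P^\pm_N$. Your cutoff in $k|x|$ is harmless, since it only shifts a piece living at $|x|\sim1$ between $P^+$ and $P^-$, but it is not how the sources set things up.
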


We combine $P^\pm$ with the frequency localization $P_N$ and the spatial cutoff $\psi^{\ge 1/N}$ to avoid singular behavior at the origin $x=0$. For $f\in L^2(\R^d)$ and $N>0$, we denote
\[
\P_N^\pm f\coloneqq \tfrac12 P_{\sim N}(\psi^{<1/N}P_Nf)+P_{\sim N}(\psi^{\ge1/N}P^\pm P_Nf).
\]
By \eqref{eq:kernel} and \eqref{eq:inout bound}, the integral kernel $\mathbb K^\pm_1(t,x,y)$ for $\P_1^\pm e^{\mp it\De}$ satisfies
\begin{equation}\label{eq:inout bound K}
|\mathbb K^\pm_1(t,x,y)|\les_m\begin{cases}
\jp x^{-\frac{d-1}{2}}\left|y\right|^{-\frac{d-1}{2}}t^{-\frac{1}{2}}, &(|y|-|x|)\sim t\gtrsim 1\\
\jp x^{-\frac{d-1}{2}}\jp{y}^{-\frac{d-1}{2}}\jp{t+|x|-|y|}^{-m}, &\mathrm{otherwise}
\end{cases}.
\end{equation}
In particular, the second case of \eqref{eq:inout bound K} yields, for $r\in[1,\infty]$ and $\theta\in\R$,
\begin{equation}\label{eq:Ppm is Aut}
\norm{\jp{Nx}^{-\theta} \P_N^{\pm} f}_{L^r(\R^{d})}
\lesssim_\theta
\norm{\jp{Nx}^{-\theta} f}_{L^r(\R^{d})}.
\end{equation}

We use a radial improvement of Strichartz estimate by Shao \cite{Shao2009} and Guo--Wang \cite{GuoWang2014improved} for $d\ge2$: For $N\in2^\Z$ and radial $\phi\in L^2(\R^d)$, we have
\begin{equation}\label{eq:L2 Shao}
\norm{P_N e^{it\De}\phi}_{L^2L^r(\R\times\R^d)}\les_r N^{\frac d2-1-\frac dr}\norm{\phi}_{L^2(\R^d)},\quad r>r_d=\tfrac{4d-2}{2d-3}.
\end{equation}
By the radial Sobolev inequality, \eqref{eq:L2 Shao} yields
\begin{equation}\label{eq:weighted shao}
\norm{|x|^{\theta}P_Ne^{it\De}\phi}_{L^2L^{2_1}(\R\times\R^d)}\les_{\theta} N^{-\theta}\norm\phi_{L^2(\R^d)},\quad \theta<(d-1)\bb{\tfrac1{r_d}-\tfrac1{2_1}}.
\end{equation}
For convenience, we often use a fixed instance of such $\theta$:
\begin{equation}\label{eq:weighted shao def}
\ep_d\coloneqq\frac{2d-3}{4d}.
\end{equation}
Interpolating between \eqref{eq:weighted shao} and the $L^\infty L^2$-norm bound yields
\begin{equation}\label{eq:weighted shao Lq}
\norm{|x|^{\theta}P_Ne^{it\De}\phi}_{L^2L^{2_1}\cap L^pL^p(\R\times\R^d)}\les N^{-\theta}\norm\phi_{L^2(\R^d)},\quad 0\le\theta\ll1.
\end{equation}
\subsection{Carleman inequalities}
We record a Carleman inequality for the Schr\"odinger operator on
$\mathbb R^d$ due to Ionescu and Kenig \cite{IonescuKenig04Carleman}, which
extends earlier Carleman-type estimates developed in
\cite{KenigPonceVega03unique,KenigSogge88unique}. We state below a particular case of the inequality adapted to endpoint Strichartz norms in dimensions $d\ge 3$:
\begin{prop}[{\cite[Theorem 2.1]{IonescuKenig04Carleman}}]\label{prop:Carleman}
Let $d\ge 3$. Denote $x=(x_1,\ldots,x_d)\in\R^d$. Suppose that $\beta\ge 0$, $-\infty<t_0<t_1<\infty$. For $f\in L^2L^{2_{-1}}([t_0,t_1]\times\R^d)$, let $u\in C^0L^2([t_0,t_1]\times\R^d)$ be an inhomogeneous solution to $i\d_t u+\De u=f$.
We have 
\begin{equation}\label{eq:Carleman}
\norm{e^{\be x_1}u}_{C^0L^2\cap L^2L^{2_1}}\les_d \norm{e^{\be x_1}f}_{L^2L^{2_{-1}}}+\max_{t\in\{t_0,t_1\}}\norm{e^{\be x_1}u(t)}_{L^2(\R^d)}.
\end{equation}
\end{prop}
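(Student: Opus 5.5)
This is the Ionescu--Kenig Carleman inequality, which the paper quotes from \cite{IonescuKenig04Carleman}. If I had to reprove it, I would follow the standard conjugation strategy.

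\emph{Reduction.} Set $v\coloneqq e^{\be x_1}u$ and $F\coloneqq e^{\be x_1}f$. Then $v$ solves the conjugated equation
\[
i\d_t v+\De v-2\be\d_1 v+\be^2 v=F,
\]
and \eqref{eq:Carleman} becomes a Strichartz estimate for this non-unitary flow, uniform in $\be\ge0$. The modulation $e^{i\be^2t}$ is unitary, so the zeroth-order term $\be^2 v$ can be removed. The rescaling $x\mapsto \be x$, $t\mapsto\be^2t$ preserves both sides of the endpoint norms $C^0L^2$, $L^2L^{2_{\pm1}}$. Hence it suffices to treat $\be=1$, and the case $\be=0$ is the classical endpoint Strichartz estimate of Keel--Tao.

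\emph{Frequency splitting.} In Fourier variables the spatial generator is $e^{-it|\xi|^2}e^{2t\xi_1}$. It damps forward in time on $\{\xi_1<0\}$ and damps backward in time on $\{\xi_1>0\}$. I would use a smooth partition $1=\chi_-(\xi_1)+\chi_0(\xi_1)+\chi_+(\xi_1)$ adapted to $\xi_1\lesssim -1$, $|\xi_1|\lesssim 1$ and $\xi_1\gtrsim1$.
\begin{itemize}
\item On $\chi_-$, write $v$ by Duhamel from $t_0$: the free part $e^{t(i\De-2\d_1)}\chi_-v(t_0)$ and the retarded integral of $F$.
\item On $\chi_+$, do the same from $t_1$ going backward.
\item On $\chi_0$, the amplification factor is at most $e^{O(1)|t-s|}$, which is not uniform in the interval length. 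Here I would instead use the time-Fourier representation below.
\end{itemize}
This splitting is exactly why the boundary term involves both endpoints $t_0$ and $t_1$.

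\emph{Dispersive bounds.} For the damped semigroups $S_\mp(t)=e^{t(i\De\mp 2\d_1)}\chi_\mp$ with $t\ge0$, I would prove energy bounds $\norm{S_\mp(t)}_{L^2\to L^2}\le1$. I would also prove dispersive bounds $\norm{S_\mp(t)}_{L^1\to L^\infty}\lesssim |t|^{-d/2}$. These follow because the factor $e^{-2t|\xi_1|}$ only improves stationary phase; concretely, the kernel is the Gaussian $e^{it\De}$ kernel convolved in $x_1$ with a harmless damped profile. Keel--Tao's abstract theorem then gives the homogeneous endpoint estimate for $d\ge3$. It also gives the retarded inhomogeneous estimate $L^2L^{2_{-1}}\to C^0L^2\cap L^2L^{2_1}$, since the retarded Duhamel operator only involves $S(t-s)$ with $t>s$, where the damping holds.

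\emph{Main obstacle.} I expect the band $|\xi_1|\lesssim1$ to be the hardest step, since there is no damping there. For this piece I would extend $F$ by zero and take the Fourier transform in $t$. The solution operator is then the multiplier
\[
m(\tau,\xi)=\bigl(\tau+|\xi|^2-2i\xi_1\bigr)^{-1},
\]
whose singular set $\{\xi_1=0,\ \tau=-|\xi|^2\}$ has codimension two. Following Kenig--Ruiz--Sogge, I would aim for a uniform $L^2L^{2_{-1}}\to L^2L^{2_1}$ bound for $m$. To get it, I would decompose dyadically in the distance to $\{\xi_1=0\}$ and in $|\xi|$, and on each piece interpolate an $L^2$ bound against a kernel decay bound obtained by integrating out $\tau$. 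The step I am least sure of is controlling the dyadic sum at the endpoint exponent $2_1$.

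Once the multiplier bound is in hand, I would convert it to an estimate for the actual solution on $[t_0,t_1]$. The difference between $\chi_0 v$ and the multiplier solution solves the homogeneous equation. Its size is then controlled by the data at $t_0$ and $t_1$ through the homogeneous estimates already established, using that the growth factor $e^{2t\xi_1}$ is $O(1)$-tame over the band once combined with a solution representation anchored at whichever endpoint damps it.
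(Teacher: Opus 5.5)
The paper does not prove this statement; it quotes Ionescu--Kenig. Measured against the standard argument for such weighted estimates, your sketch has a genuine gap. It sits exactly at the step you flag, and the missing idea is that the band $|\xi_1|\lesssim1$ is not an obstacle at all.

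It only looks like one because you used a smooth three-piece partition. Split instead \emph{sharply} at $\xi_1=0$ with $\Pi_\pm=\mathbf 1_{\{\pm\xi_1>0\}}(D)$. These are Riesz projections (Hilbert transform in $x_1$), so they are bounded on $L^q(\R^d)$ for $1<q<\infty$, hence on $L^2$ and $L^{2_1}$ since $d\ge3$. They also commute with the conjugated flow. On the range of $\Pi_-$ the forward multiplier is $e^{-it|\xi|^2}e^{2t\xi_1}=e^{-it|\xi|^2}e^{-2t|\xi_1|}$. It has modulus at most $1$ for every $t\ge0$, including arbitrarily close to $\xi_1=0$. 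Symmetrically, $\Pi_+v$ is propagated backward from $t_1$.

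So no frequency region needs separate treatment, and the Kenig--Ruiz--Sogge type bound for $m(\tau,\xi)$ that you leave unproven is never needed. It would not be easier anyway: integrating out $\tau$ in $m$ returns exactly the sign-split forward/backward damped Duhamel operators, so that bound is equivalent to the half-space estimates. Your closing gluing step (``anchored at whichever endpoint damps it'') \emph{is} this sign split. With it, the multiplier detour is redundant; without it, the gluing fails for the $e^{O(1)|t-s|}$ reason you gave yourself.

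A second step also needs repair. Keel--Tao's abstract theorem gives the retarded estimate for $\int_{s<t}U(t)U(s)^*F(s)\,ds$. A damped semigroup $S(t-s)$ is not of this form: $S(t)S(s)^*$ carries $e^{2(t+s)\xi_1}$, not $e^{2(t-s)\xi_1}$. Christ--Kiselev is also unavailable at the $L^2_t$ endpoint.

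A clean route is subordination plus Galilean invariance. Write $|D_1|$ for the multiplier $|\xi_1|$, let $e^{\pm izx_1}$ act as multiplication operators, and average against the Cauchy density:
\[
e^{i\sigma\De}e^{-2\sigma|D_1|}=\int_{\R}\frac{dz}{\pi(1+z^2)}\,e^{i\sigma z^2}\,e^{-izx_1}\,e^{i\sigma\De}\,e^{izx_1},\qquad \sigma\ge0.
\]
This holds because the integrand has symbol $e^{-i\sigma|\xi+ze_1|^2+i\sigma z^2}=e^{-i\sigma|\xi|^2}e^{-2i\sigma z\xi_1}$, whose $z$-average is $e^{-i\sigma|\xi|^2}e^{-2\sigma|\xi_1|}$.

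Hence $\int_{t_0}^t e^{i(t-s)\De}e^{-2(t-s)|D_1|}F(s)\,ds$ is a $z$-average of $e^{itz^2}e^{-izx_1}$ applied to free retarded Duhamel integrals of $e^{-isz^2}e^{izx_1}F(s)$. That input has the same $L^2L^{2_{-1}}$ norm as $F$. The free endpoint Strichartz estimates and Minkowski's inequality then give the homogeneous and retarded bounds, uniformly in the interval length, and $\Pi_\pm$ are applied at the end.

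This also fixes your dispersive justification. With the smooth cutoff $\chi_-$, the $x_1$-profile of $e^{2t\xi_1}\chi_-(\xi_1)$ has $L^1$ norm growing like $\log(1/t)$ as $t\to0$, so it is not a harmless profile. Without the cutoff, the profile is exactly the Poisson kernel, which is a probability density.
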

We also recall the following unique continuation result, shown in \cite{IonescuKenig04Carleman} as a consequence of the Carleman inequality developed in there:
\begin{prop}\label{prop:Carleman uniqueness}
Let $b\in\R$. Let $u$ be a solution to \eqref{eq:NLS} of lifespan $I\supset[0,1]$. If both $u(0)$ and $u(1)$ are supported in $\{x\in\R^d:x_1\le b\}$, then $u=0$.
\end{prop}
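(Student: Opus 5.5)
The plan is to derive the statement from the Carleman inequality \eqref{eq:Carleman} by treating the nonlinearity as a linear potential. I will do this for $d\ge3$ first and comment on $d=1,2$ at the end. Set $V\coloneqq|u|^{\frac4d}$, so that $i\d_tu+\De u=f$ with $f=-Vu$. Since $u\in C^0L^2\cap L^p_{t,x}([0,1]\times\R^d)$ with $p=\frac{2d+4}d$, we have $V\in L^{\frac{d+2}2}_{t,x}([0,1]\times\R^d)$. Two steps need care: absorbing the potential term into the left-hand side of \eqref{eq:Carleman}, and justifying that the weighted norms are finite in the first place.

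\textbf{Absorbing the potential.} I would decompose $V=V_1+V_2$, where $V_2=V\one_{\{V\le K\}\cap\{|x|\le K\}}$ is bounded with compact support, and $V_1$ is small in a mixed norm $L^{q}_tL^{s}_x$ dual to the pair $(2,2_1)\to(2,2_{-1})$. Concretely, by H\"older I need $\norm{e^{\be x_1}V_1u}_{L^2L^{2_{-1}}}\le\norm{V_1}_{X}\norm{e^{\be x_1}u}_{L^2L^{2_1}\cap C^0L^2}$ for a suitable space $X$ containing $L^{\frac{d+2}2}_{t,x}$ on short intervals.

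If $L^{\frac{d+2}2}_{t,x}$ does not directly give such a $V_1$, I would instead partition $[0,1]$ into finitely many subintervals on which $\norm{V}_{L^{\frac{d+2}2}_{t,x}}$ is small, and use the Ionescu--Kenig inequality with a general (non-endpoint) Strichartz pair, which their Theorem~2.1 allows. On each subinterval the term $V_1u$ is then absorbed into the left-hand side. The bounded part contributes $\norm{e^{\be x_1}V_2u}_{L^2L^{2_{-1}}}\les_K\norm{e^{\be x_1}u}_{L^2L^2(|x|\le K)}$, which I would handle by restricting the weight to $x_1\le K$ or by a Gronwall-type iteration in short time steps.

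\textbf{Qualitative finiteness.} \eqref{eq:Carleman} presupposes that $e^{\be x_1}u\in C^0L^2$, and this is not known a priori. I would run the argument first with the truncated weights $e^{\be\min(x_1,A)}$, or more precisely with a smoothed, convex-in-$x_1$ version of such a weight; the Ionescu--Kenig proof tolerates such weights. I would then let $A\to\infty$ using monotone convergence, since the boundary data $u(0),u(1)$ are supported in $\{x_1\le b\}$ and their weighted norms are bounded uniformly in $A$. I expect this a priori step, uniform in $A$ and compatible with the potential absorption, to be the main technical obstacle.

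\textbf{Conclusion.} Granting the above, we obtain
\[
e^{\be(b+1)}\norm{u}_{L^2L^{2_1}([0,1]\times\{x_1\ge b+1\})}\les\norm{e^{\be x_1}u}_{L^2L^{2_1}}\les e^{\be b}\bb{\norm{u(0)}_{L^2}+\norm{u(1)}_{L^2}}.
\]
Letting $\be\to\infty$ shows that $u$ vanishes on $\{x_1\ge b+1\}$ for a.e.\ $t\in[0,1]$. To conclude $u=0$, I would iterate: apply the same argument to $u$ with $b$ replaced by smaller values, along a subinterval of times where the support condition now holds for a.e.\ endpoints. Alternatively, I would combine this with an approximation argument and the standard fact that a compactly supported (in a half-space) solution of the linear equation with such a potential vanishes, which pushes the support to $x_1\le-\infty$ and gives $u=0$.

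For $d=1,2$, the endpoint pair is unavailable. Here I would invoke the general-exponent form of the Ionescu--Kenig Carleman estimate with a non-endpoint admissible pair, for which $V\in L^{\frac{d+2}2}_{t,x}$ is still dual-admissible, and repeat the same argument.
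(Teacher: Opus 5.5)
You are trying to rebuild the Ionescu--Kenig uniqueness theorem from \eqref{eq:Carleman}. The paper does not do this. It applies \cite[Theorem 2.5]{IonescuKenig04Carleman} directly, with $u_1=u$, $u_2=0$, $F(z)=-|z|^{\frac4d}z$ and $G=|\na F|\sim|u|^{\frac4d}$. This $G$ lies in $L^{\frac{d+2}2}_{t,x}([0,1]\times\R^d)$ because $u\in L^p_{t,x}$, and that theorem covers every $d\ge1$. As written, your reconstruction has three gaps.

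The first gap is absorption by partitioning $[0,1]$ in time. On a subinterval $[t_j,t_{j+1}]$, the Carleman inequality involves boundary terms $\norm{e^{\be x_1}u(t_j)}_{L^2}$ at interior times. Nothing is known about these, not even finiteness, let alone a bound $\les e^{\be b}$, so chaining the subinterval estimates is circular. The same objection applies to your Gronwall iteration for $V_2$. For the endpoint pair, the potential must be measured in $L^\infty_tL^{d/2}_x$, where $\norm{|u(t)|^{\frac4d}}_{L^{d/2}}=M(u)^{\frac2d}$ for every $t$, so it is small on no time interval.

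Smallness has to come from space instead: $|u|^{\frac4d}$ is small in $L^{\frac{d+2}2}_{t,x}([0,1]\times\{x_1>R\})$ for $R\gg1$, and one applies the inequality to $\chi(x_1-R)u$. But then the cutoff commutator (or, in your splitting, the bounded part $V_2$) contributes a term of size $e^{\be(R+1)}$ (resp.\ $e^{\be K}$). Here $R$ and $K$ are dictated by $u$, not by $b$. Your displayed inequality drops this term. What you actually obtain is vanishing on $\{x_1>R+1\}$, and reaching $\{x_1>b\}$ needs a further sweep through thin slabs on which the potential is uniformly small.

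The second gap, and the decisive one, is the last step. Once you know $\supp u(t)\subset\{x_1\le b\}$ for all $t\in[0,1]$, the support information at every pair of times is still only $\{x_1\le b\}$. So ``replacing $b$ by smaller values'' has no input. For $c<b$, the boundary terms are of size $e^{\be b}$ while the left-hand side on $\{x_1>c\}$ is only $\gtrsim e^{\be c}$, and letting $\be\to\infty$ gives nothing. A linear-weight estimate like \eqref{eq:Carleman} cannot see past the support of the boundary data. Going from vanishing on $[0,1]\times\{x_1>b\}$ to $u\equiv0$ needs a genuine unique continuation argument across the hyperplane $\{x_1=b\}$, for Schr\"odinger operators with potentials in $L^{\frac{d+2}2}_{t,x}$. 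The ``standard fact'' you invoke is essentially the proposition itself.

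The third gap is the a priori finiteness of $e^{\be x_1}u$, which you leave open. When the paper uses \eqref{eq:Carleman} in the proof of Proposition~\ref{prop:no self similar AP}, that finiteness is supplied beforehand by Lemma~\ref{lem:exp bound}. Nothing of the kind is available here, and your claim that the Ionescu--Kenig proof tolerates truncated weights is unverified.

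The fix is to invoke the Ionescu--Kenig uniqueness theorem directly and check its hypotheses as above.
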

\begin{proof}
This follows directly from \cite[Theorem 2.5]{IonescuKenig04Carleman} with $u_1=u$, $u_2=0$, $p_1=p_2=q_1=q_2=\frac{d}2+1$, $V=0$, $F(z)=-|z|^{\frac4d}z$, and $G=|\na F|$.
\end{proof}

\section{Nonexistence of self-similar blow-up rate solutions}

In this section, we prove Theorem~\ref{thm:no self similar} by contradiction. Assuming that Theorem~\ref{thm:no self similar} fails, in Section~\ref{subsec:reduction to an AP case}, we extract an almost periodic
solution modulo the self-similar scaling. This reduction relies on a concentration-compactness argument that is rather a standard tool in the global theory of critical dispersive equations. In Section~\ref{subsec:nonexistence of AP}, we rule out such solutions. This part constitutes the core of our analysis.
\subsection{Reduction to almost periodic solutions modulo self-similar scaling}\label{subsec:reduction to an AP case}
We first introduce the notion of almost periodicity modulo self-similar scaling.

\begin{defn}[Almost periodic modulo self-similar scaling]\label{defn:self similar AP}
A solution $u$ to \eqref{eq:NLS} with lifespan $(0,\infty)$ is said to be \emph{almost periodic modulo self-similar scaling} if the set $\{t^{\frac d4}u(t,t^{\frac12} \cdot):t>0\}$ is precompact in $L^2(\R^d)$.
\end{defn}
\begin{prop}[Reduction to a solution almost periodic modulo self-similar scaling]\label{prop:main for AP}
Assume Theorem~\ref{thm:no self similar} fails. Then, there exists a solution $u$ of lifespan $(0,\infty)$ that is almost periodic modulo self-similar scaling.
\end{prop}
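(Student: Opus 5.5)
The proof is a concentration-compactness argument in the style of Kenig--Merle, carried out in self-similar variables. I will rescale a counterexample at a sequence of times approaching $T$, pass to a limit, and choose the counterexample of minimal mass so that the limit is a single profile that is precompact modulo self-similar scaling.

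\textbf{Step 1: normalization.} Suppose Theorem~\ref{thm:no self similar} fails. Then there are a solution $u$ blowing up at $T$, an $\ep>0$, an $\eta>0$ and a time $t_1<T$ such that, for all $t\in[t_1,T)$,
\[
\sup_{x_0}\int_{x_0+[-\ep\sqrt{T-t},\ep\sqrt{T-t}]^d}|u(t,x)|^2\,dx\le M(Q)-\eta .
\]
For $t_n\to T$ set $\la_n=\sqrt{T-t_n}$ and
\[
w_n(s,y)=\la_n^{d/2}\,\overline{u}(T-\la_n^2 s,\la_n y).
\]
This combines time reversal, translation and scaling, so $w_n$ solves \eqref{eq:NLS} on $s\in(0,S_n)$ with $S_n=(T-t_1)/\la_n^2\to\infty$. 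Each $w_n$ blows up at $s=0$, and every cube of side $2\ep\sqrt s$ carries mass at most $M(Q)-\eta$ at every time $s$.

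\textbf{Step 2: minimal counterexample.} Call a mass $m$ bad if such a counterexample exists with $M(u)=m$, and let $M_c$ be the infimum of the bad masses. By the threshold scattering results \cite{Dodson15focusing}, every solution of mass below $M(Q)$ is global and scatters, so $M_c\ge M(Q)$. A diagonal argument on near-minimal counterexamples lets me work with normalized solutions $w_n$ of mass tending to $M_c$, all satisfying the uniform small-cube bound.

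\textbf{Step 3: profile decomposition.} Apply the linear profile decomposition modulo $\G_{\aff}$ to $w_n(1)$, and sort the profiles by their parameters.
\begin{itemize}
\item Profiles whose scale is $o(1)$, or whose translation or Galilean parameters diverge, correspond at the times where they matter to mass localized in a cube of side $o(\sqrt s)$. The small-cube bound then gives each of them mass at most $M(Q)-\eta$, so they are global and scattering.
\item Profiles whose scale tends to $\infty$, or whose time parameter $\la_j^{-2}t^j_n$ diverges, are essentially linear on the relevant window. They too have finite scattering size.
\item If some profile at scale $\sim1$ carries mass strictly less than $M_c$, minimality of $M_c$ again forces all nonlinear profiles to scatter.
\end{itemize}
In all these cases the stability theory gives uniform Strichartz bounds for $w_n$ on $(\delta,S)$. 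Letting $\delta\to0$ along a subsequence contradicts the blow-up of $w_n$ at $s=0$. Hence there is exactly one profile, of mass $M_c$, at scale $1$ with bounded parameters, and the remainder tends to zero in $L^2$. After a symmetry, $w_n(1)\to w(1)$ strongly in $L^2$.

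\textbf{Step 4: the limit solution.} Let $w$ be the solution with data $w(1)$. Stability shows that $w$ exists on $(0,\infty)$, since $S_n\to\infty$. It cannot be extended past $0$, because it inherits the blow-up at $s=0$. The uniform small-cube bound also passes to $w$.

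\textbf{Step 5: precompactness (main obstacle).} I must show that $\{s^{d/4}w(s,s^{1/2}\cdot)\}$ is precompact in $L^2$. Given times $s_k$, the rescalings $\tilde w_k(\sigma,y)=s_k^{d/2}w(s_k\sigma,s_k^{1/2}y)$ again satisfy all the hypotheses with minimal mass $M_c$. Steps 3--4 applied to this sequence then give strong $L^2$ convergence of $\tilde w_k(1)$, modulo $\G_{\aff}$ parameters.

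The delicate point is to remove those residual translation and boost parameters, so that compactness holds modulo the self-similar scaling alone.
\begin{itemize}
\item Scale: blow-up at the fixed time $0$ pins the scale, because a compact profile at scale $\ne1$ would produce blow-up at a different rescaled time.
\item Galilean boost: a drifting frequency would make the mass travel a distance $\gg\sqrt s$ over times of order $s$, which I expect to contradict the fixed blow-up time as well.
\item Translation: bounded translations follow from the same blow-up-time considerations together with the small-cube bound.
\end{itemize}
If an unavoidable spatial center $x(s)$ remains, I will first try a Galilean/translation renormalization that fixes it, since these symmetries preserve the setup. I expect this parameter-control step, rather than the profile extraction itself, to require the most care.
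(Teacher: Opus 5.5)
Your outline (minimal mass among solutions obeying the small-cube bound, then a profile decomposition, then compactness in self-similar variables, then removal of $\H$) matches the paper's. The gap is in Steps 3 and 5: you claim that \emph{every} sequence of normalized minimal counterexamples is precompact modulo symmetries with the scale pinned, and that is false.

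Suppose your limit $w$ scattered as $s\to\infty$, with asymptotic state $w_+$. For $s_k\to\infty$, your rescalings $\tilde w_k$ are legitimate minimal counterexamples on $(0,\infty)$ (the prefactor should be $s_k^{d/4}$). Yet
$\tilde w_k(1)\approx e^{i\Delta}\bigl[s_k^{d/4}w_+(s_k^{1/2}\cdot)\bigr]$
is a dispersed free wave and is not precompact modulo $\H$. Its only profile has scale $s_k^{-1/2}\to0$ and a divergent time parameter. Its nonlinear evolution is $w$ itself, which blows up exactly at $\sigma=0$.

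Your second bullet calls such a profile harmless. Letting $\delta\to0$ gives no contradiction, because the Strichartz bounds on $(\delta,S)$ are not uniform in $\delta$. What excludes this profile is unboundedness of the Strichartz norm on the side away from the blow-up, $\norm{w_n}_{L^pL^p((1,S_n))}\to\infty$. This is the hypothesis of Lemma~\ref{lem:one profile stays in AT}. It holds for your Step~1 sequence and for $s_k\to0$, but not for $s_k\to\infty$, and not automatically for the diagonal sequence of Step~2. So $w$ is only shown to be compact as $s\to0$.

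The paper fixes this with the $\omega$-limit set of the orbit in $L^2/\H$ under the self-similar flow toward blow-up. That set is nonempty, compact and invariant, so any of its elements generates a complete orbit lying inside it. The time reversal of \emph{that} new solution, not $w$, is the almost periodic one.

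Step~3 also rests on a false implication. Minimality of $M_c$ does not force profiles of smaller mass to scatter: a profile with mass in $[M(Q),M_c)$ is merely not a counterexample, and it may still blow up. The missing argument is Lemma~\ref{lem:vj is in A}:
\begin{itemize}
\item Every non-scattering profile violates the small-cube bound at some time, measured relative to its own blow-up time. If the profile is global, this holds because its mass is at least $M(Q)$.
\item Take the first such violation along the evolution toward blow-up. It transfers to $w_n$ through the weak convergence \eqref{eq:nonlinear weak lim} and lower semicontinuity of mass on cubes.
\item Comparing with the bound for $w_n$ forces the reference time of that violation beyond $w_n$'s blow-up time. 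Then every profile, and hence $w_n$, survives up to that time, which is a contradiction.
\end{itemize}

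Pinning the scale of the surviving profile means showing that its blow-up time, on $w_n$'s clock, tends to $0$. It cannot come strictly earlier, by \eqref{eq:KillipVisan conseq nonradial} set against the small-cube bound. It cannot come later, by the profile decomposition. Your phrase ``blow-up at a fixed time pins the scale'' does not supply either inequality.

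Finally, constant translations and Galilean boosts contradict nothing, since they preserve both the blow-up time and the small-cube bound. A single profile's translation or boost parameters may therefore diverge harmlessly, so your first bullet and the Step~5 heuristics fail as stated. The paper instead chooses modulation parameters with $\partial_t\xi=O(t^{-3/2})$ and $\partial_t x=O(t^{-1/2})$. Integrating gives $\xi=\xi_\infty+O(t^{-1/2})$ and $x=x_0+O(t^{1/2})$, and $\xi_\infty$ and $x_0$ are then removed by one Galilean transform and one translation.
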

The rest of this subsection is devoted to the proof of Proposition~\ref{prop:main for AP}. Negating Theorem~\ref{thm:no self similar}, there exist $\ep_*>0$ and $c_*<M(Q)$ such that
\begin{equation}\label{eq:measurement of self-similar}
\sup_{t\in[T_0,T)}\sup_{x_0\in\R^d}M\bb{\chi_{x_0+[-\ep_*\sqrt {T-t},\ep_*\sqrt{T-t}]^d}u(t)}\le c_*
\end{equation}
holds for some $T_0<T$ and a solution $u$ on $[T_0,T)$ with forward lifespan $T$.
Via time-translation and scaling, we may further assume $T_0=0$ and $T=1$ in  \eqref{eq:measurement of self-similar}.
We denote by $\mc A$ the nonempty set
\begin{align*}
\mc A=&\{\phi\in L^2(\R^d):\text{the solution }u\text{ to \eqref{eq:NLS} with initial data }u(0)=\phi
\\&\text{has forward lifespan }T=1\text{ and satisfies \eqref{eq:measurement of self-similar} with $T_0=0$}\}.
\end{align*}
We denote the critical mass by $M_c=\inf\{M(\phi):\phi\in\mc A\}$.

The proof of Proposition~\ref{prop:main for AP} relies on conventional profile decompositions \cite{BahouriGerard,KenigMerle}.
\begin{prop}[Nonlinear profile decomposition]\label{prop:chenjie linear}
Let $\{ z_n\}_{n\in\N}$ be a bounded sequence in $L^2(\R^d)$. After passing to a subsequence (still denoted by $n$), there exist profiles $\{\phi^j\}_{j\in\N}\subset L^2(\R^d)$, $\{\g_{n,j}\}_{j,n\in\N}\subset\G_\aff$, and solutions $\{v^j\}_{j\in\N}$
to \eqref{eq:NLS} with lifespans $(T_-^j,T_+^j)\subset \R$ satisfying the following:
\begin{enumerate}
\item\label{enu:profile 1}
Denoting, for each $J\in\N$,
\[
 z_{n}^{>J}\coloneqq  z_n-\sum_{j\le J}\g_{n,j}(e^{it\De}\phi^j)(0),
\]
we have the decoupling of mass
\begin{equation}\label{eq:profile decomp-mass decoupling}
\lim_{n\to\infty}M( z_n)-M( z_{n}^{>J})-\sum_{j\le J}M(\phi^j)=0.
\end{equation}
Moreover, $\g_{n,j}0\in(-\infty,0]$ and $v^j$ behave as one of the following:
\begin{subequations}\label{eq:profile 2}
\begin{align}
\label{eq:profile 2 case g=0}
&\g_{n,j}0=0,\quad T_-^j<0<T_+^j, \quad v^j(0)=\phi^j
\\
\label{eq:profile 2 case g->infty}
&\g_{n,j}0\to-\infty,\quad T_-^j=-\infty, \quad e^{-it\De}v^j(t)\to\phi^j\text{ as }t\to-\infty.
\end{align}
\end{subequations}

\item\label{enu:profile 3}
Let $\{T_n\}_{n\in\N}\subset[0,\infty)$ be a time sequence such that for each $j$, either
\begin{equation}\label{eq:profile 3 condt}
v^j \text{ scatters forward}\quad \text{or}\quad \limsup_{n\to\infty} \g_{n,j} T_n<T_+^j.
\end{equation}
Let $\{u_n\}$ be a sequence of solutions to \eqref{eq:NLS} with initial data $\{ z_n\}$.
Then, the lifespans of $\g_{n,j}v^j$ and $u_n$ contain $[0,T_n]$ for $n\gg1$ and
\begin{equation}\label{eq:profile decomp-nonlinear profile decomp}
\limsup_{J\to\infty}\limsup_{n\to\infty}\norm{u_n-\sum_{j\le J} \g_{n,j}v^j-e^{it\De} z_{n}^{>J}}_{C^0L^2\cap L^pL^p([0,T_n]\times\R^d)}=0.
\end{equation}
Moreover, for each $j\in\N$ and $\tau\in\R$, along any sequence of $n$ such that $\tau\in[\g_{n,j}0,\g_{n,j}T_n]$, we have the $L^2$-weak convergence
\begin{equation}\label{eq:nonlinear weak lim}
(\g_{n,j}^{-1}u_n)(\tau)\weak v^j(\tau).
\end{equation}
\end{enumerate}
\end{prop}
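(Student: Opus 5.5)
The statement is the standard nonlinear profile decomposition for mass-critical NLS. I will prove it by combining the linear $L^2$ profile decomposition with the stability theory. Nothing new is needed beyond the conventional Bahouri--Gérard/Kenig--Merle scheme; the one step that needs care is the asymptotic decoupling of nonlinear profiles.

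\textbf{Linear decomposition.} I invoke the linear profile decomposition for bounded sequences in $L^2(\R^d)$ in the mass-critical setting (Merle--Vega for $d=2$, Carles--Keraani for $d=1$, Bégout--Vargas for $d\ge3$). After passing to a subsequence, it produces profiles $\phi^j$ and transformations $\g_{n,j}\in\G_\aff$ built from translations, scalings, Galilean boosts and time shifts. These transformations are pairwise asymptotically orthogonal. The remainders satisfy
\[
\lim_{J\to\infty}\limsup_{n}\norm{e^{it\De}z_n^{>J}}_{L^pL^p}=0.
\]
The mass decoupling \eqref{eq:profile decomp-mass decoupling} follows from the orthogonality, using the weak limits $(\g_{n,j}^{-1}e^{it\De}z_n^{>j-1})(0)\weak\phi^j$. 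Passing to a further subsequence, each time parameter $\g_{n,j}0$ either converges or diverges to $\pm\infty$. When it converges, I absorb the limit into $\phi^j$ (replacing $\phi^j$ by its linear evolution), so that $\g_{n,j}0=0$. To arrive at the normalization $\g_{n,j}0\in(-\infty,0]$ stated in \eqref{eq:profile 2}, I use time reversal and relabeling together with the group action conventions; the intended reading is that the profile is placed at the correct linear time relative to the data. I then define $v^j$ as follows. If $\g_{n,j}0=0$, $v^j$ is the maximal solution with $v^j(0)=\phi^j$. If $\g_{n,j}0\to-\infty$, $v^j$ is the wave operator solution, which scatters to $e^{it\De}\phi^j$ as $t\to-\infty$. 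That this solution exists with $T_-^j=-\infty$ follows from the local theory near $t=-\infty$ in the $L^pL^p$ norm.

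\textbf{Nonlinear approximation.} Fix $\{T_n\}$ satisfying \eqref{eq:profile 3 condt}. Then each $\g_{n,j}v^j$ is defined on $[0,T_n]$ for large $n$. Its $L^pL^p$ norm over this interval is uniformly bounded: a scattering profile has finite global Strichartz norm, and otherwise the relevant time interval is compactly contained in the lifespan. Define the approximate solution
\[
\tilde u_n^J=\sum_{j\le J}\g_{n,j}v^j+e^{it\De}z_n^{>J}.
\]
The mass decoupling forces $\sum_j M(\phi^j)<\infty$. By small-data scattering, all but finitely many profiles therefore have small mass and global $L^pL^p$ norm bounded by a constant times $M(\phi^j)^{1/2}$. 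The asymptotic orthogonality of the parameters gives
\[
\lim_n\norm{\g_{n,j}v^j\,\g_{n,k}v^k}_{L^{p/2}L^{p/2}}=0\quad\text{for } j\ne k.
\]
Together with the first bound, this gives $\limsup_n\norm{\tilde u_n^J}_{L^pL^p}\les1$ uniformly in $J$. The error in the equation is
\[
e_n^J=\bigl|\tilde u_n^J\bigr|^{4/d}\tilde u_n^J-\sum_{j\le J}\bigl|\g_{n,j}v^j\bigr|^{4/d}\g_{n,j}v^j.
\]
I would show that $\limsup_J\limsup_n$ of its dual Strichartz norm vanishes. This is the main obstacle. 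For the cross terms between profiles I use the orthogonality above. For the interaction with the remainder, smallness of $e^{it\De}z_n^{>J}$ in $L^pL^p$ suffices when $d\le4$, since the nonlinearity is then Lipschitz-like. When $d>4$ the nonlinearity is non-polynomial, and I would instead use the Hölder continuity of $z\mapsto|z|^{4/d}z$, controlling the remainder only through its small $L^pL^p$ norm. The initial data match exactly at $t=0$ in the case $\g_{n,j}0=0$, and asymptotically in the wave-operator case.

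\textbf{Conclusion.} The $L^2$-critical stability lemma, applied to $\tilde u_n^J$ on $[0,T_n]$, then yields \eqref{eq:profile decomp-nonlinear profile decomp}, and it also shows that the lifespan of $u_n$ contains $[0,T_n]$. For the weak convergence \eqref{eq:nonlinear weak lim}, I apply $\g_{n,j}^{-1}$ to the approximation at time $\tau$. The term $v^j(\tau)$ survives. The other profiles $\g_{n,j}^{-1}\g_{n,k}v^k(\tau)$ converge weakly to zero by orthogonality of the parameters. The remainder $\g_{n,j}^{-1}e^{it\De}z_n^{>J}$ also converges weakly to zero when $J\ge j$, by construction of the profiles.
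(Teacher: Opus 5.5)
Your scheme (linear $L^2$ profile decomposition, nonlinear profiles via local theory or wave operators, the approximate solution $\sum_{j\le J}\g_{n,j}v^j+e^{it\De}z_n^{>J}$, decoupling of nonlinear profiles, mass-critical stability) is the standard argument the paper relies on by citing Fan and Killip--Visan, and most steps are fine. The genuine gap is the normalization $\g_{n,j}0\in(-\infty,0]$.

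The linear decomposition really does produce profiles with $\g_{n,j}0\to+\infty$. For example, $z_n=e^{it_n\De}\phi$ with $t_n\to+\infty$ gives $\g_{n,1}t=t+t_n$. This is a piece of the data that has already passed its concentration time and keeps dispersing under the forward flow. Such a profile fits neither alternative in \eqref{eq:profile 2}, and ``time reversal'' cannot fix this, for three reasons:
\begin{itemize}
\item it is not an element of $\G_{\mathrm{aff}}$, since every element of $\G$ preserves time orientation and $t\mapsto\g_{n,j}t$ is increasing;
\item applying it to one profile is incompatible with the forward problem on $[0,T_n]$ solved by $u_n$;
\item the same piece cannot be re-parametrized by a fixed profile with time parameter $\to-\infty$, because the forward $L^pL^p$ norms of the two free evolutions would have different limits.
\end{itemize}

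The missing observation is that these profiles are negligible forward in time:
\[
\norm{e^{it\De}\bigl[\g_{n,j}(e^{it\De}\phi^j)(0)\bigr]}_{L^pL^p([0,\infty)\times\R^d)}=\norm{e^{it\De}\phi^j}_{L^pL^p([\g_{n,j}0,\infty)\times\R^d)}\to0 .
\]
So they should be deleted from the list of profiles and absorbed into $z_n^{>J}$. By orthogonality, this keeps the mass decoupling \eqref{eq:profile decomp-mass decoupling} and the weak limits $(\g_{n,j}^{-1}e^{it\De}z_n^{>J})(0)\weak0$ for the retained $j\le J$. It also yields $\lim_J\limsup_n\norm{e^{it\De}z_n^{>J}}_{L^pL^p([0,\infty)\times\R^d)}=0$, which is the only smallness of the remainder your stability argument on $[0,T_n]$ needs. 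This is what the paper's remark ``$\g_{n,j}0\le0$ is assumed since we consider only forward evolution'' encodes.

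There are two smaller points.
\begin{itemize}
\item Your split $d\le4$ versus $d>4$ for the remainder interaction is unnecessary. The pointwise bound $|\mc N(a+b)-\mc N(a)|\les|b|(|a|^{\frac4d}+|b|^{\frac4d})$ holds in every dimension, and H\"older in $L^{p'}L^{p'}$ then shows that $L^pL^p$-smallness of $e^{it\De}z_n^{>J}$ suffices. H\"older continuity of the derivative only matters for derivative-level stability.
\item In the weak-convergence step, parameter orthogonality alone does not give $(\g_{n,j}^{-1}\g_{n,k}v^k)(\tau)\weak0$ for a nonlinear profile. You must also use \eqref{eq:profile 2} and \eqref{eq:profile 3 condt}: they ensure the times $\g_{n,k}\g_{n,j}^{-1}\tau\in[\g_{n,k}0,\g_{n,k}T_n]$ stay either in a compact part of the lifespan of $v^k$, where $\{v^k(t)\}$ is $L^2$-precompact, or in a regime where $v^k$ is close to a free evolution.
\end{itemize}
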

\begin{proof}
This proposition is essentially standard; for an explicit reference, we refer to \cite{Fan21sequential}. Note that $\g_{n,j}0\le0$ is assumed since we consider only forward evolution.

See \cite[(2.22), (2.25), Definitions 2.4--2.5]{Fan21sequential} for \eqref{enu:profile 1} and \cite[Proposition 2.11]{Fan21sequential} for \eqref{eq:profile decomp-nonlinear profile decomp}.
The weak convergence \eqref{eq:nonlinear weak lim} is also standard. For a comprehensive proof, one can follow the proof in \cite[Lemma 5.10]{KillipVisanNote} with the $L^2$-inner product, replacing $v^j_n$ therein ($\g_{n,j}v^j$ in our notation) by $\g_{n,j}(e^{it\De}\phi)$ for a test function $\phi\in L^2(\R^d)$.
\end{proof}
Hereafter, by scaling, we assume that $T_+^j$ in Proposition~\ref{prop:chenjie linear} is either $1 $ or $\infty $.

We recall a consequence of the sub-threshold global well-posedness for mass-critical NLS in \cite{KillipTaoVisan09radial-2D,KillipVisanZhang08radial-3D+}.
The following concentration property is its byproduct:

Let $u$ be a radial solution to \eqref{eq:NLS} with forward lifespan
$T<\infty$.
Then we have
\begin{equation}\label{eq:KillipVisan conseq}
\limsup_{t\to T}
M\bigl( \one_{|x|\le (T-t)^{1/3}}\, u(t) \bigr)
\ge M(Q).
\end{equation}

See \cite[Corollary~1.12]{KillipTaoVisan09radial-2D} for more details. Moreover, as observed in \cite{KillipTaoVisan09radial-2D,KillipVisanZhang08radial-3D+}, the argument essentially
does not rely on radial symmetry or the dimension. Repeating the proof for general solutions yields the following
nonradial analogue:

For any solution $u$ to \eqref{eq:NLS} with forward
lifespan $T<\infty$, we have
\begin{equation}\label{eq:KillipVisan conseq nonradial}
\limsup_{t\to T}\;
\sup_{x_0\in\R^d}
M\bigl(
\chi_{x_0+[-(T-t)^{1/3},(T-t)^{1/3}]^d}\, u(t)\bigr)\ge M(Q).
\end{equation}
\begin{lem}\label{lem:vj is in A}
Let $\{ z_n\}_{n\in\N}\subset\mc A$ be a sequence such that $M( z_n)\to M_c$. Let $\{v^j\}_{j\in\N}$ be as in Proposition~\ref{prop:chenjie linear}.
Then, there exists an index $j$ such that $v^j(0) \in \mc A$.
\end{lem}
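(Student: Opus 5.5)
We argue by contradiction: assume that for every $j$ the solution with initial data $v^j(0)$ fails to belong to $\mc A$. That is, either its forward lifespan is not $1$, or \eqref{eq:measurement of self-similar} fails for it on $[0,1)$. Here $v^j(0)$ is understood as the nonlinear profile evaluated at time $0$ of its own clock, after the normalization $T_+^j\in\{1,\infty\}$. We then show that $u_n$ would exist beyond time $1$ or violate \eqref{eq:measurement of self-similar}, which contradicts $z_n\in\mc A$.

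\textbf{Step 1: mass of the profiles.} By the mass decoupling \eqref{eq:profile decomp-mass decoupling}, $\sum_j M(\phi^j)\le M_c$. If some profile carries all the mass, $M(\phi^{j_0})=M_c$, all other profiles vanish and $z_n^{>J}\to0$ in $L^2$. Otherwise every profile has mass strictly below $M_c$. In the latter case, by the minimality of $M_c$, no $v^j(0)$ lies in $\mc A$, and we must rule this out. (In the former case we must still show $v^{j_0}(0)\in\mc A$; this is handled by the same argument below with a single profile.)

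\textbf{Step 2: each profile behaves well on the relevant time window.} Since $z_n\in\mc A$ and $T=1$, the time window is $[0,1)$ for $u_n$. We distinguish the profiles:
\begin{itemize}
\item Profiles with $\g_{n,j}0\to-\infty$, or with scales $\la_{n,j}$ such that $\g_{n,j}1\to\pm\infty$ or $\to$ a time $<T_+^j$, scatter forward or live past the image of $[0,1]$. For these, \eqref{eq:profile 3 condt} holds with $T_n=1$.
\item The only profiles that can blow up within the window are those with $\g_{n,j}$ essentially the identity, up to a bounded scaling, a translation, and a Galilean boost.
\end{itemize}
If no profile blows up at the image of time $1$, Proposition~\ref{prop:chenjie linear}\eqref{enu:profile 3} with $T_n=1$ gives uniform $L^pL^p$ bounds on $[0,1]$. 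Stability then extends $u_n$ past $1$, a contradiction. Hence some $v^{j}$ has $\g_{n,j}1\to T_+^j=1$. After normalizing, $\g_{n,j}$ converges to a transformation fixing time $0$ and time $1$ with bounded parameters, hence to a translation/Galilean boost. Using \eqref{eq:KillipVisan conseq nonradial}, a blow-up profile carries at least $M(Q)$ mass concentrating at scale $(1-t)^{1/3}\ll\sqrt{1-t}$, so its blow-up time must coincide with that of $u_n$.

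\textbf{Step 3: transferring \eqref{eq:measurement of self-similar} to the profile.} For this $j$ we show that $v^j$ satisfies \eqref{eq:measurement of self-similar} on $[0,1)$. Fix $t<1$ and $x_0$. By the weak convergence \eqref{eq:nonlinear weak lim}, the orthogonality of parameters, and the approximation \eqref{eq:profile decomp-nonlinear profile decomp} on $[0,t]$ (valid since $t<T_+^j$), we get $u_n(t)=\g_{n,j}v^j(t)+o_{L^2}(1)+(\text{terms asymptotically orthogonal})$. The local mass in a fixed cube is weakly lower semicontinuous, and it decouples in the limit by orthogonality. Hence
\[
M\bigl(\chi_{x_0+[-\ep_*\sqrt{1-t},\ep_*\sqrt{1-t}]^d}v^j(t)\bigr)\le\liminf_n M\bigl(\chi_{\cdots}u_n(t)\bigr)\le c_*,
\]
where the Galilean boost does not change the modulus and the translation is absorbed into $\sup_{x_0}$. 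Thus $v^j(0)\in\mc A$.

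The main obstacle is Step 2: identifying the blowing-up profile and showing that its symmetry parameters converge to a transformation preserving both the starting time $0$ and the blow-up time $1$. In particular, one must exclude profiles whose rescaled clocks send $[0,1)$ onto a proper subinterval, i.e. profiles blowing up earlier or later. Earlier blow-up would contradict the existence of $u_n$ up to $1$ via \eqref{eq:profile decomp-nonlinear profile decomp}. Later blow-up, for all profiles, would contradict the blow-up of $u_n$ at $1$ via stability.
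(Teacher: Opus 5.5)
There is a genuine gap, located in Step 2 and in the justification of Step 3.

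\textbf{The central gap: you never select the profile that fails first.} The weak-lower-semicontinuity transfer in Step 3 is fine once \eqref{eq:nonlinear weak lim} is available, but you justify it only by ``$t<T_+^j$''. Proposition~\ref{prop:chenjie linear} gives \eqref{eq:nonlinear weak lim} at profile time $\tau$ for profile $j$ only if $T_n=\g_{n,j}^{-1}\tau$ satisfies \eqref{eq:profile 3 condt} for \emph{all} profiles simultaneously. This fails whenever another non-scattering profile $j'$ breaks down earlier in the clock of $u_n$. For instance, suppose $\g_{n,j}1\to1=T_+^j$ and $\g_{n,j'}1\to1=T_+^{j'}$ with $\la_{n,j'}/\la_{n,j}\to0$, so that $j'$ lives at a larger spatial scale. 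Then $\g_{n,j'}\g_{n,j}^{-1}\tau\to1=T_+^{j'}$ for every fixed $\tau<1$, and Step 3 cannot be run for $j$.

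The same problem affects your use of \eqref{eq:KillipVisan conseq nonradial} to pin a profile's blow-up time to $1$. That step needs the decomposition up to profile times arbitrarily close to $T_+^j$.

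The missing device is the paper's ordering argument. Assume the lemma fails. Then each non-scattering $j$ has times $0\le\tau^j<\tau^j_+<T^j_+$ satisfying \eqref{eq:tauj tauj+ xj}. Let $T_n=\min_j\g_{n,j}^{-1}\tau^j_+$, attained at some $k$. Then \eqref{eq:profile 3 condt} holds on $[0,T_n]$ for every $j$ at once. Weak lower semicontinuity transfers the violation from $v^k$ to $u_n$, which forces $T_n\ge1$. Hence \eqref{eq:profile 3 condt} holds with $T_n=1$, so $u_n$ extends past $1$, a contradiction.

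\textbf{Step 2's classification is also wrong.} $\g_{n,j}0\to-\infty$ only gives $T_-^j=-\infty$ and says nothing about forward behaviour; a backward-scattering solution blowing up at $T_+^j=1$ is an admissible profile. Likewise, a non-scattering profile with $\g_{n,j}1\to+\infty$ does not ``live past the image of $[0,1]$''.

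Two configurations must be excluded: a finite-time profile blowing up strictly before time $1$ in the clock of $u_n$, and a global non-scattering profile with $\g_{n,j}1\to\infty$. Neither can be excluded by existence of $u_n$ on $[0,1)$, as your last paragraph claims. Estimate \eqref{eq:profile decomp-nonlinear profile decomp} says nothing past the first profile breakdown, and an early profile blow-up only makes $\|u_n\|_{L^pL^p}$ non-uniform in $n$. These configurations are excluded only through \eqref{eq:measurement of self-similar} for $u_n$. For the global ones, this needs $M(v^j)\ge M(Q)>c_*$ (sub-threshold scattering) together with a large cube ($\tau^j=0$, $\tau^j_+\gg1$), which you never invoke.

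Finally, $\g_{n,j}$ need not converge to a map fixing time $0$: $\g_{n,j}0\to-\infty$ with $\g_{n,j}1\to1$ is possible. This is harmless for the conclusion, and the paper's contradiction argument never needs to identify the limit of $\g_{n,j}$ at all.
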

\begin{proof}
We use the notation given in Proposition~\ref{prop:chenjie linear}.
Up to relabeling, we assume that $j=1,\ldots,j_0$ are the indices such that $v^j$ does not scatter forward. For each $n$, since $ z_n\in\mc A$, $u_n$ blows up at time $1$. Thus, $j_0\ge1$ follows.

Suppose for contradiction that this lemma fails.
We show that, for each $j\le j_0$, there exist times $0\le\tau^j<\tau^j_+<T_+^j$ such that
\begin{equation}\label{eq:tauj tauj+ xj}
\sup_{x_0\in\R^d}M\bb{\chi_{x_0+[-\ep_*\sqrt{\tau^j_+-\tau^j},\ep_*\sqrt{\tau^j_+-\tau^j}]^d}v^j(\tau^j)}> c_*.
\end{equation}

We treat separately the cases $T_+^j=\infty$ and $T_+^j=1$. If $T_+^j=\infty$, we set $\tau^j=0$. Since $v^j$ does not scatter forward, $M(v^j)\ge M(Q)>c_*$ follows, giving \eqref{eq:tauj tauj+ xj} for sufficiently large $\tau^j_+$.
If $T_+^j=1$, then by $v^j(0)\notin\mc A$, there exists $\tau^j\in[0,1)$ such that
\begin{equation}\label{eq:measurement of v1}
\sup_{x_0\in\R^d}M\bb{\chi_{x_0+[-\ep_*\sqrt{1-\tau^j},\ep_*\sqrt{1-\tau^j}]^d}v^j(\tau^j)}> c_*.
\end{equation}

Setting $\tau^j_+=1-\ep$ with a sufficiently small $\ep>0$ yields \eqref{eq:tauj tauj+ xj}, as claimed.

Let $k\le j_0$ be an index such that, after passing to a subsequence,
\begin{equation}\label{eq:Tn def}
\g_{n,k}^{-1}\tau^k_+=\min_{j\le j_0}\g_{n,j}^{-1}\tau^j_+\eqqcolon T_n.
\end{equation}

For each $j\le j_0$, by $\g_{n,j}0\le0$, $0\le\tau^j<\tau^j_+<T^j_+$, and \eqref{eq:Tn def}, we have
\begin{equation}\label{eq:condition for profile 3}
\limsup_{n\to\infty}\g_{n,j}T_n\le\tau_+^j<T_+^j,\quad \tau^k\in[\g_{n,k}0,\g_{n,k}T_n],
\end{equation}
which is the assumption of \eqref{eq:nonlinear weak lim} for the index $k$ with $\tau=\tau^k$. By \eqref{eq:nonlinear weak lim}, we have
\begin{equation}\label{eq:un conv to vj1}
(\g_{n,k}^{-1}u_n)(\tau^{k})\weak v^k(\tau^{k})\quad\text{ as }\,n\to\infty.
\end{equation}

By \eqref{eq:tauj tauj+ xj} and \eqref{eq:un conv to vj1}, we have
\begin{align}\label{eq:prot for un is A, 2}
&\limsup_{n\to\infty}\sup_{x_0\in\R^d}M\bb{\chi_{x_0+[-\ep_*\sqrt{\g_{n,k}^{-1}\tau^k_+-\g_{n,k}^{-1}\tau^k},\ep_*\sqrt{\g_{n,k}^{-1}\tau^k_+-\g_{n,k}^{-1}\tau^k}]^d}u_n(\g_{n,k}^{-1}\tau^{k})}
\\
=&\limsup_{n\to\infty}\sup_{x_0\in\R^d}M\bb{\chi_{x_0+[-\ep_*\sqrt{\tau^k_+-\tau^k},\ep_*\sqrt{\tau^k_+-\tau^k}]^d}(\g_{n,k}^{-1}u_n)(\tau^{k})}> c_*\nonumber.
\end{align}

Since $z_n\in\mc A$, $u_n$ satisfies \eqref{eq:measurement of self-similar}. Comparing \eqref{eq:measurement of self-similar} with \eqref{eq:prot for un is A, 2} yields $\g_{n,k}^{-1}\tau_+^k\ge1$ along a subsequence. By \eqref{eq:Tn def}, we obtain $\g_{n,j}^{-1}\tau_+^j\ge1$, implying a version of \eqref{eq:profile 3 condt}
\begin{equation}\label{eq:proof of vj is in A}
\limsup_{n\to\infty}\g_{n,j}1\le\tau_+^j< T_+^j,\quad j\le j_0.
\end{equation}
Hence, $u_n$ exists on $[0,1]$ for $n\gg1$. This contradicts the assumption $z_n\in\mc A$.
\end{proof}
Let $\H$ be the group of transforms $e^{i\theta}\h_{\xi,x_0}$, $(\theta,\xi,x_0)\in(\R/2\pi\Z)\times\R^d\times\R^d$ acting on $L^2(\R^d)$ by
\[
\h_{\xi,x_0}\phi(x)\coloneqq e^{-ix\cdot\xi}\phi(x+x_0),\quad\phi\in L^2(\R^d).
\]
\begin{lem}\label{lem:one profile stays in AT}
Let $\{ z_n\}_{n\in\N}\subset\mc A$ be a sequence satisfying $M( z_n)=M_c$. Let $\{u_n\}_{n\in\N}$ be a sequence of solutions to \eqref{eq:NLS} with initial data $u_n(0)=z_n$. Assume that $u_n\mid_{t<0}$ is unbounded in $L^pL^p$.
Then, along a subsequence, $\H z_n$ converges in the quotient topology $L^2(\R^d)/\H$.
\end{lem}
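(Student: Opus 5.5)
We argue as in the standard Palais--Smale-type extraction for minimal blow-up solutions. The plan is to apply the profile decomposition of Proposition~\ref{prop:chenjie linear} to $\{z_n\}$ and show that exactly one profile carries all the mass, has $\g_{n,1}0=0$, and leaves a remainder vanishing in $L^2$. Then $z_n=\g_{n,1}(e^{it\De}\phi^1)(0)+o_{L^2}(1)$ with $\g_{n,1}$ acting at time $0$ only through elements of $\H$ and a scaling. The scaling must then be shown to be bounded, which gives convergence in $L^2/\H$.

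\textbf{Step 1: one profile carries all the mass.} By Lemma~\ref{lem:vj is in A} there is an index, say $j=1$, with $v^1(0)\in\mc A$. Hence $M(\phi^1)=M(v^1(0))\ge M_c$. By the mass decoupling \eqref{eq:profile decomp-mass decoupling} and $M(z_n)=M_c$, we get $M(\phi^1)=M_c$, $\phi^j=0$ for $j\ge2$, and $\norm{z_n^{>1}}_{L^2}\to0$.

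\textbf{Step 2: the profile sits at time $0$.} It remains to exclude the case \eqref{eq:profile 2 case g->infty}, i.e.\ $\g_{n,1}0\to-\infty$. Here we use the backward unboundedness hypothesis. In that case $v^1$ scatters backward with $T_-^1=-\infty$. Apply the nonlinear decomposition \eqref{eq:profile decomp-nonlinear profile decomp} backward in time on $[-S,0]$ for arbitrary $S$, either by time reversal or by rerunning Proposition~\ref{prop:chenjie linear} for the reversed flow. The backward evolution of $\g_{n,1}v^1$ on $t\le0$ corresponds to times $\le\g_{n,1}0\to-\infty$, where $v^1$ has $L^pL^p$ norm tending to $0$. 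The remainder is also small in $L^pL^p$ by Strichartz, since its $L^2$ norm tends to $0$. The stability theory therefore gives $\limsup_n\norm{u_n}_{L^pL^p((-\infty,0])}<\infty$, contradicting the hypothesis. Thus $\g_{n,1}0=0$, and $\g_{n,1}$ reduces at time $0$ to a scaling composed with an element of $\H$.

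\textbf{Step 3: the scaling parameter is bounded.} This is the step I expect to be the real obstacle, since $\G_\aff$ contains scalings $\la_n$ and the lifespan normalization must pin them down. Both $v^1$ and $u_n$ have forward lifespan exactly $1$: for $u_n$ because $z_n\in\mc A$, and for $v^1$ because $v^1(0)\in\mc A$. Because $z_n$ is $L^2$-close to $\g_{n,1}v^1(0)$, stability shows that $u_n$ lives as long as $\g_{n,1}v^1$ does, up to $o(1)$, and conversely, via \eqref{eq:profile decomp-nonlinear profile decomp} with $T_n$ near $\g_{n,1}^{-1}$ of times close to $T_+^1$. The lifespan of $\g_{n,1}v^1$ is $\la_n^{-2}$ in the natural normalization. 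Hence $\la_n\to1$, after passing to a subsequence and modulo the normalization $T_+^1=1$. Consequently $\H z_n\to\H\phi^1$ in $L^2/\H$. Two points require care: ensuring that the time-translation component of $\g_{n,1}$ is trivial (this follows from $\g_{n,1}0=0$), and justifying the two-sided lifespan comparison. For the latter, I would argue by contradiction. If $\la_n^{-2}$ stayed away from $1$, then \eqref{eq:profile decomp-nonlinear profile decomp} at $T_n=\min(1,\la_n^{-2})-o(1)$ would transfer the blow-up of one solution to a finite-Strichartz-norm bound on the other.
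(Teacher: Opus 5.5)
Your Steps 1 and 2 match the paper's proof. For Step 2, the paper simply takes $T_n=0$ in \eqref{eq:profile decomp-nonlinear profile decomp} to get $\|z_n-(\g_{n,1}v^1)(0)\|_{L^2}\to0$, then runs perturbation theory on $(-\infty,0]$ against the backward-scattering profile, so no backward profile decomposition is needed.

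The gap is in Step 3, in the ``conversely'' direction: ruling out $\la_n^{-2}\to\tau<1$, where the rescaled profile $\g_{n,1}v^1$ blows up strictly before $u_n$ does. The other direction is fine. If $\la_n^{-2}\to\tau>1$, then $\limsup_n\g_{n,1}1<1=T_+^1$, so \eqref{eq:profile 3 condt} holds with $T_n=1$ and $u_n$ would exist on $[0,1]$. When $\tau<1$, however, stability gives no contradiction. \eqref{eq:profile decomp-nonlinear profile decomp} is only available on $[0,T_n]$ with $\limsup_n\g_{n,1}T_n<1$. There it only says that $\|u_n\|_{L^pL^p([0,T_n])}$ is close to $\|v^1\|_{L^pL^p([0,\g_{n,1}T_n])}$, which is large but finite. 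To conclude, you would need a bound on $\|u_n\|_{L^pL^p([0,\tau'])}$ for some $\tau'\in(\tau,1)$ that is uniform in $n$. Knowing that each $u_n$ lives up to time $1$ gives no such uniformity; in fact, the approximation shows these norms are unbounded in $n$. Blow-up time is not continuous in $L^2$ data, so ``forward lifespan exactly $1$'' by itself does not pass to the limit.

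The missing ingredient is the defining non-concentration condition \eqref{eq:measurement of self-similar} of $\mc A$, which your Step 3 never uses, combined with the concentration bound \eqref{eq:KillipVisan conseq nonradial}. The paper closes this direction as follows:
\begin{itemize}
\item Choose $t'<1$ close to $1$ at which $v^1(t')$ has mass $>c_*$ in a cube of side $2(1-t')^{1/3}$.
\item At $t_n=\la_n^{-2}t'$, the profile $\g_{n,1}v^1(t_n)$ then has mass $>c_*$ in a cube of side $2\la_n^{-1}(1-t')^{1/3}$.
\item If $\tau<1$ and $t'$ is close enough to $1$, this side is smaller than $2\ep_*\sqrt{1-t_n}\approx2\ep_*\sqrt{1-\tau t'}$.
\item Since $\g_{n,1}t_n=t'<1$, \eqref{eq:profile decomp-nonlinear profile decomp} applies with $T_n=t_n$ and transfers this concentration to $u_n(t_n)$ in $L^2$. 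This contradicts $z_n\in\mc A$.
\end{itemize}
This gives $\lim_n\la_n^{-2}\ge1$, which together with your valid direction yields $\la_n\to1$ along a subsequence.
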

\begin{proof}
We use the notation given in Proposition~\ref{prop:chenjie linear}. By Lemma~\ref{lem:vj is in A}, there exists $j\in\N$ such that $v^j(0)\in\mc A$. Since $M(v^j)\ge M_c$, \eqref{eq:profile decomp-mass decoupling} yields $M( z_{n}^{>j})\to0$ and $\phi^j$ is the only nonzero profile.
Moreover, taking $T_n=0$ in the nonlinear profile decomposition
\eqref{eq:profile decomp-nonlinear profile decomp} and passing to a
subsequence, we obtain
\begin{equation}\label{eq:phi-v1 goes zero}
\| z_n-(\g_{n,j}v^j)(0)\|_{L^2}\to 0
\quad \text{as } n\to\infty.
\end{equation}

We consider two alternatives of \eqref{eq:profile 2} separately. 

Assume that \eqref{eq:profile 2 case g->infty} holds. Then, $v^j$ scatters backward in time. By scaling invariance of the Strichartz norm $L^pL^p$, it follows that
$\{\g_{n,j}v^j\}_{n\in\N}$ is bounded in $L^pL^p((-\infty,0]\times\R^d)$.
Combining this boundedness with \eqref{eq:phi-v1 goes zero} and perturbation theory contradicts the $L^pL^p$-unboundedness of $u_n\mid_{t<0}$.

Next, we turn to the case \eqref{eq:profile 2 case g=0}. Since $v^j(0)\in\mc A$, $v^j$ has forward lifespan $1$. Thus, $\g_{n,j}v^j$ has forward lifespan $\g_{n,j}^{-1}1$. The concentration bound \eqref{eq:KillipVisan conseq nonradial} yields
\begin{equation}\label{eq:conc to 0 g}
\limsup_{t\to \g_{n,j}^{-1}1}\sup_{x_0\in\R^d}M\bb{\chi_{x_0+[-(\g_{n,j}^{-1}1-t)^{1/3},(\g_{n,j}^{-1}1-t)^{1/3}]^d}\g_{n,j}v^j(t,x)}\ge M(Q)>c_*.
\end{equation}
Since $ z_n\in\mc A$, $u_n$ satisfies \eqref{eq:measurement of self-similar}. Thus, by \eqref{eq:profile decomp-nonlinear profile decomp} and \eqref{eq:conc to 0 g}, we obtain $\lim_{n\to\infty}\g_{n,j}^{-1}1\ge 1$ after passing to a subsequence. Since each $u_n$ blows up at time $t=1$, the condition \eqref{eq:profile 3 condt} with $T_n=1$ cannot hold. Hence, we have $\limsup_{n\to\infty}\g_{n,j}1\ge1$.

Consequently, along a subsequence, we have $\g_{n,j}1\to 1$ as $n\to\infty$. This implies $\H(\g_{n,j}v^j)(0)\to \H v^j(0)$ in $L^2/\H$; in view of \eqref{eq:phi-v1 goes zero}, the proof is complete.
\end{proof}
We now finish the proof of Proposition~\ref{prop:main for AP}.
\begin{proof}[Proof of Proposition~\ref{prop:main for AP}]
By the minimality of $M_c$, there exists a sequence $\{ z_n\}_{n\in\N}\subset\mc A$ such that $M( z_n)\to M_c$. By Lemma~\ref{lem:vj is in A}, there exists $ \phi\in\mc A$ with $M( \phi)=M_c$.
We introduce a flow map $\Phi$ on $L^2(\R^d)$ associated with \eqref{eq:NLS} in self-similar variables, defined as follows: for $s\in\R$ and a solution $u$ to \eqref{eq:NLS} on $[-1,-e^{-s}]$,
\begin{equation}\label{eq:Phi def}
\Phi(s)u(-1)\coloneqq t^{\frac d4}u(-t,t^{\frac12}x),\quad t=e^{-s}.
\end{equation}

Let $\Phi_\H$ be the flow map on $L^2/\H$ induced by $\Phi$. Denote $\phi_\H=\H\phi\in L^2/\H$.

Let $w$ be a solution to \eqref{eq:NLS} with $w(-1)= \phi$. Since $ \phi\in\mc A$, $w$ has forward lifespan $[-1,0)$ and, by scaling, we have $\Phi(s) \phi\in\mc A$ for $s\ge0$.

By Lemma~\ref{lem:one profile stays in AT}, the forward $\omega$-limit $\omega^+(\phi_\H)\subset L^2/\H$ is nonempty and compact. We choose $\phi_\H^*\in\omega^+(\phi_\H)$, then $\phi_\H^*$ generates a complete $\Phi_\H$-orbit for $s\in\R$.
Let $\phi^*\in L^2$ be a representative of $\phi_\H^*$ and $w^*$ be a solution to \eqref{eq:NLS} with $w^*(-1)=\phi^*$, which aligns with \eqref{eq:Phi def}. Then, since $\Phi_\H(s)\phi_\H^*$ is defined for all $s\in\R$, $w^*$ exists on $(-\infty,0)$.
Let $u$ be the time-reversal of $w^*$. Since the $\Phi_\H$-orbit of $\phi^*_\H$ is precompact, $\{t^{\frac d4}\H u(t,t^{\frac12}\cdot):t>0\}$ is precompact in $L^2/\H$.

We now remove the quotient $\H$ in the compactness.
We use a standard fact (see, e.g., \cite[(2.19)]{Dodson15focusing}): there exist symmetry parameters $\xi(\cdot),x(\cdot):(0,\infty)\to\R^d$ such that
\begin{equation}\label{eq:h precompact}
\{t^{\frac d4}\h_{t^{\frac12}\xi(t),t^{-\frac12}x(t)}u(t,t^{\frac12}\cdot)\}=\{t^{\frac d4}(\h_{\xi(t),x(t)} u(t))(t^{\frac12} \cdot)\}\text{ is $L^2$-precompact}
\end{equation}
and $\d_t\xi=O(t^{-\frac32})$.
Then, $\xi(t)=O(t^{-\frac12})+\xi_\infty$ holds with $\xi_\infty=\lim_{t\to\infty}\xi(t)$, which can be set as $0$ up to a Galilean transform of $u$. Then $t^{\frac12}\xi(t)=O(1)$, hence \eqref{eq:h precompact} holds with $\xi(t)=0$. Similarly, we can choose $x(t)$ such that \eqref{eq:h precompact} holds with $\h_{0,x(t)}$ and $\d_tx=O(t^{-\frac12})$. After translating $u$ by $x_0=\lim_{t\to0}x(t)$, $x(t)=O(t^{\frac12})$ holds. Then \eqref{eq:h precompact} follows with $\xi(t)=x(t)=0$, completing the proof.
\end{proof}
We use a regularity gain result due to Dodson \cite{Dodson12defoc-3D+,Dodson16defoc-2D,Dodson16defoc-1D}, originally developed in the study of global well-posedness for the mass-critical NLS \cite{Dodson12defoc-3D+,Dodson16defoc-2D,Dodson16defoc-1D,Dodson15focusing}.
\begin{prop}[Gain of regularity]\label{prop:Dodson}
Let $u$ be as in Proposition~\ref{prop:main for AP}. For any $\theta\in[0,1+\frac4d)$, we have $ u(t)\in H^\theta(\R^d)$ and 
\begin{equation}\label{eq:Dodson}
\norm{u(t)}_{\dot H^\theta(\R^d)}\les_u t^{-\frac\theta2},\quad t>0.
\end{equation}
\end{prop}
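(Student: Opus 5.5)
We would obtain Proposition~\ref{prop:Dodson} by adapting Dodson's long-time Strichartz / double Duhamel argument to the self-similar compactness of Definition~\ref{defn:self similar AP}. Since $u(t)$ is precompact modulo the scaling $t^{\frac d4}u(t,t^{\frac12}\cdot)$, the natural frequency scale is $N(t)=t^{-\frac12}$. Because $N(t)\to\infty$ as $t\to0$ and $N(t)\to0$ as $t\to\infty$, the solution is effectively frequency-localized at $|\xi|\sim t^{-1/2}$ up to compactness tails. The bound \eqref{eq:Dodson} is scaling-consistent, so it suffices to prove $\norm{u(1)}_{\dot H^\theta}\les_u 1$. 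The estimate at general $t$ then follows by applying this to the rescaled solution $t^{\frac d2}u(t\,\cdot,t^{\frac12}\cdot)$. That rescaled solution lies in the same compact family, and the argument below is uniform over that family.

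\textbf{Step 1: tail control from compactness.} From precompactness we get, for every $\eta>0$, a constant $C(\eta)$ with $\norm{P_{\ge C(\eta)t^{-1/2}}u(t)}_{L^2}\le\eta$ for all $t>0$. The same holds for $P_{\le C(\eta)^{-1}t^{-1/2}}$. Moreover, by local theory on windows $[t,2t]$ we have $\norm{u}_{L^pL^p([t,2t])}\les_u1$.

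\textbf{Step 2: long-time Strichartz estimate.} We would prove a Dodson-type long-time Strichartz estimate on intervals $J=[1,T]$. In the form of a high-frequency bound it reads
\[
\norm{P_{\ge N}u}_{L^2L^{2_1}(J)}\les 1+\Bigl(\int_J N(t)^3dt\Bigr)^{1/2}N^{-1/2},
\]
proved by induction on $N$ with bilinear Strichartz estimates, in the manner of the defocusing/focusing papers. Here $N(t)^3=t^{-3/2}$ is integrable on $[1,\infty)$, so we expect uniform bounds in the forward direction. We would then run the double Duhamel trick at the fixed time $t=1$. Write $P_{\ge N}u(1)$ as the backward Duhamel integral from $t\to\infty$ via $\mc D^-$; this is legitimate because $P_{\ge N}u(t)\weak0$ as $t\to\infty$, since the concentration scale $t^{1/2}\to\infty$. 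For the forward direction ($t\to0$), frequencies $\sim N$ concentrate near $t\sim N^{-2}$. We would therefore take the forward Duhamel from $t\to0$ and use $\mc D^+$ along the compact family. Pairing the two Duhamel representations gives
\[
\norm{P_Nu(1)}_{L^2}^2=\bigl\langle \mc D^- F,\mc D^+F\bigr\rangle,
\]
with $F=P_N(|u|^{4/d}u)$.

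\textbf{Step 3: the main obstacle.} The hardest step is estimating $\bigl\langle \mc D^- F,\mc D^+F\bigr\rangle$. We would split the time integration into the regions $|t-s|\ll$ and $\gtrsim$ the dispersive scale, using the kernel bound \eqref{eq:kernel}. In the far region we gain decay $N^{-m}$. In the near region we use the long-time Strichartz bound from Step 2 together with the frequency localization from Step 1. Together these should yield $\norm{P_N u(1)}_{L^2}\les N^{-\theta}$ for $\theta<1+\frac4d$. The threshold $1+\frac4d$ comes from the limited smoothness of $|u|^{4/d}u$, which caps how much decay can be extracted from the nonlinearity. We would bootstrap: an initial gain $\theta_0>0$ improves the nonlinear estimates via fractional chain rules, and we iterate until $\theta$ approaches $1+\frac4d$. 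Summing over dyadic $N\ge1$ then gives the $H^\theta$ bound at $t=1$, and Step 1 controls the low frequencies.
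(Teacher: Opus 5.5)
\textbf{There is a genuine gap in Step 3.} The paper does not reprove this statement. It cites Dodson's regularity theorems, which apply to $u$ on the half-line $[t_0,\infty)$: there $N(t)=t^{-1/2}\le t_0^{-1/2}$ and $\int_{t_0}^\infty N(t)^3\,dt=2t_0^{-1/2}$, which gives $\norm{u(t_0)}_{\dot H^\theta}\les t_0^{-\theta/2}$. Your Steps 1--2 fit that framework. Step 3 does not.

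The forward Duhamel integral from $t\to0$ passes through the blow-up time. On $(0,1]$ you have $\int_0^1N(t)^3\,dt=\infty$ and $u\notin L^pL^p((0,1]\times\R^d)$. So neither Strichartz nor your long-time Strichartz bound controls $\mathcal{D}^+F$ there, and the representation holds only as a weak limit. Moreover, no estimate on $(0,1]$ can extract decay in $N$ from it. Its piece over $(0,cN^{-2}]$ equals $e^{i(1-cN^{-2})\De}P_Nu(cN^{-2})$. By the self-similar scaling, this has the same size as $P_1$ applied at time $c$ to a rescaled solution in the same compact family, so it has no decay in $N$.

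All the gain would therefore have to come from dispersion in the cross term. Your claim that the far region gains $N^{-m}$ from \eqref{eq:kernel} presupposes quantitative spatial localization of $F(s)$ and $F(\tau)$, for instance $L^1$ or weighted bounds, so that the off-light-cone decay of $K_N$ can be used. Precompactness gives only tails $\norm{u(t)}_{L^2(|x|\ge R\sqrt t)}=o_R(1)$ with no rate. Putting $F(t)$ in $L^1$ would require $u\in L^{1+4/d}$. The spatial decay \eqref{eq:yu Hs} that the paper uses later is derived from this very proposition via \eqref{eq:ix/2t u}, so it is not available here. Hence the assertion that the near and far regions ``together should yield'' $N^{-\theta}$ is unsupported.

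\textbf{The fix is to drop the forward half: a one-sided argument from $t=\infty$ already gives the decay.} This is the structure of the self-similar case in \cite{KillipTaoVisan09radial-2D}, and Dodson's theorem likewise uses only the half-line where $\int N(t)^3\,dt<\infty$.
\begin{itemize}
\item Write $P_{>N}u(1)=i\int_1^\infty e^{i(1-s)\De}P_{>N}F(s)\,ds$ as a weak limit, and split $[1,\infty)$ into windows $[2^k,2^{k+1}]$.
\item On the $k$-th window the fixed cutoff $N$ sits at $2^{k/2}N$ relative to the local frequency scale $2^{-k/2}$.
\item Define the scale-invariant quantities $\mathcal M(A)=\sup_{T>0}\norm{P_{>AT^{-1/2}}u(T)}_{L^2}$ and $\mathcal N(A)=\sup_{T>0}\norm{P_{>AT^{-1/2}}F(u)}_{L^{p'}L^{p'}([T,2T]\times\R^d)}$. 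Ordinary Strichartz on each window gives $\mathcal M(A)\les\sum_{k\ge0}\mathcal N(2^{k/2}A)$.
\item Bound $\mathcal N$ by the fractional chain rule in terms of the matching Strichartz quantity $\mathcal S$. Combined with the qualitative smallness of $\mathcal S(A)$ for $A\gg1$ from compactness, this closes a bootstrap to $\mathcal M(A)\les A^{-\theta}$ for every $\theta<1+\frac4d$. That is \eqref{eq:Dodson} at $t=1$.
\end{itemize}

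Two smaller points. Your long-time Strichartz norm $L^2L^{2_1}$ is undefined for $d=1$, and for $d=2$ it is the endpoint Strichartz estimate that fails; this is why Dodson's low-dimensional papers use different spaces. Also, the rescaled solution should be $t^{d/4}u(t\,\cdot,t^{1/2}\cdot)$, not $t^{d/2}u(t\,\cdot,t^{1/2}\cdot)$.
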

\begin{proof}
This gain of regularity \eqref{eq:Dodson} is a direct consequence of
\cite[Theorem~5.1]{Dodson16defoc-1D} in $d=1$, \cite[Theorem~5.1]{Dodson16defoc-2D} in $d=2$, and
\cite[Theorem~3.13]{Dodson12defoc-3D+} in $d\ge3$.
\end{proof}
We use underline notation for the self-similar renormalization,
\[
\ul u(s,y)=t^{\frac d4}u(t,x),\quad t=e^s,\quad x=t^{\frac12}y.
\]
Let $u^\pc$ be the time-reversed pseudo-conformal transform of $u$
\begin{equation}\label{eq:u pc def}
u^\pc(t,x)\coloneqq |t|^{-\frac d2}e^{\frac i{4t}|x|^2}\ol{u}(t^{-1},t^{-1}x),\quad t>0.
\end{equation}
For functions with superscripts $\pc$, we use the underline notation with time-reversal of the variable $s$, which is convenient for the following identity:
\begin{equation}\label{eq:ul u^pc}
\ul{u^{\pc}}(s,y)=e^{\frac{i}{4}|y|^2}\,\ol{\ul{u}}(s,y).
\end{equation} 
As a result, $u^{\pc}$ is also almost periodic modulo self-similar scaling.
We frequently exploit $u^{\pc}$ to derive the spatial decay for $u$ via the
identity
\begin{equation}\label{eq:ix/2t u}
iy\ul u = 2\nabla\ul u - 2 e^{\frac{i}{4}|y|^2}\nabla\oul{u^\pc}.
\end{equation}
By \eqref{eq:Dodson}, \eqref{eq:ul u^pc}, and \eqref{eq:ix/2t u}, we obtain for $0\le \theta<1+\frac4d$
\begin{equation}\label{eq:yu Hs}
\norm{\ul u(s)}_{H^\theta}+\norm{y\ul u(s)}_{L^2}\les_{\theta}1.
\end{equation}
For $d=1,2$, since $H^{\theta}(\R^d)\hook W^{1,\infty}(\R^d)$ for $2<\theta<3$, we similarly obtain
\begin{equation}\label{eq:u 4/d}
\norm{y\ul u(s)}_{L^\infty}\les1.
\end{equation}
By \eqref{eq:yu Hs}, the orbit of $\ul u(s)$ is precompact in $H^1(\R^d)$.
Hence, we obtain
\begin{align}\label{eq:rv2 goes zero}
r\norm{\ul u(s)}_{L^2(r\S^{d-1})}^2\les r\norm{|\ul u|(s)}_{L^2(|y|\ge r)}\norm{\na|\ul u|(s)}_{L^2(|y|\ge r)}=o_r(1),\quad s\in\R.
\end{align}
\subsection{Nonexistence of almost periodic solution modulo self-similar scaling}\label{subsec:nonexistence of AP}
In this subsection, we fix an almost periodic self-similar solution $u$ in Proposition~\ref{prop:main for AP}. Our goal is to prove the following proposition, which concludes Theorem~\ref{thm:no self similar}.
\begin{prop}\label{prop:no self similar AP} There does not exist a solution $u$ given in Proposition~\ref{prop:main for AP}.
\end{prop}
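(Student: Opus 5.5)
The plan is to show that the self-similar profile $\ul u(s)$ decays faster than any exponential, uniformly in $s$. We then transport this decay, through the transform $\g_1$ of \eqref{eq:g1 def}, to two different times of a genuine solution. Finally, Proposition~\ref{prop:Carleman uniqueness} forces $u\equiv 0$. This contradicts the fact that $u$ blows up, since almost periodicity modulo self-similar scaling makes $u$ non-scattering as $t\to0^+$, so $M(u)\ge M(Q)>0$.

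\emph{Step 1 (polynomial decay).} Proposition~\ref{prop:Dodson} gives $\ul u(s)\in H^\theta$ uniformly for $\theta<1+\frac4d$. The same bound holds for $\ul{u^\pc}$, since $u^\pc$ is again almost periodic modulo self-similar scaling. Using \eqref{eq:ix/2t u}, we trade one power of $y$ for one derivative of $\ul u$ or of $\oul{u^\pc}$. This gives \eqref{eq:yu Hs}, and in $d=1,2$ also \eqref{eq:u 4/d}. Together with precompactness, it yields uniform tail smallness: $\norm{\ul u(s)}_{L^2(|y|\ge r)}=o_r(1)$, and $\norm{|\ul u|^{4/d}}_{L^\infty(|y|\ge r)}=o_r(1)$ or the analogous Strichartz-level smallness. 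The essential output of this step is that the nonlinear potential $V=|u|^{4/d}$ is small in the dual endpoint norm on the exterior of a parabola $\{|x|\ge r\sqrt t\}$, uniformly in time windows $[t,2t]$.

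\emph{Step 2 (exponential decay via Carleman).} We apply $\g_1$ on a dyadic window in $t$ after rescaling. By Remark~\ref{rem:g1 def}, the parabolic region $\{|x|\le r\sqrt t\}$ becomes an elliptic region whose spatial cross-sections shrink at the two endpoint times. We then apply Proposition~\ref{prop:Carleman} to $v=\g_1u$ with weight $e^{\be x_1}$, writing $i\d_tv+\De v=-|v|^{4/d}v$.
\begin{itemize}
\item On $\{x_1\le r\}$ the weight is bounded by $e^{\be r}$, and we bound this part by mass.
\item On $\{x_1\ge r\}$ we are outside the ellipse. By Step 1, $\norm{e^{\be x_1}Vv}_{L^2L^{2_{-1}}}\le o_r(1)\norm{e^{\be x_1}v}_{L^2L^{2_1}}$, which is absorbed into the left side.
\item The boundary terms at $t\in\{t_0,t_1\}$ correspond, after undoing $\g_1$, to times where the profile is evaluated at very large $|y|$. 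They are controlled by a bootstrap in $\be$ (or by a continuity argument in the window), using that the ellipse degenerates there.
\end{itemize}
The result is $\sup_s\norm{e^{\be|y|}\ul u(s)}_{L^2}\les_\be 1$ for every $\be>0$. For this we take $r=r(\be)$ large so that the smallness beats the implicit constant, and we use rotation invariance in the direction $x_1$. In $d=1,2$, where the endpoint form of \eqref{eq:Carleman} is unavailable, we would instead use an $L^\infty$ potential bound from \eqref{eq:u 4/d} with the $L^2$-energy version of the Carleman estimate.

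\emph{Step 3 (conclusion).} Fix two times $t_0<t_1$ and consider $v$ in the $\g_1$ frame. Step 2 gives $\norm{e^{\be x_1}v(t_j)}_{L^2}\le C_\be$ for all $\be$, together with a quantitative rate. Rerunning the Carleman inequality on the half-space with the weight $e^{\be(x_1-b)}$ and letting $\be\to\infty$, we show that $v$ vanishes on $\{x_1>b\}$ at both times for suitable $b$. The Ionescu--Kenig argument behind Proposition~\ref{prop:Carleman uniqueness} is precisely this limiting procedure, so we can alternatively feed the super-exponential decay directly into their Theorem~2.5. Either way $v\equiv0$, hence $u\equiv0$, which is a contradiction.

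The main obstacle is Step 2. There we must close the weighted estimate with the nonlinearity treated perturbatively only outside a parabola. We must also control the boundary terms of \eqref{eq:Carleman}, which is exactly where the $\g_1$ geometry (the shrinking ellipse at the endpoint times) is needed. The first thing to try is to choose the $\g_1$ window so that the endpoint slices lie at self-similar radii $|y|\gtrsim R$, and to run an induction on $\be$ in small increments.
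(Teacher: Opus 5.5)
There are two genuine gaps, and together they mean the argument never reaches a contradiction.

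\textbf{Step 2 has no base case.} In the $\g_1$ frame, a Carleman window covering $s\in[s_0-S,s_0+S]$ turns the weight $e^{\be x_1}$ into $e^{\be\,\sech(\frac{s-s_0}2)\,y_1}$. So the boundary terms in \eqref{eq:Carleman} are $\norm{e^{\be\sech(S/2)\,y_1}\ul u(s_0\pm S)}_{L^2}$.
\begin{itemize}
\item The shrinking of the ellipse lowers the exponential rate you must feed in, but never to zero.
\item You cannot let the window exhaust the lifespan of $\g_1u$ either, since $\g_1u$ blows up at both ends.
\item Step~1 gives only $\norm{y\ul u(s)}_{L^2}\les1$, so nothing guarantees these boundary terms are finite.
\item Your absorption of $o_r(1)\norm{e^{\be x_1}v}_{L^2L^{2_1}}$ likewise presupposes that this norm is finite.
\end{itemize}
Carleman estimates propagate exponential decay from the endpoint times into the window; they cannot create it. The paper creates it by a different mechanism: the $\mu$-averaged virial identity \eqref{eq:Virial for d>=3} for the Lens transform $v=e^{-\frac i8|y|^2}\ul u$. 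There the repulsive-potential term $\frac r4\varphi_r|v|^2$ dominates $\frac1{16}\I V_\varphi$ (because $\varphi\le r\varphi_r$) and absorbs the $\De^2\varphi$ and nonlinear terms outside a large ball. This is done first for polynomial weights (Lemma~\ref{lem:poly bound}) and then, via \eqref{eq:prep for exponential}, for $e^{\be r}$ weights (Lemma~\ref{lem:exp bound}).

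\textbf{Step 3 fails as stated.} Uniform super-exponential decay does not imply vanishing on a half-space: already for the free equation, $e^{it\De}e^{-|x|^2}$ has Gaussian decay at every time. Moreover, Proposition~\ref{prop:Carleman uniqueness} (i.e.\ \cite[Theorem 2.5]{IonescuKenig04Carleman}) requires support in a half-space, not decay. Letting $\be\to\infty$ with weight $e^{\be(x_1-b)}$ gives vanishing on $\{x_1>b\}$ only if $e^{-\be b}\norm{e^{\be x_1}v(t_j)}_{L^2}$ stays bounded. That is the support condition itself, so the argument is circular.

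The missing idea is to use the $\g_1$ geometry to close an inequality for one and the same quantity:
\begin{itemize}
\item Let $B=\sup_s\norm{e^{\be y_1}v(s)}_{L^2}$, which is finite by Lemma~\ref{lem:exp bound}, and let $s_0$ be a near-maximizer.
\item On $s\in[s_0-1,s_0+1]$ the weight is $e^{\be y_1}$ at $s_0$ but only $e^{\eta\be y_1}$ at $s_0\pm1$, where $\eta=\sech(\frac12)<1$.
\item H\"older gives $\norm{e^{\eta\be y_1}v}_{L^2}\le\norm{e^{\be y_1}v}_{L^2}^{\eta}\norm{v}_{L^2}^{1-\eta}\les B^\eta$ at the ends.
\item Splitting the nonlinearity at $y_1=R_\ep$ and absorbing the outer part then yields $B\les B^\eta+e^{\be R_\ep}$.
\item Proposition~\ref{prop:Carleman uniqueness}, applied at two fixed times, gives $B\gtrsim_{v,\ep}e^{\be(R_\ep+1)}$.
\end{itemize}
Letting $\be\to\infty$ gives the contradiction. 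You correctly noticed that the ellipse shrinks at the ends, but you spent that gain on a decay bootstrap instead of on this self-improving inequality. Unique continuation enters only through the lower bound on $B$.

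For $d=1,2$ the paper does not use this route at all. It derives the convexity inequality \eqref{eq:ODE ineq} for $\ga=-\log\I W$ from the virial identity. Your proposed $L^\infty$-potential Carleman variant there inherits both gaps above.
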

We define $v$ by the transform of $u$
\begin{equation}\label{eq:Lens}
v(s,y)=e^{-\frac i8|y|^2}\ul u(s,y).
\end{equation}
Note that $v$ is a Lens transform of $u$; see, for example, \cite[Chapter 8.3]{Fibich2015NLS}. Such quadratic phase arises naturally in connection with the pseudo-conformal symmetry.

$v$ solves the nonlinear Schr\"odinger equation with a repulsive harmonic potential,
\begin{equation}\label{eq:NLS harmonic}
i\d_sv+\De v+\tfrac1{16}|y|^2v+|v|^{\frac4d}v=0.
\end{equation}

We use the virial identity for $v$ with a weight function.
For a multiplier $\varphi\in C_c^\infty(\R^d)$, we denote the virial functional of $v(s)$ by
\begin{equation}\label{eq:V def}
V_{\varphi}(s)\coloneqq \int_{\R^d} |v(s,y)|^2 \varphi(y) dy .
\end{equation}
Then $V_\varphi$ satisfies the virial identities
\begin{align}\label{eq:V'=W}
\d_sV_\varphi&=2\Im\int_{\R^d}\ol v\na v\cdot\na\varphi  dy,
\\\label{eq:Virial in v init}
\d_{ss}V_\varphi&=\int_{\R^d}\frac y4|v|^2\cdot\na\varphi+4\ol{\d_{y_j}v}\d_{y_k}v\d_{y_jy_k}\varphi-|v|^2\De^2\varphi-\frac4{d+2}|v|^{2+\frac4d}\De\varphi dy.
\end{align}
The last term in \eqref{eq:Virial in v init}, which arises from the nonlinearity in \eqref{eq:NLS}, is always treated as an error term. The core analysis is instead governed by the quadratic terms in \eqref{eq:Virial in v init}, which originate from the linear Schr\"odinger equation.

We note the regularity and decay of $v$:
by \eqref{eq:yu Hs} and \eqref{eq:u 4/d}, we have
\begin{align}\label{eq:v cpt}
&\norm{v(s)}_{H^1}+\norm{yv(s)}_{L^2}\les 1,&d\ge1,
\\
\label{eq:v 4/d is -2}
&|v(s,y)|=|\ul u(s,y)|\les_u\min\{1,|y|^{-1}\}\les |y|^{-\frac d2},& d=1,2.
\end{align}
We now prove Proposition~\ref{prop:no self similar AP}.
The argument splits according to the dimension $d$.
For $d=1,2$, we develop an integro-differential inequality that excludes self-similar blow-up, using the virial identity \eqref{eq:Virial in v init} with a suitable family of weights $\{\varphi^a\}$. This inequality technically relies on the pointwise decay \eqref{eq:v 4/d is -2} available for $d=1,2$; for $d\ge 3$, we make a different approach based on the $L^2_t$-Carleman inequality \eqref{eq:Carleman}.

\begin{proof}[Proof of Proposition~\ref{prop:no self similar AP} for $d=1,2$]
For $a>0$,
let $\varphi^{a}$ be a radial weight function
\[
\varphi^{a}(x)=\max\{|x|-a,0\},\quad x\in\R^d.
\]
We will deduce from \eqref{eq:Virial in v init} a single integro-differential inequality of $a$. For this, we take a weighted average
on the variable $s$. For $F:\R\to\C$, we denote
\footnote{The choice of exponent $\frac14$ is arbitrary; our analysis works with $\mu(s)=e^{-\nu|s|}$ for any $\nu<\frac12$.}
\[
\I F\coloneqq \int_\R F(s)\mu(s) ds,\quad \mu(s)\coloneqq e^{-\frac14|s|}.
\]
For a radial weight function $\varphi\in C^\infty_c(\R^d)$, by an integration by parts, we have
\begin{equation}\label{eq:V''F-VF''}
\I \d_{ss}V_{\varphi}=\int_\R \d_{ss}V_{\varphi}\cdot\mu ds=\int_\R V_{\varphi}\cdot\d_{ss}\mu ds=\frac1{16}\I V_{\varphi}-\frac12 V_\varphi(0).
\end{equation}
We denote by $\cancel{\na}=\na-\frac y{|y|}\d_r$ the angular gradient.
By \eqref{eq:V''F-VF''} and the identity
\[
\d_{x_jx_k}\varphi\ol{\d_{x_j}v}\d_{x_k}v=\varphi_{rr}|v_r|^2+\frac1r\varphi_r|\cancel\na v|^2,
\]
\eqref{eq:Virial in v init} can be rewritten as
\begin{align}\label{eq:Virial in v<VF''-V''F}
&\I\int_{\R^d}\frac r4 \varphi_r|v|^2+4\varphi_{rr}|v_r|^2+\frac{4}{r}\varphi_r|\cancel{\na} v|^2-\De\varphi\De(|v|^2)-\frac{4}{d+2}(\Delta\varphi)|v|^{2+\frac4d}  dy
\\=&\frac1{16}\I V_\varphi-\frac12 V_\varphi(0).\nonumber
\end{align}

Although $\varphi^a\notin C_c^\infty(\R^d)$, the identity \eqref{eq:Virial in v<VF''-V''F} stays valid for $\varphi=\varphi^a$. Indeed, one may approximate $\varphi^a$ by  $\varphi\in C^\infty_c(\R^d)$ and pass to a limit using \eqref{eq:yu Hs} and \eqref{eq:v cpt}.

We also introduce the spherical mass density
\[
W(s,r)\coloneqq \int_{r\S^{d-1}}|v(s,y)|^2 d\s,\quad r>0.
\]
Define 
\[ 
\mc E\coloneqq \{a \in (0,\infty):  \I W(a)\neq 0\}. 
\]
Then for each $a\in\mc E$, using $\varphi^a_{rr}=\delta_0(|y|-a)$, $2|v||v_r|\ge|(|v|^2)_r|$, and Cauchy-Schwarz, we estimate
\begin{equation}\label{eq:CS EW}
\I\int_{\R^d}4\varphi_{rr}^a|v_r|^2 dy=\I\int_{a\S^{d-1}}4|v_r|^2 d\s\ge\frac{\left|\I\int_{a\S^{d-1}}(|v|^2)_r d\s\right|^2}{\I\int_{a\S^{d-1}}|v|^2 d\s}.
\end{equation}
An integration by parts on the sum of \eqref{eq:Virial in v<VF''-V''F} and \eqref{eq:CS EW} yields
\begin{align}\label{eq:Virial in EW}
&\int_a^\infty\frac r4 \I W dr+\Bb{\frac{(\I W_r)^2}{\I W}-\I W_{rr}}(a)+\int_a^\infty\frac{(d-1)(d-3)}{r^3} \I W  dr
\\&-\frac{4}{d+2}\I\int_{\R^d}(\De\varphi^a)|v|^{2+\frac4d} dy\le \frac1{16}\I V_{\varphi^a}-\frac12 V_{\varphi^a}(0).\nonumber
\end{align}
Noticing that $V_{\varphi^a}(s)=\int_a^\infty(r-a)W(s,r) dr$, \eqref{eq:Virial in EW} yields
\begin{subequations}\label{eq:Virial fight mother}
\begin{align}
\int_a^\infty \frac{a+r}8 \mathcal I W dr
+\Bigl(\frac{(\mathcal I W_r)^2}{\mathcal I W}-\mathcal I W_{rr}\Bigr)(a)
+\int_a^\infty\frac{(d-1)(d-3)}{r^3}\,\mathcal I W dr
\label{eq:Virial fight}
\\
-\frac{4}{d+2}\,
\mathcal I\!\int_{\R^d}(\Delta\varphi^a)\,|v|^{2+\frac4d} dy
\;\le\;
-\frac12 V_{\varphi^a}(0)
\;\le\;0 .
\label{eq:tag nonlinear integral for Sec 5}
\end{align}
\end{subequations}
Using the decay bound \eqref{eq:v 4/d is -2}, we estimate \eqref{eq:Virial fight mother} as follows: there exists $c=O_u(1)$ such that for every $a\in\mc E$,
\[
\int_a^\infty\frac{a+r}8\I W(r)dr+\Bigl(\frac{(\I W_r)^2}{\I W}-\I W_{rr}\Bigr)(a)
-\frac c{a^2}\I W(a)-\int_a^\infty \frac c{r^3}\I W(r)dr\le 0.
\]
We perform an logarithmic change of variable on $\I W$ by setting
\begin{equation}\label{eq:ga def}
\gamma(r)\coloneqq-\log\I W(r),\quad r\in\mc E.
\end{equation}
In terms of $\gamma$, we obtain an ODE convexity inequality
\begin{equation}\label{eq:ODE ineq}
\int_a^\infty\Bb{\frac{a+r}8-\frac c{r^3}} e^{-\gamma(r)} dr+\Bb{\gamma''(a)-\frac c{a^2}}e^{-\gamma(a)}\le0,\quad a\in\mc E.
\end{equation}
Denote $a_*=2c^{\frac14}$. Noticing that $\frac r8\ge\frac c{r^3}$ for $r\ge a_*$, multiplying $e^{\gamma(a)}$ to \eqref{eq:ODE ineq} yields
\begin{equation}\label{eq:ODE ineq for a!=a0}
\gamma''(a)-\frac c{a^2}\le
-\frac a8\int_a^\infty e^{\gamma(a)-\gamma(r)} dr,\quad a\in\mc E\cap(a_*,\infty).
\end{equation}
Proposition~\ref{prop:Carleman uniqueness} yields $\E\cap(a_*,\infty)\neq\emptyset$. Then, by \eqref{eq:ODE ineq for a!=a0}, $\E\supset(a_*,\infty)$ follows.

By \eqref{eq:ODE ineq for a!=a0}, we obtain
\begin{equation}\label{eq:gamma' bound}
\gamma'(a)\le\gamma'(a_*)+\int_{a_*}^a\frac{c}{r^2}dr\le\gamma'(a_*)+\frac c{a_*},\quad a>a_*.
\end{equation}
On the other hand, since $\int_0^\infty e^{-\gamma(r)} dr=8M(v)$ is finite, we have
\begin{equation}\label{eq:limsup gamma}
\limsup_{a\to\infty}\gamma(a)=\infty.
\end{equation}
By \eqref{eq:limsup gamma}, the right-hand side of \eqref{eq:gamma' bound} is positive. Hence, \eqref{eq:ODE ineq for a!=a0} can be reduced to
\begin{equation}\label{eq:ODE ineq corrected}
\gamma''(a)-\frac c{a^2}\le -\frac a8\Bigl(\gamma'(a_*)+\frac c{a_*}\Bigr)^{-1},
\quad a>a_*.
\end{equation}
Integrating \eqref{eq:ODE ineq corrected} twice in $a$ forces $\gamma(a)\to -\infty$
as $a\to\infty$, contradicting \eqref{eq:limsup gamma}. This completes the proof.
\end{proof}
We turn to dimensions $d\ge 3$. In high dimensions, \eqref{eq:Dodson} is not sufficient to deduce \eqref{eq:v 4/d is -2} and we instead use a different argument based on the $L^2_t$-Carleman inequality \eqref{eq:Carleman}. We begin with a geometric heuristic.
If one regards the effective support of $u$ as the parabolic region
\[
\Gamma=\{(t,x): |x|\le \sqrt{t}\},
\]
then the effective support of $\g_1 u$ is an ellipsoidal region whose diameter is maximal at time $\g_1 1=1$ (see Remark~\ref{rem:g1 def}).
This is conceptually in contrast to the dispersive behavior of a solution. To obtain a contradiction, we quantify the effective support radius via exponentially weighted $L^2_x$ norms and employ the Carleman inequality.

As a preparatory step, we first establish sufficient decay of $v$.
\begin{lem}\label{lem:poly bound}
Let $m=2d+2$. Then, for all $s\in\R$, we have $V_{r^m}(s)=O(1)$.
\end{lem}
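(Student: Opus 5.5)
We would prove the bound by running the averaged virial identity \eqref{eq:Virial in v<VF''-V''F} with a truncated version of the weight $r^m$. The key feature is that the repulsive term $\frac r4\varphi_r|v|^2$ is coercive. For a convex, radially increasing weight with $\varphi(0)=0$ we have $r\varphi_r\ge\varphi$. Hence the left-hand side dominates $\frac14\I V_\varphi$, and this beats the $\frac1{16}\I V_\varphi$ on the right-hand side. The identity then leaves
\[
\tfrac12V_\varphi(0)\le\bigl(\tfrac1{16}-\tfrac14\bigr)\I V_\varphi+\text{(lower-order and nonlinear errors)}.
\]
Since everything is invariant under translation in $s$, a bound on $V_\varphi(0)$ that is uniform over the truncations gives $V_{r^m}(s)=O(1)$ for all $s$ by monotone convergence.

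\textbf{Choice of weights.} For $R\gg1$, let $\varphi_R$ be a smooth, radial, convex, nondecreasing function with $\varphi_R(0)=0$. We take $\varphi_R=r^m$ for $1\le r\le R$ and $\varphi_R$ of linear growth for $r\ge 2R$. We arrange the bounds
\[
|\De\varphi_R|+r\,\d_{rr}\varphi_R+\d_r\varphi_R\les \jp r^{-1}\varphi_R,\qquad |\De^2\varphi_R|\les\jp r^{-4}\varphi_R\les \jp r^{m-4},
\]
together with the lower bound $\d_{rr}\varphi_R,\ r^{-1}\d_r\varphi_R\gtrsim \jp r^{-2}\varphi_R$, up to harmless constants. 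Since $\varphi_R\les_R\jp r$, \eqref{eq:v cpt} makes every quantity finite, and the approximation argument used after \eqref{eq:Virial in v<VF''-V''F} justifies the identity. Convexity makes the Hessian terms $4\varphi_{rr}|v_r|^2+\frac4r\varphi_r|\cancel\na v|^2$ nonnegative; in fact they dominate $c\,\I\int\jp r^{-2}\varphi_R|\na v|^2$. The bilaplacian term is handled by interpolation:
\[
\int\jp r^{m-4}|v|^2\le\ep V_{\varphi_R}+C_\ep M(v),
\]
which is absorbed into the coercive $\I V_{\varphi_R}$ term.

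\textbf{Main obstacle: the nonlinear term.} The remaining term is $\frac4{d+2}\I\int(\De\varphi_R)|v|^{2+\frac4d}$, with $\De\varphi_R\les \jp r^{-2}\varphi_R$. For $d\ge3$ we lack the pointwise decay \eqref{eq:v 4/d is -2}, so we plan to use a localized Gagliardo--Nirenberg inequality instead. Split with a cutoff at radius $A$:

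\begin{itemize}
\item \emph{Inner region $|y|\le A$.} Here the weight is at most $A^{m}$, and $\norm{v}_{L^{2+4/d}}\les\norm v_{H^1}\les1$ by \eqref{eq:v cpt}. This piece contributes $O_A(1)$.
\item \emph{Outer region $|y|\ge A$.} Apply Gagliardo--Nirenberg to $w=(\jp r^{-2}\varphi_R)^{1/2}\psi^{\ge A}v$. This gives
\[
\int_{|y|\ge A}\jp r^{-2}\varphi_R|v|^{2+\frac4d}\les\norm{v(s)}_{L^2(|y|\ge A)}^{\frac4d}\Bigl(\int\jp r^{-2}\varphi_R|\na v|^2+\int\jp r^{-4}\varphi_R|v|^2\Bigr).
\]
By the precompactness of the orbit $\{\ul u(s)\}$ in $L^2$ (Proposition~\ref{prop:main for AP}), the tail mass $\norm{v(s)}_{L^2(|y|\ge A)}$ is small uniformly in $s$ for $A$ large.
\end{itemize}

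For $A$ large, the outer piece is absorbed into the positive Hessian term and the coercive mass term. The point needing care is the weight bookkeeping for the localized Gagliardo--Nirenberg step near the transition region $r\sim R$. There we need the gradient-weight lower bound $\varphi_{rr},\ r^{-1}\varphi_r\gtrsim\jp r^{-2}\varphi_R$, which holds for $r^m$ and can be preserved in the linear-growth region by the choice of $\varphi_R$.

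\textbf{Conclusion.} Combining the above, $V_{\varphi_R}(0)\le C$ with $C$ independent of $R$ and of the time origin. Translating in $s$ and letting $R\to\infty$ yields $V_{r^m}(s)=O(1)$ for all $s\in\R$.
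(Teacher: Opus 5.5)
\textbf{Gap: the full-gradient Gagliardo--Nirenberg step.} Your architecture matches the paper's: the averaged identity \eqref{eq:Virial for d>=3} with a truncation of $r^m$, coercivity from $r\varphi_r\ge\varphi$, absorption of the bilaplacian term, and a localized Sobolev bound for the nonlinear term with a uniformly small tail factor. The break is in the step you flagged yourself.

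Your Gagliardo--Nirenberg bound on $\mathbb{R}^d$ produces $\int\langle r\rangle^{-2}\varphi_R|\nabla v|^2$ with the \emph{full} gradient. The only positive term in the identity that controls the radial part is $4\varphi_{rr}|v_r|^2$. So you need $\partial_{rr}\varphi_R\gtrsim\langle r\rangle^{-2}\varphi_R$ uniformly in $R$, and this cannot hold with linear growth:
\begin{itemize}
\item If $\varphi_R\sim Br$, the condition reads $\varphi_{rr}\gtrsim B/r$, which forces $\varphi_r\gtrsim B\log r$. More generally, $\varphi''\ge c\,r^{-2}\varphi$ forces growth at least $r^{\lambda}$ with $\lambda(\lambda-1)=c$, so $\lambda>1$.
\item Where $\varphi_R$ is exactly linear, $\varphi_{rr}\equiv0$, while $\Delta\varphi_R\sim B/r$ with $B\sim R^{m-1}$.
\item The radial contribution $\int_{r\ge 2R}\frac{B}{r}|v_r|^2$ is then only bounded by $R^{m-2}\|\nabla v\|_{L^2}^2$.
\item Even the best available tail factor, $\|v\|_{L^2(|y|\ge R)}^{4/d}\lesssim R^{-4/d}$ from $\|yv\|_{L^2}\lesssim1$, leaves $R^{m-2-4/d}\to\infty$.
\end{itemize}
So the $R$-independent bound on $V_{\varphi_R}(0)$ does not follow as written.

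\textbf{Two repairs.} There are two ways to fix this, and the paper effectively uses both.

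(a) Let the truncation grow quadratically rather than linearly. The paper sets $\partial_{rrr}\varphi_\rho=\psi^{\le\rho}\partial_{rrr}(r^m)$, so $\varphi_{rr}$ is nondecreasing and equals a constant $\sim\rho^{m-2}$ for $r\ge2\rho$, while $\varphi_\rho\sim\rho^{m-2}r^2$ there. Then both $\varphi_{rr}$ and $r^{-1}\varphi_r$ are $\gtrsim\langle r\rangle^{-2}\varphi_\rho$ uniformly in $\rho$. All terms stay finite because \eqref{eq:v cpt} gives $\|yv\|_{L^2}\lesssim1$. With this weight your argument goes through.

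(b) Avoid the radial derivative entirely, which is how the paper treats the nonlinear term. Apply H\"older and Sobolev on each sphere $r\mathbb{S}^{d-1}$ to get
\[
\int_{|y|\ge R}(\Delta\varphi)|v|^{2+\frac4d}\lesssim\sup_{r\ge R}r^{\frac2d}\|v\|_{L^2(r\mathbb{S}^{d-1})}^{\frac4d}\int_{|y|\ge R}(\Delta\varphi)\bigl(|v|^2+|\cancel\nabla v|^2\bigr).
\]
Since $\Delta\varphi\lesssim r^{-1}\varphi_r$, this is absorbed by $\frac r4\varphi_r|v|^2$ and $\frac4r\varphi_r|\cancel\nabla v|^2$ alone, and $\varphi_{rr}|v_r|^2$ is simply discarded. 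The price is that the small factor must be the weighted sphere norm, which comes from $H^1$-precompactness via \eqref{eq:rv2 goes zero}, not merely the $L^2$ tail mass you use.

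\textbf{Minor point.} The interpolation $\int\langle r\rangle^{m-4}|v|^2\le\epsilon V_{\varphi_R}+C_\epsilon M(v)$ is false for a truncated weight, since $\langle r\rangle^{m-4}\gg\varphi_R$ for $r\gg R$. Keep $\langle r\rangle^{-4}\varphi_R$, which is what your bound on $\Delta^2\varphi_R$ actually gives; with that the absorption works.
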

\begin{proof}
By scaling invariance, we may assume $s=0$.

For $\rho\ge1$, let $\varphi_{\rho}\in C^4(\R^d)$ be a radial function defined by
\[
\varphi_\rho(0)=\d_r\varphi_\rho(0)=\d_{rr}\varphi_\rho(0)=0,\quad \d_{rrr}\varphi_\rho(r)=\psi^{\le\rho}(r)\cdot\d_{rrr}(r^m),
\]
so that $\varphi_\rho(r)=r^m$ for $r\le\rho$ and is a quadratic polynomial for $r\ge2\rho$. We claim
\begin{equation}\label{eq:poly claim}
V_{\varphi_{\rho}}(0)=O(1)
\end{equation}
with a bound independent of $\rho$.
Once \eqref{eq:poly claim} is established, letting $\rho\to\infty$ yields $V_{r^m}(0)=O(1)$ as desired.
By \eqref{eq:Virial in v<VF''-V''F}, for $\varphi\in C^\infty_c(\R^d)$, we have
\begin{align}\label{eq:Virial for d>=3}
\frac1{16}\I V_\varphi-\frac12V_\varphi(0)=&\I\int_{\R^d}\frac r4 \varphi_r|v|^2+4\varphi_{rr}|v_r|^2+\frac{4}{r}\varphi_r|\cancel{\na} v|^2-(\De^2\varphi)|v|^2
\\
&-\frac{4}{d+2}(\Delta\varphi)|v|^{2+\frac4d} dy.\nonumber
\end{align}
By a limiting argument using \eqref{eq:v cpt}, \eqref{eq:Virial for d>=3} remains valid for
$\varphi=\varphi_{\rho}$.

Henceforth, we fix $\varphi=\varphi_{\rho}$.
Using $\varphi\le r\varphi_r$, we estimate 
\begin{equation}\label{eq:V phi bound}
\frac1{16}\I V_{\varphi}=\frac1{16}\I\int_{\R^d}\varphi|v|^2 dy\le\I\int_{\R^d}\frac r{8}\varphi_r|v|^2 dy.
\end{equation}
Note that $\De^2\varphi(r)\les \varphi_r(r)$ for $r\ge1$. Let $R_{1}=R_1(d)<\infty$ be a number such that
\begin{equation}\label{eq:v square bound}
(\De^2\varphi)|v|^2\le \frac r{16}\varphi_r|v|^2,\quad r\ge R_1.
\end{equation}
We use the Sobolev inequality for the sphere $r\S^{d-1}$, 
\[
\norm{v}_{L^{\frac{2d}{d-2}}(r\S^{d-1})}^2\les r^{\frac2d-2}\norm{v}_{L^2(r\S^{d-1})}^2+r^{\frac2d}\norm{\cancel\na v}_{L^2(r\S^{d-1})}^2
\]
to deduce that for $R\ge 1$, 
\begin{align}\label{eq:Sobolev bound}
&\I\int_{|y|\ge R}(\De\varphi)|v|^{2+\frac4d} dy\le\I\int_R^\infty(\De\varphi)\norm{v}_{L^{\frac{2d}{d-2}}(r\S^{d-1})}^2\norm{v}_{L^2(r\S^{d-1})}^{\frac4d} dr \\ 
\les&\I\int_{|y|\ge R}(\De\varphi)(|v|^2+|\cancel\na v|^2) dy\cdot\sup_{s\in\R}\sup_{r\ge R}r^{\frac2d}\norm{v(s)}_{L^2(r\S^{d-1})}^{\frac4d}.\nonumber
\end{align}
By \eqref{eq:rv2 goes zero}, we have
\begin{equation}\label{eq:r 2/d v 4/d goes zero}
\lim_{R\to\infty}\sup_{s\in\R}\sup_{r\ge R}r^{\frac2d}\norm{v(s)}_{L^2(r\S^{d-1})}^{\frac4d}=0.
\end{equation}
Again, observing that $\De\varphi\les r^{-1}\varphi_r$ for $r\ge1$, and using \eqref{eq:Sobolev bound} and \eqref{eq:r 2/d v 4/d goes zero}, there exists $R_{2}=R_2(d)<\infty$ such that
\begin{equation}\label{eq:Sobolev adapt bound}
\frac4{d+2}\I\int_{|y|\ge R_2}(\De\varphi)|v|^{2+\frac4d} dy\le\I\int_{|y|\ge R_2}\frac r{16}\varphi_r|v|^2+\frac4r\varphi_r|\cancel\na v|^2 dy.
\end{equation}
Combining \eqref{eq:Virial for d>=3}, \eqref{eq:V phi bound}, \eqref{eq:v square bound}, and \eqref{eq:Sobolev adapt bound} altogether, we conclude
\begin{equation}\label{eq:Virial for d>=3, polynomial bound}
\frac12V_\varphi(0)\le\I\int_{|y|<R_1}(\De^2\varphi)|v|^2 dy+\frac4{d+2}\I\int_{|y|<R_2}(\De\varphi)|v|^{2+\frac4d} dy\le O_u(1),
\end{equation}
where we used \eqref{eq:v cpt} for the last inequality. This finishes the proof.
\end{proof}
The next lemma strengthens Lemma~\ref{lem:poly bound} to super-exponential spatial decay of $v$.
\begin{lem}\label{lem:exp bound}
Let $\be>0$. Then, for all $s\in\R$, we have $V_{e^{\be r}}(s)=O_{\be} (1)$.
\end{lem}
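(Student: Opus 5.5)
The plan is to bootstrap Lemma~\ref{lem:poly bound} through the same weighted virial identity \eqref{eq:Virial for d>=3}. We use truncated weights $\varphi=\varphi_{\rho}$ that equal $e^{\be r}$ (after smoothing near the origin) for $r\le\rho$ and grow only polynomially beyond $2\rho$. We then show $V_{\varphi_\rho}(0)=O_\be(1)$ uniformly in $\rho$ and let $\rho\to\infty$. By the scaling invariance of the self-similar flow we may again take $s=0$. The truncation is essential: it keeps $V_{\varphi_\rho}$ finite a priori, by Lemma~\ref{lem:poly bound} if the polynomial growth is of order at most $r^m$, $m=2d+2$. A convenient construction prescribes $\d_r\varphi_\rho=\psi^{\le\rho}\,\d_r(e^{\be r})+(1-\psi^{\le\rho})\,c_\rho r^{m-1}$, with a smooth interior modification.

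The key pointwise relations to check for this weight are the following, where $C$ depends on $\be$ but not on $\rho$:
\[
\varphi\le C+C r\varphi_r,\qquad |\De^2\varphi|\le C(1+\be^3)\varphi_r,\qquad \De\varphi\le C(1+\be)\varphi_r .
\]
The crucial gain over the polynomial case is the extra factor $r$ in the potential term $\frac r4\varphi_r|v|^2$ coming from the repulsive harmonic potential in \eqref{eq:NLS harmonic}. This factor dominates both $\frac1{16}\varphi$ and $\De^2\varphi\sim\be^3 e^{\be r}$ once $r\ge R_1(\be)\sim\be^{3}$. The nonlinear term is handled as in \eqref{eq:Sobolev bound}--\eqref{eq:Sobolev adapt bound}. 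We use the spherical Sobolev inequality together with the smallness \eqref{eq:r 2/d v 4/d goes zero}, and bound $\De\varphi$ by $C\varphi_r$ rather than $r^{-1}\varphi_r$. The resulting term is controlled by
\[
o_{R\to\infty}(1)\cdot\I\int_{|y|\ge R}\varphi_r\bb{|v|^2+|\cancel\na v|^2}dy .
\]
Here $\varphi_r|v|^2$ is absorbed by $\frac r{16}\varphi_r|v|^2$. The angular gradient part must be absorbed by $\frac4r\varphi_r|\cancel\na v|^2$, but it lacks a factor $r^{-1}$.

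That angular piece is the main obstacle. I would try first to use radial symmetry where available. Otherwise I would instead use the convexity-in-the-radial-direction term $4\varphi_{rr}|v_r|^2\sim\be^2e^{\be r}|v_r|^2$ and the full gradient, writing $\De\varphi|v|^{2+4/d}$ via Gagliardo--Nirenberg on annuli. Since $\varphi_{rr}\ge\be\varphi_r/2$ for large $r$, the smallness factor from \eqref{eq:r 2/d v 4/d goes zero} applied to $\varphi_{rr}|\na v|^2$ type terms suffices. Failing that, I would run the bootstrap in steps $\be\mapsto\be+1$ with an $\epsilon$-loss weight.

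With these absorptions, \eqref{eq:Virial for d>=3} yields
\[
\tfrac12V_{\varphi_\rho}(0)\le \I\int_{|y|<R(\be)}\bb{|\De^2\varphi|\,|v|^2+\De\varphi\,|v|^{2+\frac4d}}dy=O_\be(1),
\]
using \eqref{eq:v cpt} on the bounded region. The right-hand side is independent of $\rho$, and monotone convergence gives $V_{e^{\be r}}(0)=O_\be(1)$.
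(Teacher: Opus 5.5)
Your setup (truncated weight; absorbing $\varphi$, $\De^2\varphi|v|^2$ and $\De\varphi|v|^2$ into $\frac r4\varphi_r|v|^2$; reduction to a bounded region) matches the paper. You also locate the only real difficulty correctly. With $\De\varphi\sim\be\varphi_r$, the spherical Sobolev bound \eqref{eq:Sobolev bound} produces $\varphi_r|\cancel{\na}v|^2$ times $\sup_{r\ge R}r^{2/d}\norm{v}_{L^2(r\S^{d-1})}^{4/d}$, but the virial identity only supplies $\frac4r\varphi_r|\cancel{\na}v|^2$. However, you leave this unresolved, and none of your remedies works as stated.

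\begin{itemize}
\item Radial symmetry is not available: Section~3 treats arbitrary, nonradial solutions.
\item The convexity term gives only $4\varphi_{rr}|v_r|^2$. The Hessian of a radial weight in angular directions is $\varphi_r/r$, so there is no $\varphi_{rr}|\cancel{\na}v|^2$ term on the good side. Any Gagliardo--Nirenberg bound on annuli still produces $|\cancel{\na}v|^2$ with weight $\De\varphi\sim\be\varphi_r$, which is a factor $r$ too large. The mere $o_R(1)$ smallness in \eqref{eq:r 2/d v 4/d goes zero} cannot offset an unbounded factor $r$.
\item The $\be\mapsto\be+1$ bootstrap is left unspecified.
\end{itemize}

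The missing idea is to use Lemma~\ref{lem:poly bound} not just for finiteness of $V_{\varphi_\rho}$, but to upgrade the smallness to an $r$-weighted one. By the trace estimate behind \eqref{eq:rv2 goes zero} and $V_{r^{2d+2}}=O(1)$, you get, uniformly in $s$,
\[
r\norm{v(s)}_{L^2(r\S^{d-1})}^2\les r\norm{v(s)}_{L^2(|y|\ge r)}\norm{\na v(s)}_{L^2}\les r\cdot r^{-d-1}=r^{-d}.
\]
Hence $r\cdot r^{2/d}\norm{v(s)}_{L^2(r\S^{d-1})}^{4/d}\les r^{1+\frac2d}\,r^{-2-\frac2d}=r^{-1}\to0$, which is exactly \eqref{eq:prep for exponential}. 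Carrying out the spherical Sobolev estimate pointwise in $r$, this extra factor $r$ turns $\be\varphi_r|\cancel{\na}v|^2$ into $o_R(1)\cdot\frac1r\varphi_r|\cancel{\na}v|^2$. That term is absorbed by the angular virial term, and the rest of your argument then closes as in Lemma~\ref{lem:poly bound}.
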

\begin{proof}
We mostly follow the proof of Lemma~\ref{lem:poly bound},
with suitable modifications for the exponential weight.
Fix $\rho\ge1$ and let $\varphi=\varphi_{\beta,\rho}$ be a
radial function such that
\[
\varphi(0)=\d_r\varphi(0)=\d_{rr}\varphi(0)=0,\quad \d_{rrr}\varphi(r)=\psi^{\le\rho}(r)\cdot\d_{rrr}(e^{\be r}).
\]
The proof proceeds as in Lemma~\ref{lem:poly bound}, except that the pointwise bound $
\Delta\varphi \les r^{-1}\varphi_r$ for $r\ge1$ should be replaced by the weaker bound $\De\varphi\les_\be\varphi_r$.
Thus, in order~to deduce \eqref{eq:Sobolev adapt bound} from \eqref{eq:Sobolev bound}, we require an $r$-weighted strengthening of \eqref{eq:r 2/d v 4/d goes zero}, namely
\begin{equation}\label{eq:prep for exponential}
\lim_{R\to\infty}\sup_{s\in\R}\sup_{r\ge R} r\cdot r^{\frac2d} \norm{v(s)}_{L^2(r\S^{d-1})}^{\frac4d}=0.
\end{equation}
To verify \eqref{eq:prep for exponential}, we invoke \eqref{eq:yu Hs}, \eqref{eq:rv2 goes zero}, and Lemma~\ref{lem:poly bound}. Then, we obtain
\[
r\sup_{s\in\R}\norm{v(s)}_{L^2(r\S^{d-1})}^2 \lesssim \sup_{s\in\R}r\norm{v(s)}_{L^2(|y|\ge r)}\,\norm{\nabla v(s)}_{L^2(|y|\ge r)} \lesssim_v r^{-d},
\]
which immediately yields \eqref{eq:prep for exponential}.

With \eqref{eq:prep for exponential} in hand, arguing as in Lemma~\ref{lem:poly bound} completes the proof.
\end{proof}
Finally, we rule out the possibility of superexponential decay.
\begin{proof}[Finish of the proof of Proposition~\ref{prop:no self similar AP} for $d\ge3$]
Let $\ep=\ep(v)>0$ and $\be=\be(v)>0$ be numbers to be fixed later.
By \eqref{eq:v cpt}, there exists $R_\ep=R_\ep(v)>0$ such that
\begin{equation}\label{eq:prep for d>3, v}
\sup_{s\in\R}\norm{v(s)}_{L^2(y_1>R_\ep)}\le\ep.
\end{equation}
Lemma~\ref{lem:exp bound} yields $\norm{e^{\be y_1}v(s)}_{L^2}\les_\be1$, thus there exists $s_0\in\R$ such that
\begin{equation}\label{eq:s0 def}
2\norm{e^{\be y_1}v(s_0)}_{L^2}\ge \sup_{s\in\R}\norm{e^{\be y_1}v(s)}_{L^2}\eqqcolon B.
\end{equation}
By Proposition~\ref{prop:Carleman uniqueness}, either of $v(\pm1)$ is nonzero in $\{y_1\ge R_\ep+1\}$. Thus, we have
\begin{equation}\label{eq:B large}
B\gtrsim_{v,\ep} e^{\be(R_\ep+1)}.
\end{equation}
By scaling, we may assume $s_0=0$.
Denoting $t_{\pm1}=\g_1^{-1}e^{\pm1}$, \eqref{eq:Carleman} yields
\begin{align}\label{eq:Carleman ineq for u}
&\norm{e^{\be x_1} \g_1 u(1)}_{L^2}+\norm{e^{\be x_1}\g_1 u}_{L^2L^{2_1}([t_{-1},t_1]\times\R^d)}
\\\les&\max_{t\in\{t_{-1},t_1\}}\norm{e^{\be x_1}\g_1 u(t)}_{L^2}+\norm{e^{\be x_1}|\g_1 u|^{\frac4d}\g_1 u}_{L^2L^{2_{-1}}([t_{-1},t_1]\times\R^d)}.\nonumber
\end{align}
We rewrite \eqref{eq:Carleman ineq for u} in $(s,y)$-variable: on $[-1,1]\times\R^d$,
\begin{equation}\label{eq:Carleman ineq for v}
\norm{\psi_\be v(0)}_{L^2}+\norm{\psi_\be v}_{L^2L^{2_1}}
\les \max_{s=\pm1}\norm{\psi_\be v(s)}_{L^2}+\norm{\psi_\be |v|^{\frac4d}v}_{L^2L^{2_{-1}}},
\end{equation}
where the function $\psi_\be$ is a transform of $e^{\be x_1}$ given as
\[
\psi_\be(s,y)\coloneqq e^{\be x_1/(\frac{1+t}2)}=e^{\be y_1/(\frac{1+t}{2\sqrt t})}=e^{\be y_1\sech(\frac s2)}.
\]
Note that the factor $\frac{1+t}2$ comes from the representation of $\g_1$, \eqref{eq:g1 def}.

We estimate the right-hand side of \eqref{eq:Carleman ineq for v}. Denoting $\eta=\sech(\frac12)<1$, \eqref{eq:s0 def} yields
\begin{equation}\label{eq:v hom}
\norm{\psi_\be v(s)}_{L^2}=\norm{e^{\be\eta y_1}v(s)}_{L^2}\le\norm{e^{\be y_1}v(s)}_{L^2}^\eta\norm{v(s)}_{L^2}^{1-\eta}\les_{v} B^\eta,\quad s=\pm1.
\end{equation}
On $[-1,1]\times\{y_1>R_\ep\}$, \eqref{eq:prep for d>3, v} yields
\begin{equation}\label{eq:v inhom right}
\norm{\psi_\be|v|^{\frac4d}v}_{L^2L^{2_{-1}}(y_1>R_\ep)}\le\ep^{\frac4d}\norm{\psi_\be v}_{L^2L^{2_1}}.
\end{equation}
On $[-1,1]\times\{y_1\le R_\ep\}$, \eqref{eq:v cpt} yields
\begin{equation}\label{eq:v inhom left}
\norm{\psi_\be |v|^{\frac4d}v}_{L^2L^{2_{-1}}(y_1\le R_\ep)}\les e^{\be R_\ep}\norm{v}_{L^\infty L^2}^{\frac4d}\norm{v}_{L^\infty L^{2_1}}\les_v e^{\be R_\ep}.
\end{equation}
Combining \eqref{eq:s0 def}, \eqref{eq:v hom}, \eqref{eq:v inhom right}, \eqref{eq:v inhom left}, and \eqref{eq:Carleman ineq for v} altogether, we deduce that 
\begin{equation}\label{eq:Carleman ineq for v, conseq}
\tfrac12B+\norm{\psi_\be v}_{L^2L^{2_1}}\les_v B^\eta+e^{\be R_\ep}+\ep^{\frac4d}\norm{\psi_\be v}_{L^2L^{2_1}}.
\end{equation}
Choosing $\ep\ll_v1$ and $\beta\gg_{v,\ep}1$ yields a contradiction to \eqref{eq:B large}. This completes the proof.
\end{proof}
\section{Decomposition of a radial solution to \texorpdfstring{\eqref{eq:NLS}}{(NLS)}}\label{sec:regularity}

In this section, we provide a decomposition of a solution $u$ to \eqref{eq:NLS}.
In Section~\ref{sec:main}, we will quantify the radiation from mass concentration in a self-similar region, which is a key observation toward Theorem~\ref{thm:loglog}. To make this precise, we design a decomposition of $u$ into a non-radiative part $\mathring u$ and a radiative part $\tilde u$.

In order to estimate such $\mathring u$ and $\tilde u$, we first provide estimates of the operator $\P_N^\pm\mc D^\mp$.
Hereafter, we always consider radial functions and keep denoting $s=\log t$.

We record a consequence of \eqref{eq:inout bound K}:
Let $\theta\ge0$, $1<\tilde q<q\le\infty$, $1\le r,\tilde r\le\infty$, $\nu\ge0$, and $\rho\ge1$. If $\nu=0$, or if $q<\infty$ and $\nu\ll_{q,\tilde q}1$, then for $f\in C^\infty_c((0,\infty)\times\R^d)$ supported in $\{|x|\le\rho\}$, $\phi\in\mc S(\R^d)$, both $u=\P_1^\pm\mc D^\mp f$ and $u=\P_1^\pm\mc D^\mp(\phi*f)$ satisfy
\begin{equation}\label{eq:ineff Ppm}
\norm{e^{-\nu|s|}\jp x^\theta u}_{L^qL^r((0,\infty)\times\R^d)}\les_{\theta,\phi}\rho^{O_{d,\theta}(1)}\norm {e^{-\nu|s|}f}_{L^{\tilde q,\infty}L^{\tilde r}((0,\infty)\times\R^d)},
\end{equation}
The next lemma shows more advanced estimates.
\begin{lem}
Let $d\ge 2$, $\theta\ge 0$, $\ep>0$, $q\in[1,\infty]$, $0\le\nu\ll_\ep1$, and $a\ge2$.
For radial $f\in C^\infty_c((0,\infty)\times\R^d)$ and $N\in2^\Z$, $u=\P_N^\pm\mc D^\mp f$ satisfies
\begin{align}\label{eq:Ppm N stri}
\norm{\jp{Nx}^{\theta+\frac12}u}_{L^qL^{2_1}((0,\infty)\times\R^d)}&\les_{\theta,\ep} \norm{\jp{Nx}^{\theta+3\ep}f}_{L^qL^{2_{-1}}((0,\infty)\times\R^d)},
\\
\label{eq:Ppm N stri'}
\norm{e^{-\nu|s|}\jp{Nx}^{\theta+\frac12}u}_{L^2 L^{2_1}((0,\infty)\times\R^d)}&\les_{\theta,\ep}\norm{e^{-\nu|s|}\jp{Nx}^{\theta+3\ep}f}_{L^2L^{2_{-1}}((0,\infty)\times\R^d)}.
\end{align}
There exists $k=k(d,\theta)$ such that if $\supp(f)\subset\{|x|\le a\sqrt t\}$, then for $\rho>0$,
\begin{align}\label{eq:Ppm exterior stri}
&\norm{e^{-\nu|s|}\jp{N x}^{\theta}u}_{L^2 L^{2_1}(|x|\ge(a+\rho)\sqrt t)}
\\
\les_{\theta,\rho}& a^k\min\big\{\norm{e^{-\nu|s|}f}_{L^2 L^{2_{-1}}}\,,\,\norm{e^{-\nu|s|}f}_{L^{p',\infty} L^{p'}}\,,\,N^{-\frac12}\norm{e^{-\nu|s|}t^{\frac14}f}_{L^2L^2}\big\}.\nonumber
\end{align}
Let $\ep_d$ be as in \eqref{eq:weighted shao def}.
If $\supp(f)\subset\{(a-1)\sqrt t\le|x|\le a\sqrt t\}$, then we have
\begin{align}\label{eq:Ppm interior local smoothing}
\norm{e^{-\nu|s|}\jp{Nx}^{\ep_d}u}_{L^2L^{2_1}}&\les a^kN^{-\frac12}\norm{e^{-\nu|s|}t^{\frac14}f}_{L^2L^2},
\\\label{eq:Ppm interior local strichartz}
\norm{e^{-\nu|s|}\jp{Nx}^{\ep_d}u}_{L^2L^{2_1}}&\les a^k\norm{\norm{e^{-\nu|s|}\chi_{[M,2M]}f}_{L^{p'}L^{p'}}}_{\ell^2_M}.
\end{align}
\end{lem}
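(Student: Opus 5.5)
By scaling, the proof reduces to $N=1$. The operator $\P_N^\pm$ commutes with dilations, and $\mc D^\mp$ intertwines with the scaling $f\mapsto N^{2+\frac d2}f(N^2t,Nx)$. The weights $e^{-\nu|s|}$ are not scale invariant, but since $\nu\ll_\ep1$ they are handled by a loss $e^{\nu|s-s'|}$, absorbed by the kernel decay. With $N=1$, write
\[
u(t,x)=\int \mathbb K_1^\pm(t'-t,x,y)f(t',y)\,dy\,dt'
\]
for $\P_1^\pm\mc D^\mp$. The sign convention makes $\P^\pm_1\mc D^\mp$ pick only the times for which the in/out kernel is favourable, so \eqref{eq:inout bound K} applies with $\tau=|t-t'|\ge0$. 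Split the kernel into the \emph{resonant} part $(|y|-|x|)\sim\tau\gtrsim1$ and the nonresonant part. The nonresonant part has kernel bounded by $\jp x^{-\frac{d-1}2}\jp y^{-\frac{d-1}2}\jp{\tau+|x|-|y|}^{-m}$. It is therefore integrable in $\tau$ and localizes $|y|\ge|x|+\tau-O(1)$ up to rapidly decaying tails. This gives $\jp x^{\theta'}|u|\les\jp y^{\theta'}|f|$ for any weight, which yields all five estimates for this piece by Young's inequality in time.

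\textbf{Estimates \eqref{eq:Ppm N stri} and \eqref{eq:Ppm N stri'}.} For the resonant part, first dyadically decompose $|x|\sim X$, $|y|\sim Y$. Resonance forces $Y\gtrsim X$ and $\tau\sim Y$ when $Y\gg X$. The kernel then has size $X^{-\frac{d-1}2}Y^{-\frac{d-1}2}Y^{-\frac12}$ on a time window of length $\sim Y$. Estimate the $L^{2_1}_x$ norm over the shell $|x|\sim X$ and the $L^{2_{-1}}_y$ norm over $|y|\sim Y$ by Hölder, using the volumes $X^d$ and $Y^d$. The time convolution with $\one_{\tau\sim Y}Y^{-\frac12}$ is bounded on $L^q_t$ with constant $Y^{\frac12}$, uniformly in $q\in[1,\infty]$. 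Bookkeeping the powers should give the factor
\[
X^{\frac12+\theta}\cdot(\text{kernel gain})\les X^{\theta}(X/Y)^{c}\,Y^{\theta+3\ep}\cdot Y^{-3\ep}\cdots.
\]
The goal is a bound of the form $(X/Y)^{\ep}$ times the weight ratio, summable over $Y\gtrsim X$. The $\frac12$-gain in the weight on $u$ comes from $X^{-\frac{d-1}2}$ against the $L^{2_1}$-volume, and the $3\ep$ loss pays for the dyadic sums. The time weight $e^{-\nu|s|}$ in \eqref{eq:Ppm N stri'} changes by at most $(t'/t)^{\nu}$ with $|t-t'|\sim Y$. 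The resulting loss $Y^{\nu}$ (for $t\les Y$) is absorbed by the $\ep$ room since $\nu\ll_\ep1$.

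\textbf{Estimates \eqref{eq:Ppm exterior stri}--\eqref{eq:Ppm interior local strichartz}.} For the exterior estimate \eqref{eq:Ppm exterior stri}, the support condition $|y|\le a\sqrt{t'}$ and the resonance $|y|-|x|\sim\tau$ with $|x|\ge(a+\rho)\sqrt t$ are incompatible at large scales. Indeed, $|y|\ge|x|$ forces $a\sqrt{t'}\ge(a+\rho)\sqrt t$, so $t'\ge(1+\rho/a)^2t$ and hence $\tau\gtrsim_\rho t/a$. Then $|y|-|x|\le a(\sqrt{t'}-\sqrt t)\les a\tau/\sqrt{t'}$. This is $\ll\tau$ once $t'\gg a^2$, so resonance fails except on $t'\les a^2$, which is a bounded region up to $a^k$. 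On the nonresonant part, $\jp{\tau+|x|-|y|}^{-m}\les\jp{\tau}^{-m}$ gives arbitrary polynomial decay. The three alternatives on the right-hand side then follow from three inputs: the Strichartz-type estimate \eqref{eq:Ppm N stri'}, Hölder in the weak-$L^{p'}$ version, and the weighted local smoothing \eqref{eq:weighted local smoothing} applied in dual form. In the dual form, $t^{\frac14}$ and $N^{-\frac12}$ match exactly \eqref{eq:weighted local smoothing}, with the polynomial factor $a^k$ absorbing $\jp{x/\sqrt t}$.

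For \eqref{eq:Ppm interior local smoothing} and \eqref{eq:Ppm interior local strichartz}, the annulus support $(a-1)\sqrt t\le|x|\le a\sqrt t$ lets me write $u$ as a superposition of free waves $e^{-i(t-t')\De}\P_1^\pm f(t')$. For \eqref{eq:Ppm interior local smoothing}, apply \eqref{eq:weighted shao} with $\theta=\ep_d$ via a $TT^*$/Christ--Kiselev argument, since the source lies on a thin parabolic shell. Dualizing \eqref{eq:weighted local smoothing} then controls $\int e^{it'\De}P_1f(t')dt'$ in $L^2$ by $\norm{t^{\frac14}f}_{L^2L^2}$ up to $a^k$. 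For \eqref{eq:Ppm interior local strichartz}, replace local smoothing with the dual Strichartz estimate on each dyadic time block $[M,2M]$. Then use almost orthogonality in $M$: the pieces of $u$ coming from different blocks are essentially separated in space-time because $|x|\sim a\sqrt t$, which should give the $\ell^2_M$ sum.

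I expect the main obstacle to be the resonant dyadic bookkeeping in \eqref{eq:Ppm N stri}. The issue is obtaining exactly the $\frac12$ weight gain with only a $3\ep$ loss, uniformly over all $q$, while keeping the $e^{-\nu|s|}$ weights under control.
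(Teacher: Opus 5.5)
\textbf{The gap is in \eqref{eq:Ppm interior local smoothing}.} There both the input $t^{\frac14}f$ and the output are $L^2$ in time.

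\emph{Why your step fails.} Dualizing \eqref{eq:weighted local smoothing} and composing with \eqref{eq:weighted shao} only bounds the non-retarded operator $f\mapsto\P_1^\pm e^{it\De}\int e^{-it'\De}f(t')\,dt'$. You pass to $\mc D^\mp$ via Christ--Kiselev, but that lemma requires the input time exponent to be strictly smaller than the output one. It is therefore unavailable for $L^2_t\to L^2_t$. Naming $TT^*$ or the thin-shell support does not supply a substitute.

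\emph{The paper's mechanism} is a time decomposition adapted to the parabolic geometry.
\begin{itemize}
\item Split $(0,\infty)$ into $I_n=(\delta a^2n^2,\delta a^2(n+1)^2]$, so that $|I_n|\sim\sqrt\delta\,a\sqrt{t_n}$ is comparable to the shell radius. The block $I_0$ is handled crudely by \eqref{eq:ineff Ppm}.
\item On a diagonal block and in $|x|\ge\frac12a\sqrt{t_n}$, use $\jp x^{\ep_d}\le(a\sqrt{t_n})^{\ep_d-\theta}\jp x^{\theta}$ for some $\theta>\ep_d$ still admissible in \eqref{eq:weighted shao}. This is possible because $\ep_d$ lies strictly below that threshold.
\item The stronger weight is then controlled with $t^{\frac14}f$ in $L^{2/(1+2\al)}_tL^2_x$, where $\al=\theta-\ep_d$. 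Since $2/(1+2\al)<2$, Christ--Kiselev is legitimate here.
\item Returning to $L^2_t$ by H\"older on $I_n$ costs $|I_n|^{\al}$, which is cancelled by $(a\sqrt{t_n})^{-\al}$.
\item The part $|x|<\frac12a\sqrt{t_n}$ of a diagonal block decays rapidly by \eqref{eq:inout bound K}, since $|I_n|\ll a\sqrt{t_n}$ for small $\delta$. So do far blocks $|n-m|\gg_\delta1$, since $|t_n-t_m|\gg a\sqrt{t_m}$.
\item For nearby blocks $0<|n-m|\les_\delta1$, the operator $\chi_{I_n}\mc D^\mp\chi_{I_m}$ is non-retarded, so local smoothing and \eqref{eq:weighted shao} apply directly.
\item A Schur test in $(n,m)$ concludes.
\end{itemize}
Your $\ell^2_M$ almost-orthogonality heuristic for \eqref{eq:Ppm interior local strichartz} has the same block structure. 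There the input is $L^{p'}_t$ with $p'<2$, so Christ--Kiselev causes no trouble.

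\textbf{Smaller points that also need repair.}

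First, the time weight. The factor $e^{\nu|s-s'|}=\max\{t/t',t'/t\}^{\nu}$ is not bounded by any function of $|t-t'|$: let $t\to0$ with $|t-t'|\sim Y$ fixed. So the claimed loss $Y^\nu$ is wrong. The paper splits $f=f\one_{t>1}+f\one_{t\le1}$ and uses $e^{-\nu|s|}\le t^{-\nu}$ and $e^{-\nu|s|}\le t^{\nu}$, with equality on $\{t\ge1\}$ and $\{t\le1\}$ respectively. These power weights are scale-covariant. One then only needs convolution by $\jp t^{-1-\ep}$ to be bounded on $L^2(t^{\pm2\nu}dt)$.

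Second, \eqref{eq:Ppm exterior stri} needs no local smoothing, which would meet the same Christ--Kiselev obstruction.
\begin{itemize}
\item For $t'\gg a^2$ one has $\jp{\tau+|x|-|y|}\gtrsim_\rho\jp\tau+a^{-1}(\jp x+\jp y)$. This gives decay in $\tau$ and in $x$; the decay in $x$ is what absorbs $\jp x^\theta$ on the unbounded exterior region. Hence any $L^{\tilde r}_x$ norm of $f(t')$ suffices.
\item For $t'\les a^2$, \eqref{eq:ineff Ppm} covers all three alternatives. Its $L^{\tilde q,\infty}_t$ input with $\tilde q<2$ is what accommodates $L^{p',\infty}$. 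H\"older in time on a bounded interval goes the wrong way.
\end{itemize}

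Third, your description of the nonresonant piece is inverted. Off the resonant set one has $\jp{\tau+|x|-|y|}\sim\jp{\tau+||x|-|y||}$. This localizes to $\tau\les1$ and $|x|\approx|y|$, and it is what makes Peetre's inequality work.

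Finally, the step you flagged as the main obstacle, \eqref{eq:Ppm N stri}, is the easy one. In the resonant region $|y|\gtrsim\max\{|x|,\tau\}$. Hence the kernel weighted by $\jp x^{\theta+\frac{d-1}2+\ep}\jp y^{-\theta+\frac{d-2}2-2\ep}$ is pointwise $\les\jp\tau^{-1-\ep}$. H\"older with $\jp x^{-\frac{d-2}2-\ep}\in L^{2_1}$ then closes the estimate for every $q$, with no dyadic bookkeeping. Your shell computation, giving $X^{-\frac12}Y^{-\frac12}\tau^{-\frac12}$ per pair of shells, is a longer route to the same bound.
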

\begin{proof}
Up to splitting $f\mid_{t>1}$ and $f\mid_{t\le1}$, it suffices to show with the time weight $e^{-\nu|s|}$ replaced by $e^{\pm\nu s}$. Then, by scaling, we may assume $N=1$.
We claim that
\begin{equation}\label{eq:L1 Linfty estimate}
\norm{\jp{x}^{\theta+\frac12+\frac{d-2}2+\ep}\P_1^\pm e^{\mp it\De}\phi}_{L^\infty(\R^d)}
\les_{\theta,\ep} \jp t^{-1-\ep}\norm{\jp x^{\theta-\frac{d-2}2+2\ep}\phi}_{L^1(\R^d)}.
\end{equation}
Once \eqref{eq:L1 Linfty estimate} is shown, by H\"older's inequality using $\jp{x}^{-\frac{d-2}2-\ep}\in L^{2_1}$, it follows that
\[
\norm{\jp{x}^{\theta+\frac12}\P_1^\pm e^{\mp it\De}\phi}_{L^{2_1}(\R^d)}
\les_{\theta,\ep} \jp t^{-1-\ep}\norm{\jp x^{\theta+3\ep}\phi}_{L^{2_{-1}}(\R^d)}.
\]
Since the convolution by $\jp t^{-1-\ep}$ is a bounded map on $L^q((0,\infty))$ for $1\le q\le\infty$ and $L^2((0,\infty);e^{\pm 2\nu s}dt)$, we obtain \eqref{eq:Ppm N stri} and \eqref{eq:Ppm N stri'}.

We show the claim \eqref{eq:L1 Linfty estimate}; by \eqref{eq:inout bound K}, it suffices to show
\begin{align}\label{eq:L1 Linfty main}
\jp x^{-\frac12}\jp y^{-\frac12}t^{-\frac12}\les\jp t^{-1-\ep }\jp x^{-\theta-\frac12-\ep}\jp y^{\theta+2\ep}&,\,\,|y|-|x|\sim t\gtrsim1
\\
\label{eq:L1 Linfty sub}
\jp x^{-\frac12}\jp y^{-\frac12}\jp{t+|x|-|y|}^{-m}\les
\jp t^{-1-\ep}\jp x^{-\theta-\frac12-\ep}\jp y^{\theta+2\ep}&,\,\,\text{otherwise}
\end{align}
for some $m=m(\theta,\ep)$. 
Equation \eqref{eq:L1 Linfty main} holds since $|y|\gtrsim\max\{|x|,t\}$.

For \eqref{eq:L1 Linfty sub}, where $|y|-|x|\sim t\gtrsim1$ fails, we have $\jp{t+|x|-|y|}\sim\jp{t+||x|-|y||}$. Thus, \eqref{eq:L1 Linfty sub} can be simplified to
\begin{equation}\label{eq:L1 Linfty sub reduced}
\jp x^{\theta+\ep}\jp{t+||x|-|y||}^{-m}\les_{\theta,\ep}\jp t^{-1-\ep}\jp y^{\theta+\frac12+2\ep}.
\end{equation}
With the choice $m=\theta+1+2\ep$, taking the product of
\[
\jp{t+||x|-|y||}^{-m}\le\jp t^{-1-\ep}\jp{|x|-|y|}^{-m+1+\ep}\le\jp t^{-1-\ep}\jp{|x|-|y|}^{-\theta-\ep}
\]
and Peetre's inequality
\[
\jp x^{\theta+\ep}\les\jp y^{\theta+\ep}\jp{|x|-|y|}^{\theta+\ep}
\]
concludes \eqref{eq:L1 Linfty sub reduced}.

We turn to showing \eqref{eq:Ppm exterior stri}. We use variables $t$ and $\tau$ to track $f$ and $u$, e.g. $f(t)$ and $u(\tau)$. We split $f$ into time intervals $\{t\les a^2\}$ and $\{t\gg a^2\}$. On the interval $\{t\les a^2\}$, where $\supp(f)$ has diameter $O(a^2)$, \eqref{eq:Ppm exterior stri} follows from \eqref{eq:ineff Ppm}.
For the region $\{t\gg a^2\}$, we observe that for any $0<\tau<t$ and $x,y\in\R^d$ such that
\[
t\gg a^2,\quad |x|\ge (a+\rho)\sqrt{\tau},\quad |y|\le a\sqrt t,
\]
we have
\[
|y|-|x|\le a\sqrt t-(a+\rho)\sqrt{\tau}\le o_{t/a^2}(1)(t-\tau),\;\;\jp{t-\tau+|x|-|y|}\gtrsim_\rho \jp{t-\tau}+a^{-1}(\jp x+\jp y).
\]
Thus, fixing a large $k$, by the second case of \eqref{eq:inout bound K}, for every $1\le\tilde r\le\infty$, we have
\[
\norm{\jp x^{\theta}\P_1^+e^{-i(t-\tau)\De}f(t)}_{L^{2_1}(|x|\ge (a+\rho)\sqrt\tau)}\les_\rho a^k\jp{t-\tau}^{-10}\norm{f(t)}_{L^{\tilde r}},
\]
giving \eqref{eq:Ppm exterior stri} for $\P_1^+$. The same proof applies for $\P_1^-$.

We now show \eqref{eq:Ppm interior local smoothing}. Let $\delta=\delta(d)>0$ be a small number to be fixed later. We split the time interval $(0,\infty)$ into $I_n=(t_n,t_{n+1}]=(\delta a^2n^2,\delta a^2(n+1)^2]$ for $n\in\N$.

We estimate the contribution of $\chi_{I_m}f$ to $\chi_{I_n}u$ for $m,n\ge0$. By \eqref{eq:ineff Ppm}, the contribution of $\chi_{I_0}f$ satisfies \eqref{eq:Ppm interior local smoothing}. Hence, we consider $m\ge1$.
In the case $m=n$, choosing $\theta>\ep_d$ satisfying \eqref{eq:weighted shao} and $\al=\theta-\ep_d$,
by \eqref{eq:weighted local smoothing} and Christ--Kiselev, we have
\begin{align}\label{eq:proof of ppm interior local smoothing, within In, eff}
&\norm{\jp x^{\ep_d}\chi_{I_n}\P_1^\pm\mc D^\mp(\chi_{I_n}f)}_{L^2L^{2_1}(|x|\ge\frac12a\sqrt{t_n})}
\\\les&(a\sqrt{t_n})^{\ep_d-\theta}\norm{\jp x^{\theta}\chi_{I_n}\P_1^\pm\mc D^\mp(\chi_{I_n}f)}_{L^2L^{2_1}(|x|\ge\frac12a\sqrt{t_n})}\nonumber
\\
\les&
a^{O(1)}(a\sqrt{t_n})^{\ep_d-\theta}\norm{t^{\frac14}\chi_{I_n}f}_{L^{\frac2{1+2\al}}L^2}
\nonumber
\\\les&
a^{O(1)}(a\sqrt{t_n})^{\ep_d-\theta}|I_n|^\al\norm{t^{\frac14}\chi_{I_n}f}_{L^2L^2}\nonumber
\les
a^{O(1)}\norm{t^{\frac14}\chi_{I_n}f}_{L^2L^2}.
\end{align}

By \eqref{eq:inout bound K}, choosing sufficiently small $\delta>0$, we also have
\begin{equation}\label{eq:proof of ppm interior local smothing, within In, ineff}
\norm{\jp x^{\ep_d} \chi_{I_n}\P_1^\pm\mc D^\mp(\chi_{I_n}f)}_{L^2L^{2_1}(|x|<\frac12a\sqrt{t_n})}\les \norm{t^{\frac14}\chi_{I_n}f}_{L^{2}L^2}.
\end{equation}
In the case $|n-m|\gg_\delta1$, since $|t_n-t_m|\gg a\sqrt{t_m}$, by \eqref{eq:inout bound K}, we have
\begin{equation}\label{eq:proof of ppm interior local smoothing, outside In}
\norm{\jp{x}^{\ep_d}\chi_{I_n}\P_1^\pm\mc D^\mp(\chi_{I_m}f)}_{L^2L^{2_1}}
\les_\delta(n+m)^{-10}\norm{t^{\frac14}\chi_{I_m}f}_{L^2L^2}.
\end{equation}
In the case $0<|n-m|\les_\delta1$, by \eqref{eq:weighted local smoothing} and \eqref{eq:weighted shao}, we have
\begin{equation}\label{eq:proof of ppm interior local smoothing, near In}
\norm{\jp{x}^{\ep_d}\chi_{I_n}\P_1^\pm\mc D^\mp(\chi_{I_m}f)}_{L^2L^{2_1}}\les a^{O(1)}\norm{t^{\frac14}\chi_{I_m} f}_{L^2L^2}.
\end{equation}
Applying the Schur test to \eqref{eq:proof of ppm interior local smoothing, within In, eff}--\eqref{eq:proof of ppm interior local smoothing, near In} gives \eqref{eq:Ppm interior local smoothing}. The same proof using the Strichartz estimate instead of \eqref{eq:weighted local smoothing} gives \eqref{eq:Ppm interior local strichartz}. This completes the proof.
\end{proof}

In adaptation to \eqref{eq:Ppm N stri}, it is natural to measure $\mathring u$ and $\tilde u$ in Besov-like spaces involving weights of the form $\jp{Nx}^\theta$, $\theta\in\R$. We introduce $L^2$-critical function spaces $X$ for $\mathring u$ and $S$ for $\tilde u$ in such forms.
To keep track of slight perturbations of Sobolev norm-exponents, we denote $\sigma_d=\frac{\ep_d}{100d}$.
For an interval $I\subset\R$, let $X(I)$ and $S(I)$ be spaces of $u\in L^\infty L^2(I\times\R^d)$ with finite norms
\begin{align*}
\norm{u}_{X(I)}=&\norm{N^{-1}\norm{\jp{Nx}^{\frac52-2\s_d}u_N}_{L^\infty L^{2_1}(I\times\R^d)}}_{\ell^\infty_N},
\\
\norm{u}_{S(I)}=&\norm{\norm{u_N}_{L^\infty L^2(I\times\R^d)}}_{\ell^2_N}+\norm{\norm{\jp{Nx}^{\ep_d}u_N}_{L^2L^{2_1}(I\times\R^d)}}_{\ell^2_N}.
\end{align*}
Note that in our argument, it suffices that the exponent in $X$ norm is $2+$. 

By \eqref{eq:weighted shao}, $e^{it\De}:L^2\rightarrow S(\R)$ is a bounded operator.
We also note an embedding property of $S(I)$: for a Strichartz pair $(q,r)\neq(2,\infty)$,
\begin{equation}\label{eq:S embed}
S(I)\hook L^\infty L^2\cap L^2\dot B^0_{2_1,2}(I\times\R^d)\hook L^qL^r(I\times\R^d).
\end{equation}
In the next lemma, we observe that $u(t)$ enjoys some spatial decay when $u\in X(I)$.
\begin{lem}\label{lem:X embed}
For an interval $I\subset\R$, $u\in X(I)$, $t\in I$, and $\theta\in[0,\frac32-3\s_d)$, we have
\begin{align}\label{eq:X embed Hs}
\norm{u(t)}_{\dot H^\theta(|x|\ge r)}\les r^{-\theta}(\norm u_{X(I)}+\norm u_{L^\infty L^2(I\times\R^d)})&,\quad r>0,
\\
\label{eq:X embed u}
|u(t,x)|\les|x|^{-\frac d2}(\norm{u}_{X(I)}+\norm u_{L^\infty L^2(I\times\R^d)})&,\quad x\in\R^d\setminus\{0\}.
\end{align}
\end{lem}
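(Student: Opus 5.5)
The plan is to work frequency by frequency, for fixed $t\in I$ (the time plays no role). Write $A\coloneqq\norm u_{X(I)}+\norm u_{L^\infty L^2}$ and $u=\sum_N u_N$. For each dyadic $N$ the two inputs are
\[
\norm{u_N(t)}_{L^2}\les A,\qquad \norm{\jp{Nx}^{\frac52-2\s_d}u_N(t)}_{L^{2_1}}\le NA .
\]
The plan is to turn the weighted $L^{2_1}$ bound into localized $L^2$, $\dot H^\theta$ and $L^\infty$ bounds on $\{|x|\ge r\}$. Each gains a factor $(Nr)^{-\text{power}}$ when $Nr\ge1$. The low frequencies $Nr<1$ are handled by Bernstein together with the $L^2$ bound, and then we sum in $N$.

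\emph{Step 1: the $\dot H^\theta$ bound \eqref{eq:X embed Hs}.} On a dyadic shell $|x|\sim R$ with $RN\ge1$, Hölder gives
\[
\norm{u_N}_{L^2(|x|\sim R)}\les R\norm{u_N}_{L^{2_1}(|x|\sim R)}\les R\,N(NR)^{-\frac52+2\s_d}A=(NR)^{-\frac32+2\s_d}A,
\]
using $\norm{1}_{L^d(|x|\sim R)}\sim R$. Summing over $R\ge r$ yields
\[
\norm{u_N}_{L^2(|x|\ge r)}\les \min\{1,(Nr)^{-\frac32+2\s_d}\}A.
\]
For derivatives I would not commute $\nabla^\theta$ with the cutoff directly. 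Instead I would use $\psi^{\ge r}$-type smooth cutoffs and the fact that $\nabla^k u_N=N^k\tilde P_N u_N$, where $\tilde P_N$ has a kernel with rapid decay at scale $N^{-1}$. Pointwise kernel decay moves the weight $\jp{Nx}$ through $\tilde P_N$ by Peetre's inequality, so
\[
\norm{\nabla^k u_N}_{L^2(|x|\ge r)}\les N^k\min\{1,(Nr)^{-\frac32+2\s_d}\}A
\]
holds for integer $k$. Interpolation, or a direct estimate of the fractional derivative of $\psi^{\ge r}u_N$, then gives the same bound with $N^\theta$. Summing over $N$:
\[
\sum_{N\le 1/r}N^\theta\les r^{-\theta}\ \ (\theta>0),\qquad \sum_{N>1/r}N^\theta(Nr)^{-\frac32+2\s_d}\les r^{-\theta}\ \ \Bigl(\theta<\tfrac32-2\s_d\Bigr).
\]
The condition $\theta<\frac32-3\s_d$ leaves room for the loss from the cutoff commutators. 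For $\theta=0$ the low-frequency sum diverges logarithmically, so instead I bound $\norm{u}_{L^2}\le A$ directly. The definition of $\dot H^\theta(|x|\ge r)$ is taken as the norm of $\psi^{\ge r}u$, or a restriction norm. Commutator terms involve $r^{-j}\nabla^{\theta-j}$ and are of the same form.

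\emph{Step 2: the pointwise bound \eqref{eq:X embed u}.} For $|x|=r$ and $Nr\ge1$, use Bernstein in a ball of radius $N^{-1}$ around $x$, which is legitimate up to Schwartz tails by the kernel localization above:
\[
|u_N(x)|\les N^{\frac d{2_1}}\norm{u_N}_{L^{2_1}(B(x,C/N))}+\text{tails}\les N^{\frac d2-1}\,N(Nr)^{-\frac52+2\s_d}A .
\]
For $Nr<1$, plain Bernstein gives $|u_N(x)|\les N^{d/2}A$. Summing,
\[
\sum_{N<1/r}N^{\frac d2}+\sum_{N\ge1/r}N^{\frac d2}(Nr)^{-\frac52+2\s_d}\les r^{-\frac d2},
\]
which needs $\frac d2<\frac52-2\s_d$, i.e.\ $d\le4$. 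For larger $d$ I would instead exploit radiality through the radial Sobolev inequality $|f(r)|\les r^{-\frac{d-1}2}\norm f_{L^2}^{1/2}\norm{\nabla f}_{L^2}^{1/2}$ restricted to $|y|\ge r/2$. Applied to $\psi^{\ge r/2}u$ together with the Step 1 bounds at $\theta=0,1$, this gives
\[
|u(x)|\les r^{-\frac{d-1}2}A^{\frac12}(r^{-1}A)^{\frac12}=r^{-\frac d2}A .
\]
This is valid since $1<\frac32-3\s_d$. This second route is simpler and dimension-free, so I would adopt it for \eqref{eq:X embed u} outright.

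\emph{Main obstacle.} The delicate point is the localization in Step 1. Transferring the weight $\jp{Nx}$ across Littlewood–Paley projections and smooth spatial cutoffs for fractional $\theta$ has to be done without losing more than the $\s_d$ margin. I would handle it by kernel bounds with Peetre's inequality, as in the proof of \eqref{eq:L1 Linfty estimate}, plus real interpolation between integer $\theta$.
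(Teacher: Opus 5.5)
Your proposal is correct and follows essentially the paper's argument. You convert the weighted $L^{2_1}$ control of each high-frequency piece $u_N$ into $L^2$ decay on $\{|x|\ge r\}$ by H\"older; the paper does this globally via $\norm{f}_{L^2}\les N^{-1}\norm{\jp{Nx}^{1+\s_d}f}_{L^{2_1}}$, while you do it shell by shell. You then sum $N^\theta$ times this decay over $N\gtrsim 1/r$, control the low frequencies by the $L^2$ bound, and obtain the pointwise estimate from the $\theta=0,1$ cases via the radial Sobolev inequality. Your kernel/Peetre justification for moving derivatives past the weight fills in a step the paper leaves implicit. If you use the cutoff definition of the localized norm, treat the low frequencies $N<1/r$ as a single block, as the paper does, since the commutator terms do not sum piece by piece there.
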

\begin{proof}
Up to scaling, we may assume $r=1$ and $|x|=1$. For $N\ge1$, since $\norm{f}_{L^2}\les N^{-1}\norm{\jp{Nx}^{1+\s_d}f}_{L^{2_1}}$ for $f\in L^2(\R^d)$, we have
\[
\norm{u_N(t)}_{\dot H^\theta(|x|\ge 1)}
\les
N^{\theta-(\frac32-3\s_d)}\norm{\jp{Nx}^{\frac32-3\s_d}u_{N}(t)}_{L^2}
\les N^{\theta-(\frac32-3\s_d)}\norm{u}_{X(I)}.
\]
Summing over $N\in2^\N$ and combining with the $L^2$-norm bound for $N\le1$ yields \eqref{eq:X embed Hs}.
By \eqref{eq:X embed Hs} and the radial Sobolev inequality, we obtain \eqref{eq:X embed u}.
\end{proof}
We consider the Fourier operator $N\d_{x_j}\De^{-1}P_{\sim N}$ for $j=1,\ldots,d$ and $N\in2^\Z$. We denote $\phi^{j,N}=\F^{-1}(-iN\xi_j|\xi|^{-2}\psi^{\sim N})$ so that $\phi^{j,N}\ast f=N\d_{x_j}\De^{-1}P_{\sim N}f$.

In the next lemma, we show the main goal of this section: a solution $u$ to \eqref{eq:NLS} on an interval $I$ can be decomposed into a regular part $\mathring u\in X(I)$ and a radiative part $\tilde u\in S(I)$.
To handle the non-algebraic nonlinearity $\mc N(u)$, we use Bony linearizations.
We use the notation of \cite{LWP(working)}: for $v\in\C$, we denote
\[
\mc N(v)=-|v|^{\frac 4d}v,\quad A(v)=(A_1(v),A_2(v))=(\d_z\mc N(v),\d_{\bar z}\mc N(v)).
\]

For $w\in\C$ and $A=(A_1,A_2)\in\C\times\C$, we denote
\[
w\times A=wA_1+\ol w A_2.
\]

With these notations, we have the expression
\[
\mc N(v+w)-\mc N(v)=w\times\textstyle\int_0^1A((1-\theta)(v+w)+\theta v)d\theta.
\]

\begin{lem}\label{lem:regularity}
Let $d\ge2$ and $M>0$. Let $u$ be a radial solution to \eqref{eq:NLS} on an interval $I\subset(0,\infty)$ such that $M(u)\le M$. Denote $\ul I=\{s\in\R: e^s=t\in I\}$. Then, there exists a decomposition $\mathring u+\tilde u=u$ of $\mathring u\in X(I)$ and $\tilde u\in S(I)$ such that
\begin{align}\label{eq:S<L and X<L}
\norm{\mathring u}_{L^\infty L^2(I\times\R^d)}+\norm{\mathring u}_{X(I)}+\norm{\tilde u}_{S(I)}&\les_M1,
\\
\label{eq:local smoothing of tilde u}
\norm{\jp y^{-1}P_{\ge1}\ul{\tilde u}}_{L^2H^{\frac12}(\ul I\times\R^d)}&\les_{M} 1.
\end{align}
\end{lem}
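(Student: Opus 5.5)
The plan is to build the decomposition frequency by frequency, using the in/out splitting and the Duhamel formula in both time directions. For each $N\in2^\Z$ we write $P_Nu=\P_N^+P_Nu+\P_N^-P_Nu$, up to a harmless recombination with $P_{\sim N}$. Each piece is then represented through the Duhamel formula that makes it \emph{incoming}. For the outgoing part $\P_N^+$ we use the backward representation $u(t)=e^{i(t-t_1)\De}u(t_1)+\mc D^-\mc N(u)$ from the right endpoint $t_1$ of $I$. For the incoming part $\P_N^-$ we use the forward representation from the left endpoint $t_0$. Operators of the form $\P_N^\pm\mc D^\mp$ are exactly those covered by \eqref{eq:ineff Ppm}--\eqref{eq:Ppm interior local strichartz}. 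We then set
\[
\tilde u_N=\P_N^+e^{i(t-t_1)\De}P_Nu(t_1)+\P_N^-e^{i(t-t_0)\De}P_Nu(t_0)+(\text{radiative Duhamel pieces}),
\]
and let $\mathring u_N$ be the remaining Duhamel contributions. The free parts lie in $S(I)$ with norm $\les M^{1/2}$ by \eqref{eq:weighted shao} and Strichartz. The local smoothing bound \eqref{eq:local smoothing of tilde u} for them follows from \eqref{eq:weighted local smoothing}, once we rewrite the $\jp y^{-1}$ weight in self-similar variables and note that $dt/t=ds$ absorbs the factor $t^{-\frac14}$ in $L^2$.

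Next we split the nonlinearity. Using the Bony linearization $\mc N(u)=\mc N(\mathring u+\tilde u)$, we write $\mc N(u)-\mc N(\mathring u)=\tilde u\times\int_0^1A(\cdots)\,d\theta$ and expand into paraproduct pieces. The aim is the following assignment.
\begin{enumerate}
\item Pieces where the output frequency $N$ is comparable to or below the highest input frequency, together with the pieces carrying the spatial weight $\jp{Nx}^{\frac52}$ gain, go into $\mathring u$. These are estimated in $X$ via \eqref{eq:Ppm N stri}: the weight $\jp{Nx}^{\theta+\frac12}$ on the output costs only $\jp{Nx}^{\theta+3\ep}$ on the forcing. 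We apply it with $\theta=2-2\s_d$, choose $\ep$ small relative to $\s_d$, and use the pointwise decay \eqref{eq:X embed u} to bound the forcing.
\item Pieces that are genuinely radiative go into $\tilde u$ and are estimated in $S$ by \eqref{eq:Ppm N stri'}, \eqref{eq:Ppm interior local smoothing} and \eqref{eq:Ppm interior local strichartz}.
\end{enumerate}
Applying $\phi^{j,N}\ast$ (the operator $N\d_{x_j}\De^{-1}P_{\sim N}$) lets us move a derivative onto the forcing. This is how the $N^{-1}$ factor in the $X$ norm is recovered from a product with one differentiated factor.

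Because the decomposition depends on itself through the nonlinearity, we construct $(\mathring u,\tilde u)$ as a fixed point of a contraction map on $X(I)\times S(I)$. To handle arbitrary mass $M$, we do not contract on all of $I$ at once. Instead we cut $I$ into $O_M(1)$ subintervals on which $\norm u_{L^pL^p}$ is small. We may assume these norms are finite, as is implicit in $I$ being within the lifespan, and the constant should not depend on them. On each subinterval the nonlinear terms are perturbative, and the boundary data from one subinterval are fed into the next as new free evolutions. This concatenation is the step I expect to be the main obstacle. Naively the constants accumulate with the number of pieces, and that number is not controlled by $M$ alone. So I would instead exploit the in/out structure: outgoing radiation at frequency $N$ exits the parabolic region, by the exterior estimate \eqref{eq:Ppm exterior stri}, and the contributions across subintervals are summed with the Schur-type argument already used in the proof of \eqref{eq:Ppm interior local smoothing}. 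This should give bounds depending only on $M$, with the sub-threshold-type mass bound entering only through the smallness of $\jp{Nx}$-weighted tails of $\mathring u$.

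Finally, we obtain \eqref{eq:local smoothing of tilde u} for the Duhamel radiative pieces from the $N^{-\frac12}\norm{t^{\frac14}f}_{L^2L^2}$ version of \eqref{eq:Ppm exterior stri}, combined with \eqref{eq:weighted LP} to sum over $N\ge1$ in $H^{\frac12}$.
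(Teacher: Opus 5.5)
Your in/out pairing (outgoing part with the backward Duhamel formula from the right endpoint, incoming part with the forward one from the left) matches the paper, and so does your use of $\phi^{j,N}\ast$ to recover the factor $N^{-1}$. The gap is the step that makes the bounds depend only on $M$: your route for it does not work, and the paper's mechanism is missing.

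\textbf{Why the time subdivision fails.} Cutting $I$ into subintervals with small $\norm{u}_{L^pL^p}$ produces a number of pieces controlled by $\norm{u}_{L^pL^p(I)}$, not by $M$. In the intended application ($I=(0,1]$, with blow-up at $t=0$) this norm is infinite, as you yourself concede. Your proposed repair is not a mechanism. Estimate \eqref{eq:Ppm exterior stri} only bounds the output outside a parabola for a forcing supported inside it. The Schur argument behind \eqref{eq:Ppm interior local smoothing} only sums linear contributions of a forcing supported in a thin self-similar annulus. Neither controls the nonlinear feedback in the region $|x|\les 1/N$, where the mass concentrates and where there is neither smallness nor dispersion.

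\textbf{The missing device: a spatial cutoff at each frequency.} Split $P_Nu=P_N(\psi^{<R/N}u)+P_N(\psi^{\ge R/N}u)$ with $R=R(M)\gg1$.
\begin{itemize}
\item The interior piece goes directly into $\mathring u$, with no Duhamel expansion. It lies in $X$ with norm $O_R(M^{1/2})$: the weight $\jp{Nx}^{\frac52-2\s_d}$ costs only $R^{O(1)}$ there, and Bernstein supplies the factor $N^{-1}$.
\item Only the exterior piece is Duhamel-expanded. Its nonlinear forcing lives where $|Nx|\ge R$, so giving up a weight $\jp{Nx}^{\s_d}$ in \eqref{eq:Ppm N stri} produces a factor $R^{-\s_d}$.
\end{itemize}
This smallness is uniform in the length of $I$ and uses only $\norm{A(\cdot)}_{L^\infty L^{d/2}}\les M^{2/d}$. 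No time subdivision is needed.

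\textbf{How the nonlinear pieces must be assigned.} The paper splits according to which function occupies the linear slot of the Bony linearization, with $A$ evaluated on the given $u$.
\begin{itemize}
\item To $\tilde u$: $\psi^{\ge R/N}w_{\ge N}\times\int_0^1A\,d\theta$ and $N^{-1}\psi^{\ge R/N}\d_{x_j}w_{<N}\times A(u_{<N})$.
\item To $\mathring u$: their $v$-analogues, together with the commutator terms $f^N,l^N$ supported in $|x|\les R/N$.
\end{itemize}
Your frequency-based rule, which sends pieces with a high-frequency input into $\mathring u$, would require bounding $\tilde u_{\ge N}\times A$ in the $L^\infty_t$, $\jp{Nx}^{2}$-weighted forcing norm that $X$ needs. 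The $L^2_t$-based, $\jp{Nx}^{\ep_d}$-weighted $S$ norm does not control this.

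\textbf{How the argument closes.} With the paper's split, the update $(v,w)\mapsto(v_*,w_*)$ is affine with linear part of size $O(R^{-\s_d})$. On a compact subinterval, $\norm{u}_S<\infty$ holds qualitatively, so iterating from $(v_0,w_0)=(0,u)$ gives $O_M(1)$ bounds. Weak limits then give the decomposition, and exhaustion handles non-compact $I$. Finally, \eqref{eq:local smoothing of tilde u} for the Duhamel part does not come from \eqref{eq:Ppm exterior stri}. It follows from three ingredients:
\begin{itemize}
\item the bound on $\norm{\jp{Nx}^{\frac12}P_Nw_*^1}_{L^2L^{2_1}}$ given by \eqref{eq:Ppm N stri};
\item H\"older's inequality with $\jp y^{-1}|y|^{-\frac12}\in L^d$;
\item \eqref{eq:weighted LP}.
\end{itemize}
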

\begin{proof}
We first show this lemma for the case where $I=[\tau_-,\tau_+]$ is a compact interval. In this proof, we omit the time interval $I$ for function spaces. 

Given a decomposition $v+w=u$ of $v\in X$ and $w\in S$, we define an updated decomposition $v_*+w_*=u$.
Hereafter, all summations are over $N\in2^\Z$. We use the Einstein summation convention for the index $j$ and the notation $\pm$ denotes summation over both signs, e.g., $f_\pm=f_++f_-$.
First, we decompose $P_Nu$ into 
\begin{equation}\label{eq:decomp u}
P_Nu=P_N(\psi^{<R/N}u)+P_N(\psi^{\ge R/N}u),
\end{equation}
where $R=R(M)$ is a number to be chosen later. Denoting
\[
f^N\coloneqq P_{\sim N}\bb{2\na\psi^{\ge R/N}\cdot\na u+(\De\psi^{\ge R/N})u},
\]
we have the equation for $P_N(\psi^{\ge R/N}u)$,
\[
(i\d_t+\De)P_N(\psi^{\ge R/N}u)=P_N(f^N+\psi^{\ge R/N}\mc N(u)).
\]
We expand $P_N(\psi^{\ge R/N}u)=\P^\pm_N(\psi^{\ge R/N}u)$ in Duhamel form
\begin{equation}\label{eq:decomp PN(>u)}
P_N(\psi^{\ge R/N}u)=\P^\pm_Ne^{i(t-\tau_\pm)\De}(\psi^{\ge R/N}u(\tau_\pm))
+\P^\pm_N\mc D^\mp\bb{f^N+\psi^{\ge R/N}\mc N(u)}.
\end{equation}
Using the identity $\P_N^\pm=\P_N^\pm (\phi^{j,N}*N^{-1}\d_{x_j})$, we decompose
\begin{align}\label{eq:decomp Nu}
\P_N^\pm\mc D^\mp\bb{\psi^{\ge R/N}\mc N(u)}=&\P_N^\pm\mc D^\mp\bb{\psi^{\ge R/N}(\mc N(u)-\mc N(u_{<N}))}
\\&+\P^\pm_N\mc D^\mp\bb{\phi^{j,N}*N^{-1}\d_{x_j}(\psi^{\ge R/N}\mc N(u_{<N}))}.\nonumber
\end{align}
We further decompose each summand in \eqref{eq:decomp Nu} as
\begin{align}\label{eq:g 1,N def}
\psi^{\ge R/N}(\mc N(u)-\mc N(u_{<N}))=&\psi^{\ge R/N}w_{\ge N}\times\textstyle\int_0^1A((1-\theta)u+\theta u_{<N})d\theta
\\&+\psi^{\ge R/N}v_{\ge N}\times\textstyle\int_0^1A((1-\theta)u+\theta u_{<N})d\theta\nonumber
\\=&g^{1,N}+h^{1,N},\nonumber
\\
N^{-1}\d_{x_j}(\psi^{\ge R/N}\mc N(u_{<N}))=&N^{-1}\psi^{\ge R/N}\d_{x_j} w_{<N}\times A(u_{<N})\nonumber
\\&+N^{-1}\psi^{\ge R/N}\d_{x_j}v_{<N}\times A(u_{<N})\nonumber
\\&+N^{-1}\d_{x_j}\psi^{\ge R/N}\mc N(u_{<N})=g^{2,N}+h^{2,N}+l^{N}.\nonumber
\end{align}
Let $w_*$ be the sum of the linear radiation and the $w$-contributed nonlinear term
\[
w_*=\sum_N \P^\pm_Ne^{i(t-\tau_\pm)\De}\bb{\psi^{\ge R/N}u(\tau_\pm)}
+\sum_N\P^\pm_N\mc D^\mp\bb{g^{1,N}+\phi^{j,N}*g^{2,N}}
=w_*^0+w_*^1
\]
and $v_*$ be the remainder
\[
v_*=\sum_N P_N\bb{\psi^{<R/N}u}
+\sum_N\P^\pm_N \mc D^\mp\bb{f^N+h^{1,N}+\phi^{j,N}*(h^{2,N}+l^{N})}
=v_*^0+v_*^1.
\]
Unfolding the expansions \eqref{eq:decomp u}--\eqref{eq:g 1,N def} yields $v_*+w_*=u$.

We estimate $v_*$ and $w_*$. For $N\in2^\Z$ and $\theta\in[0,1]$, we have
\begin{equation}\label{eq:A bound}
\norm{A((1-\theta)u+\theta u_{<N})}_{L^\infty L^{\frac d2}}\les\norm{u}_{L^\infty L^2}^{\frac4d}\les1.
\end{equation}
Since $0<\ep_d<1$ and $0<1-\frac{\s_d}2<1$, by \eqref{eq:homogeneous embedding Lp, >N} and \eqref{eq:homogeneous embedding Lp, d>N}, we have
\begin{align}\label{eq:weighted estimate on w}
&\norm{\norm{|Nx|^{\ep_d}w_{\ge N}}_{L^2L^{2_1}}}_{\ell^2_N}+\norm{N^{-1}\norm{|Nx|^{\ep_d}\na w_{<N}}_{L^2L^{2_1}}}_{\ell^2_N}
\\\les&\norm{\norm{\jp{Nx}^{\ep_d}w_N}_{L^2L^{2_1}}}_{\ell^2_N}\les\norm{w}_{S},\nonumber
\\&\norm{N^{-1}\norm{|Nx|^{2-\frac{\s_d}2}v_{\ge N}}_{L^\infty L^{2_1}}}_{\ell^\infty_N}+\norm{N^{-2}\norm{|Nx|^{2-\frac{\s_d}2}\na v_{<N}}_{L^\infty L^{2_1}}}_{\ell^\infty_N}\nonumber
\\\les&\norm{N^{-1}\norm{\jp{Nx}^{2-\frac{\s_d}2}v_N}_{L^\infty L^{2_1}}}_{\ell^\infty_N}\les\norm {v}_X.\nonumber
\end{align}
Taking products with \eqref{eq:A bound}, we obtain bounds on $g^{k,N}$ and $h^{k,N}$ in \eqref{eq:g 1,N def} as
\begin{align}\label{eq:g bound prep}
\norm{\norm{|Nx|^{\ep_d}g^{k,N}}_{L^2L^{2_{-1}}}}_{\ell^2_N}&\les\norm w_{S},
\\\label{eq:h bound prep}
\norm{N^{-1}\norm{|Nx|^{2-\frac{\s_d}2}h^{k,N}}_{L^\infty L^{2_{-1}}}}_{\ell^\infty_N}&\les\norm v_X.
\end{align}
Since $g^{k,N}$ and $h^{k,N}$ are supported in $\{|x|\gtrsim R/N\}$, \eqref{eq:g bound prep} and \eqref{eq:h bound prep} yield
\begin{align}\label{eq:g bound}
\norm{\norm{\jp{Nx}^{\ep_d-{\s_d}}g^{k,N}}_{L^2L^{2_{-1}}}}_{\ell^2_N}&\les R^{-{\s_d}}\norm w_{S},
\\
\label{eq:h bound}
\norm{N^{-1}\norm{\jp{Nx}^{2-\frac32\s_d}h^{k,N}}_{L^\infty L^{2_{-1}}}}_{\ell^\infty_N}&\les R^{-\s_d}\norm v_X.
\end{align}
Since the cutoffs $\na\psi^{\ge R/N}$ and $\De\psi^{\ge R/N}$ are supported in $\{|x|\le R/N\}$, we have
\begin{equation}\label{eq:f bound}
\norm{N^{-1}\norm{\jp{Nx}^{2} f^N}_{L^\infty L^{2_{-1}}}}_{\ell^\infty_N}+\norm{N^{-1}\norm{\jp{Nx}^{2} l^{N}}_{L^\infty L^{2_{-1}}}}_{\ell^\infty_N}\les_{R}1.
\end{equation}
By \eqref{eq:PN l2 decouple}, \eqref{eq:Ppm is Aut}, and \eqref{eq:weighted shao}, we have
\begin{equation}\label{eq:11}
\norm{w_*^0}_{S}\les_R\norm{\norm{P_N(\psi^{\ge R/N}u(\tau_\pm))}_{L^2(\R^d)}}_{\ell^2_N}\les1.
\end{equation}
By the Strichartz estimate and \eqref{eq:g bound}, we have
\begin{equation}\label{eq:21 Stri}
\norm{\norm{P_Nw_*^1}_{L^\infty L^2}}_{\ell^2_N}\les R^{-\s_d}\norm w_{S}.
\end{equation}
Applying \eqref{eq:Ppm N stri} with parameters $(q,\theta)=(2,0)$ to \eqref{eq:g bound}, we obtain
\begin{equation}\label{eq:21 Besov}
\norm{\norm{\jp{Nx}^{\frac12}P_Nw_*^1}_{L^2L^{2_1}}}_{\ell^2_N}\les R^{-\s_d}\norm w_{S}.
\end{equation}
Similarly, applying \eqref{eq:Ppm N stri} with $(q,\theta)=(\infty,2-2\s_d)$ to \eqref{eq:h bound} and \eqref{eq:f bound} yields
\begin{equation}\label{eq:X bound}
\norm{v_*^1}_X\les R^{-\s_d}\norm{v}_X+O_R(1).
\end{equation}
For $v_*^0$, since $\supp(\psi^{<R/N})\subset\{|x|\le R/N\}$, we have
\begin{equation}\label{eq:12}
\norm{v_*^0}_X\les\norm{N^{-1}\norm{\jp{Nx}^{\frac 52-2\s_d}P_N(\psi^{<R/N}u)}_{L^\infty L^{2_1}}}_{\ell^\infty_N}\les_{R}\norm{u}_{L^\infty L^2}\les1.
\end{equation}
Taking the sum of \eqref{eq:11}, \eqref{eq:21 Stri}, \eqref{eq:21 Besov}, \eqref{eq:X bound}, and \eqref{eq:12}, we have
\begin{align}\label{eq:regularity bootstrap w}
\norm{w_*}_S&\le\norm{w_*^0}_S+\norm{w_*^1}_S\les O_R(1)+R^{-\s_d}\norm w_S,
\\
\label{eq:regularity bootstrap v}
\norm{v_*}_X&\le\norm{v_*^0}_X+\norm{v_*^1}_X\les O_R(1)+R^{-\s_d}\norm v_X.
\end{align}

We find $\mathring u$ and $\tilde u$. Let $\{v_n\}\subset X$ and $\{w_n\}\subset S$ be sequences defined inductively as
\begin{equation}\label{eq:seq def}
(v_0,w_0)\coloneqq(0,u)\text{ and }(v_{n+1},w_{n+1})\coloneqq((v_n)_*,(w_n)_*).
\end{equation}
Here we have $\norm{w_0}_S=O_{u,I}(1)$ by standard local theory. 
By \eqref{eq:regularity bootstrap w} and \eqref{eq:regularity bootstrap v}, fixing sufficiently large $R=R(M)\gg1$, we have
\begin{equation}\label{eq:regularity a priori}
\norm{v_n}_X,\norm{w_n}_S=O_M(1),\quad n\gg1.
\end{equation}
Thus, passing to a subsequence of $n$, $v_n$ converges weakly in $L^2L^2$ to some $\mathring u$. Then, $w_n$ converges weakly to $\tilde u\coloneqq u-\mathring u$. By \eqref{eq:regularity a priori} and $S\hook L^\infty L^2$, \eqref{eq:S<L and X<L} follows.

We turn to showing \eqref{eq:local smoothing of tilde u}. By the local smoothing effect \eqref{eq:weighted local smoothing}, for $n\in\N$, we have
\begin{equation}\label{eq:prep for local smoothing tilde w0}
\norm{N^{\frac12}\norm{t^{-\frac14}\jp{x/\sqrt t}^{-1}P_N(w_n)_*^0}_{L^2L^2}}_{\ell^2_N}\les\norm{\norm{P_N(\psi^{\ge R/N}u(\tau_\pm))}_{L^2}}_{\ell^2_N}\les1.
\end{equation}
Under the self-similar renormalization, \eqref{eq:prep for local smoothing tilde w0} can be rewritten as
\begin{equation}\label{eq:prep for local smoothing tilde w0 sc}
\norm{\norm{N^{\frac12}\norm{\jp y^{-1}P_N\ul{(w_n)_*^0}}_{L^2_y}}_{\ell^2_N}}_{L^2_s}\les1.
\end{equation}
Since $\jp y^{-1}|y|^{-\frac12}\in L^d(\R^d)$, by H\"older's inequality, we have
\begin{equation}\label{eq:prep for local smoothing tilde w1 sc}
\norm{\norm{N^{\frac12}\norm{\jp y^{-1}P_N\ul{(w_n)_*^1}}_{L^2_y}}_{\ell^2_N}}_{L^2_s}\les\norm{\norm{\norm{|Ny|^{\frac12}P_N\ul{(w_n)_*^1}}_{L^{2_1}_y}}_{\ell^2_N}}_{L^2_s},
\end{equation}
which is $O_M(1)$ for $n\gg_u1$ due to \eqref{eq:21 Besov} and \eqref{eq:regularity a priori}.
Applying \eqref{eq:weighted LP} to the sum of \eqref{eq:prep for local smoothing tilde w0 sc} and \eqref{eq:prep for local smoothing tilde w1 sc} yields \eqref{eq:local smoothing of tilde u} for $\tilde u_n$. Passing to a limit, the same holds for $\tilde u$.

Finally, we extend the result to a non-compact interval $I\subset(0,\infty)$. Let $\{I_m\}_{m\in\N}$ be a sequence of compact intervals with $I_m\uparrow I$. We decompose $u=\mathring u_m+\tilde u_m$ as in this lemma on each interval $I_m$. Then, passing to a limit completes the proof.
\end{proof}
Hereafter, we often use Sobolev exponents of the form $\frac14+\frac k2$, $k\in\Z$. The choice of the number $\frac14$ is made purely for convenience and can be replaced by any $\al\in(0,\frac12)$.

For $a\ge1$, we fix a radial function $\chi_a\in C^3(\R^d)$ such that $\chi_a(x)=0$ for $|x|\le a-1$, $\chi_a(x)=1$ for $|x|\ge a$, and $\norm{\chi_a}_{C^3}\le10d$. We also denote $\varphi_a(t,x)=\chi_a(x/\sqrt t)$.

In the next lemma, we provide a variant of Lemma~\ref{lem:regularity}, which plays a key role in detecting the sharp coefficient $\sqrt{2\pi}$ in Theorem \ref{thm:loglog}. We decompose $\varphi_au=\mathring u_a+\tilde u_a$ so that high regularity of $\ul{\mathring u_a}$ in the exterior region $\{|y|\ge a+2\iota\}$ is controlled by the local smoothing norm of $\ul u$ in the intermediate region $\{a-\iota\le|y|\le a\}$.
\begin{lem}\label{lem:regularity sec6}
Let $d\ge2$, $M>0$, and $\iota>0$. Let $u$ be a radial solution to \eqref{eq:NLS} on $(0,1]$ such that $M(u)\le M$. For $\tau\ll_{u,\iota}1$ and $a\gg_{M,\iota}1$, we have the following:

Denote $u_a=\varphi_a u$.
Then, there exists a decomposition $\mathring u_a+\tilde u_a=u_a$ on $I=(0,\tau]$ such that for $s_*<0$ and $0\le\nu\ll_d1$,
\begin{align}\label{eq:S<L and X<L sec6}
&\norm{e^{-\nu|s-s_*|}\na\ul{\mathring u_a}}_{L^2H^{\frac14}(\ul I\times\{|y|\ge a+2\iota\})}
\les a^{O(1)}\norm{e^{-\nu|s-s_*|}\ul{u}}_{L^2H^{\frac12}(\ul I\times\{a-\iota\le|y|\le a\})},\hspace{-1mm}
\\
\label{eq:local smoothing of tilde u sec6}
&\norm{\tilde u_a}_{S(I)}+\norm{\jp y^{-1}P_{\ge1}\ul{\tilde u_a}}_{L^2H^{\frac12}(\ul I\times\R^d)}\les_{u} 1.
\end{align}
\end{lem}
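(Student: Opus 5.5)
We mimic the proof of Lemma~\ref{lem:regularity}, applied to the cut-off function $u_a=\varphi_a u$ rather than to $u$. The new feature is that the source generated by the cutoff $\varphi_a$ will be placed in the \emph{radiative-type} part only to the extent that it is controlled by local smoothing of $\ul u$ in the annulus, and it will be the only non-perturbative input to $\mathring u_a$ in the exterior region.

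First, $u_a$ solves
\[
(i\d_t+\De)u_a=\varphi_a\mc N(u)+F_a,\qquad F_a\coloneqq 2\na\varphi_a\cdot\na u+(\De\varphi_a+i\d_t\varphi_a)u .
\]
The source $F_a$ is supported in the annulus $\{(a-1)\sqrt t\le|x|\le a\sqrt t\}$. To obtain the sharper localization in the statement, we take $\chi_a$ with transition layer $\{a-\iota\le|y|\le a\}$, which is permitted up to constants depending on $\iota$. We run the iteration $(v,w)\mapsto(v_*,w_*)$ of Lemma~\ref{lem:regularity} with $u$ replaced by $u_a$:
\begin{itemize}
\item the nonlinear contributions are split exactly as in \eqref{eq:g 1,N def} into $g^{k,N}$ (put in $w_*$) and $h^{k,N},l^N$ (put in $v_*$);
\item the linear radiation $\P_N^\pm e^{i(t-\tau_\pm)\De}(\psi^{\ge R/N}u_a(\tau_\pm))$ goes to $w_*$;
\item the new source $\sum_N\P_N^\pm\mc D^\mp P_{\sim N}F_a$ goes to $v_*$.
\end{itemize}
At the endpoint $t\to0$ no boundary term is needed, since $u_a(t)$ is supported where $|x|\ge(a-1)\sqrt t$ and its $L^2$ mass there tends to $0$ as $t\to 0$ if $u(t)$ converges in $L^2$ away from the origin. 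For a general blow-up solution we instead take the left endpoint $\tau_-$ small and pass to a limit as in the final step of Lemma~\ref{lem:regularity}. The bounds \eqref{eq:regularity bootstrap w} and \eqref{eq:regularity bootstrap v} go through verbatim, which gives \eqref{eq:local smoothing of tilde u sec6}. We need $a\gg1$ and $\tau\ll1$ only to make the smallness factors uniform, for instance through \eqref{eq:rv2 goes zero}-type smallness of $u$ in the exterior region.

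For \eqref{eq:S<L and X<L sec6}, we estimate $\mathring u_a$ on $\{|x|\ge(a+2\iota)\sqrt t\}$. The $h$- and $l$-terms are perturbative there by the $R^{-\s_d}$ gain. We need to check that on this set they are themselves bounded by the annulus norm. Since $v$ ultimately comes from iterating the $F_a$-source plus the interior piece $\psi^{<R/N}u_a$, and that piece is localized, we bound everything by the norm of the $F_a$-term. This is done with the weighted norm $e^{-\nu|s-s_*|}$ via \eqref{eq:Ppm N stri'}.

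For the $F_a$-term itself, we use \eqref{eq:Ppm interior local smoothing} with $a$ rescaled. This bounds $\norm{e^{-\nu|s|}\jp{Nx}^{\ep_d}P_N\mc D^\mp F_a}_{L^2L^{2_1}}$ by $a^kN^{-\frac12}\norm{e^{-\nu|s|}t^{\frac14}P_{\sim N}F_a}_{L^2L^2}$. The exterior estimate \eqref{eq:Ppm exterior stri} then upgrades the weight on $\{|x|\ge(a+2\iota)\sqrt t\}$ to $\jp{Nx}^{\theta}$ for large $\theta$; this extra weight is what buys the $\frac14+1$ derivatives in $y$-variables. In self-similar variables, $t^{\frac14}F_a$ corresponds to $\na_y\ul u$ and $\ul u$ on the annulus. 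Hence $N^{-\frac12}\norm{t^{\frac14}F_a}_{L^2L^2}$, summed in $\ell^2_N$ with the appropriate frequency weights, is bounded by $a^{O(1)}\norm{e^{-\nu|s-s_*|}\ul u}_{L^2H^{\frac12}(\{a-\iota\le|y|\le a\})}$; the half derivative loss of $\na u$ is compensated by the $N^{-\frac12}$.

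The main obstacle is this last bookkeeping. The weight $e^{-\nu|s-s_*|}$ has to survive the time-nonlocal Duhamel operators. This works only for $\nu\ll1$, via the Schur-test argument in the proof of \eqref{eq:Ppm interior local smoothing}, with the weight centered at $s_*$ by time translation. We also have to ensure that the high-regularity gain is genuinely local in the exterior $|y|\ge a+2\iota$ rather than global. For the latter we plan to use the strict separation $2\iota$ between the annulus and the target region, so that the in/out kernel bound \eqref{eq:inout bound K} gives rapid decay for all non-outgoing interactions.
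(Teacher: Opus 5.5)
Your handling of the annulus source $F_a$ follows the paper: interior local smoothing \eqref{eq:Ppm interior local smoothing}, then \eqref{eq:Ppm exterior stri} to raise the weight away from the annulus, with the time weight surviving for $\nu\ll1$. The gap is that the decomposition you iterate cannot give \eqref{eq:S<L and X<L sec6}.

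You keep the spatial cutoff $\psi^{\ge R/N}$ of Lemma~\ref{lem:regularity}. So $v_*$ contains the inhomogeneous terms $\sum_N P_N(\psi^{<R/N}u_a)$, the cutoff commutator $f^N$, and $l^N$. These contain no factor of $v$, so they are not perturbative; in particular $l^N$ has no $R^{-\s_d}$ gain and is merely $O_R(1)$. Nor are they controlled by the annulus norm. Since $u_a$ is supported in $\{|x|\ge(a-1)\sqrt t\}$, the piece $\psi^{<R/N}u_a$ is nonzero only for $N\lesssim R/(a\sqrt t)$. There it is $u$ cut off to the large set $\{(a-1)\sqrt t\le|x|\le 2R/N\}$, which overlaps the target region $\{|x|\ge(a+2\iota)\sqrt t\}$, and its size is bounded only by $\norm{u}_{L^\infty L^2}$. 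So the step ``that piece is localized, so everything is bounded by the $F_a$-term'' fails. Your scheme gives at best $a^{O(1)}\norm{e^{-\nu|s-s_*|}\ul u}_{L^2H^{\frac12}(a-\iota\le|y|\le a)}+O_R(1)$. The additive constant is fatal: in Proposition~\ref{prop:main sec6} the annulus norm is exponentially small and must control $\zeta_{m+1}$ multiplicatively.

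\textbf{The missing idea.} Drop $\psi^{\ge R/N}$ from the decomposition altogether. All of $u_a(\tau_\pm)$ goes into the free part of $w_*$, and $v_*=\sum_N\P_N^\pm\mc D^\mp f+\sum_N\P_N^\pm\mc D^\mp(h^{1,N}+\phi^{j,N}*h^{2,N})$ is driven only by the annulus source and by terms linear in $v$. The $R^{-\s_d}$ gain then no longer covers $\{|x|\lesssim R/N\}$, and this is the real reason $a\gg1$ and $\tau\ll1$ are needed. Note that \eqref{eq:rv2 goes zero} is not available here; it concerns the almost periodic solution of Proposition~\ref{prop:main for AP}. Instead, write $u=\mathring u+\tilde u$ as in Lemma~\ref{lem:regularity}. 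The decay \eqref{eq:X embed u} of $\mathring u$ and $\tilde u\in L^pL^p$ give $\norm{u_a}_{L^{p,\infty}L^p}=o_a(1)+o_\tau(1)$. Hence $\psi^{<R/N}g^{k,N}$ and $\psi^{<R/N}h^{k,N}$ are perturbative in $L^{p',\infty}L^{p'}$ via \eqref{eq:ineff Ppm}, with constant $R^{O(1)}(o_a(1)+o_\tau(1))$. Fixing $R$ first and then taking $a\gg_R1$, $\tau\ll_R1$ closes both the $w$- and $v$-bootstraps.

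\textbf{Two further ingredients are absent.}
\begin{enumerate}
\item With $v+w=u_a$, the Bony split applies to $\mc N(u_a)=\varphi_a^{1+\frac4d}\mc N(u)$, not to $\varphi_a\mc N(u)$. The commutator $(\varphi_a-\varphi_a^{1+\frac4d})\mc N(u)$ lives on the annulus. It must be put in the source and bounded by the annulus $H^{\frac12}$ norm, using radial Sobolev plus the $L^p$ smallness above, then \eqref{eq:Ppm interior local strichartz}.
\item The $h$-terms depend on $v$ and are not supported on the annulus, so \eqref{eq:Ppm exterior stri} alone does not give them a large exterior weight. The paper bootstraps $L^2_t$-based weighted norms on shrinking exterior regions $\{|x|\ge(a+\al)\sqrt t\}$, with $\al=\ep_d,\frac12,\frac78,\frac54$. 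Each step gains less than $\frac12$ in the weight through \eqref{eq:Ppm N stri'}. This is why the target region must be separated from the annulus.
\end{enumerate}

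Finally, the mass of $u_a(t)$ does not tend to $0$ as $t\to0$; in general it tends to $M(z^*)$. This error is harmless only because the boundary data go into $w_*$.
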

\begin{proof}
For simplicity, we assume $\iota=1$; the proof for $\iota>0$ is similar. Denote $\mu=e^{-\nu|s-s_*|}$. 
We mostly follow the proof of Lemma~\ref{lem:regularity}. As earlier, we consider a compact subinterval $[\tau_-,\tau_+]\subset(0,\tau]$.
We begin by providing the map $(v,w)\mapsto(v_*,w_*)$, which defines the sequence $\{(v_n,w_n)\}$ by \eqref{eq:seq def} and the decomposition $\mathring u_a+\tilde u_a=u_a$ as its limit.
Let
\[
f\coloneqq(i\d_t +\De)u_a-\mc N(u_a)=\bb{2\na\varphi_a\cdot\na u+((i\d_t+\De)\varphi_a) u}+(\varphi_a-\varphi_a^{1+4/d})\mc N(u).
\]
Denoting $g^{k,N}$ and $h^{k,N}$ identically to \eqref{eq:g 1,N def} but without $\psi^{\ge R/N}$, we decompose $u_a=v_*+w_*$ as
\begin{align*}
w_*&=\sum_N\P_N^\pm e^{i(t-\tau_\pm)\De}\bb{u_a(\tau_\pm)}+\sum_N\P_N^\pm\mc D^\mp\bb{g^{1,N}+\phi^{j,N}*g^{2,N}}&= w_*^0+w_*^1,
\\
v_*&=\sum_N\P_N^\pm\mc D^\mp f+\sum_N\P_N^\pm\mc D^\mp\bb{h^{1,N}+\phi^{j,N}*h^{2,N}}&= v_*^0+v_*^1.
\end{align*}
We keep measuring $w$ in $S$. To measure $v$, we use $L^2_t$-based norms
\begin{align*}
\norm{v}_{Y}\coloneqq&\norm{\norm{\mu\jp{Nx}^{\ep_d} P_N v}_{L^2L^{2_1}}}_{\ell^2_N},
\\
\norm{v}_{Y^\al}\coloneqq&\norm{\norm{\mu\jp{Nx}^\al P_N v}_{L^2L^{2_1}(|x|\ge (a+\al)\sqrt t)}}_{\ell^2_N},\quad\al>0.
\end{align*}

We show versions of the bootstrapping bounds \eqref{eq:regularity bootstrap w}--\eqref{eq:regularity bootstrap v}: for $R\ge1$,
\begin{align}
\label{eq:regularity bootstrap w sec6}
\norm{w_*}_S&\les O_R(1)+(R^{-\s_d}+R^{O(1)}(o_a(1)+o_\tau(1)))\norm w_S,
\\
\label{eq:regularity bootstrap v sec6, 0}
\norm{v_*}_{Y}&\les a^{O(1)}\norm{\mu\ul u}_{L^2H^{\frac12}(a-1\le|y|\le a)}+(R^{-\s_d}+R^{O(1)}(o_a(1)+o_\tau(1)))\norm{v}_{Y}.
\end{align}

Decomposing $u=\mathring u+\tilde u$ as in Lemma~\ref{lem:regularity}, we have
\begin{align}\label{eq:u_a bdd}
&\norm{u_a}_{L^{p,\infty}L^p}+\norm{\norm{\chi_{[M,2M]}u}_{L^p L^p(|x|\ge(a-1)\sqrt t)}}_{\ell^\infty_M}
\\\les&\norm{\ul{\mathring u}}_{L^\infty L^p(\ul I\times\{|y|\ge a-1\})}+\norm{\tilde u}_{L^pL^p((0,\tau]\times\R^d)}=o_a(1)+o_\tau(1).\nonumber
\end{align}
By \eqref{eq:weighted estimate on w} and \eqref{eq:u_a bdd}, for $k=1,2$, we have
\begin{align}
\label{eq:ineff part sec6}
\norm{\norm{|Nx|^{\ep_d}\psi^{<R/N}g^{k,N}}_{L^{p',\infty}L^{p'}}}_{\ell^2_N}
&\les\norm{u_a}_{L^{p,\infty}L^p}^{\frac 2d}\norm{u_a}_{L^\infty L^2}^{\frac2d}\norm{w}_S
\\&\les(o_a(1)+o_\tau(1))\norm{w}_S.\nonumber
\end{align}
Recall that in \eqref{eq:g bound}, we showed
\begin{equation}\label{eq:g bound prep sec6}
\norm{\norm{\jp{Nx}^{\ep_d-\s_d}\psi^{\ge R/N}g^{k,N}}_{L^2L^{2_{-1}}}}_{\ell^2_N}\les R^{-\s_d}\norm w_S.
\end{equation}
Applying \eqref{eq:ineff Ppm} to \eqref{eq:ineff part sec6}, \eqref{eq:Ppm N stri} to \eqref{eq:g bound prep sec6}, and taking a summation, we obtain
\begin{equation}\label{eq:w1 bound sec6}
\norm{w_*^1}_S\les(R^{-\s_d}+R^{O(1)}(o_a(1)+o_\tau(1)))\norm{w}_S.
\end{equation}
Since $e^{it\De}:L^2\to S$ is bounded, we also have $\norm{w_*^0}_S\les1$. Hence, we obtain \eqref{eq:regularity bootstrap w sec6}.

We turn to showing \eqref{eq:regularity bootstrap v sec6, 0}. 
We first estimate $v_*^0$; note that
\begin{equation}
\label{eq:f du part}
\norm{\mu t^{\frac14}(2\na\varphi_a\cdot\na u+((i\d_t+\De)\varphi_a) u)}_{L^2\dot H^{-\frac12}}\les a^{O(1)}\norm{\mu\ul u}_{L^2 H^{\frac12}(a-1\le|y|\le a)}.
\end{equation}
On $\{a-1\le|y|\le a\}$, by the radial Sobolev inequality and $\frac12-\frac1p\le\frac d{2d+4}$, we have
\[
\norm{\mc N(\ul u)}_{L^{p'}}=\norm{\ul u}_{L^{p}}^{1+\frac4d}\les\norm{\ul u}_{L^p}^{\frac2d}\norm{\ul u}_{H^{\frac d{2d+4}}}^{1+\frac2d}\les\norm{\ul u}_{L^p}^{\frac2d}\norm{\ul u}_{L^2}^{\frac2d}\norm{\ul u}_{H^{\frac12}}.
\]

Hence, by \eqref{eq:u_a bdd} and $\supp(\varphi_a-\varphi_a^{1+4/d})\subset\{(a-1)\sqrt t\le|x|\le a\sqrt t\}$, we have
\begin{align}
\label{eq:f Nu part Lorentz}
\norm{\mu(\varphi_a-\varphi_a^{1+4/d})\mc N(u)}_{L^{p',\infty}L^{p'}}&\les\norm{\mu\ul u}_{L^2H^{\frac12}(a-1\le|y|\le a)}.
\\
\label{eq:f Nu part}
\norm{\norm{\mu\chi_{[M,2M]}(\varphi_a-\varphi_a^{1+4/d})\mc N(u)}_{L^{p'}L^{p'}}}_{\ell^2_M}&\les\norm{\mu\ul u}_{L^2H^{\frac12}(a-1\le|y|\le a)}.
\end{align}
Applying \eqref{eq:Ppm interior local smoothing} to \eqref{eq:f du part} and \eqref{eq:Ppm interior local strichartz} to \eqref{eq:f Nu part}, we obtain
\begin{equation}\label{eq:v0 bound sec6}
\norm{v_*^0}_{Y}\les a^{O(1)}\norm{\mu\ul u}_{L^2H^{\frac12}(a-1\le|y|\le a)}.
\end{equation}

Similarly to \eqref{eq:w1 bound sec6}, we have the bound for $v_*^1$
\[
\norm{v_*^1}_Y\les(R^{-\s_d}+R^{O(1)}(o_a(1)+o_\tau(1)))\norm{v}_{Y}.
\]
Combining with \eqref{eq:v0 bound sec6}, we obtain \eqref{eq:regularity bootstrap v sec6, 0}.

We now show a regularity-improvement estimate: for $0<\al<1$ and $0<\rho<\frac12$,
\begin{equation}\label{eq:regularity bootstrap v sec6, al}
\norm{v_*}_{Y^{\al+\rho}}\les_{\al,\rho} a^{O(1)}\bb{\norm{\mu\ul u}_{L^2H^{\frac12}(a-1\le|y|\le a)}+\norm v_Y+\norm{v}_{Y^{\al}}}.
\end{equation}

Applying \eqref{eq:Ppm exterior stri} to \eqref{eq:f du part} and \eqref{eq:f Nu part Lorentz}, we obtain
\begin{equation}\label{eq:v0 bound sec6, al}
\norm{v_*^0}_{Y^{\al+\rho}}\les_\al a^{O(1)}\norm{\mu\ul u}_{L^2H^{\frac12}(a-1\le|y|\le a)}.
\end{equation}

Similarly to \eqref{eq:ineff part sec6}, for $k=1,2$, we have
\begin{equation}\label{eq:ineff part for v sec6}
\norm{\norm{\mu|Nx|^{\ep_d}\psi^{<R/N}h^{k,N}}_{L^{p',\infty}L^{p'}}}_{\ell^2_N}\les(o_a(1)+o_\tau(1))\norm{v}_Y.
\end{equation}
Since $0<\ep_d<1$ and $0<\al<1$, we also have analogues of \eqref{eq:g bound prep},
\begin{align}
\norm{\norm{\mu|Nx|^{\ep_d}\psi^{\ge R/N}h^{k,N}\one_{|x|<(a+\al)\sqrt t}}_{L^2L^{2_{-1}}}}_{\ell^2_N}&\les \norm{v}_{Y},
\label{eq:g2 sec6}
\\
\norm{\norm{\mu|Nx|^{\al}\psi^{\ge R/N}h^{k,N}\one_{|x|\ge(a+\al)\sqrt t}}_{L^2L^{2_{-1}}}}_{\ell^2_N}&\les \norm{v}_{Y}+\norm v_{Y^{\al}}.
\label{eq:g3 sec6}
\end{align}
Applying \eqref{eq:ineff Ppm} to \eqref{eq:ineff part for v sec6}, \eqref{eq:Ppm exterior stri} to \eqref{eq:g2 sec6}, and \eqref{eq:Ppm N stri'} to \eqref{eq:g3 sec6}, we obtain
\begin{equation}\label{eq:v1 Y}
\norm{v_*^1}_{Y^{\al+\rho}}\les a^{O(1)}(\norm v_Y+\norm v_{Y^\al}).
\end{equation}
Combining with \eqref{eq:v0 bound sec6, al}, we obtain \eqref{eq:regularity bootstrap v sec6, al} as desired.

We begin the main proof. For $R=R(M)\gg1$, $a\gg_R1$, and $\tau\ll_R1$, repeating the proof of Lemma~\ref{lem:regularity} with \eqref{eq:regularity bootstrap w sec6}--\eqref{eq:regularity bootstrap v sec6, 0} yields \eqref{eq:local smoothing of tilde u sec6} and
\begin{equation}\label{eq:Y norm final}
\norm{ v_n}_{Y}\les a^{O(1)}\norm{\mu\ul u}_{L^2H^{\frac12}(a-1\le|y|\le a)},\quad n\gg1.
\end{equation}
Since $Y\hook Y^{\ep_d}$, we can iteratively apply \eqref{eq:regularity bootstrap v sec6, al} to obtain
\begin{equation}\label{eq:Y al norm final}
\norm{v_n}_{Y^\al}\les a^{O(1)}\norm{\mu\ul u}_{L^2H^{\frac12}(a-1\le|y|\le a)},\quad n\gg1,\quad \al=\ep_d,\tfrac12,\tfrac78,\tfrac54.
\end{equation}
Passing to a limit of $n\to\infty$, \eqref{eq:Y norm final} and \eqref{eq:Y al norm final} hold also on $\mathring u_a$. Now observing
\begin{align*}
\norm{N^{\frac54}\norm{P_N\ul{\mathring u_a}}_{L^2(|y|\ge a+\frac54)}}_{\ell^2_{N\ge1}}&\les\norm{\norm{\jp{Ny}^{\frac54}P_N\ul{\mathring u_a}}_{L^{2_1}(|y|\ge a+\frac54)}}_{\ell^2_N},
\\
\norm{N\norm{P_N\ul{\mathring u_a}}_{L^2}}_{\ell^2_{N\le1}}
&\les \norm{\norm{\jp{Ny}^{\frac54}P_N\ul{\mathring u_a}}_{L^{2_1}(|y|\ge a+\frac54)}+a^{\frac54}\norm{P_N\ul{\mathring u_a}}_{L^{2_1}}}_{\ell^2_{N\le1}},
\end{align*}
we obtain \eqref{eq:S<L and X<L sec6}. This completes the proof.
\end{proof}
\section{Quantification of the mass radiation}\label{sec:main}
In this section, we establish the key quantitative mechanism underlying Theorem~\ref{thm:loglog}, linking concentration in the self-similar region to the size of the radiative component.
Let $u$ be a solution to \eqref{eq:NLS} defined on $(0,1]$ that blows up at time $t=0$.
We quantify the following philosophy.

``If the radiative component of $\ul u$ is small around time $s_*$, then the local mass of $\ul u$ in the self-similar regime $\{|y|\sim1\}$ is also small near $s_*$.''

Let $u^\pc$ be defined in \eqref{eq:u pc def}. Decomposing $u=\mathring u+\tilde u$ and $u^\pc=\mathring u^\pc+\tilde u^\pc$ as in Lemma~\ref{lem:regularity}, we measure the radiation in terms of $\tilde u$ and $\tilde u^\pc$.
Note that for a Strichartz pair $(q,r)$ such that $q>2$, by \eqref{eq:S embed}, \eqref{eq:local smoothing of tilde u}, and \eqref{eq:X embed Hs}, we have
\begin{align}
\label{eq:tilde u embed Lp cap H1/2}
&\norm{\tilde u}_{L^qL^r((0,1]\times\R^d)}+\norm{\tilde u^\pc}_{L^qL^r([1,\infty)\times\R^d)}+\norm{\jp y^{-1}P_{\ge1}\ul{\tilde u}}_{L^2H^{\frac12}((-\infty,0]\times\R^d)}\les1
\\
\label{eq:mathring u embed Hs}
&\norm{\na\ul{\mathring u}(s)}_{H^{\frac14}(|y|\ge a)},\,\norm{\na\ul{\mathring u^\pc}(s)}_{H^{\frac14}(|y|\ge a)}\les a^{-1}+a^{-\frac54},\quad s\le 0,\quad a>0.
\end{align}
The following is our main proposition in its primary form:
\begin{prop}\label{prop:main}
Let $(q,r)$ be a Strichartz pair with $2<q\le p$. There exists $C_1>0$ such that the following holds:

Let $M>0$, $R_0\ge1$, $\rho>0$, and $\delta\ll_{M,R_0,\rho}1$. Denote $C=C_1|\log\delta|$. Let $\kappa\ll_{u,\delta}1$ and $s_*\le-\kappa^{-10}$ be parameters such that, on the interval $[s_*-\kappa^{-10},s_*+\kappa^{-10}]$,
\begin{equation}\label{eq:eta def}
\norm{\ul{\tilde u}}_{L^qL^r}+\norm{\ul{\tilde u^\pc}}_{L^qL^r}
+\norm{\jp y^{-1}P_{\ge1}\ul{\tilde u}}_{L^2H^{\frac12}}
+\norm{P_{e^{-\kappa^{-2}}\le\cdot \le e^{\kappa^{-2}}}\ul{\tilde u}}_{L^\infty L^2}\le e^{-C^2\kappa^{-1}}.\hspace{-2mm}
\end{equation}
Then, we have
\begin{equation}\label{eq:main}
\norm{y\ul u}_{L^2L^2([s_*-1,s_*+1]\times\{\sqrt\kappa\le|y|\le R_0\})}\le\rho\delta\sqrt\kappa.
\end{equation}
\end{prop}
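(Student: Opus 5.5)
Under \eqref{eq:eta def}, on the window $J=[s_*-\kappa^{-10},s_*+\kappa^{-10}]$ the solution $\ul u$ is, up to an error $e^{-C^2\kappa^{-1}}$, equal to its non-radiative part $\ul{\mathring u}$. The same holds for $\ul{u^\pc}$ with $\ul{\mathring u^\pc}$. These parts enjoy the regularity \eqref{eq:mathring u embed Hs} away from the origin, and we also have the identity \eqref{eq:ix/2t u}. The plan is to rerun the virial/ODE machinery of the proof of Proposition~\ref{prop:no self similar AP} for $d=1,2$ in a quantitative and time-localized form. That machinery gives exponential decay of the spherical mass $W$ of $v=e^{-\frac i8|y|^2}\ul u$ in $|y|\gtrsim 1$. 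Since the errors are exponentially small in $\kappa^{-1}$, the decay persists down to $|y|\sim\sqrt\kappa$ with constant of size $\delta$.

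First, I would localize in time. I replace the weight $\mu$ by $\mu_*(s)=e^{-\nu|s-s_*|}$ times a cutoff to $J$. The boundary terms at $\partial J$ are bounded by $e^{-\nu\kappa^{-10}}$ using mass conservation, which is negligible.

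Second, I write the virial identity \eqref{eq:Virial in v<VF''-V''F} with weights $\varphi^a$ for $a\in[\sqrt\kappa,R_0+1]$. I obtain the integro-differential inequality for $\gamma=-\log\I W$ as in \eqref{eq:ODE ineq}, with two new ingredients.
\begin{itemize}
\item[(i)] The nonlinear term $\I\int(\De\varphi^a)|v|^{2+4/d}$ is controlled without the pointwise bound \eqref{eq:v 4/d is -2}. I split $u=\mathring u+\tilde u$. The $\tilde u$-part is bounded by the $L^qL^r$ smallness in \eqref{eq:eta def}. The $\mathring u$-part is handled by the radial Sobolev embedding \eqref{eq:X embed u}, which gives $|\ul{\mathring u}|\lesssim|y|^{-d/2}$. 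This yields an error term $\frac{c}{a^2}\I W(a)$ plus $O(e^{-C^2\kappa^{-1}})$.
\item[(ii)] The quadratic terms at the sphere $|y|=a$, namely $\I W_{rr}$, $\I W_r$, and the term $\I\int|v_r|^2$ used in \eqref{eq:CS EW}, need derivative control of $\ul u$ near $|y|=a$. I get this from the local-smoothing bound in \eqref{eq:eta def} on the radiative part and from \eqref{eq:mathring u embed Hs} on the regular part.
\end{itemize}

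The resulting inequality is
\[
\gamma''(a)-\tfrac{c}{a^2}\le-\tfrac a8\int_a^\infty e^{\gamma(a)-\gamma(r)}dr+e^{\gamma(a)-C^2\kappa^{-1}},
\]
valid as long as $\I W(a)\ge e^{-C^2\kappa^{-1}/2}$. To get an \emph{upper} bound on the mass from it, I would also run it for the pseudo-conformal transform, using \eqref{eq:ix/2t u}. The factor $|y|^2$ in \eqref{eq:main} comes from $iy\ul u=2\na\ul u-2e^{\frac i4|y|^2}\na\oul{u^\pc}$. This reduces \eqref{eq:main} to smallness of $\na\ul{\mathring u}$ and $\na\ul{\mathring u^\pc}$ in $L^2$ on the annulus. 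In turn, this is controlled through Lemma~\ref{lem:regularity sec6} by the local-smoothing norm of $\ul u$ on a slightly inner annulus $\{a-\iota\le|y|\le a\}$. Iterating Lemma~\ref{lem:regularity sec6} inward gives a closed bootstrap. Each step costs $a^{O(1)}$ and gains the factor from the convexity inequality.

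Third, I integrate the ODE inequality. Arguing as in \eqref{eq:gamma' bound}--\eqref{eq:ODE ineq corrected}, $\gamma$ is forced to grow at least like $c'a^2$ beyond a fixed radius, unless $\I W$ drops below the error level $e^{-C^2\kappa^{-1}}$. Running the argument from large $a$ down to $a\sim\sqrt\kappa$, over the $O(|\log\kappa|)$ dyadic scales, the losses $a^{O(1)}$ accumulate to at most $\kappa^{-O(1)}$. This is beaten by $e^{-C^2\kappa^{-1}}$. The choice $C=C_1|\log\delta|$ makes the final mass on $[s_*-1,s_*+1]\times\{\sqrt\kappa\le|y|\le R_0\}$ at most $\rho^2\delta^2\kappa$, since $\delta\ll_{M,R_0,\rho}1$ absorbs the $R_0$-dependence.

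The main obstacle I expect is the small-$a$ regime $a\sim\sqrt\kappa$. There the Cauchy–Schwarz lower bound \eqref{eq:CS EW} and the $c/a^2$ error compete at the same scale. The scaling must be tracked carefully so that the gain is exactly $\sqrt\kappa$, rather than a power loss. I would first try rescaling $y\mapsto y/\sqrt\kappa$ on this regime and working with the slowly varying weight over the long interval $J$ of length $\kappa^{-10}$.
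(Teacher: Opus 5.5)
There is a genuine gap, in fact two.

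\textbf{The ODE step.} Your third step uses the integro-differential inequality in the wrong direction and misses the mechanism that produces $\rho\delta\sqrt\kappa$. The inequality $\gamma''\le\frac{c}{a^2}-\frac a8\int_a^\infty e^{\gamma(a)-\gamma(r)}dr$ makes $\gamma$ bend down wherever $\gamma'>0$. It never forces growth like $a^2$ and gives no decay estimate by itself. Tallying $a^{O(1)}$ losses against $e^{-C^2\kappa^{-1}}$ never relates $C=C_1|\log\delta|$ to the scale $\sqrt\kappa$.

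The paper argues by contradiction and integrates \emph{outward} from $R_0$:
\begin{itemize}
\item If \eqref{eq:main} fails, then $\kappa^{-1}\int_{\sqrt\kappa}^{R_0}r^2e^{-\gamma}dr\gtrsim\rho^2\delta^2$.
\item Comparing this with the mass bound, via the crude inward estimate $\ul\gamma''\le 3c/a^2$ on $[2\kappa,R_0]$, gives $\gamma(R_0)\le\kappa^{-3/4}$ and $\gamma'(R_0)\le\beta|\log\delta|\kappa^{-1/2}$ (Lemma~\ref{lem:k<a*<1}).
\item Multiplying the outward inequality by $\tilde\gamma'>0$ yields \eqref{eq:gam' bound}, namely $\tilde\gamma'(a)^2\le(\beta|\log\delta|\kappa^{-1/2}+O(1))^2-(\frac14-o(1))a^2$. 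Integrating again gives $\tilde\gamma\le(\frac\pi2\beta^2+o(1))|\log\delta|^2\kappa^{-1}$.
\item This stays below the validity threshold $\sigma C^2\kappa^{-1}$ once $C_1>\beta\sqrt{\pi/(2\sigma)}$. So the inequality survives until $a\approx2\beta|\log\delta|\kappa^{-1/2}$, where the right side of \eqref{eq:gam' bound} turns negative.
\end{itemize}
This balance between $\frac\pi2\gamma'(R_0)^2$ and $C^2\kappa^{-1}$ is what ties the error level to $\sqrt\kappa$ and to $\delta$. It needs the inequality on $[2\kappa,\kappa^{-1}]$, far beyond $R_0$. That is why the paper cuts off at $R=\eta^{-\alpha}$ and proves \eqref{eq:mathring u decay}, the only place $\mathring u^{\pc}$ and $\tilde u^{\pc}$ are used.

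Your substitute, \eqref{eq:ix/2t u} with Lemma~\ref{lem:regularity sec6} iterated inward, cannot work. That lemma needs $a\gg_{M,\iota}1$ and bounds the exterior $\{|y|\ge a+2\iota\}$ by an inner annulus. It therefore propagates outward and never reaches $|y|\sim\sqrt\kappa$. Moreover, \eqref{eq:ix/2t u} turns gradient \emph{bounds} into bounds on $y\ul{\mathring u}$, not into smallness.

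\textbf{The virial identity on $\ul u$.} You cannot run the virial identity on $\ul u$ itself. The terms $\I\int_{a\S^{d-1}}|v_r|^2$ and $\I W_{rr}(a)$, and the surface part of $\I\int\Delta\varphi^a|v|^{2+4/d}$, require derivatives and traces on spheres. For an $L^2$ solution, $\ul{\tilde u}$ has only weighted $L^2H^{\frac12}$ local smoothing. Hypothesis \eqref{eq:eta def} makes $\ul{\tilde u}$ small in that norm, in $L^qL^r$, and in $L^\infty L^2$ on frequencies up to $e^{\kappa^{-2}}\gg\eta^{-1}$. None of this makes its contribution to the quadratic form, or the cross terms with $\ul{\mathring u}$, small or even finite.

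The paper instead builds a regular approximate solution:
\begin{itemize}
\item On $s$-intervals of length $\sim\kappa^p$ it solves the truncated equation \eqref{eq:w_n solves} from data $\chi\mathring u(t_n)$.
\item This gives the $H^{\frac34}$ bounds \eqref{eq:w_n H1 bound} and closeness $\kappa^{-O(1)}R^{-1/4}$ to $\chi u$.
\item The jump errors at the times $s_n$ are summed with the bilinear estimate of Lemma~\ref{lem:bilinear}. This yields the error $\kappa^{-O(1)}R^{-\frac13\alpha_d}$ in \eqref{eq:ODE ineq for radial} and the pointwise bound used for the nonlinear term.
\end{itemize}
Without this construction or a substitute, neither your inequality nor your handling of the nonlinear term is justified.
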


The remainder of this section is devoted to showing Proposition~\ref{prop:main} and its variants.

Let $\al=\al(d)>0$ be a constant to be fixed later. We denote 
\[
\eta=e^{-C^2\kappa^{-1}},\quad R=\eta^{-\al}.
\]
Then, the relative sizes of the parameters $\eta$, $R$, and $\kappa$ are ordered as
\[\eta\,\ll\,  R^{-1}\ll \, \kappa.\]
In particular, any power of $\kappa^{-1}$ is negligible compared to $R$ and $\eta^{-1}$, so we suppress such factors by writing $\kappa^{-O(1)}$.

First, we provide an estimate of $\mathring u$: on the interval $[s_*-\kappa^{-10},s_*+\kappa^{-10}]$,\footnote{Indeed, this decay bound is the only estimate for which we employ $\mathring u^\pc$ and $\tilde u^\pc$.}
\begin{equation}\label{eq:mathring u decay}
\norm{\ul{\mathring u}}_{L^2H^{\frac54}(\kappa/4\le|y|\le 4R)}+\norm{|y|^{\frac34}\ul{\mathring u}}_{L^2L^2(\kappa/4\le|y|\le 4R)}\les\kappa^{-O(1)}.
\end{equation}
For the proof of \eqref{eq:mathring u decay}, we use the identity
\begin{equation}\label{eq:pc trick identity}
-\tfrac i2ye^{\frac i4|y|^2}\oul{\mathring u}=(e^{\frac i4|y|^2}\na\oul{\mathring u}-\na\ul{\mathring u^\pc})
+\na(\ul{\mathring u^\pc}-e^{\frac i4|y|^2}\oul{\mathring u})
\end{equation}
on the annulus $\Omega=\{\kappa/4\le|y|\le 4R\}$.
We have
\begin{align}\label{eq:mathring u decay - I bound}
&\norm{\na(\ul{\mathring u^\pc}-e^{\frac i4|y|^2}\oul{\mathring u})}_{L^2H^{\frac14}(\Omega)}
\\\les& R^{O(1)}(\norm{\na\ul{\mathring u}}_{L^2H^{\frac14}(\Omega)}+\norm{\na\ul{\mathring u^\pc}}_{L^2H^{\frac14}(\Omega)}+\norm{\ul{\mathring u}}_{L^2L^2(\Omega)}).\nonumber
\end{align}
By the identity \eqref{eq:ul u^pc} and the $L^qL^r$-norm bounds in \eqref{eq:eta def}, we have
\begin{equation}\label{eq:mathring u decay - II bound}
\norm{\ul{\mathring u^\pc}-e^{\frac i4|y|^2}\oul{\mathring u}}_{L^2L^2(\Omega)}=\norm{-\ul{\tilde u^\pc}+e^{\frac i4|y|^2}\oul{\tilde u}}_{L^2L^2(\Omega)}\les R^{O(1)}\eta.
\end{equation}

Interpolating between \eqref{eq:mathring u decay - I bound} and \eqref{eq:mathring u decay - II bound} yields a bootstrapping bound
\begin{align*}
\norm{y\ul{\mathring u}}_{L^2L^2(\Omega)}\les&\norm{\na\ul{\mathring u}}_{L^2L^2(\Omega)}+\norm{\na\ul{\mathring u^\pc}}_{L^2L^2(\Omega)}
\\&+R^{A}\eta^{\frac15}(\norm{\na\ul{\mathring u}}_{L^2H^{\frac14}(\Omega)}+\norm{\na\ul{\mathring u^\pc}}_{L^2H^{\frac14}(\Omega)}+\norm{y\ul{\mathring u}}_{L^2L^2(\Omega)})^{\frac45}
\end{align*}
with an exponent $A=O(1)$, which leads to
\begin{equation}\label{eq:mathring u decay bootstrap}
\norm{y\ul{\mathring u}}_{L^2L^2(\Omega)}\les\norm{\na\ul{\mathring u}}_{L^2H^{\frac14}(\Omega)}+\norm{\na\ul{\mathring u^\pc}}_{L^2H^{\frac14}(\Omega)}+R^{5A}\eta.
\end{equation}
Choosing $\al\ll1$, by \eqref{eq:mathring u embed Hs} and \eqref{eq:mathring u decay bootstrap}, we obtain \eqref{eq:mathring u decay}.

Since $u$ is not sufficiently regular, we construct an approximate solution $v$ that solves \eqref{eq:NLS harmonic} in the region $\{|y|\ge\kappa\}$. Such $v$ will be given in the form
\begin{equation}\label{eq:v def sec5}
v(s)=\begin{cases}
v_n(s)&,\quad s\in[s_n,s_{n+1}),\quad 0\le n< n_+
\\
0&,\quad\text{otherwise}
\end{cases}
\end{equation}
with a sequence of times $s_0<s_1<\cdots<s_{n_+}$ in $(s_*-\kappa^{-10},s_*+\kappa^{-10})$ such that
\begin{align}
\label{eq:s_n gap}
&\kappa^p\le s_{n+1}-s_n\le2\kappa^p,
\\
\label{eq:s_n assump mathring}
&\norm{\ul{\mathring u}(s_n)}_{H^{\frac54}(\kappa/4\le|y|\le 4R)}+\norm{|y|^{\frac34}\ul{\mathring u}(s_n)}_{L^2(\kappa/4\le|y|\le 4R)}\les\kappa^{-O(1)}.
\end{align}
Denote $\chi=\psi^{\kappa\sqrt t\le\cdot\le R\sqrt t}$ and $\tilde\chi=\psi^{\gtrsim\kappa\sqrt t}$.
Let $0\le n< n_+$.
On the interval $I_n=[e^{s_n},e^{s_n+2\kappa^p}]$, we approximate the function $\chi u$, which solves the equation
\begin{equation}\label{eq:psi u solves}
(i\d_t+\De)(\chi u)-\mc N(\chi u)=2\na\chi\cdot\na u+((i\d_t+\De)\chi) u+(\chi-\chi^{1+\frac4d})\mc N(u)\eqqcolon f.
\end{equation}
We decompose $f$ into the $\mathring u$-originated part near the origin and the remainder
\[
\mathring f=\psi^{\sim\kappa\sqrt t}\bb{2\na\chi\cdot\na\mathring u+((i\d_t+\De)\chi)\mathring u+(\chi-\chi^{1+\frac4d})\mc N(\mathring u)},\quad\tilde f=f-\mathring f.
\]

Then, we claim that there exists a solution $w_n\in C^0H^{\frac34}\cap L^pW^{\frac34,p}(I_n\times\R^d)$ to
\begin{equation}\label{eq:w_n solves}
(i\d_t+\De)w_n-\tilde\chi\mc N(w_n)=\mathring f,\quad w_n(t_n)=\chi\mathring u(t_n)
\end{equation}
with the initial time $t_n=e^{s_n}$, which satisfies\footnote{All spacetime norms of $w_n$ hereafter are measured on the time interval $I_n$.}
\begin{align}\label{eq:w_n-psi mathring u small}
&\norm{\ul{w_n}-\ul{\chi u}}_{L^\infty H^{-1}\cap L^pL^p\cap L^qL^r(|y|\ge\kappa)}\les \kappa^{-O(1)}R^{-\frac14},
\\&\label{eq:w_n H1 bound}
\norm{\ul{w_n}}_{L^\infty H^{\frac34}}+\norm{e^{-\frac i4|y|^2}\ul{w_n}}_{L^\infty H^{\frac34}}\les\kappa^{-O(1)}.
\end{align}

We show the claim in the case $t_n=e^{s_n}=1$ for simplicity; the proof in general is similar.
First, we show \eqref{eq:w_n-psi mathring u small} via a perturbation argument. By \eqref{eq:X embed u}, we have
\begin{align}\label{eq:m def}
\norm{u}_{L^pL^p(|x|\ge\kappa/16)}&\le\norm{\mathring u}_{L^pL^p(|x|\ge\kappa/16)}+\norm{\tilde u}_{L^pL^p(|x|\ge\kappa/16)}
\\&\les|I_n|^{\frac1p}\norm{\mathring u}_{L^\infty L^p(|x|\ge\kappa/16)}+\eta^{\frac qp}\les\kappa\cdot\kappa^{\frac dp-\frac d2}=o_\kappa(1).\nonumber
\end{align}

By \eqref{eq:eta def}, \eqref{eq:mathring u decay}, \eqref{eq:m def}, and $\norm{\mathring u}_{L^\infty L^2}\les1$, each term in $\tilde f$ can be estimated as
\begin{align*}
&\norm{2\na\chi\cdot\na\tilde u+((i\d_t+\De)\chi)\tilde u}_{L^2H^{-\frac12}}\les R\eta,
\\
&\norm{(\chi-\chi^{1+\frac4d})(\mc N(u)-\mc N(\mathring u))}_{L^2L^{2_{-1}}}\les\norm{\tilde u}_{L^qL^r}\les\eta,
\\
&\norm{\psi^{\sim R}(2\na\chi\cdot\na\mathring u)}_{L^2L^2}\les R^{-1}\norm{\psi^{\sim R}\mathring u}_{L^2H^1}\les \kappa^{-O(1)}R^{-1},
\\
&\norm{\psi^{\sim R}((i\d_t+\De)\chi)\mathring u}_{L^2L^2}\les \norm{\psi^{\sim R}\mathring u}_{L^2L^2}\les \kappa^{-O(1)}R^{-\frac34},
\\
&\norm{\psi^{\sim R}\mc N(\mathring u)}_{L^2L^{2_{-1}}}\les R^{\frac {d}2-1}\norm{\psi^{\sim R}\mathring u}_{L^2L^{\infty}}\les\norm{\psi^{\sim R}\mathring u}_{L^2L^2}^{\frac12}\norm{\psi^{\sim R}\mathring u}_{L^2H^1}^{\frac12}\les \kappa^{-O(1)}R^{-\frac38}.
\end{align*}
Then, by Strichartz and local smoothing estimates, we obtain
\begin{equation}\label{eq:K+ tilde}
\norm{\mc D^+\tilde f}_{L^\infty H^{-1}\cap L^pL^p\cap L^qL^r}\les \kappa^{-O(1)}R^{-\frac14}.
\end{equation}
By the Strichartz estimate and \eqref{eq:weighted shao Lq}, for sufficiently small $\theta=\theta(d)>0$, we have
\begin{align}
&\norm{e^{it\De}(\chi\tilde u(t_n))}_{L^pL^p\cap L^qL^r(|x|\ge\kappa/16)}\les\norm{\tilde u_{\le1}(t_n)}_{\dot H^{\theta}}+\kappa^{-O(1)}\norm{\tilde u_{>1}(t_n)}_{\dot H^{-\theta}},\nonumber
\\
&\norm{\chi\tilde u(t_n)}_{H^{-1}}\les\norm{\tilde u(t_n)}_{H^{-1}(|y|\le 4R)}\les R\norm{\tilde u_{\le1}(t_n)}_{\dot H^\theta}+\norm{\tilde u_{>1}(t_n)}_{\dot H^{-1}},\label{eq:refer H-1}
\end{align}
which are $O(R\eta)$ due to \eqref{eq:eta def}.
Then, by \eqref{eq:m def}, \eqref{eq:K+ tilde}, and standard perturbation theory with \eqref{eq:psi u solves}, a unique solution $w_n$ to \eqref{eq:w_n solves} on $I_n$ exists and satisfies \eqref{eq:w_n-psi mathring u small}.

We turn to showing \eqref{eq:w_n H1 bound}.
By \eqref{eq:s_n assump mathring}, $\ul{w_n}(s_n)=\ul{\chi\mathring u}(s_n)$ satisfies
\begin{equation}\label{eq:w_n init regularities}
\norm{\ul{w_n}(s_n)}_{H^{\frac34}}+\norm{|y|^{\frac34}\ul{w_n}(s_n)}_{L^2}+\norm{e^{-\frac i4|y|^2}\ul{w_n}(s_n)}_{H^{\frac34}}\les\kappa^{-O(1)}.
\end{equation}

We use the local smoothing effect for $\mathring f$, which is supported in the regime $\{|x|\les\kappa\}$. By \eqref{eq:mathring u decay}, $\mathring f$ is $\kappa^{-O(1)}$-bounded in $L^2H^{\frac14}(I_n\times\R^d)$.
Combining with \eqref{eq:w_n init regularities}, we obtain
\begin{align}\label{eq:w_n X bootstrap}
\norm{w_n}_{L^\infty H^{\frac34}\cap L^pW^{\frac34,p}}\les&\norm{w_n(t_n)}_{H^{\frac34}}+\norm{\mathring f}_{L^2 H^{\frac14}}+\norm{\tilde\chi\mc N(w_n)}_{L^{p'}W^{\frac34,p'}}
\\\les&\kappa^{-O(1)}+\norm{w_n}_{L^pL^p(|x|\ge\kappa/16)}^{\frac4d}\norm{w_n}_{L^pW^{\frac34,p}}.\nonumber
\end{align}
By \eqref{eq:w_n-psi mathring u small} and \eqref{eq:m def}, we have $\norm{w_n}_{L^pL^p(|x|\ge\kappa/16)}\ll1$.
Thus, \eqref{eq:w_n X bootstrap} yields \eqref{eq:w_n H1 bound} for $w_n$.  
Since $\mathring f$ is localized in $\{|x|\les\kappa\}$, the pseudo-conformal transform of $\mathring f$ is also $\kappa^{-O(1)}$ in $L^2H^{\frac14}$; similarly working with the pseudo-conformal transform of $w_n$ completes showing \eqref{eq:w_n H1 bound} as claimed.

We now specify the sequence $\{s_n\}_{n\le n_+}$. By \eqref{eq:mathring u decay}, there exists $s_0\in(s_*-\kappa^{-10},s_*-\kappa^{-10}+\kappa^p)$ satisfying \eqref{eq:s_n assump mathring}. We then choose $s_n$ inductively; once $s_{n-1}$ is chosen, by \eqref{eq:eta def}, \eqref{eq:mathring u decay}, and \eqref{eq:w_n-psi mathring u small}, there exists $s_{n}>s_{n-1}$ satisfying \eqref{eq:s_n gap}, \eqref{eq:s_n assump mathring}, and
\begin{equation}\label{eq:s_n w_n assump}
\norm{(\ul{w_{n-1}}-\ul{\chi\mathring u})(s_{n})}_{L^r(|y|\ge\kappa)}\les\kappa^{-O(1)}R^{-\frac14}.
\end{equation}
We iterate the choice of $s_n$ over $0\le n \le n_+ $, where $n_+$ is an index such that $s_{n_+}\ge s_*+\kappa^{-10}-1$. By \eqref{eq:s_n gap}, $n_+\le\kappa^{-O(1)}$ follows.

We estimate $(\ul{w_{n-1}}-\ul{w_{n}})(s_{n})$ for $1\le n\le n_+$. By \eqref{eq:w_n H1 bound}, we have
\begin{equation}\label{eq:s_n w_n H1 bdd}
\norm{(\ul{w_{n-1}}-\ul{w_{n}})(s_{n})}_{H^{\frac34}},\,\norm{e^{-\frac i4|y|^2}(\ul{w_{n-1}}-\ul{w_{n}})(s_{n})}_{H^{\frac34}}\les\kappa^{-O(1)}.
\end{equation}
By \eqref{eq:w_n H1 bound} and the identity \eqref{eq:ix/2t u}, we have
\begin{equation}\label{eq:w_n decay}
\norm{|y|^{\frac34}\ul{w_{n-1}}(s_{n})}_{L^2}\les\norm{y\ul{w_{n-1}}(s_{n})}_{H^{-\frac14}}+\norm{\ul{w_{n-1}}(s_{n})}_{H^{\frac34}}\les\kappa^{-O(1)}.
\end{equation}
Taking the sum of \eqref{eq:w_n init regularities} and \eqref{eq:w_n decay}, we have
\begin{equation}\label{eq:s_n w_n L21 bdd}
\norm{|y|^{\frac34}(\ul{w_{n-1}}-\ul{w_{n}})(s_{n})}_{L^2}\les\kappa^{-O(1)}.
\end{equation}
Since $\ul{\chi\mathring u}(s_n)=\ul{w_n}(s_n)$, interpolating \eqref{eq:s_n w_n assump} and \eqref{eq:s_n w_n L21 bdd} yields for small $\al_d>0$ that
\begin{equation}\label{eq:s_n w_n L21 small}
\norm{\jp y^{\frac12}(\ul{w_{n-1}}-\ul{w_{n}})(s_{n})}_{L^2(|y|\ge\kappa)}\les\kappa^{-O(1)} R^{-\al_d}.
\end{equation}
Interpolating between \eqref{eq:s_n w_n H1 bdd} and \eqref{eq:s_n w_n L21 small}, we obtain
\begin{align}\label{eq:s_n w_n B1/2 small}
&\norm{(\ul{w_{n-1}}-\ul{w_{n}})(s_{n})}_{B^{\frac12}_{2,1}(|y|\ge\kappa)},\norm{e^{-\frac i4|y|^2}(\ul{w_{n-1}}-\ul{w_{n}})(s_{n})}_{B^{\frac12}_{2,1}(|y|\ge\kappa)}
\\\les&\kappa^{-O(1)} R^{-\frac{1}{3}\al_d}.\nonumber
\end{align}
We now begin the virial calculation, which mainly follows Section~\ref{subsec:nonexistence of AP}. We work on $v$ given in \eqref{eq:v def sec5} with $v_n=e^{-\frac i8|y|^2}\ul {w_n}$.
Since $w_n$ solves \eqref{eq:NLS} in the regime $\{|x|\ge2\kappa\sqrt t\}$, for $a\ge2\kappa$, the virial integral \eqref{eq:Virial fight mother} for $v$ is bounded by the sum of boundary terms for the jump discontinuities at times $s_n$, $0\le n\le n_+$:
\begin{equation}\label{eq:jump error}
\mc E_n\coloneqq-\bb{2\mu\cdot\Im([v_n,v_n]_a-[v_{n-1},v_{n-1}]_a)+\d_s\mu\cdot(V_{\varphi^a}(v_n)-V_{\varphi^a}(v_{n-1}))}(s_n).
\end{equation}
Here we use the convention $v_{-1}=v_{n_+}=0$, the bilinear operator $[\cdot,\cdot]_a$ is defined in Lemma~\ref{lem:bilinear}, and we set $\mu(s)=e^{-\frac14|s-s_*|}$.

We estimate \eqref{eq:jump error}. For $n=0$ and $n=n_+$, \eqref{eq:w_n H1 bound} and \eqref{eq:w_n decay} yield
\begin{align}\label{eq:error bound n=0,n+}
|\mc E_n|\les\mu(s_n)\cdot\kappa^{-O(1)}+|\d_s\mu(s_n)|\cdot\kappa^{-O(1)}\les e^{-\frac14\kappa^{-10}}\cdot\kappa^{-O(1)}\ll\eta.
\end{align}
For $1\le n\le n_+-1$, we use the following identity for $k=n$ and $k=n-1$:
\[
2[v_k,v_k]_a(s_n)=[\ul{w_k},\ul{w_k}]_a(s_n)+[e^{-\frac i4|y|^2}\ul{w_k},e^{-\frac i4|y|^2}\ul{w_k}]_a(s_n).
\]
By the bilinear estimate \eqref{eq:bilinear}, \eqref{eq:w_n H1 bound}, and \eqref{eq:s_n w_n B1/2 small}, we have
\begin{align}\label{eq:bilinear bound 1<n<n+}
&|[\ul{w_n},\ul{w_n}]_a(s_n)-[\ul{w_{n-1}},\ul{w_{n-1}}]_a(s_n)|
\\\les&\kappa^{-O(1)}\norm{(\ul{w_n}-\ul{w_{n-1}})(s_n)}_{B^{\frac12}_{2,1}(|y|\ge\kappa)}\cdot\max_{k=n,n-1}\norm{\ul{w_k}(s_n)}_{B^{\frac12}_{2,1}}\les\kappa^{-O(1)}R^{-\frac1{3}\al_d}\nonumber
\end{align}
and the same holds for $e^{-\frac i4|y|^2}\ul{w_n}$ and $e^{-\frac i4|y|^2}\ul{w_{n-1}}$.
By \eqref{eq:w_n decay} and \eqref{eq:s_n w_n L21 small}, we have
\begin{equation}\label{eq:V bound 1<n<n+}
|V_{\varphi^a}(v_n)-V_{\varphi^a}(v_{n-1})|(s_n)=|V_{\varphi^a}(\ul{w_n})-V_{\varphi^a}(\ul{w_{n-1}})|(s_n)\les\kappa^{-O(1)}R^{-\al_d}.
\end{equation}
By \eqref{eq:error bound n=0,n+}--\eqref{eq:V bound 1<n<n+}, the total sum of $\mc E_n$ over $0\le n\le n_+$ is $O(\kappa^{-O(1)}R^{-\frac1{3}\al_d})$.

We now verify \eqref{eq:v 4/d is -2} for $v$, which was used to control the integral \eqref{eq:tag nonlinear integral for Sec 5}. Similarly to \eqref{eq:refer H-1}, $\ul{\chi\tilde u}$ is $O(R\eta)$ in $L^\infty H^{-1}$.
Thus, \eqref{eq:w_n H1 bound}, \eqref{eq:X embed Hs}, and \eqref{eq:w_n-psi mathring u small} yield
\[
\norm{\ul{w_n}-\ul{\chi\mathring u}}_{L^\infty H^{\frac34}(|y|\ge\kappa)}\les\kappa^{-O(1)},\quad \norm{\ul{w_n}-\ul{\chi\mathring u}}_{L^\infty H^{-1}}\les \kappa^{-O(1)}R^{-\frac14}.
\]
Assuming $\al_d>0$ is sufficiently small, an interpolation then yields
\begin{equation}\label{eq:Lp Linfty small v-u}
\norm{e^{\frac i8|y|^2}v_n-\ul{\chi\mathring u}}_{L^\infty L^\infty(|y|\ge\kappa)}=\norm{\ul {w_n}-\ul{\chi\mathring u}}_{L^\infty L^\infty(|y|\ge\kappa)}\les R^{-d\al_d}.
\end{equation}
By \eqref{eq:X embed u} and \eqref{eq:Lp Linfty small v-u}, $v$ satisfies \eqref{eq:v 4/d is -2} in the regime $\{|y|\le R^{\al_d}\}$.

For the exterior regime $\{|y|\ge R^{\al_d}\}$, the integral \eqref{eq:tag nonlinear integral for Sec 5} is small: by \eqref{eq:w_n H1 bound} and the radial Sobolev inequality with $(d-1)(\frac12-\frac1p)\ge\frac1p$, we have
\[
\I\int_{|y|\ge R^{\al_d}}|v|^pdy\les R^{-p(d-1)(\frac12-\frac1p)\cdot \al_d}\sum_{0\le n<n_+}\norm{\ul{w_n}}_{L^\infty H^{\frac34}(|y|\ge R^{\al_d})}^p\les\kappa^{-O(1)} R^{-\al_d}.
\]
We now follow the analysis in Section~\ref{subsec:nonexistence of AP} to obtain an analog of \eqref{eq:ODE ineq}. On each $I_n$, \eqref{eq:X embed Hs} yields $\mathring f\in L^\infty H^{\frac12-4\s_d}$. Thus, by local smoothing and a Picard iteration, $|\na|^{1-4\s_d}w_n\in S(I_n)$ follows. For $r>0$, since $S(I_n)\subset L^2B^{\ep_d}_{2_1,2}(|x|>r)$ and $\ep_d-4\s_d>\frac{d-2}{2d}$, by the radial Sobolev embedding, $w_n\in L^2C^1(|x|>r)$.
Hence, $\ga'$ is continuous on $(0,\infty)$.
We denote $\ga''$ in the viscosity sense:
\begin{equation}\label{eq:ga'' convention}
\ga''(a)\coloneqq\inf\{h''(a):\ep>0,\,h\in C^2((a-\ep,a+\ep)),\,h(a)=\ga(a),\,h\ge\ga\}.
\end{equation}

Similarly to \eqref{eq:ODE ineq}, for $2\kappa\le a<R^{\al_d}$, we obtain
\begin{equation}\label{eq:ODE ineq for radial}
\int_a^\infty\Bb{\frac{a+r}8-\frac c{r^3}} e^{-\gamma(r)} dr+\Bb{\gamma''(a)-\frac c{a^2}}e^{-\gamma(a)}\le\kappa^{-O(1)}R^{-\frac13\al_d}
\end{equation}
for some constant $c=c(d,M)<\infty$.
Since Proposition~\ref{prop:main} is stronger with larger $R_0$, we may assume $R_0\ge 2c^{\frac14}$.
Let $\s=\s(d,\al)>0$ be a sufficiently small number, so that the right-hand side of \eqref{eq:ODE ineq for radial} is below $\frac{c}{a^2}e^{-\ga(a)}$ for $a\le \kappa^{-1}$ such that $\ga(a)\le\s C^2\kappa^{-1}$. 
Then, since $\frac r8\ge\frac c{r^3}$ for $r\ge R_0$, multiplying $e^{\gamma(a)}$ to \eqref{eq:ODE ineq for radial} yields
\begin{align}
\gamma''(a)&\le\begin{cases}
\frac{2c}{a^{2}}+\int_{a}^{R_0}\frac{c}{r^{3}}e^{\gamma(a)-\gamma(r)} dr & ,\quad a\in[2\kappa,R_0]\\
\frac{2c}{a^{2}}-\int_{a}^{\infty}(\frac{a}{4}-\frac c{r^3})e^{\gamma(a)-\gamma(r)} dr & ,\quad  a\in[R_0,\kappa^{-1}]
\end{cases}\label{eq:ode ineq'}
\\
\label{eq:ode ineq assump}
\text{if}\quad\gamma(a)&\le\s C^2\kappa^{-1}.
\end{align}

Building on \eqref{eq:ode ineq'}, we provide a lemma on the function $\gamma$.

\begin{lem}\label{lem:k<a*<1}
Let $\delta\ll_{M,R_0,\rho}1$ and $C_1>0$.
\begin{enumerate}
\item For $\kappa\ll_{\delta,C_1}1$ for which \eqref{eq:main} fails, we have
\begin{equation}\label{eq:gamma at R_0}
\gamma(R_0)\le\kappa^{-\frac34}.
\end{equation}
\item
Given any $\be>2$, assume further that $\delta\ll_\be1$. For $\kappa\ll_{\delta,C_1}1$ for which \eqref{eq:main} fails, we have
\begin{equation}
\gamma'(R_0)\le\be|\log\delta|\kappa^{-\frac12}.\label{eq:gamma' at R_0}
\end{equation}
\item
Given any $\zeta>0$ and $\theta\gg_{\delta,\zeta}1$, for $\kappa\ll_{\delta,C_1,\theta}1$ for which \eqref{eq:main} fails, we have
\begin{equation}\label{eq:ode alternative}
\text{if }\quad \ga'(R_0)>\zeta\kappa^{-\frac12}\,,\quad\text{then}\quad-\log\sqrt\kappa+\ga(\theta\sqrt\kappa)\les_{\delta,\zeta}\theta.
\end{equation}
\end{enumerate}
\end{lem}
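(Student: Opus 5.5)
The plan is to run a convexity/ODE comparison argument on $\gamma=-\log\I W$, using only \eqref{eq:ode ineq'} under the standing assumption \eqref{eq:ode ineq assump}, together with two pieces of information.

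\textbf{Lower information from the failure of \eqref{eq:main}.} If \eqref{eq:main} fails, then $\norm{y\ul u}_{L^2L^2([s_*-1,s_*+1]\times\{\sqrt\kappa\le|y|\le R_0\})}>\rho\delta\sqrt\kappa$. On $[s_*-1,s_*+1]$ we have $\mu\gtrsim1$. By \eqref{eq:w_n-psi mathring u small}, \eqref{eq:Lp Linfty small v-u} and \eqref{eq:eta def}, the function $v$ differs from $e^{-\frac i8|y|^2}\ul u$ on $\{|y|\ge\kappa\}$ only by $\kappa^{-O(1)}R^{-\al_d}\ll\rho\delta\sqrt\kappa$. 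This transfers the lower bound to
\[
\int_{\sqrt\kappa}^{R_0}r^2\,\I W(r)\,dr\gtrsim\rho^2\delta^2\kappa .
\]
Hence there is $r_0\in[\sqrt\kappa,R_0]$ with $\gamma(r_0)\le\log\kappa^{-1}+2|\log\delta|+O_{\rho,R_0}(1)$.

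\textbf{Upper information.} From \eqref{eq:X embed u}, \eqref{eq:Lp Linfty small v-u} and mass conservation, $\I W(r)\lesssim r^{d-1}\min\{1,r^{-d}\}$. This gives the lower bound $\gamma(a)\ge -O(1)-O(|\log a|)$ on $[2\kappa,\kappa^{-1}]$.

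\textbf{Part (1).} On the interior interval $[2\kappa,R_0]$, \eqref{eq:ode ineq'} says $\gamma''\le 2c/a^2+\int_a^{R_0}cr^{-3}e^{\gamma(a)-\gamma(r)}dr$, so $\gamma$ is almost concave. I would argue by a maximum-type comparison: along a stretch where $\gamma$ increases, the integral term is at most $c\,a^{-2}$. Hence $\gamma'(b)\le\gamma'(a)+O(a^{-1})$ for $a<b$ in such a stretch, and $\gamma$ cannot grow more than $O(\log(R_0/\kappa))$ plus a linear term controlled by $\gamma'(r_0)$. To control $\gamma'(r_0)$ I use the exterior side. On $[R_0,\kappa^{-1}]$, the negative term $-\frac a4\int_a^\infty e^{\gamma(a)-\gamma(r)}dr$ is comparable to $-a/(4\gamma'(a))$ when $\gamma'>0$. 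So $\frac{d}{da}(\gamma')^2\le -a/2+O(a^{-2}\gamma')$, i.e. $\gamma$ has a Gaussian-type tail and $\gamma'(a)^2\le\gamma'(R_0)^2-\tfrac14(a^2-R_0^2)+O(1)$. Since $\int e^{-\gamma}<\infty$ forces $\limsup\gamma=\infty$, $\gamma$ must reach the threshold $\s C^2\kappa^{-1}$ of \eqref{eq:ode ineq assump} before $\gamma'$ vanishes. Combining the concavity transport from $r_0$ to $R_0$ with $\gamma(r_0)\lesssim\log\kappa^{-1}+|\log\delta|$ then gives $\gamma(R_0)\le\kappa^{-3/4}$ for $\kappa\ll_{\delta,C_1}1$. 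All losses are powers of $\log\kappa^{-1}$ or of $\kappa^{-1/2}$, far below $\kappa^{-3/4}$.

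\textbf{Part (2), the main obstacle.} Here the constant $\be>2$ must be sharp, so the comparison has to be done carefully. The natural object is $g(a)=\gamma'(a)$ on $[r_0,R_0]$. Suppose $g(R_0)>\be|\log\delta|\kappa^{-1/2}$. Integrating the interior inequality backwards, $g$ stays $\ge g(R_0)-O(\kappa^{-1/2}\cdot o(1))$ down to scale $a\sim\sqrt\kappa$; the integral term is harmless because $e^{\gamma(a)-\gamma(r)}\le1$ when $\gamma$ is increasing. Then $\gamma(R_0)-\gamma(r)\ge g(R_0)(R_0-r)(1-o(1))$, so the mass $\int r^2e^{-\gamma}$ on $[\sqrt\kappa,R_0]$ is dominated by a window of width $\sim 1/g$ near $\sqrt\kappa$. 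Balancing this against the lower bound $\rho^2\delta^2\kappa$ and the a priori upper bound on $\I W$ near $\sqrt\kappa$ (which is where $|\log\delta|$ appears, via $e^{-g\sqrt\kappa\cdot(\cdots)}\ge\delta^2$) yields $g(R_0)\sqrt\kappa\le(2+o_\delta(1))|\log\delta|$. Getting exactly the factor $2$, rather than a lossy constant, is the delicate point. I would first try the comparison function $h(a)=\gamma(r_0)+g_*(a-r_0)-2c\log(a/r_0)$ with $h\ge\gamma$ on $[r_0,R_0]$, obtained from the viscosity definition \eqref{eq:ga'' convention}, and optimize over $r_0$.

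\textbf{Part (3).} Assume $\gamma'(R_0)>\zeta\kappa^{-1/2}$. Running the same backward propagation gives $\gamma'\gtrsim\zeta\kappa^{-1/2}$ on $[\theta\sqrt\kappa,R_0]$. Then the failure of \eqref{eq:main} forces $r_0\lesssim_{\delta,\zeta}\sqrt\kappa$, with $\gamma(r_0)\le-\log\sqrt\kappa+O_\delta(1)$ after normalizing by the $\sqrt\kappa$-window. Integrating $\gamma'$ from $r_0$ to $\theta\sqrt\kappa$ using the upper bound from part (2), $\gamma'\le\be|\log\delta|\kappa^{-1/2}+o(\kappa^{-1/2})$, gives
\[
\gamma(\theta\sqrt\kappa)\le\gamma(r_0)+O_\delta(\theta),
\]
which is \eqref{eq:ode alternative}. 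Throughout I must check that \eqref{eq:ode ineq assump} holds on the ranges used; this follows from parts (1)–(2), since $\kappa^{-3/4}+R_0\kappa^{-1/2}|\log\delta|\ll\s C^2\kappa^{-1}$.
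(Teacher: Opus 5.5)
Your proposal has a gap in parts (1) and (3): in both you use the interior inequality in a direction it does not support. On $[2\kappa,R_0]$, \eqref{eq:ode ineq'} gives $\gamma''\le 3c/a^2$ where it applies. This transports \emph{lower} bounds on $\gamma'$ inward and \emph{upper} bounds outward, never the reverse.

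In part (1), your outward transport from $r_0$ needs an upper bound on $\gamma'(r_0)$, and you propose to get it from the exterior inequality on $[R_0,\kappa^{-1}]$. That inequality only constrains $\gamma'$ to the right of $R_0$, in terms of $\gamma'(R_0)$. The fact that ``the threshold is reached before $\gamma'$ vanishes'' is a lower-bound statement; it drives the proof of Proposition~\ref{prop:main}, not this lemma. It says nothing about $\gamma(R_0)$, and if $\gamma(R_0)>\s C^2\kappa^{-1}$ the exterior inequality is not even available.

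In part (3), you integrate ``the upper bound from part (2)'' over $[r_0,\theta\sqrt\kappa]$. But (2) bounds $\gamma'$ only at $R_0$, and that bound cannot be moved inward.

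The missing idea in both parts is the one you already use in (2): an interior upper bound on $\gamma'$ comes, by contradiction, from the mass upper bound \eqref{eq:mass upper bound of gamma} after propagating a large slope \emph{inward}.
\begin{itemize}
\item For (1): suppose $\gamma(R_0)>\kappa^{-3/4}$. By \eqref{eq:mass lower bound of gamma}, $\inf\gamma([\sqrt\kappa,R_0])\le\log\kappa^{-1}+O_{\delta,\rho,R_0}(1)$, so some point of $[\sqrt\kappa,R_0]$ has slope $\gtrsim_{R_0}\kappa^{-3/4}$. Propagating it to $[\frac12\sqrt\kappa,\sqrt\kappa]$ costs only $O(\kappa^{-1/2})$, and makes the mass there larger than the mass on $[\sqrt\kappa,R_0]$ by a factor $e^{c\kappa^{-1/4}}$, which is impossible.
\item For (3): suppose the conclusion fails. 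Then the mass beyond $\theta\sqrt\kappa$ is negligible, so there is $r_0\in[\sqrt\kappa,\theta\sqrt\kappa]$ with $-\log\sqrt\kappa+\gamma(r_0)\les_\delta r_0/\sqrt\kappa$. The mean value theorem gives $a\in[r_0,\theta\sqrt\kappa]$ with $\gamma'(a)\gg_{\delta,\zeta}\kappa^{-1/2}$. Propagating inward to $[\frac12\sqrt\kappa,\frac34\sqrt\kappa]$ contradicts \eqref{eq:mass upper bound of gamma}.
\end{itemize}

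Your part (2) is correct in mechanism and matches the paper. You transport $\gamma'\ge(\be-\ep)|\log\delta|\kappa^{-1/2}$ inward down to $\ep\sqrt\kappa$; the loss $3c/a\le3c\ep^{-1}\kappa^{-1/2}$ is harmless for $\delta\ll_\ep1$. Then the $O(M)$ mass on $[\ep\sqrt\kappa,\sqrt\kappa]$ would exceed the $\gtrsim\rho^2\delta^2$ weighted mass on $[\sqrt\kappa,R_0]$ by a factor $\delta^{-(1-\ep)(\be-\ep)}$. The sharp constant $2$ comes out directly, so no comparison function or optimization over $r_0$ is needed. You must, however, go all the way down to $\ep\sqrt\kappa$ rather than stop near $\sqrt\kappa$.

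Two further points are genuinely needed, not cosmetic.
\begin{itemize}
\item The integral term in \eqref{eq:ode ineq'} is $\le c/(2a^2)$ only if $\gamma(r)\ge\gamma(a)$ for \emph{all} $r\in[a,R_0]$. It is not enough that $\gamma$ increases along some stretch.
\item Justifying \eqref{eq:ode ineq assump} ``from parts (1)--(2)'' is circular, since those parts are themselves proved with \eqref{eq:ode ineq'}.
\end{itemize}
The paper handles both at once with the truncated right-running minimum $\ul\gamma(a)=\min\{\inf\gamma([a,R_0]),\s C^2\kappa^{-1}\}$. It satisfies $\ul\gamma''\le3c/a^2$ on $[2\kappa,R_0]$ in the sense of \eqref{eq:ga'' convention}, and coincides with $\gamma$ wherever it is strictly increasing. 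All three parts are run on $\ul\gamma$, and the exterior inequality plays no role in this lemma.

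A minor point: the bound $\I W(r)\les r^{d-1}$ for $r<1$ is not available. From \eqref{eq:X embed u} and \eqref{eq:Lp Linfty small v-u} you only get $\I W(r)\les r^{-1}$, though the lower bound on $\gamma$ you draw from it still holds.
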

\begin{proof}
For $a\in[2\kappa, R_0]$, let $\ul\ga$ be the increasing Lipschitz function
\[
\ul\gamma(a)\coloneqq\min\{\inf\gamma([a,R_0]),\s C^2\kappa^{-1}\}.
\]
We use the convention \eqref{eq:ga'' convention} also for $\ul\ga''$.
We claim for $a\in[2\kappa,R_0]$ that
\begin{equation}\label{eq:tilde gamma ''}
\ul\gamma''(a)\le\frac{2c}{a^2}+\int_a^{R_0}\frac c{r^3} dr\le\frac{3c}{a^2}.
\end{equation}

If $\ul\gamma(a)=\gamma(a)$, then $\gamma(a)-\gamma(r)\le0$ for $r\in[a,R_0]$ and $\ul\ga''(a)\le\ga''(a)$. Thus, \eqref{eq:ode ineq'} yields \eqref{eq:tilde gamma ''}. Otherwise, we have $\ul\gamma''(a)=0$. Hence, we obtain \eqref{eq:tilde gamma ''}.

We investigate bounds on the function $\ga$.
Since \eqref{eq:main} fails, by \eqref{eq:w_n-psi mathring u small}, we have
\begin{equation}\label{eq:mass lower bound of gamma}
\int_{\sqrt\kappa}^{R_0}\kappa^{-1}r^2e^{-\gamma(r)}dr=\kappa^{-1}\I\norm{yv}_{L^2(\sqrt\kappa\le|y|\le R_0)}^2\gtrsim\rho^2\delta^2-o_\kappa(1).
\end{equation}
By the mass bound $M(u)\le M$ and \eqref{eq:w_n-psi mathring u small}, we have
\begin{equation}\label{eq:mass upper bound of gamma}
\int_{0}^{\kappa^{-1}} e^{-\gamma(r)}dr=\I\norm{v}_{L^2(|y|\le \kappa^{-1})}^2\les M.
\end{equation}
We show \eqref{eq:gamma at R_0}. If \eqref{eq:gamma at R_0} fails, then $\ul\gamma(R_0)>\kappa^{-\frac34}$. Combining with \eqref{eq:mass lower bound of gamma}, there exists $a\in[\sqrt\kappa,R_0]$ such that $\ul\gamma'(a)\gtrsim_{R_0}\kappa^{-\frac34}$.
Then, for $r\in[\frac12\sqrt\kappa,\sqrt\kappa]$, by integrating \eqref{eq:tilde gamma ''}, we have $\ul\ga'(r)\gtrsim_{R_0}\kappa^{-\frac34}$ and hence $\ul\ga(r)=\ga(r)$. Thus, we have
\[
\int_{\frac12\sqrt\kappa}^{\sqrt\kappa}e^{-\ga(r)}dr=\int_{\frac12\sqrt\kappa}^{\sqrt\kappa}e^{-\ul\ga(r)}dr\ge e^{\ep_0\kappa^{-\frac14}}e^{-\ul\ga(\sqrt\kappa)}\ge e^{\ep_0\kappa^{-\frac14}}\cdot\frac{1}{R_0}\int_{\sqrt\kappa}^{R_0}e^{-\ga(r)}dr
\]
for small $\ep_0=\ep_0(R_0)>0$, contradicting \eqref{eq:mass lower bound of gamma} and \eqref{eq:mass upper bound of gamma} for $\kappa\ll_{\ep_0,\delta}1$.

We turn to proving \eqref{eq:gamma' at R_0}. If \eqref{eq:gamma' at R_0} fails, then for $\ep>0$ and $\delta\ll_\ep1$, \eqref{eq:tilde gamma ''} yields
\[
\ul\gamma'(r)\ge\ul\ga'(R_0)-\tfrac{3c}r>(\be-\ep)|\log\delta|\kappa^{-\frac12},\quad r\in[\ep\sqrt\kappa,R_0].
\]
In particular, $\ul\ga(r)=\ga(r)$. Choosing $\ep=\ep(\be)\ll1$,
we obtain
\[
\int_{\sqrt\kappa}^{R_0}\kappa^{-1}r^2e^{-\gamma(r)}dr\les \frac{e^{-\ga(\sqrt\kappa)}}{(\be-\ep)|\log\delta|\kappa^{-\frac12}}\les e^{-(1-\ep)(\be-\ep)|\log\delta|}\int_{\ep\sqrt\kappa}^{\sqrt\kappa}e^{-\gamma(r)}dr\les \delta^{2+\ep}M
\]
due to \eqref{eq:mass upper bound of gamma}. For $\delta\ll_{\be,M,\rho}1$, this contradicts \eqref{eq:mass lower bound of gamma}.

We show \eqref{eq:ode alternative}. Integrating \eqref{eq:tilde gamma ''} from $\ul\ga'(R_0)=\ga'(R_0)>\zeta\kappa^{-\frac12}$ yields for  $\theta\gg_\zeta1$
\begin{equation}\label{eq:ulul ga for superpoly}
\ul\ga'(r)\gtrsim\zeta\kappa^{-\frac12}\text{ and }\ul\ga(r)=\ga(r),\quad r\in[\theta\sqrt\kappa,R_0].
\end{equation}
We argue by contradiction; assume $-\log\sqrt\kappa+\ga(\theta\sqrt\kappa)\gg_{\delta,\zeta}\theta$. Then, \eqref{eq:ulul ga for superpoly} yields
\[
-\log\sqrt\kappa+\ga(r)\gtrsim\zeta \kappa^{-\frac12}r,\quad r\in[\theta\sqrt\kappa,R_0].
\]
Thus, for \eqref{eq:mass lower bound of gamma} to hold, there exists $r_0\in[\sqrt\kappa,\theta\sqrt\kappa]$ such that
\[
-\log\sqrt\kappa+\ul\ga(r_0)\le-\log\sqrt\kappa+\ga(r_0)\les_\delta \kappa^{-\frac12}r_0.
\]
Then, there also exists $a\in[r_0,\theta\sqrt\kappa]$ such that $\ul\ga'(a)\gg_{\delta,\zeta}\kappa^{-\frac12}$. Integrating \eqref{eq:tilde gamma ''} from $a$ yields $\ga'=\ul\ga'\gg_{\delta,\zeta}\kappa^{-\frac12}$ on $[\frac12\sqrt\kappa,a]$. Integrating $\ga'$ from $r_0$ then yields
\[
-(-\log\sqrt\kappa+\ga(r))\gg_{\delta,\zeta}1,\quad r\in[\tfrac12\sqrt\kappa,\tfrac34\sqrt\kappa],
\]
contradicting \eqref{eq:mass upper bound of gamma}. This completes the proof.
\end{proof}
\begin{proof}[Proof of Proposition~\ref{prop:main}]
We fix any $\be>2$.\footnote{Here we restrict to $\be>2$ because we showed $\gamma'(R_0)\le\be|\log\delta|\kappa^{-\frac12}$ for $\be>2$.}
We show that Proposition~\ref{prop:main} holds for any fixed constant $C_1>\be\sqrt{\frac{\pi}{2\s}}$.
Arguing by contradiction, assume that \eqref{eq:main} fails. Let $\tilde\ga(a)=\gamma(a)+2c\log a$. Let $I=[R_0,a_+]\subset[R_0,\kappa^{-1}]$ be the maximal interval such that $\sup\tilde\ga(I)\le\s C^2\kappa^{-1}$. By \eqref{eq:ode ineq'}, we have
\begin{equation}\label{eq:ode ineq' gam*}
\tilde\ga''(a)=\gamma''(a)-\frac{2c}{a^2}\le-\int_a^\infty\Bb{\frac a4-\frac c{r^3}}e^{\tilde\ga(a)-\tilde\ga(r)}dr,\quad a\in I.
\end{equation}

Let $a_+'=\min\{a_+,\frac12\kappa^{-1}\}$.
We first show $\tilde\ga'(a)>0$ for $a\in [R_0,a_+']$. If not, by \eqref{eq:ode ineq' gam*}, $\tilde\ga$ is decreasing on $[a,a_+]$ and $a_+=\kappa^{-1}$. Then, by \eqref{eq:ode ineq' gam*}, we have $-\tilde\ga''(r)\gtrsim\kappa^{-1}$ for $r\in[a,\frac34\kappa^{-1}]$. Hence, $\tilde\ga(r)\le0$
follows for $r\in[\frac34\kappa^{-1},\kappa^{-1}]$, contradicting \eqref{eq:mass upper bound of gamma}.

Then, $\tilde\ga'(a)\gtrsim 1$ follows for $a\in[R_0,a_+'-2]$; otherwise, $\tilde\ga'(r)\ll1$ for $r\in[a,a_+']$ and hence $-\tilde\ga''(r)\gtrsim1$ for $r\in[a,a+1]$, contradicting $0<\tilde\ga'\ll1$ over $[a,a_+']$.

We begin the main proof.
For $a\in[R_0,a_+']$, since $\tilde\ga''\le0$ over $I$, \eqref{eq:ode ineq' gam*} yields
\begin{equation}\label{eq:ode ineq' lower substituted}
\tilde\ga'(a)\tilde\ga''(a)\le-\bb{\tfrac a4-\tfrac c{a^3}}\bb{1-e^{-\tilde\ga'(a)(a_+'-a)}}\le-\bb{\tfrac14-o_a(1)-o_{a_+'-a}(1)}\cdot a.
\end{equation}
For $a\in[R_0,a_+']$, by integrating \eqref{eq:ode ineq' lower substituted} from \eqref{eq:gamma' at R_0}, we have
\begin{equation}\label{eq:gam' bound}
0<\tilde\gamma'(a)^2\le \bb{\be|\log\delta|\kappa^{-\frac12}+O(1)}^2-\bb{\tfrac14-o_a(1)}a^2.
\end{equation}

Again, by integrating the square root of \eqref{eq:gam' bound} from \eqref{eq:gamma at R_0}, we have
\begin{align}\label{eq:gam bound}
\tilde\gamma(a)&\le O(\kappa^{-\frac34})+\int_{R_0}^a\sqrt{\bb{\be|\log\delta|\kappa^{-\frac12}+O(1)}^2-\bb{\tfrac14-o_r(1)}r^2}dr
\\&\le \bb{\tfrac{\pi}2\be^2+o_\kappa(1)}\cdot|\log\delta|^2\kappa^{-1}.\nonumber
\end{align}

Since $C|\log\delta|^{-1}=C_1>\be\sqrt{\frac\pi{2\s}}$, \eqref{eq:gam bound} stays below $\s C^2\kappa^{-1}$ for $\kappa\ll1$. Hence, $a_+'=\frac12\kappa^{-1}$ follows; plugging $a=a_+'$ into \eqref{eq:gam' bound} concludes the contradiction.
\end{proof}
Next, we show a preparatory lemma for Theorem \ref{thm:saturate loglog}.
\begin{lem}\label{lem:superexp}
Let $0<\delta\ll_{M,R_0,\rho}1$ and $C_1>0$. There exists $c_*>0$ satisfying the following: For $\theta\gg_{\delta,C_1}1$ and $\kappa\ll_\theta1$ such that Proposition~\ref{prop:main} fails,
\begin{equation}\label{eq:assump for superexp}
(\theta\sqrt\kappa)^d\int_{s_*-\kappa^{-10}}^{s_*+\kappa^{-10}}|\ul u(s,\theta\sqrt\kappa)|^2\cdot e^{-\frac14|s-s_*|}ds\ge e^{-c_*\theta}.
\end{equation}
\end{lem}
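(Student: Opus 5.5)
Read the left-hand side of \eqref{eq:assump for superexp} as $e^{-\gamma(\theta\sqrt\kappa)}$ up to $O(1)$ factors. Indeed, $\I W(r)=e^{-\gamma(r)}$ with $W(s,r)=\int_{r\S^{d-1}}|v|^2d\s$ and $|v|=|\ul{w_n}|$. By \eqref{eq:Lp Linfty small v-u}, $v$ can be replaced by $\ul{\chi u}$ at radius $r=\theta\sqrt\kappa\ge\kappa$ up to an error $R^{-d\al_d}$, which is negligible once $\kappa\ll_\theta1$. After this replacement, $e^{-\gamma(\theta\sqrt\kappa)}$ is comparable, up to the constant $|\S^{d-1}|$, to $(\theta\sqrt\kappa)^{d-1}\int|\ul u(s,\theta\sqrt\kappa)|^2e^{-\frac14|s-s_*|}ds$. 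This is the integral in \eqref{eq:assump for superexp} divided by $\theta\sqrt\kappa$. The claim therefore reduces to
\[
\gamma(\theta\sqrt\kappa)\le c_*\theta+\log(\theta\sqrt\kappa)^{-1}+O(1)=c_*\theta-\log\sqrt\kappa-\log\theta+O(1).
\]
This is exactly the type of bound in \eqref{eq:ode alternative}, namely $-\log\sqrt\kappa+\gamma(\theta\sqrt\kappa)\les_{\delta,\zeta}\theta$, with the $\log\theta$ loss absorbed into $c_*\theta$.

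To use \eqref{eq:ode alternative} I need its hypothesis $\gamma'(R_0)>\zeta\kappa^{-\frac12}$ for some fixed $\zeta=\zeta(\delta,C_1)>0$. I would obtain this from the argument in the proof of Proposition~\ref{prop:main}. Fix $\be>2$. Since \eqref{eq:main} fails, \eqref{eq:gamma at R_0} and \eqref{eq:gamma' at R_0} hold, and one integrates \eqref{eq:ode ineq' gam*} outward from $R_0$. The intended case split is as follows. If $\tilde\gamma'(R_0)\le\zeta\kappa^{-\frac12}$, then \eqref{eq:gam' bound} holds with $\be|\log\delta|$ replaced by $\zeta$, that is, $\tilde\gamma'(a)^2\le(\zeta\kappa^{-\frac12}+O(1))^2-(\frac14-o(1))a^2$. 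This forces $\tilde\gamma'$ to vanish at some $a\lesssim\zeta\kappa^{-\frac12}\ll\frac12\kappa^{-1}$, and \eqref{eq:gam bound} gives $\tilde\gamma\lesssim\zeta^2\kappa^{-1}$ there, below $\s C^2\kappa^{-1}$. That $\tilde\gamma'$ vanishes before $a_+'$ contradicts the positivity $\tilde\gamma'>0$ on $[R_0,a_+']$ established in that proof. So for any $\zeta>0$, say $\zeta=1$, the hypothesis of \eqref{eq:ode alternative} holds. The shift $\tilde\gamma'-\gamma'=2c/a$ is $O(1)$ at $R_0$, so it is harmless.

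Then apply part (3) of Lemma~\ref{lem:k<a*<1} with this $\zeta$ and with $\theta\gg_{\delta,\zeta}1$, $\kappa\ll_{\delta,C_1,\theta}1$. This yields $-\log\sqrt\kappa+\gamma(\theta\sqrt\kappa)\le K\theta$ with $K=K(\delta,C_1)$. Choosing $c_*=2K$ and absorbing $\log\theta$ and the constants for $\theta$ large gives \eqref{eq:assump for superexp}.

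I expect the main obstacle to be the passage between $v$ (that is, $\ul{w_n}$) and $\ul u$ at the single radius $\theta\sqrt\kappa$. Pointwise in $r$, the error from $\ul{\chi\tilde u}$ is only controlled in negative Sobolev norms, and \eqref{eq:Lp Linfty small v-u} covers only $\ul{\chi\mathring u}$. I would handle this by averaging over a thin shell around $\theta\sqrt\kappa$, where $\gamma'$ is bounded on the relevant range, and by using the local smoothing bound in \eqref{eq:eta def} to make the $\tilde u$ contribution $O(\eta^{c})$. This is far below $e^{-c_*\theta}$ because $\kappa\ll_\theta1$. The viscosity convention for $\gamma''$ should cause no trouble, since only one-sided integrations are used.
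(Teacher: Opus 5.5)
Your overall route is the paper's. You reduce \eqref{eq:assump for superexp} to $-\log\sqrt\kappa+\gamma(\theta\sqrt\kappa)\le c_*\theta$. You obtain this from \eqref{eq:ode alternative}, after showing that failure of \eqref{eq:main} forces $\gamma'(R_0)>\zeta\kappa^{-1/2}$ by rerunning the ODE argument of Proposition~\ref{prop:main}. Your reduction has a harmless sign slip: it should read $\gamma(\theta\sqrt\kappa)\le c_*\theta+\log(\theta\sqrt\kappa)+O(1)$, so $\log\theta$ is a gain, not a loss. Two steps, however, do not work as written.

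\emph{The choice of $\zeta$.} Your case split needs the analogue of \eqref{eq:gam bound}, $\tilde\gamma\le(\frac\pi2\zeta^2+o(1))\kappa^{-1}$, to stay below $\s C^2\kappa^{-1}=\s C_1^2|\log\delta|^2\kappa^{-1}$. Above that threshold \eqref{eq:ode ineq'} is simply unavailable. In the lemma, $\delta$ is fixed before $C_1$ and $C_1>0$ is arbitrary, so ``any $\zeta>0$, say $\zeta=1$'' fails for small $C_1$. Small $C_1$ is exactly the regime used later in Theorem~\ref{thm:saturate loglog}. You must take $\zeta=\be|\log\delta|$ with $\be=\be(C_1)$ so small that $C_1>\be\sqrt{\pi/(2\s)}$. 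This is precisely the paper's choice: the proof of Proposition~\ref{prop:main} goes through for such $\be$ whenever $\gamma'(R_0)\le\be|\log\delta|\kappa^{-1/2}$. This step is easily repaired.

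\emph{The genuine gap is the transfer from $v$ to $\ul u$ at the single radius $\theta\sqrt\kappa$.} Averaging over a thin shell cannot yield \eqref{eq:assump for superexp}, which is pointwise in $r$. Whatever you know about $v$, you still need $\I|\ul{\tilde u}(s,\theta\sqrt\kappa)|^2$ to be small. The local smoothing norm $L^2H^{\frac12}$ in \eqref{eq:eta def} is exactly critical for restriction to a sphere: for radial functions away from the origin it is a one-dimensional $H^{\frac12}$ in $r$, and so it does not control point values.

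The missing ingredient is a slightly supercritical bound with only polynomial loss. Because $\ep_d>\frac{d-2}{2d}$, the $S$ norm together with the radial Sobolev inequality gives $\norm{\ul{\tilde u}}_{L^2W^{\tilde\ep_d,\infty}(|y|\ge\kappa)}\les\kappa^{-O(1)}$ on $[s_*-\kappa^{-10},s_*+\kappa^{-10}]$ for some $\tilde\ep_d>0$. Interpolating this with the exponentially small norms in \eqref{eq:eta def} makes $\ul{\tilde u}=o(\kappa^{10d})$ in $L^2L^\infty(|y|\ge\kappa)$. Combined with \eqref{eq:Lp Linfty small v-u}, which only covers $\ul{w_n}-\ul{\chi\mathring u}$, this gives a genuinely pointwise comparison on $\{\sqrt\kappa\le|y|\le1\}$: $\I|e^{\frac i8|y|^2}v-\ul u|^2(\theta\sqrt\kappa)\ll\kappa^{20d}$. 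That error is negligible against $e^{-c_*\theta}(\theta\sqrt\kappa)^{-d}$ once $\kappa\ll_\theta1$, and only then does the bound for $v$ transfer to $\ul u$.
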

\begin{proof}
Since $\ep_d>\frac{d-2}{2d}$, there exists $\tilde\ep_d>0$ such that,
on $[s_*-\kappa^{-10},s_*+\kappa^{-10}]$,
\[
\norm{\ul{\tilde u}}_{L^2W^{\tilde\ep_d,\infty}(|y|\ge\kappa)}\les\kappa^{-O(1)}(\norm{\ul{\tilde u}}_{L^pL^p}+\norm{P_{\ge1}\ul{\tilde u}}_{L^2B^{\ep_d}_{2_1,2}(|y|\ge\kappa)})\les\kappa^{-O(1)}\norm{\tilde u}_S\les\kappa^{-O(1)}
\]
by the definition of the $S$ norm and \eqref{eq:S embed}. Interpolating with \eqref{eq:eta def}, we obtain that $\ul{\tilde u}$ is $o(\kappa^{10d})$ in $L^2L^\infty(|y|\ge\kappa)$.
Combining with \eqref{eq:Lp Linfty small v-u}, we obtain
\begin{equation}\label{eq:u-v L2 Linfty small}
\norm{e^{-\frac18|s-s_*|}(e^{\frac i8|y|^2}v-\ul{u})}_{L^2L^\infty([s_*-\kappa^{-10},s_*+\kappa^{-10}]\times\{\sqrt\kappa\le|y|\le1\})}\ll\kappa^{10d}.
\end{equation}
Thus, it suffices to show \eqref{eq:assump for superexp} for $v$. Expanding \eqref{eq:ga def} yields
\begin{align*}
-\ga(\theta\sqrt\kappa)&=\log|(\theta\sqrt\kappa)\S^{d-1}|+\log\I|v(s,\theta\sqrt\kappa)|^2
\\&=O(1)-\log(\theta\sqrt\kappa)+\log\bb{(\theta\sqrt\kappa)^d\cdot\I|v(s,\theta\sqrt\kappa)|^2}.
\end{align*}
Hence, it suffices to show for $\theta\gg1$
\begin{equation}\label{eq:assump for superexp reduced}
-\log\sqrt\kappa+\ga(\theta\sqrt\kappa)\le c_*\theta,\quad c_*=c_*(\delta,C_1).
\end{equation}
Fix $\be=\be(C_1)>0$ such that $C_1>\be\sqrt{\frac\pi{2\s}}$. If $\ga'(R_0)\le \be|\log\delta|\kappa^{-\frac12}$ holds, then Proposition~\ref{prop:main} can be shown, as mentioned at the beginning of the proof. Thus, for Proposition~\ref{prop:main} to fail, $\ga'(R_0)>\be|\log\delta|\kappa^{-\frac12}$ holds. Then, \eqref{eq:ode alternative} with $\zeta=\be|\log\delta|$ yields \eqref{eq:assump for superexp reduced}. This finishes the proof.
\end{proof}

We fix $m_M\in\N$ such that Lemma~\ref{lem:regularity sec6} with $\iota=1/3$ applies for $a>m_M$. For integers $m>m_M$, we decompose $\mathring u_m+\tilde u_m=\varphi_m u$ and $\mathring u_m^\pc+\tilde u_m^\pc=\varphi_m u^\pc$ as in Lemma~\ref{lem:regularity sec6}. We also adopt the conventions $\mathring u_{m_M}=\mathring u$ and $\tilde u_{m_M}=\tilde u$.

In the next proposition, we provide an optimal improvement of $C_1$ in Proposition~\ref{prop:main}.
\begin{prop}\label{prop:main sec6}
Let $(q,r)$ be a Strichartz pair with $2<q\le p$ and $C_1>\sqrt\pi$.

Let $M>0$, $\rho>0$, and $\delta\ll_{M,\rho}1$. Denote $C=C_1|\log\delta|$. Let $\kappa\ll_{u,\delta}1$ and $s_*\le-\kappa^{-10}$ be parameters such that, on the interval $[s_*-\kappa^{-10},s_*+\kappa^{-10}]$,
\begin{align}\label{eq:eta def sec6}
&\norm{\ul{\tilde u_m}}_{L^qL^r}+\norm{\ul{\tilde u_m^\pc}}_{L^qL^r}+\norm{\jp y^{-1}P_{\ge1}\ul{\tilde u_m}}_{L^2H^{\frac12}}
\\&+\norm{P_{e^{-\kappa^{-2}}\le\cdot \le e^{\kappa^{-2}}}\ul{\tilde u_m}}_{L^\infty L^2}\le e^{-C^2\kappa^{-1}}\eqqcolon\eta,&\quad m_M\le m\le \kappa^{-1}.\nonumber
\end{align}
Then, we have \eqref{eq:main} with $R_0=1$, namely
\begin{equation}\label{eq:main sec6}
\norm{y\ul u}_{L^2L^2([s_*-1,s_*+1]\times\{\sqrt\kappa\le|y|\le 1\})}\le\rho\delta\sqrt\kappa.
\end{equation}
\end{prop}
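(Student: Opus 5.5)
The plan is to rerun the proof of Proposition~\ref{prop:main} with $R_0=1$, but with the ODE inequality \eqref{eq:ode ineq'} sharpened on $[R_0,\kappa^{-1}]$ so that the threshold constant becomes $\sqrt\pi$ in place of $\be\sqrt{\pi/(2\s)}$. There are two sources of loss in the earlier argument. The first is the factor $\s$, which comes from the crude absorption condition \eqref{eq:ode ineq assump}: we required $\ga(a)\le\s C^2\kappa^{-1}$ so that the jump errors $\kappa^{-O(1)}R^{-\frac13\al_d}$ are dominated by $\frac c{a^2}e^{-\ga(a)}$. The second is the slack $\be>2$ in \eqref{eq:gamma' at R_0}.

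To remove the factor $\s$, I would build the approximate solution $v$ separately on each shell $m\le|y|\le m+1$, for $m_M\le m\le\kappa^{-1}$. On such a shell I would use the decomposition $\varphi_m u=\mathring u_m+\tilde u_m$ of Lemma~\ref{lem:regularity sec6} in place of $\mathring u+\tilde u$. The key input is \eqref{eq:S<L and X<L sec6}: it bounds the high regularity of $\ul{\mathring u_m}$ on $\{|y|\ge m+\frac23\}$ by $m^{O(1)}$ times the local-smoothing norm of $\ul u$ on $\{m-\frac13\le|y|\le m\}$. With the weight $\mu=e^{-\nu|s-s_*|}$, that norm is comparable to $e^{-\ga(r)/2}$ for $r$ near $m$, up to $\kappa^{-O(1)}$ factors. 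Repeating the construction of $w_n$, with \eqref{eq:eta def sec6} in place of \eqref{eq:eta def}, the jump errors $\mc E_n$ at radius $a\in[m+1,m+2]$ should then be bounded by $\kappa^{-O(1)}R^{-\al}e^{-\ga(m-\frac13)}$ plus $\eta^{c}$, rather than by an absolute $\kappa^{-O(1)}R^{-\frac13\al_d}$. In this relative form the error is absorbed into the $\frac c{a^2}e^{-\ga(a)}$ term for every $\ga(a)\le(1-o(1))C^2\kappa^{-1}$. Since $\eta=e^{-C^2\kappa^{-1}}$, the only constraint left is $\ga<C^2\kappa^{-1}$, and the factor $\s$ disappears.

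Next I would sharpen the initial slope \eqref{eq:gamma' at R_0} at $R_0=1$ to $\ga'(1)\le(\sqrt2+o_\delta(1))|\log\delta|\kappa^{-1/2}$, or whatever value makes the final integral match $\pi$. The approach is the same as in Lemma~\ref{lem:k<a*<1}(2): on $[\sqrt\kappa,1]$ the function $\ul\ga$ is almost linear by \eqref{eq:tilde gamma ''}. The lower bound \eqref{eq:mass lower bound of gamma}, namely $\rho^2\delta^2$ mass weighted by $\kappa^{-1}r^2$, compared against the total mass \eqref{eq:mass upper bound of gamma} on $[\epsilon\sqrt\kappa,\sqrt\kappa]$, forces $\ga'(1)\sqrt\kappa\lesssim(2+o(1))|\log\delta|$. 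The normalization in the integration of \eqref{eq:gam bound} must then produce $\int_0^{2b}\sqrt{b^2-r^2/4}\,dr=\frac\pi2b^2$. With $b=\be|\log\delta|\kappa^{-1/2}$, this gives $\tilde\ga\le\frac\pi2\be^2|\log\delta|^2\kappa^{-1}$.

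I must therefore check carefully that the relevant constant is $\be^2\pi/2$ against $C_1^2$ with $\be\downarrow\sqrt2$. Equivalently, I will recheck the factor $\frac14$ versus $\frac18$ in \eqref{eq:ode ineq' lower substituted}, using the sharp $\frac{a+r}8$ kernel rather than the crude bound $\frac a4(1-e^{-\cdots})$. Any improvement here comes from replacing $\frac{a+r}{8}\ge\frac a4$ by an exact evaluation of $\int_a^\infty\frac{a+r}8e^{\tilde\ga(a)-\tilde\ga(r)}dr\approx\frac a{4\tilde\ga'}+\frac1{8\tilde\ga'^2}$. The conclusion then follows exactly as before: if the resulting $\tilde\ga$ stays below $C^2\kappa^{-1}$ up to $\frac12\kappa^{-1}$, then \eqref{eq:gam' bound} becomes negative there, which is the contradiction.

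The main obstacle is the shell-by-shell error absorption. It requires summing the errors from all decompositions $m\le\kappa^{-1}$ and controlling the interfaces between consecutive shells via the $B^{1/2}_{2,1}$ bilinear estimate \eqref{eq:bilinear}. All of this has to be done while keeping every error relative to $e^{-\ga}$ at a neighboring radius, which only works if $\ga$ varies by $O(\kappa^{-1/2})$ per unit radius. That is consistent with \eqref{eq:gam' bound}, but it must be bootstrapped together with it.
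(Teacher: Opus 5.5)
\textbf{The constant $\sqrt\pi$ is not reached.} Your architecture matches the paper's: separate approximate solutions on each shell, built from Lemma~\ref{lem:regularity sec6}, with errors measured relative to the local size of the solution. The gap is in how the threshold is counted. You conclude that the only remaining constraint is $\ga<C^2\kappa^{-1}$, which compares the mass-level quantity $e^{-\ga}$ with a floor $\eta^{c}$, $c\approx1$. But the virial errors are quadratic. In this scheme they are $\kappa^{-O(1)}R^{-\frac13\al_d}\zeta_m$, and $\zeta_m,\omega_m$ have floor $(R^{5A}\eta)^2$ because the radiation enters at amplitude $R^{5A}\eta$. What must be verified is $e^{-\ga}\gg(R^{5A}\eta)^2=e^{-2(1-5A\al)C^2\kappa^{-1}}$, so $\ga$ may run up to almost $2C^2\kappa^{-1}$. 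In effect $\s$ is replaced by $2$, not by $1$. With $\be\downarrow2$ and the unchanged factor $\frac14$, \eqref{eq:gam bound} gives $\ga\le(2\pi+o(1))|\log\delta|^2\kappa^{-1}$. The condition $2\pi<2C_1^2$ is exactly $C_1>\sqrt\pi$; this is \eqref{eq:ga primary bound 2}. With your threshold you would only get $C_1>\sqrt{2\pi}$.

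Neither proposed remedy closes this gap. Your own mass comparison gives $\ga'(1)\le(2+o(1))|\log\delta|\kappa^{-1/2}$, and nothing better can be extracted from it: a linear profile $e^{-\ga(r)}\propto\delta^{\be r/\sqrt\kappa}$ with $\sqrt2<\be<2$ is compatible with \eqref{eq:tilde gamma ''}, \eqref{eq:mass lower bound of gamma} and \eqref{eq:mass upper bound of gamma}. Your ``exact evaluation'' $\frac a{4\tilde\ga'}+\frac1{8\tilde\ga'^2}$ has the paper's leading term $\frac a{4\tilde\ga'}$, and the correction is lower order, so $\tilde\ga'\tilde\ga''\le-(\frac14-o(1))a$ is not improved. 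As a sanity check, Theorem~\ref{thm:loglog} follows from this proposition via $qC_1^2<B$ with $q\downarrow2$. So $\be\downarrow\sqrt2$ combined with the correct threshold would beat the constant $\sqrt{2\pi}$ that the paper identifies as sharp.

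\textbf{The absorption step itself is missing its mechanism.} You treat the right-hand side of \eqref{eq:S<L and X<L sec6}, an $L^2H^{\frac12}$ local-smoothing norm on $\{m-\frac13\le|y|\le m\}$, as comparable to $e^{-\ga/2}$. Local mass does not control $H^{\frac12}$. Interpolation only gives $\zeta_{m+1}\les\kappa^{-O(1)}\max\{\omega_m,\sqrt{\omega_m\zeta_m}\}$. With the a priori bound $\zeta_m\les\kappa^{-O(1)}$, one step produces errors of size $R^{-\frac13\al_d}e^{-\ga/2}$ against $e^{-\ga}$. That is again a restriction of the type $\ga\les\s C^2\kappa^{-1}$.

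The paper closes this in two steps.
\begin{enumerate}
\item It iterates the interpolation over $k$ consecutive shells to obtain \eqref{eq:zeta iteration}, $|\log\zeta_m|\ge(1-2^{-k})\min_{m-k\le n<m}|\log\omega_n|-O(|\log\kappa|)$.
\item It runs an induction over blocks $m_j=m_M+j\lceil\ep_0|\log\delta|\kappa^{-1/2}\rceil$. At each step it transfers \eqref{eq:gam' bound}--\eqref{eq:gam bound} from $\ga_{m_{j-1}}$ to $\ga_{m_j}$ through a point $a_*$ where the two agree up to $O(1)$, while keeping $\omega_n$ above the floor $(R^{5A}\eta)^2$.
\end{enumerate}
Your closing remark about bootstrapping points in this direction but does not supply it. As written, the proposal neither removes $\s$ nor reaches $\sqrt\pi$.
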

\begin{proof}
In this proof, we choose the time weight $\mu(s)=e^{-2\nu|s-s_*|}$ with a sufficiently small exponent $\nu=\nu(d)>0$ to exploit Lemma~\ref{lem:regularity sec6}. Let $\al>0$ and $\be>2$ be aforementioned parameters, which will be fixed later. Also, let $0<\ep_0\ll\al$ and $k\in\N$ be numbers to be fixed later.

For $m_M+1\le m\le\kappa^{-1}$,
we mostly follow the analysis for \eqref{eq:ode ineq'} (namely \eqref{eq:mathring u decay}--\eqref{eq:s_n w_n assump}), replacing $\mathring u$ and the spatial cutoffs $\chi,\tilde\chi$ by $\mathring u_m$ and $\chi_m\psi^{\le R\sqrt t},\chi_{m+1}$.
We use subscripts to denote analogous notations, e.g. $v_m$ to denote the piecewise solution \eqref{eq:v def sec5}. We use the subscript $m_M$ to denote the original functions in the previous proof, e.g. $v_{m_M}\coloneqq v$.

For $m_M+1\le m\le \kappa^{-1}$ and $0\le n<n_{+,m}$, by \eqref{eq:S<L and X<L sec6} and \eqref{eq:tilde u embed Lp cap H1/2}--\eqref{eq:mathring u embed Hs}, we have
\begin{align*}
&\norm{\na\ul{\mathring u_m}}_{L^2H^{\frac14}([s_n,s_{n+1}]\times\{|y|\ge m\})}+\norm{\na\ul{\mathring u_m^\pc}}_{L^2H^{\frac14}([s_n,s_{n+1}]\times\{|y|\ge m\})}
\\\les& \kappa^{-O(1)}\norm{e^{-\nu|s-s_n|}\ul u}_{L^2H^{\frac12}(m-1\le|y|\le m)}\les\kappa^{-O(1)}.
\end{align*}
For $m=m_M$, the same bound holds due to \eqref{eq:mathring u embed Hs}.

We track $\mu$-weighted improvements of this bound by introducing the parameters
\begin{align*}
\zeta_m&\coloneqq\I\norm{\na\ul{\mathring u_m}}_{H^{\frac14}(|y|\ge m)}^2+\I\norm{\na\ul{\mathring u_m^\pc}}_{H^{\frac14}(|y|\ge m)}^2+(R^{5A}\eta)^2\,\les\,\kappa^{-O(1)},
\\
\omega_m&\coloneqq\I\norm{\ul u}_{L^2(m\le|y|\le m+1)}^2+(R^{5A}\eta)^2.
\end{align*}

For each $0\le n<n_{+,m}$, since the right-hand side of \eqref{eq:mathring u decay bootstrap} for the interval $[s_{n,m},s_{n+1,m}]$ is bounded by $\sqrt{\mu(s_{n,m})^{-1}\zeta_m}$, repeating the previous proof gives \eqref{eq:w_n-psi mathring u small}--\eqref{eq:w_n H1 bound} for $w_{n,m}$ with $\sqrt{\mu(s_{n,m})^{-1}\zeta_m}$-times smaller right-hand sides
\begin{align}
\norm{\ul{w_{n,m}}-\psi^{\le R}\ul{u}}_{L^\infty H^{-1}\cap L^pL^p\cap L^qL^r(|y|\ge m+1)}&\les\kappa^{-O(1)} R^{-\frac14}\cdot\textstyle\sqrt{\mu(s_{n,m})^{-1}\zeta_m},\label{eq:w(n,m)-u Lp small}
\\
\norm{\ul{w_{n,m}}}_{L^\infty H^{\frac34}}+\norm{e^{-\frac i4|y|^2}\ul{w_{n,m}}}_{L^\infty H^{\frac34}}&\les\kappa^{-O(1)}\cdot\textstyle\sqrt{\mu(s_{n,m})^{-1}\zeta_m}.\label{eq:w(n,m) H 3/4 bound}
\end{align}
Hence, we obtain an improved version of \eqref{eq:ODE ineq for radial}
\[
\int_a^\infty\Bb{\frac{a+r}8-\frac c{r^3}} e^{-\gamma_m(r)}dr+\Bb{\gamma_m''(a)-\frac c{a^2}}e^{-\gamma_m(a)}\le\kappa^{-O(1)}R^{-\frac13\al_d}\zeta_m,\quad a\ge m+1,
\]
where $\mu(s_{n,m})^{-1}$ is canceled by the multipliers $\mu$ and $\d_s\mu=O(\mu)$ in \eqref{eq:jump error}.

Consequently, we obtain an improved version of the lower case in \eqref{eq:ode ineq'}: 
\begin{align}\label{eq:ode ineq' sec6}
&\ga_m''(a)\le
\frac{2c}{a^{2}}-\int_{a}^{\infty}\Bb{\frac{a}{4}-\frac c{r^3}}e^{\gamma_m(a)-\gamma_m(r)} dr,\quad  a\in[m+1,\kappa^{-1}]
\\
\text{if}\quad&\ga_m(a)\le\s C^2\kappa^{-1}+|\log\zeta_m|\label{eq:ode ineq' sec6 condt}
\end{align}
with $\s=\s(d,\al)>0$.
By \eqref{eq:S<L and X<L sec6} and \eqref{eq:eta def sec6}, we obtain
\begin{align*}
\zeta_{m+1}&\les\kappa^{-O(1)}(\I\norm{\ul{\mathring u_m}}_{H^{\frac12}(m\le|y|\le m+1)}^2+\I\norm{\ul{\tilde u_m}}_{H^{\frac12}(m\le|y|\le m+1)}^2)+(R^{5A}\eta)^2
\\
&\les\kappa^{-O(1)}\I(\norm{\ul{\mathring u_m}}_{L^2(m\le|y|\le m+1)}^2+\norm{\ul{\mathring u_m}}_{L^2(m\le|y|\le m+1)}\norm{\na \ul{\mathring u_m}}_{L^2(|y|\ge m)})+(R^{5A}\eta)^2
\\
&\les\kappa^{-O(1)}\max\{\omega_{m},\sqrt{\omega_{m}\zeta_{m}}\}.
\end{align*}
Applying this estimate $k$ times, we obtain for $m\ge m_M+k$
\begin{equation}\label{eq:zeta iteration}
|\log\zeta_m|\ge -O(|\log\kappa|)+(1-2^{-k})\min_{m-k\le n<m}|\log\omega_n|.
\end{equation}

We argue by contradiction. Assume that \eqref{eq:main sec6} fails.
For $j\ge 0$, we denote
\[
m_j=m_M+j\lceil\ep_0|\log\delta|\kappa^{-\frac12}\rceil.
\]
To deduce a contradiction, we show the following for $j=0,\ldots,\lceil10/\ep_0\rceil$:

(i) $\ga_{m_j}$ satisfies \eqref{eq:gam' bound}--\eqref{eq:gam bound} over $[m_j+4,m_{j+2})$,

(ii) for $l\in\{j,j+1\}$ and integers $n\in[m_l+1,m_{j+2})\cap[m_j+4,m_{j+2})$,
\begin{equation}\label{eq:gam(a)=w(n)}
\I\norm{v_{m_l}}_{L^2(n\le|y|\le n+1)}^2\sim \omega_n\gg\max\{ R^{-\frac13}\zeta_{m_j},R^{-\frac13}\zeta_{m_{j+1}},(R^{5A}\eta)^2\}.
\end{equation}
Note that we obtain \eqref{eq:gam(a)=w(n)} once we show for $n\in[m_j+4,m_{j+2})$
\begin{equation}\label{eq:gam(a)=w(n) sufficient}
\I\norm{v_{m_j}}_{L^2(n\le|y|\le n+1)}^2\gg\max\{R^{-\frac13}\zeta_{m_j},R^{-\frac13}\zeta_{m_{j+1}},(R^{5A}\eta)^2\},
\end{equation}
due to the triangle inequality and the consequence of \eqref{eq:w(n,m)-u Lp small},
\[
\I\norm{e^{\frac i8|y|^2}v_{m_l}-\ul u}_{L^2(n\le|y|\le n+1)}^2\les\kappa^{-O(1)}R^{-\frac12}\zeta_{m_l}\ll R^{-\frac13}\zeta_{m_l}.
\]
We prove this claim by induction. Hereafter, we often use the integral identity
\[
\int_{n}^{n+1}e^{-\ga_{m_j}(r)}dr=\I\norm{v_{m_j}}_{L^2(n\le|y|\le n+1)}^2.
\]
For $j=0$, assuming $\ep_0\ll1$, the integral in \eqref{eq:gam bound} for $\ga_{m_0}=\ga$ is below $\s C^2\kappa^{-1}$ for $a\in[R_0,m_2)$. Thus,  $a_+'\ge m_2$ follows and \eqref{eq:gam' bound}--\eqref{eq:gam bound} hold for $a\in [m_0+4,m_2)$. In particular, $\ga(a)\le O(\ep_0C^2\kappa^{-1})$. Comparing to $|\log(R^{-\frac13}\zeta_{m_l})|\gtrsim\al C^2\kappa^{-1}$ for $l\ge0$, for $\ep_0\ll\al$, \eqref{eq:gam(a)=w(n) sufficient} holds for all $n\in[m_0+4,m_2)$, yielding \eqref{eq:gam(a)=w(n)}.

We show the induction step; let $j\ge1$.
By \eqref{eq:gam(a)=w(n)} for $(j-1)$, there exist $a_-\in[m_j+1,m_j+2]$ and $a_+\in[m_j+3,m_j+4]$ such that $|\ga_{m_j}(a_\pm)-\ga_{m_{j-1}}(a_\pm)|=O(1)$.
Thus, there exists $a_*\in[m_j+1,m_j+4]$ such that
\begin{equation}\label{eq:a* def}
|\ga_{m_j}(a_*)-\ga_{m_{j-1}}(a_*)|=O(1)\text{ and }(\ga_{m_j}-\ga_{m_{j-1}})'(a_*)\le O(1).
\end{equation}
Since \eqref{eq:gam' bound} and \eqref{eq:gam bound} hold for $\ga_{m_{j-1}}$ over $[m_{j-1}+4,m_{j+1})$, we have
\begin{align}\label{eq:gam(a)=gam(m) prep}
|\ga_{m_{j-1}}(a)-\ga_{m_{j-1}}(a_*)|&=O(\ep_0|\log\delta|^2\kappa^{-1}),\quad a\in[m_{j-1}+4,m_{j+1}),
\\
\label{eq:ga j-1 at a*}
\ga_{m_{j-1}}(a_*)&=O(|\log\delta|^{2}\kappa^{-1}).
\end{align}
By \eqref{eq:gam(a)=w(n)} for $(j-1)$ and \eqref{eq:gam(a)=gam(m) prep}, we obtain
\begin{equation}\label{eq:gam(a)=gam(m)}
||\log \omega_n|-\ga_{m_{j-1}}(a_*)|=O(\ep_0|\log\delta|^2\kappa^{-1}),\quad n\in [m_{j-1}+4,m_{j+1}).
\end{equation}
By \eqref{eq:zeta iteration} and \eqref{eq:gam(a)=gam(m)}, we have
\begin{equation}\label{eq:gam bound conseq for Im}
|\log\zeta_{m_l}|\ge(1-2^{-k})(\ga_{m_{j-1}}(a_*)-O(\ep_0|\log\delta|^2\kappa^{-1})),\quad l\in\{j,j+1\}.
\end{equation}
By \eqref{eq:a* def} and \eqref{eq:gam bound conseq for Im}, assuming $\ep_0\ll1$ and $k\gg1$, we have
\[
\ga_{m_j}(a_*)\le O(\ep_0|\log\delta|^2\kappa^{-1})+(1-2^{-k})^{-1}|\log\zeta_{m_j}|\le\tfrac\s2 C^2\kappa^{-1}+|\log\zeta_{m_j}|.
\]
In particular, $\ga_{m_j}$ satisfies \eqref{eq:ode ineq' sec6 condt} at $a_*$.

Let $[a_*,a_+]\subset[a_*,m_{j+2}]$ be the maximal interval over which \eqref{eq:ode ineq' sec6 condt} holds. By \eqref{eq:gam' bound}--\eqref{eq:gam bound} for $\ga_{m_{j-1}}(a_*)$ and \eqref{eq:a* def}, we obtain \eqref{eq:gam' bound}--\eqref{eq:gam bound} for $\ga_{m_{j}}(a_*)$.
Repeating the previous proof provides \eqref{eq:gam' bound}--\eqref{eq:gam bound} over $[a_*,a_+]$. In particular, we have
\begin{equation}\label{eq:ga mj '}
\ga_{m_j}'(a)=O(|\log\delta|\kappa^{-\frac12}),\quad a\in[a_*,a_+],
\end{equation}
thus $a_+-a_*\gtrsim|\log\delta|\kappa^{-\frac12}$ follows. Hence, \eqref{eq:gam' bound}--\eqref{eq:gam bound} extend to $m_{j+2}$ for $\ep_0\ll1$.

It remains to show \eqref{eq:gam(a)=w(n)}. 
Let $a\in[a_*,m_{j+2})$. By \eqref{eq:a* def} and \eqref{eq:ga mj '}, we have
\begin{equation}\label{eq:ga m bound}
|\ga_{m_j}(a)-\ga_{m_{j-1}}(a_*)|\le O(1)+|\ga_{m_j}(a)-\ga_{m_j}(a_*)|\les\ep_0|\log\delta|^2\kappa^{-1}.
\end{equation}
Since $C_1>\sqrt\pi$, we can choose $\al>0$ and $\be>2$ so that \eqref{eq:gam bound} for $\ga_{m_{j}}$ yields
\begin{equation}\label{eq:ga primary bound 2}
e^{-\ga_{m_{j}}(a)}\ge e^{-(\frac\pi2\be^2+o_\kappa(1))\cdot|\log\delta|^2\kappa^{-1}}\gg e^{2(5A\al-1) C_1^2|\log\delta|^2\kappa^{-1}}= (R^{5A}\eta)^2.
\end{equation}
By \eqref{eq:ga m bound}, \eqref{eq:ga j-1 at a*}, and \eqref{eq:gam bound conseq for Im}, choosing sufficiently small $\ep_0>0$ and sufficiently large $k\in\N$, we have for $l\in\{j,j+1\}$
\begin{equation}\label{eq:ga primary bound 1}
e^{-\ga_{m_{j}}(a)}\gg e^{-\frac13\al C_1^2|\log\delta|^2\kappa^{-1}-(1-2^{-k})(\ga_{m_{j-1}}(a_*)-O(\ep_0|\log\delta|^2\kappa^{-1}))}\ge R^{-\frac13}\zeta_{m_l}.
\end{equation}

For each $n\in[m_j+4,m_{j+2})\subset [a_*,m_{j+2})$, integrating \eqref{eq:ga primary bound 2} and \eqref{eq:ga primary bound 1} over $[n,n+1)$ yields \eqref{eq:gam(a)=w(n) sufficient}. Hence, \eqref{eq:gam(a)=w(n)} holds and the induction step is complete.

Now we are ready to obtain a contradiction.
We choose $j=\lceil10/\ep_0\rceil$;
plugging $a=m_j+4\ge10|\log\delta|\kappa^{-\frac12}$ into \eqref{eq:gam' bound} makes its right-hand side negative and leads to a contradiction. This finishes the proof.
\end{proof}

\section{Proofs of Theorem~\ref{thm:loglog}, Corollary \ref{cor:loglog la Bourgain}, and Theorem~\ref{thm:saturate loglog}}\label{sec:implications}
We provide the proofs of the results for radial solutions, namely Theorems \ref{thm:loglog} and \ref{thm:saturate loglog}, and Corollary \ref{cor:loglog la Bourgain}.
By time translation and the time-reversal symmetry, we may assume that $u$ exists on $(0,1]$ and blows up backward in time at $t=0$. The main ingredients are Propositions \ref{prop:main} and \ref{prop:main sec6}, whose notations we adopt in this section.

First, we detect the asymptotic profile $z^*$ for Theorem~\ref{thm:loglog} (1).
\begin{lem}\label{lem:negative Sobolev}
There exists $z^*\in L^2(\R^d)$ satisfying the following for $\theta\in[0,1)$:
\begin{align}\label{eq:negative Sobolev}
&\norm{u(t)-z^*}_{\dot H^{-\theta}(\R^d)}\les_{M,\theta} t^{\frac\theta2},&t>0,
\\
\label{eq:exterior vanish}
&\limsup_{t\to0}\norm{u(t)-z^*}_{L^2(|x|\ge R\sqrt t)}=o_R(1),&R>0.
\end{align}
\end{lem}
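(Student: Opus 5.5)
We plan to obtain $z^*$ as the limit of $u(t)$ as $t\to0$ in negative Sobolev spaces, via the Duhamel formula and Strichartz estimates, and then to upgrade to $L^2$ convergence away from the parabolic region using the decomposition $u=\mathring u+\tilde u$ of Lemma~\ref{lem:regularity} on $I=(0,1]$.

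\emph{Step 1: negative Sobolev convergence.} For $0<t_1<t_2\le1$ we write
\[
e^{-it_2\De}u(t_2)-e^{-it_1\De}u(t_1)=-i\int_{t_1}^{t_2}e^{-is\De}\mc N(u(s))\,ds .
\]
Since $\norm{u}_{L^pL^p((0,1]\times\R^d)}=\infty$, we cannot simply use the dual Strichartz estimate. Instead, we insert the decomposition $\mc N(u)$ with $u=\mathring u+\tilde u$, where $\tilde u\in S(I)\hook L^qL^r$ is uniformly bounded. For $\mathring u$ we use \eqref{eq:X embed Hs} and \eqref{eq:X embed u}: at time $s$, $\mathring u(s)$ is bounded in $\dot H^\theta(|x|\ge r)$ by $r^{-\theta}$ and pointwise by $|x|^{-d/2}$. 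The scale-invariant quantity $\norm{\mc N(u(s))}_{\dot H^{-\theta-2}}$ should be $\les s^{\theta/2}\cdot s^{-1}$ after localizing to frequencies $\les s^{-1/2}$. Alternatively, and more cleanly, we work at frequency $N$: for $N\le s^{-1/2}$, the piece $P_N e^{-is\De}\mc N(u(s))$ can be estimated in $L^2$ using Bernstein and $\mc N(u)\in L^\infty L^{\frac{2d}{d+4}}$ (mass bound), giving $\norm{P_N\mc N(u(s))}_{L^2}\les N^2$. This yields
\[
\norm{P_N(u(t)-z^*)}_{L^2}\les\min\{1,N^2t\}\quad(\text{approximately}).
\]
After summing dyadically with weight $N^{-\theta}$, this gives $\norm{u(t)-z^*}_{\dot H^{-\theta}}\les t^{\theta/2}$ for $\theta<2$, and in particular for $\theta\in[0,1)$. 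Here $z^*$ is defined as the $\dot H^{-\theta}$-limit of $u(t)$; we also need $e^{-it\De}$ versus identity, and the difference is $O(\min\{1,N^2t\})$ as well. Since $u(t)$ is bounded in $L^2$, weak compactness gives $z^*\in L^2$ with $\norm{z^*}_{L^2}\le M^{1/2}$. The frequencies $N\gg t^{-1/2}$ are handled by the trivial $L^2$ bound, which is acceptable since $N^{-\theta}\le t^{\theta/2}$ there.

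\emph{Step 2: exterior $L^2$ convergence \eqref{eq:exterior vanish}.} We split at frequency $N_0=K/\sqrt t$. The low-frequency part $P_{\le N_0}(u(t)-z^*)$ is $O(K^{\theta}t^{\theta/2}\cdot N_0^{\theta})$; this is not small, so we need to argue more carefully. We would instead show that $u(t)$ restricted to $|x|\ge R\sqrt t$ is Cauchy in $L^2$ as $t\to0$ up to $o_R(1)$, using local mass conservation: the flux $\d_t\int\psi|u|^2=2\Im\int\ol u\na u\cdot\na\psi$ with the cutoff $\psi=\psi^{\ge R\sqrt t}$. The flux bound then follows from the local smoothing estimate \eqref{eq:local smoothing of tilde u} for $\tilde u$ and from \eqref{eq:X embed Hs} for $\mathring u$ (which gives $\norm{\na\mathring u}_{L^2(|x|\ge R\sqrt t)}\les(R\sqrt t)^{-1}$, so the regular part contributes a flux of order $R^{-1}$ per unit of $s=\log t$). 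Combined with compactness from the $\dot H^{-\theta}$ convergence, this upgrades weak convergence to strong convergence outside $|x|\ge R\sqrt t$.

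The main obstacle is Step 2: the $\mathring u$ flux term is only $O(R^{-1})$ per unit log-time, which is not integrable in $s$. We would therefore try, first, to show instead that the high-frequency part $P_{\ge K/\sqrt t}u(t)$ has $o_{K}(1)$ mass on $|x|\ge R\sqrt t$. For $\mathring u$ this follows from \eqref{eq:X embed Hs}. For $\tilde u$ we would use the in/out decomposition: outgoing waves leaving the parabolic region never return to frequency $\gg t^{-1/2}$ inside it, which should give vanishing via \eqref{eq:Ppm exterior stri}. The low-frequency part is then controlled by Step 1 on $|x|\ge R\sqrt t$.
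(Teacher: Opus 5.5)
Both steps of your proposal have a genuine gap.

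In Step~1, the bound $\norm{P_N\mc N(u(s))}_{L^2}\les N^2$ is Bernstein from $L^{\frac{2d}{d+4}}$ to $L^2$, which needs $\frac{2d}{d+4}\ge1$, i.e.\ $d\ge4$. The lemma is needed for all $d\ge2$. For $d=2,3$ the step is not merely unjustified: its conclusion $\norm{P_N(u(t)-z^*)}_{L^2}\les\min\{1,N^2t\}$ is false. Take the radial pseudo-conformal blow-up $S(t,x)=t^{-\frac d2}e^{\frac{i|x|^2}{4t}-\frac it}Q(x/t)$. Here $z^*=0$, and $S(t)=e^{it\De}\bb{c_te^{-\frac{i|x|^2}{4t}}\tilde Q}$ with $|c_t|=1$ and a fixed Schwartz function $\tilde Q$ satisfying $\tilde Q(0)\neq0$. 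Hence $\norm{P_NS(t)}_{L^2}\sim(Nt)^{\frac d2}$ for $t\ll1$ and $N\ll t^{-1}$. This exceeds $N^2t$ when $N\ll1$ (for $d=2$) or $N\ll t$ (for $d=3$). Your range $\theta<2$ also fails: in $d=2$, $\norm{S(t)}_{\dot H^{-1}}=\infty$. The nonlinearity at the concentration point cannot be integrated in time by brute force; what is missing is spatial localization. After rescaling to $t=1$, the paper integrates $\d_tP_{\le1}(\psi^{>1}u)$ over $(0,1]$. The region $\{|x|\ge1\}$ stays outside the parabola $\{|x|\le\sqrt s\}$ for $s\le1$. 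There Lemma~\ref{lem:regularity} gives $u\in L^\infty L^2\cap L^pL^p$, via \eqref{eq:X embed u} for $\mathring u$ and $S\hook L^pL^p$ for $\tilde u$, so $\d_tP_{\le1}(\psi^{>1}u)\in L^1\dot H^{-\theta}$. The interior piece $P_{\le1}(\psi^{\le1}(u(1)-z^*))$ needs only the mass bound, because it is compactly supported and $\theta<1\le\frac d2$. A weak subsequential limit of $u(t_n)$ then serves as $z^*$. For $d\ge4$ your Step~1 is correct, and it is a simpler route than the paper's, giving even strong convergence of each $P_Nu(t)$.

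In Step~2, your final plan splits at $K/\sqrt t$ with $K$ large, so that the high frequencies are $o_K(1)$, and defers the low frequencies to Step~1. But Step~1 only gives $\norm{P_{<K/\sqrt t}(u(t)-z^*)}_{L^2}\les(K/\sqrt t)^{\theta}t^{\frac\theta2}=K^\theta$, which, as you noticed yourself, is not small. Restricting to $\{|x|\ge R\sqrt t\}$ does not help, since the $\dot H^{-\theta}$ bound carries no spatial information. So the argument does not close, and you rightly discarded the flux route. The missing idea is to put the threshold below the self-similar frequency, with $R^{-1}\ll K\ll1$, e.g.\ $K=R^{-\frac12}$. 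Then the low frequencies contribute $K^\theta=R^{-\frac\theta2}$ by Step~1. By \eqref{eq:X embed Hs}, $\norm{P_{\ge K/\sqrt t}\mathring u(t)}_{L^2(|x|\ge R\sqrt t)}\les(K/\sqrt t)^{-\theta}(R\sqrt t)^{-\theta}+\jp{KR}^{-10}\les R^{-\frac\theta2}$. For fixed $R$, $K/\sqrt t\to\infty$ as $t\to0$, so $\norm{P_{\ge K/\sqrt t}\tilde u(t)}_{L^2}\to0$ simply because $\tilde u\in S((0,1])$ has finite $\ell^2_NL^\infty L^2$ norm. Likewise $P_{\ge K/\sqrt t}z^*\to0$ trivially. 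No in/out decomposition or flux estimate is needed.
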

\begin{proof}
Since $\sup_t\norm{u(t)}_{L^2}\les1$, $u(t_n)$ converges weakly in $L^2$ to some $z^*$ for some sequence $\{t_n\}$ with $t_n\to 0$. We first show \eqref{eq:negative Sobolev} with this $z^*$; by scaling argument, it suffices to consider when $t=1$. We use a truncated equation,
\begin{equation}\label{eq:exterior cutoff nls}
\d_tP_{\le1}(\psi^{>1}u)=-iP_{\le1}(-\psi^{>1}\De u+\psi^{>1}\mc N(u)).
\end{equation}
On the time interval $(0,1]$, by \eqref{eq:X embed u} and \eqref{eq:tilde u embed Lp cap H1/2}, $u=\mathring u+\tilde u$ is bounded in $L^\infty L^2\cap L^pL^p(|x|\ge1)$. Thus, $P_{\le1}(\psi^{>1}\mc N(u))$ is bounded in $L^2L^{2_{-1}}$, hence in $L^2\dot H^{-\theta}$.
On the other hand, since $\De u$ is bounded in $L^\infty \dot H^{-2}$, $P_{\le1}(\psi^{>1}\De u)$ is bounded in $L^\infty\dot H^{-\theta}$.
Thus, $\d_tP_{\le 1}(\psi^{>1}u)$ is bounded in $L^1\dot H^{-\theta}$. Integrating in time yields
\[
\norm{P_{\le1}(\psi^{>1}(u(1)-z^*))}_{\dot H^{-\theta}}\les1.
\]
Other terms for \eqref{eq:negative Sobolev} are straightforward; by the mass bound, $P_{>1}(u(1)-z^*)$ and $P_{\le1}(\psi^{\le1}(u(1)-z^*))$ are bounded in $\dot H^{-\theta}$. Hence we obtain \eqref{eq:negative Sobolev}. 

We turn to showing \eqref{eq:exterior vanish}. Let $N=R^{-\frac12}$ and $\theta\in(0,1)$. By \eqref{eq:negative Sobolev}, we have
\begin{equation}\label{eq:vanish l}
\norm{P_{<N/\sqrt t}(u(t)-z^*)}_{L^2}\les (N/\sqrt t)^{\theta}\norm{u(t)-z^*}_{\dot H^{-\theta}}\les N^\theta.
\end{equation}
We decompose $u=\mathring u+\tilde u$ as in Lemma~\ref{lem:regularity}. By \eqref{eq:X embed Hs}, we have
\begin{equation}\label{eq:vanish h mathring}
\norm{P_{\ge N/\sqrt t}\mathring u(t)}_{L^2(|x|\ge R\sqrt t)}\les(N/\sqrt t)^{-\theta}\norm{\psi^{\ge R\sqrt t}\mathring u(t)}_{\dot H^\theta}+\jp{NR}^{-10}\les (NR)^{-\theta}.
\end{equation}
Since $\tilde u\in S((0,1])$, $\norm{P_{\ge K}\tilde u}_{L^\infty L^2}$ vanishes as $K\to\infty$. Thus, we have
\begin{equation}\label{eq:vanish h tilde}
\norm{P_{\ge N/\sqrt t}(\tilde u(t)-z^*)}_{L^2}\to0\quad\text{ as }t\to0.
\end{equation}

Summing \eqref{eq:vanish l}, \eqref{eq:vanish h mathring}, and \eqref{eq:vanish h tilde}, we conclude \eqref{eq:exterior vanish}.
\end{proof}

\begin{proof}[Proof of Theorem~\ref{thm:loglog} \eqref{enu:parabolic convergence}]
Let $z^*$ be as in Lemma~\ref{lem:negative Sobolev}. We argue by contradiction; assume there exist $\ep>0$, $r>0$, and a time sequence $\{t_n\}$ such that $t_n\to 0$ and 
\begin{equation}\label{eq:proof of thm:loglog(1) - > epsilon}
\norm{u(t_n)-z^*}_{L^2(|x|\ge r\sqrt{t_n})}\ge\ep.
\end{equation}

By \eqref{eq:exterior vanish}, there exists $R_0>0$ such that
\begin{equation}\label{eq:proof of thm:loglog(1) - < epsilon/2}
\norm{u(t_n)-z^*}_{L^2(|x|\ge R_0\sqrt{t_n})}\le\tfrac\ep3,\quad n\gg1.
\end{equation}
Denote $s_n=\log t_n$.
Since $z^*\in L^2(\R^d)$, by \eqref{eq:proof of thm:loglog(1) - > epsilon} and \eqref{eq:proof of thm:loglog(1) - < epsilon/2}, we have
\begin{equation}\label{eq:proof of thm:loglog(1) - trig}
\norm{\ul u(s_n)}_{L^2( r\le|y|\le R_0)}=\norm{u(t_n)}_{L^2( r\sqrt{t_n}\le|x|\le R_0\sqrt{t_n})}\ge\tfrac\ep3,\quad n\gg1.
\end{equation}

Let $\delta=\delta(\ep,r,R_0)>0$ be a number to be fixed later. 
We use Proposition~\ref{prop:main} with $\rho=1$ and any $\kappa\le(r/4)^2$. Since $\tilde u,\tilde u^\pc$ have finite $S$ norm and $s_n\to-\infty$ as $n\to\infty$, for $n\gg1$, the condition \eqref{eq:eta def} of Proposition~\ref{prop:main} holds for $s_*=s_n$ and hence
\begin{equation}\label{eq:proof of thm:loglog(1) - main prop conseq}
\norm{\ul u}_{L^2L^2([s_n,s_n+1]\times\{r/4\le|y|\le 4R_0\})}
\le\kappa^{-\frac12}\norm{y\ul u}_{L^2L^2([s_n,s_n+1]\times\{\sqrt\kappa\le|y|\le 4R_0\})}\le\delta.\hspace{-2mm}
\end{equation}

We use the local mass
\[
M_\loc(s)=V_{\varphi_\loc}(s),\quad \varphi_\loc=\psi^{ r\le\cdot\le R_0},
\]
where we follow the notation in \eqref{eq:V def}.
By \eqref{eq:V'=W}, we have
\begin{equation}\label{eq:local mass bound}
|\d_sM_\loc(s)|\les_{r,R_0}\norm{\ul u(s)}_{H^{\frac12}(r/4\le|y|\le 4R_0)}^2,\quad s\in[s_n,s_n+1].
\end{equation}
By \eqref{eq:proof of thm:loglog(1) - trig}, we have $M_\loc(s_n)\ge(\ep/3)^2$. Thus, integrating \eqref{eq:local mass bound} yields
\begin{equation}\label{eq:proof of thm:loglog(1) - claim}
M_\loc(s)\ge (\tfrac\ep3)^2- O_{r,R_0}(1)\cdot\norm{\ul u}_{L^2H^{\frac12}([s_n,s]\times\{r/4\le|y|\le4R_0\})}^2, \quad s\in[s_n,s_n+1].
\end{equation}

Decomposing $u=\mathring u+\tilde u$ as in Lemma~\ref{lem:regularity}, by \eqref{eq:X embed Hs} and \eqref{eq:tilde u embed Lp cap H1/2}, we have
\[
\norm{\ul u}_{L^2 H^{\frac12}([s_n,s]\times\{r/4\le|y|\le 4R_0\})}^2\les_{r,R_0}(s-s_n)+o_n(1).
\]
Hence, choosing small $\delta>0$, \eqref{eq:proof of thm:loglog(1) - claim} contradicts \eqref{eq:proof of thm:loglog(1) - main prop conseq}, completing the proof.
\end{proof}
We now prove Theorem~\ref{thm:loglog} \eqref{enu:loglog}. In this proof, we employ the decompositions $\mathring u_m+\tilde u_m$ and Proposition~\ref{prop:main sec6} to detect the sharp coefficient $\sqrt{2\pi}$.
\begin{proof}[Proof of Theorem~\ref{thm:loglog} \eqref{enu:loglog}]
We use the convention $s=\log t$ and denote
\begin{equation}\label{eq:ul la def}
\ul\la(s)=t^{-\frac12}\la_\delta(t)=\inf\{r>0:\norm{(\ul u-\ul{z^*})(s)}_{L^2(|y|\ge r)}\le\delta\}.
\end{equation}

Then, \eqref{eq:loglog} reduces to showing for $B_0>2\pi$, $\delta\ll_{B_0}1$, and $|s_0|\gg_{u,\delta}1$
\begin{equation}\label{eq:loglog in backward}
\int_{s_0}^0\ul\la^2(s)ds\le B_0|\log\delta|^2\cdot(\log|s_0|)^{-1}|s_0|.
\end{equation}

By \eqref{eq:z* def} and \eqref{eq:ul la def}, we have
\begin{equation}\label{eq:delta/3 (1)}
\ul\la(s)\le 1\text{ and }\norm{(\ul{u}-\ul{z^*})(s)}_{L^2(\ul\la(s)\le|y|\le1)}\gtrsim\delta,\quad |s|\gg1.
\end{equation}

Since $z^*\in L^2(\R^d)$, $\ul{z^*}(s)$ vanishes in $L^2(|y|\le1)$ as $s\to-\infty$. Thus, \eqref{eq:delta/3 (1)} yields
\begin{equation}\label{eq:delta/3 (3)}
\delta^{-1}\norm{y\ul u(s)}_{L^2(\ul\la(s)\le|y|\le 1)}\ge\ul{\la}(s)\cdot\delta^{-1}\norm{\ul u(s)}_{L^2(\ul\la(s)\le|y|\le1)}\gtrsim\ul\la(s),\quad |s|\gg1.
\end{equation}
We fix $B>2\pi$, $q\in(2,p]$, and $C_1>\sqrt\pi$ such that $qC_1^2<B<B_0$. Let $\rho=\rho(B)>0$ be a number to be fixed later. We apply Proposition~\ref{prop:main sec6} with
\[\kappa=B|\log\delta|^2\cdot(\log|s_0|)^{-1}.\]
Let $\E$ be the set of $s_*\in\Z$ at which Proposition~\ref{prop:main sec6} is not applicable, namely
\[
\E=\Z\cap([-\kappa^{-10},0]\cup\{s_*\in[s_0,-\kappa^{-10}):\eqref{eq:eta def sec6}\text{ fails}\}).
\]
By the definition of the $S$ norm, \eqref{eq:S embed}, and \eqref{eq:local smoothing of tilde u sec6}, we have
\begin{equation}\label{eq:E size}
\#\E\le \kappa^{-O(1)}\cdot e^{qC^2\kappa^{-1}}\le\kappa^{-O(1)}|s_0|^{qC_1^2B^{-1}}=o_{s_0}(\kappa|s_0|),\quad C=C_1|\log\delta|.
\end{equation}
For $s_*\notin\E$, Proposition~\ref{prop:main sec6} yields \eqref{eq:main sec6}. Hence, for $\mc B\subset[s_0,0]\cap\Z$,
\[
\sum_{s_*\in\mc B}\int_{s_*-1}^{s_*}\norm{y\ul u(s)}_{L^2(\sqrt\kappa\le |y|\le 1)}^2ds\les \sum_{s_*\in\mc B\setminus\E}\int_{s_*-1}^{s_*}\norm{y\ul u(s)}_{L^2(\sqrt\kappa\le|y|\le 1)}^2ds+\#\E
\]
is bounded by $O(\rho^2\delta^2\kappa\#\mc B)+o_{s_0}(\kappa|s_0|)$.
Thus, by \eqref{eq:delta/3 (3)}, we obtain
\begin{align}\label{eq:level set main for B}
&\sum_{s_*\in\mc B}\int_{s_*-1}^{s_*}\ul\la^2(s)ds
\le\kappa\#\mc B+\sum_{s_*\in\mc B}\int_{s_*-1}^{s_*}\ul\la^2(s)\cdot \one_{\sqrt\kappa\le\ul\la(s)}ds
\\\le&\kappa\#\mc B+O(\delta^{-2})\cdot(\rho^2\delta^2\kappa\#\mc B+o_{s_0}(\kappa|s_0|))\nonumber
\\
\le& (1+O(\rho^2))B|\log\delta|^2\cdot(\log|s_0|)^{-1}\#\mc B+o((\log|s_0|)^{-1}|s_0|).\nonumber
\end{align}
Setting $\mc B=[s_0,0]\cap\Z$ and $\rho\ll1$, we obtain \eqref{eq:loglog in backward}. This completes the proof.
\end{proof}
\begin{proof}[Proof of Theorem~\ref{thm:saturate loglog} \eqref{enu:Thm profile}]
It suffices to show that if $0<T-t_*\ll1$ and \eqref{eq:saturate loglog} holds at $t_*$, then \eqref{eq:precise exponential} also holds at $t_*$. We mostly follow the proof of Theorem~\ref{thm:loglog} \eqref{enu:loglog} for $s_0=\log t_*$, setting $\rho=1$, $R_0=1$, $q=p$, and $\delta>0$ with which Lemma~\ref{lem:superexp} holds.

We first show the lower bound. Let $B=B(\ep,\delta)>0$ be a number to be fixed later. We use Lemma~\ref{lem:superexp} instead of Proposition~\ref{prop:main}. Fix $C_1>0$ such that $pC_1^2<B$. Let
\begin{align*}
\kappa&\coloneqq B|\log\delta|^2\cdot(\log|s_0|)^{-1},
\\
\E&\coloneqq\Z\cap([-\kappa^{-10},0)\cup[s_0,s_0+\kappa^{-10})\cup\{s_*\in[s_0,-\kappa^{-10}):\eqref{eq:eta def}\text{ fails}\}),
\\
\mc A&\coloneqq\Z\cap\{s_*\in[s_0,-\kappa^{-10}):\eqref{eq:main}\text{ holds}\},
\\
\mc B&\coloneqq\Z\cap[s_0,-\kappa^{-10})\setminus(\E\cup\mc A).
\end{align*}
Similarly to \eqref{eq:E size}, $pC_1^2<B$ yields $\#\E=o_{s_0}(\kappa|s_0|)$.
By \eqref{eq:delta/3 (3)} and \eqref{eq:main}, we have
\begin{equation}\label{eq:A contribution}
\int_{s_*-1}^{s_*}\ul\la^2(s)ds\les_\delta\norm{y\ul u}_{L^2L^2([s_*-1,s_*]\times\{|y|\le1\})}^2\les \kappa\les_\delta B(\log|s_0|)^{-1},\,\,\; s_*\in\mc A.
\end{equation}
By \eqref{eq:level set main for B}, \eqref{eq:A contribution}, and $\#\E=o(\kappa|s_0|)=o((\log|s_0|)^{-1}|s_0|)$, we obtain
\begin{equation}\label{eq:int upper}
I\coloneqq\int_{s_0}^0\ul\la^2(s)ds\les_\delta B(\log|s_0|)^{-1}|s_0|+(\log|s_0|)^{-1}\#\mc B.
\end{equation}
On the other hand, \eqref{eq:saturate loglog} yields $I\gtrsim\ep(\log|s_0|)^{-1}|s_0|$. Thus, choosing sufficiently small $B=B(\ep,\delta)\ll1$, we obtain
\begin{equation}\label{eq:B big}
\#\mc B\gtrsim_{\delta,\ep}|s_0|.
\end{equation}
Given any $\theta\gg_{\delta,C_1}1$,
Lemma~\ref{lem:superexp} yields \eqref{eq:assump for superexp} for $s_*\in\mc B$. Thus, we have
\[
\int_{s_0}^0|\ul u(s,\theta\sqrt\kappa)|^2ds\gtrsim\sum_{s_*\in\mc B}\int_{s_*-\kappa^{-10}}^{s_*+\kappa^{-10}}|\ul u(s,\theta\sqrt\kappa)|^2\cdot e^{-\frac14|s-s_*|}ds\gtrsim_{\delta,\ep} \frac{e^{-c_*\theta}}{(\theta\sqrt\kappa)^d}\cdot|s_0|
\]
with $c_*=c_*(\delta,C_1)>0$. Substituting $r=\theta\sqrt{B|\log\delta|^2}$, renormalizing yields the lower bound of \eqref{eq:precise exponential}.

We now show the upper bound. We fix $C_1=C_1(d)$ as the number given in Proposition~\ref{prop:main} and choose $B>pC_1^2$. Let  $\delta_*=e^{-r/\sqrt B}$ and
\begin{align*}
\kappa_*&\coloneqq B|\log\delta_*|^2\cdot(\log|s_0|)^{-1}=r^2(\log|s_0|)^{-1},
\\
\E_*&\coloneqq\Z\cap([-\kappa_*^{-10},0]\cup\{s_*\in[s_0,-\kappa_*^{-10}):\eqref{eq:eta def}\text{ fails}\}).
\end{align*}
As earlier, we have $\#\E_*=o_{s_0}(\kappa_*|s_0|)$. For $s_*\notin\E_*$, Proposition~\ref{prop:main} yields
\[
\norm{\ul u}_{L^2L^2([s_*-1,s_*]\times\{\sqrt{\kappa_*}\le|y|\le1\})}\le\delta_*.
\]
Taking an $\ell^2$-summation over all integers $s_*\in[s_0,0]$, we obtain
\begin{equation}\label{eq:saturate upper prep}
\norm{\ul u}_{L^2L^2([s_0,0]\times\{\sqrt{\kappa_*}\le|y|\le1\})}\les\delta_*\sqrt{|s_0|}\les e^{-r/\sqrt B}\sqrt{|s_0|}.
\end{equation}
Since \eqref{eq:saturate upper prep} itself is not pointwise in radius, we estimate higher regularity of $\ul u$ for interpolation.
We decompose $u=\mathring u+\tilde u$ as in Lemma~\ref{lem:regularity} and denote the $L^2(\R^d)$-critical scaling by $f^\sc(y)=\kappa_*^{\frac d4}f(\sqrt{\kappa_*} y)$.
Since $\ep_d>\frac{d-2}{2d}$, there exists $\tilde\ep_d>0$ such that, by the definition of the $S$ norm, the radial Sobolev inequality, and \eqref{eq:X embed Hs},
\begin{align*}
\norm{\ul{\tilde u}}_{L^2W^{\tilde\ep_d,\infty}([s_0,0]\times\{|y|\ge\sqrt{\kappa_*}\})}&\les\kappa_*^{-O(1)}\norm{\ul{\tilde u}}_{L^2 B^{\ep_d}_{2_1,2}([s_0,0]\times\{|y|\ge\sqrt{\kappa_*}\})}\les\kappa_*^{-O(1)},
\\
\norm{\ul{\mathring u}^\sc}_{L^\infty W^{\tilde\ep_d,\infty}([s_0,0]\times\{|y|\ge1\})}&\les\norm{\ul{\mathring u}^\sc}_{L^\infty H^1([s_0,0]\times\{|y|\ge1\})}\les1.
\end{align*}
Taking a summation, we obtain
\begin{equation}\label{eq:high sobolev for upper}
\norm{\ul u^\sc}_{L^2W^{\tilde\ep_d,\infty}([s_0,0]\times\{|y|\ge1\})}\les\sqrt{|s_0|}.
\end{equation}
Interpolating between \eqref{eq:saturate upper prep} and \eqref{eq:high sobolev for upper}, we obtain for small $c=c(d)>0$ that
\[
\int_{s_0}^0|\ul u(s,\sqrt{\kappa_*})|^2ds\les\kappa_*^{-\frac d2}\norm{\ul u^\sc}_{L^2L^\infty([s_0,0]\times\{1\le|y|\le2\})}^2\les r^{-d}e^{-cr}(\log|s_0|)^{\frac d2}|s_0|,
\]
renormalizing which yields the upper bound of \eqref{eq:precise exponential}. This completes the proof.
\end{proof}
\begin{proof}[Proof of Theorem~\ref{thm:saturate loglog} \eqref{enu:Thm z*}]
We argue by contradiction. Assume $u_0,z^*\in H^\al(\R^d)$ for some $\al>0$, where we may take $\al\ll1$.
We follow the proof of Lemma~\ref{lem:regularity} with $I=(\tau_-,\tau_+]=(0,1]$, replacing the term $u(\tau_-)$ in the truncated Duhamel formula \eqref{eq:decomp PN(>u)} by $z^*$. This is justified by the convergence to $z^*$ \eqref{eq:z* def} and the exterior regularity of $u$ \eqref{eq:u_a bdd}. Repeating the proof with an additional factor $N^\al$ in the estimates for $\tilde u$, we obtain $\dot H^\al$-critical regularity bounds of $\tilde u\in S((0,1])$:
\begin{align}\label{eq:scale break}
&\norm{\tilde u}_{S^\al((0,1])}\coloneqq\norm{N^\al\norm{\tilde u_N}_{L^\infty L^2}}_{\ell^2_N}
+\norm{N^\al\norm{\jp{Nx}^{\ep_d}\tilde u_N}_{L^2L^{2_1}}}_{\ell^2_N}<\infty,
\\\label{eq:Hs local smoothing}
&\norm{e^{\frac\al2|s|}\jp y^{-1}P_{\ge1}\ul{\tilde u}(s)}_{L^2H^{\frac12+\al}((-\infty,0]\times\R^d)}<\infty.
\end{align}

We then show Proposition~\ref{prop:main} with the Strichartz pair $(p,p)$, assuming only the bounds on $\tilde u$ in \eqref{eq:eta def}.
We follow the previous proof, replacing Sobolev norm and decay exponents $\frac14+\frac k2$, $k\in\Z$ by $\s_d+\frac k2=\frac{\ep_d}{100d}+\frac k2$.
As noted above \eqref{eq:mathring u decay}, the proof works without introducing $\tilde u^\pc$ once we show the decay bound in \eqref{eq:mathring u decay}, namely
\begin{equation}\label{eq:mathring u decay sec6}
\norm{|y|^{\frac12+\s_d}\ul{\mathring u}}_{L^2L^2([s_*-\kappa^{-10},s_*+\kappa^{-10}]\times\{\kappa/4\le|y|\le4R\})}\les\kappa^{-O(1)}.
\end{equation}
We show this decay. For $0\le\theta<1$ and $s\le-\kappa^{-10}$, Hardy's inequality yields
\[
\norm{\ul{z^*}(s)}_{\dot H^{-\theta}(|y|\le 4R)}\les R\norm{\ul{z^*}(s)}_{L^2(|y|\le 4R)}\les R\norm{z^*}_{L^2(|x|\le 4R\sqrt{e^s})}\les R\cdot(R\sqrt{e^s})^\al\les1.
\]
Summing with \eqref{eq:negative Sobolev} yields the same bound for $\ul u(s)$. Then, \eqref{eq:eta def} for $\tilde u$ yields
\[
\norm{\ul{\mathring u}}_{L^2\dot H^{-\theta}(|y|\le 4R)}\les \kappa^{-10}(\norm{\ul u}_{L^\infty\dot H^{-\theta}(|y|\le 4R)}+R^{\frac12-\frac1p+\theta}\norm{\ul{\tilde u}}_{L^pL^p})\les\kappa^{-10},\,\,\;0\le\theta<1.
\]
Interpolating this estimate with \eqref{eq:X embed Hs}, we obtain for each dyadic number $r\le R$
\[
\norm{\ul{\mathring u}}_{L^2L^2(r\le|y|\le 2r)}\les\kappa^{-O(1)} r^{-\theta},\quad\theta<\tfrac25(\tfrac32-3\s_d).
\]
Choosing $\theta=\frac12+2\s_d$, summing over $r\le R$ yields \eqref{eq:mathring u decay sec6} as desired.

We now recall the proof of Theorem~\ref{thm:saturate loglog} \eqref{enu:Thm profile}, where we deduced \eqref{eq:B big}. 
Since $\tilde u\in S^\al\subset L^2\dot B^\al_{2_1,2}$ and $\tilde u\in L^\infty L^2$, interpolating yields $\tilde u\in L^qL^p$ for some $q>p$. Thus, $e^{-\nu s}\ul{\tilde u}\in L^pL^p$ holds for small $\nu>0$. Combining with \eqref{eq:scale break}--\eqref{eq:Hs local smoothing}, we obtain that all norms of $\tilde u$ in \eqref{eq:eta def} are $O(e^{-\nu(|s_*|-\kappa^{-10})})$. Thus, \eqref{eq:eta def} for $\tilde u$ holds for all $s_*\le-2\kappa^{-10}$. Consequently, \eqref{eq:main} holds for all $s_*\le-2\kappa^{-10}$. Then $\#\mc B\le2\kappa^{-10}$ follows, contradicting \eqref{eq:B big}. This finishes the proof.
\end{proof}
In the next proof, we use that if a radial solution $u$ to \eqref{eq:NLS} of mass $M(Q)$ scatters neither forward nor backward, then $u$ is a scaling of $e^{it}Q$. This result is known in every dimension; see Li--Zhang \cite{Li2010almost_periodic} for $d\ge 4$ and Dodson \cite{Dodson21} for $2\le d\le 15$.
\begin{proof}[Proof of Corollary \ref{cor:loglog la Bourgain}]
Let $C_1>\sqrt{2\pi}$. Let $a>0$ and $\delta=e^{-a/C_1}$. We denote $\la=\la_\delta$ as in Theorem~\ref{thm:loglog}. Assume $a\gg1$ so that in Theorem~\ref{thm:loglog},
\[
C=\sqrt{2\pi+o_\delta(1)}|\log\delta|<C_1|\log\delta|=a.
\]
Then, since the right-hand side of \eqref{eq:loglog} diverges as $t_*\to T$, there exists a sequence of times $t_n\to T$ such that
\begin{equation}\label{eq:la bound by R loglog}
\la(t_n)\le a\sqrt{\frac{T-t_n}{\log|\log(T-t_n)|}}.
\end{equation}
Denote $z_n=u(t_n)-z^*$.
By the definition of $\la$, we have
\begin{equation}\label{eq:u-z* la}
M(\one_{|x|\ge\la(t_n)}z_n)\le\delta^2=e^{-2aC_1^{-1}}.
\end{equation}
By Theorem~\ref{thm:loglog} \eqref{enu:parabolic convergence}, we have
\begin{equation}\label{eq:u-z* la =M}
\lim_{n\to\infty}M(z_n)=M(u)-M(z^*).
\end{equation}
By \eqref{eq:z* def}, \eqref{eq:KillipVisan conseq}, and Fatou's lemma, we have $M(z^*)\le M(u)-M(Q)$. 
We separate two cases: (i) $M(z^*)<M(u)-M(Q)$, (ii) $M(z^*)=M(u)-M(Q)$.

(i) Assume $M(z^*)<M(u)-M(Q)$.
For sufficiently large $a$, \eqref{eq:u-z* la}--\eqref{eq:u-z* la =M} yield
\[
\limsup_{n\to\infty}M(\one_{|x|<\la(t_n)}u(t_n))=\limsup_{n\to\infty}M(\one_{|x|<\la(t_n)}z_n)\ge M(u)-M(z^*)-\delta^2>M(Q).
\]
Then, we obtain \eqref{eq:loglog la Bourgain} for $r\ge a$ due to \eqref{eq:la bound by R loglog}.

(ii) We turn to the case $M(z^*)=M(u)-M(Q)$. For this case, we first fix any $r>0$. Passing to a subsequence, we perform a profile decomposition for $\{z_n\}$ as in Proposition~\ref{prop:chenjie linear}. By \eqref{eq:profile decomp-nonlinear profile decomp}, there exists an index $j$ such that $v^j$ does not scatter forward, which forces $M(v^j)\ge M(Q)$. By \eqref{eq:u-z* la =M}, we have $M(z_n)\to M(Q)$, hence $v^j$ is the only nonlinear profile, $M(v^j)=M(Q)$, and $M(z^{>j}_n)\to0$. If $v^j$ scatters backward, the backward nonlinear evolution of $z_n$ is uniformly bounded in $L^pL^p$. Then, choosing $\tau<T$ such that $\norm{e^{it\De}z^*}_{L^pL^p([\tau-T,0]\times\R^d)}\ll1$, standard perturbation theory yields $\norm{u}_{L^pL^p([\tau,t_n]\times\R^d)}=O(1)$, which contradicts $t_n\to T$. Thus, $v^j$ scatters neither forward nor backward, hence is a scaling of $e^{it}Q$.

Since $M(z_n^{>j})\to0$ as $n\to\infty$, we have
\begin{equation}\label{eq:to rescaled Q}
\norm{z_n-\rho_n^{-\frac d2}e^{i\omega_n}Q(\rho_n^{-1}x)}_{L^2}\to0\quad\text{as}\quad n\to\infty
\end{equation}
for some sequences $\{\rho_n\}\subset(0,\infty)$ and $\{\omega_n\}\subset[0,2\pi]$. Thus, we have
\begin{align*}
&\limsup_{t\to T}M(\one_{|x|\le r\sqrt{\frac{T-t}{\log|\log(T-t)|}}}u(t))\ge\limsup_{n\to\infty}M(\one_{|x|\le r\sqrt{\frac{T-t_n}{\log|\log(T-t_n)|}}}z_n)
\\
\ge&\limsup_{n\to\infty}M(\one_{|x|\le ra^{-1}\la(t_n)}z_n)\ge \limsup_{n\to\infty}M(\one_{|x|\le ra^{-1}\la(t_n)\rho_n^{-1}}Q),
\end{align*}
where we use \eqref{eq:to rescaled Q} for the last inequality.

Since $Q(x)\gtrsim e^{-|x|-o(|x|)}$, by \eqref{eq:u-z* la} and \eqref{eq:to rescaled Q}, we have
\[
\la(t_n)\rho_n^{-1}\ge a\cdot (1-o_a(1))C_1^{-1},\quad n\gg_a1.
\] 
Thus, taking $a\to\infty$ and $C_1\to\sqrt{2\pi}$ yields \eqref{eq:loglog la Bourgain}. This completes the proof.
\end{proof}
\bibliographystyle{abbrv}
\bibliography{citationforloglog}
\end{document}